%% file: AFM.tex
\documentclass[11pt]{amsart}
\usepackage[margin=1in]{geometry}
\usepackage{amsmath,amsfonts,amssymb,regexpatch}%
\usepackage[all]{xy}
\usepackage{setspace}
\usepackage{comment}
\usepackage{tikz-cd}
\usetikzlibrary{positioning}
\usepackage{microtype}
\usepackage{bm}

\usepackage{amsthm, stmaryrd}%
\usepackage{eucal}
\let\mathscr\mathcal  % Use Euler script

\usepackage{enumerate}

\usepackage{xparse, expl3}

\definecolor{darkgreen}{rgb}{0.0, 0.5, 0.0}%
\definecolor{darkblue}{rgb}{0.0, 0.0, 0.5}%
\usepackage{url}
\usepackage{hyperref}
\hypersetup{
	colorlinks = true,
	citecolor=darkblue,
	urlcolor=darkblue,
	linkcolor=darkgreen
}

\usepackage[capitalize,nosort]{cleveref}  
\crefname{lemma}{Lemma}{Lemmas}
\crefname{corollary}{Corollary}{Corollaries}
\crefname{theorem}{Theorem}{Theorems}
\crefname{equation}{Equation}{Equations}
\crefname{example}{Example}{Examples}
\crefname{section}{Section}{Sections}
\crefname{subsection}{Section}{Sections}
\crefname{claim}{Claim}{Claims}

\title{On the Computability of Cofinal \FRAISSE\ Limits}

\author{Nathanael Ackerman}
\address{Harvard University, USA}
\email{nate@aleph0.net}

\author{Cameron Freer}
\address{Massachusetts Institute of Technology, USA}
\email{freer@mit.edu}

\author{Mostafa Mirabi}
\address{Wesleyan University and The Taft School, USA}
\email{mmirabi@wesleyan.edu}

\subjclass[2020]{03C57, 03D45}

\input{macros}
\input{theorems}

\usepackage{thmtools}
\usepackage{thm-restate}
\newenvironment{claimproof}[1][Proof]{%
  \par\noindent\emph{#1.}\space\ignorespaces
}{%
  \hfill\qedsymbol\par\medskip
}
\newcommand{\lipicsEnd}{}

\theoremstyle{plain}
\newtheorem{theorem}{Theorem}[section]
\newtheorem{proposition}[theorem]{Proposition}
\newtheorem{lemma}[theorem]{Lemma}
\newtheorem{corollary}[theorem]{Corollary}
\newtheorem{claim}[theorem]{Claim}

\theoremstyle{definition}
\newtheorem{definition}[theorem]{Definition}
\newtheorem{example}[theorem]{Example}

\theoremstyle{remark}
\newtheorem{remark}[theorem]{Remark}

\begin{document}
\setlength{\leftmargini}{3em}  % Adjust the value as needed

\maketitle

%\tableofcontents

\begin{abstract}
For any collection of finite structures closed under isomorphism (i.e., an \emph{age}) which has the Hereditary Property $\HP$, the Joint Embedding Property $\JEP$, and the Cofinal Amalgamation Property $\coAP$, there is a unique (up to isomorphism) countable structure which is \emph{cofinally ultrahomogeneous} with the given age.
Such a structure is called the \emph{cofinal \Fraisse\ limit} of the age.

In this paper, we consider the computational strength needed to construct the cofinal \Fraisse\ limit of a computable age. We show that this construction can always be done using the oracle $\TuringTripleJump$, and that there are ages that require $\TuringDoubleJump$.

In contrast, we show that if one assumes the strengthening of $\coAP$ known as the Amalgamation Property $\AP$,
then the resulting limit, called the \Fraisse\ limit, can be constructed from the age using $\TuringJump$. 
Our results therefore show that the more general case of cofinal \Fraisse\ limits requires greater computational strength than \Fraisse\ limits.
\end{abstract}

%For any collection of finite structures closed under isomorphism (i.e., an age) which has the Hereditary Property (HP), the Joint Embedding Property (JEP), and the Cofinal Amalgamation Property (CAP), there is a unique (up to isomorphism) countable structure which is cofinally ultrahomogeneous with the given age. Such a structure is called the cofinal Fraïssé limit of the age.
%
%In this paper, we consider the computational strength needed to construct the cofinal Fraïssé limit of a computable age. We show that this construction can always be done using the oracle 0''', and that there are ages that require 0''.
%
%In contrast, we show that if one assumes the strengthening of (CAP) known as the Amalgamation Property (AP), then the resulting limit, called the Fraïssé limit, can be constructed from the age using 0'. Our results therefore show that the more general case of cofinal Fraïssé limits requires greater computational strength than Fraïssé limits.

\tableofcontents

%\newpage
%%%%  %%%%  %%%%  %%%% %%%%  %%%%  %%%%  %%%%
\section{Introduction}
%%%%  %%%%  %%%%  %%%% %%%%  %%%%  %%%%  %%%%

Given a collection of finite structures, an important question is to determine when 
they can
be glued together to get a ``canonical infinite limit''. In the case where the canonical limit is a structure containing all finite structures and that has the ``maximum amount of symmetry'' (namely, is \emph{ultrahomogeneous}), this question was answered by \Fraisse. Specifically, \Fraisse\ showed that such a limit, called a \Fraisse\ limit, exists if and only if the collection of structures has the Hereditary Property, the Joint Embedding Property, and the Amalgamation Property. 

\Fraisse\ limits have played a fundamental role in computer science and mathematics, including in database theory, automata theory, model theory, and ergodic theory. Further, there is often a tight connection between properties of the \Fraisse\ limit (such as having trivial definable closure,  $\aleph_0$-categoricity, or having an amenable automorphism group) and corresponding properties of the collection of finite structures used to build it (respectively, having the Strong Amalgamation Property, having only finitely many elements (up to isomorphism) of each size, or having the Ramsey property).

Given the important connection between a class of finite structures and its \Fraisse\ limit, it is worth studying how computably one can construct the \Fraisse\ limit from the collection of structures. 
This computability question was first studied by Csima, Harizanov, Miller, and Montalb\'{a}n,
who showed in \cite{Computable-Fraisse} how to computably build a \Fraisse\ limit from a computable collection of finite structures along with computable witnesses to the Hereditary Property, the Joint Embedding Property, and the Amalgamation Property. 

Of these three properties needed to build a \Fraisse\ limit from a collection of finite structures, the Amalgamation Property is the most closely related to the symmetry of the limit. However, often the Amalgamation Property is too restrictive and we instead only have the weaker version of this property called the Cofinal Amalgamation Property. The following example shows one of the ways in which the Cofinal Amalgamation Property is not as restrictive as the Amalgamation Property 
(see \cref{Definition of (AP),Definition of (coAP)}).

\begin{example}
Let $\Lang_f = \{f\}$ be the language with a single unary function symbol and let $\cK_f$ be the collection of finite $\Lang_f$-structures $\cM$ such that 
\[
\cM \models (\forall x)\,  \bigl(f(x) \neq x\bigr)
\qquad \text{and} \qquad
\cM \models (\forall x)\, \Bigl(\bigl(f(f(x)) = x\bigr) \Or \bigl(f(f(f(x))) = x\bigr)\Bigr).
\]
Note that $\cK_f$ is a \Fraisse\ class. 

Let $\Lang_R = \{R\}$ be the language with a single binary relation symbol and let $\cK_R$ be the collection of $\Lang_R$-structures $\cM$ where $R^{\cM}$ is the graph of a function $r$ such that $\cM \models (\forall x)\, \bigl(r(x) \neq x\bigr)$ and $\cM \models (\forall x)\, \Bigl(\bigl(r(r(x)) = x\bigr) \Or \bigl(r(r(r(x))) = x\bigr)\Bigr)$. Note that $\cK_R$ does not have the Amalgamation Property. To see this, suppose $\cM_2$ is the unique element of $\cK_R$ with underlying set $\{x, y\}$ and $\cM_3$ is the unique element of $\cK_R$ with underlying set $\{x,y, z\}$ and $r^{\cM_3}(x) = y$ and $r^{\cM_3}(y) = z$. If $\cN$ were the result of amalgamating $\cM_2$ and $\cM_3$ over $\{x\}$ then we would have $r^{\cN}(r^{\cN}(x)) = x$ and $r^{\cN}(r^{\cN}(r^{\cN}(x))) = x$, implying $r^{\cN}(x) = x$.  However, $\cK_{R}$ does have the Cofinal Amalgamation Property, as it has amalgamation over all subsets that are closed under $r$. 
\end{example}

When studying structures with functions, one often does not care if the function is represented via a function symbol or via a relation symbol encoding the graph of the function. As this example shows, 
the Cofinal Amalgamation Property, 
unlike the Amalgamation Property, 
is not sensitive to the specific way that functions are represented in a structure.

Classes of structures with the Cofinal Amalgamation Property
also admit 
limit objects (called \emph{cofinal \Fraisse\ limits}), but such limit objects only satisfy the weaker notion of \emph{cofinal ultrahomogeneity}. In this paper, we study how computationally difficult it is to construct a cofinal \Fraisse\ limit from a collection of finite structures satisfying the Hereditary Property, the Joint Embedding Property, and the Cofinal Amalgamation Property.

%%%%  %%%%  %%%%  %%%% %%%%  %%%%  %%%%  %%%%
\subsection{Related Work}
%%%%  %%%%  %%%%  %%%% %%%%  %%%%  %%%%  %%%%

\Fraisse\ limits  \cite{MR0069239}, the Amalgamation Property $\AP$, and ultrahomogeneity are key concepts in classical model theory \cite[Chapter~7]{MR1221741}, which describe an important way in which certain countable structures can be built as a generic limit of an appropriate collection of finite structures \cite{MR1141931}.
In recent years, deep connections between \Fraisse\ limits and ergodic theory have also been discovered
\cite{MR3274785,MR2140630}.

\Fraisse\ limits and their computability have been considered in the context of verification of database systems \cite{DBLP:conf/pods/BojanczykST13}, and researchers have investigated which \Fraisse\ limits can be described by automata \cite{DBLP:conf/lics/KhoussainovNRS04}. Within computer science, \Fraisse\ limits have also been studied in 
the context of denotational semantics for programming languages 
\cite{DBLP:conf/lics/DrosteG90, MR1224221}, 
fuzzy logic \cite{MR3810321}, and 
the complexity of constraint satisfaction problems in AI \cite{MR2075214} and in phylogenetics and computational linguistics \cite{MR3689377}.

In this paper, we primarily study the weaker notions of cofinal \Fraisse\ limits, the Cofinal Amalgamation Property $\coAP$, and cofinal ultrahomogeneity.
Cofinal ultrahomogeneity was introduced by Calais \cite{MR0232725, MR0233739} (under the name ``pseudo-homogeneity'') along with the corresponding amalgamation property and limit objects, and was rediscovered by Truss \cite{MR1162490} in the context of generic automorphisms. For more on the history of these notions and their relationship to other notions of limit, amalgamation, and ultrahomogeneity, 
see \cite{MR4292067} and \cite{MR4433330}.
These other notions include the even weaker notions of weak \Fraisse\ limits, the Weak Amalgamation Property $\WAP$, and weak ultrahomogeneity, 
which do generalize the cofinal concepts, but do not yet include many more known examples.
For a study of the computability of the connections between 
weak \Fraisse\ limits, $\WAP$, and weak ultrahomogeneity, 
see \cite{futureworkAFM}, which generalizes the results of the current paper
(often requiring substantially more complicated proofs).

For more on the Cofinal Amalgamation Property including several examples, see
\cite{KruckmanNotes} and
\cite[Section~3.1]{MR3697592}; for a categorical presentation, see
\cite{MR4369354}.
The Cofinal Amalgamation Property has been used in the study of
topological dynamics
\cite{MR2308230}
and the Ramsey property
\cite{bartos2021weak,MR4478610}, and
has also been studied in \cite{MR4452123}, \cite{MR4458208}, and \cite{drzewiecka2023generics}.

The computability of the \Fraisse\ construction was first studied in \cite{Computable-Fraisse}, and we build on several of their results. Isomorphisms between computable \Fraisse\ limits were also studied in \cite{MR3016251} and \cite{MR3944680}, 
and a related computability notion was considered in \cite{MR3722988,CenzerAdamsNg}.
For more on the computability of certain specific \Fraisse\ limits, see \cite{MR3034686}.

%%%%  %%%%  %%%%  %%%% %%%%  %%%%  %%%%  %%%%
\section{Summary of Main Results}
%%%%  %%%%  %%%%  %%%% %%%%  %%%%  %%%%  %%%%

Given an \emph{age}, i.e., a collection of finite structures closed under isomorphisms,\footnote{Note that some of the model theory literature refers only to the \emph{age of a structure}, in which case an age always has the Hereditary Property and Joint Embedding Property. In this work we use the more general notion, in part so that the definition of \emph{computable age} is computably checkable.} one is often interested in a countable structure which can be thought of as the limit of the age, i.e., a countable structure whose age is the given one, and where the countable structure is generic in some sense. 

In this paper, we will be interested in the computable content of going from an age to its corresponding limit. It turns out that our results on the computable content of these constructions will primarily be in terms of the \emph{embedding information} of the age. The embedding information of a computable age $\compcK$, which we denote $\EmbedInfo(\compcK)$, describes when a map between two elements of $\compcK$ can be extended to an embedding. We observe in \cref{Embdedding info is computable from 0'} that $\EmbedInfo(\compcK)$ is always computable from the Turing jump of $\compcK$, and is computable in the case
where the language of $\compcK$ is finite and relational.

The most common instance of the phenomenon of an age having a \emph{limit} is that of the \Fraisse\ limit. Given an age (with countably many isomorphism classes) satisfying the Hereditary Property $\HP$, the Joint Embedding Property $\JEP$, and the Amalgamation Property $\AP$, there is a unique structure, called the \Fraisse\ limit, whose age is the given one and which is ultrahomogeneous. In \cite{Computable-Fraisse} it was shown how to construct a \Fraisse\ limit from computable witnesses for $\HP$, $\JEP$, and $\AP$. In \cref{HP implies 0'-computable HP}, \cref{(JEP) implies 0' computable (JEP)}, and \cref{(AP) implies 0' computable (AP)} we show that such witnesses are always computable from the embedding information. Putting this together, we obtain the following result.

\begin{corollary*}[\cref{Bound on the computability of Fraisse limit}]
Suppose $\compcK$ is a computable age with $\HP$, $\JEP$, and $\AP$. Then $\compcK$ has an $\EmbedInfo(\compcK)$-computable \Fraisse\ limit. 
\end{corollary*}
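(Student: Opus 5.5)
The plan is to relativize the construction of Csima, Harizanov, Miller, and Montalb\'{a}n \cite{Computable-Fraisse} to the oracle $\EmbedInfo(\compcK)$. Their result says that from a computable presentation of the age together with computable witnesses for $\HP$, $\JEP$, and $\AP$, one can computably build the \Fraisse\ limit. The argument relativizes to any oracle $X$: an $X$-computable age together with $X$-computable witnesses yields an $X$-computable \Fraisse\ limit. Since $\compcK$ is computable, it is in particular $\EmbedInfo(\compcK)$-computable. So it suffices to produce $\EmbedInfo(\compcK)$-computable witnesses for the three properties.

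\textbf{Step 1 (witnesses).} I would invoke \cref{HP implies 0'-computable HP} to get an $\EmbedInfo(\compcK)$-computable $\HP$-witness. This is the procedure which, given a structure in $\compcK$ and a finite subset, outputs (an index for) an element of $\compcK$ isomorphic to the induced substructure, together with the embedding. Next, \cref{(JEP) implies 0' computable (JEP)} gives an $\EmbedInfo(\compcK)$-computable function which, given $\cA,\cB \in \compcK$, returns $\cC \in \compcK$ with embeddings of both. Finally, \cref{(AP) implies 0' computable (AP)} gives an $\EmbedInfo(\compcK)$-computable function which, given embeddings $f\colon \cA\to\cB$ and $g\colon \cA\to\cC$ in $\compcK$, returns an amalgam with compatible embeddings. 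The underlying reason each lemma works is the same: the needed object exists by the hypothesis, and $\EmbedInfo(\compcK)$ lets us verify a candidate (a structure plus maps that extend to embeddings). So we search through $\compcK$ and the finitely many candidate maps until one is certified.

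\textbf{Step 2 (construction).} Feed these witnesses into the relativized \cite{Computable-Fraisse} construction. We build a chain $\cM_0 \subseteq \cM_1 \subseteq \cdots$ of elements of $\compcK$ (copies with universes initial segments of $\omega$). The stages dovetail two kinds of requirements. First, every element of $\compcK$ must embed, which is handled via the $\JEP$-witness. Second, for every finite substructure $\cA \subseteq \cM_s$ (identified via the $\HP$-witness) and every embedding $\cA \to \cB$ with $\cB\in\compcK$, there must be an extension into some later $\cM_t$ over $\cA$; this is handled via the $\AP$-witness. The union is then $\EmbedInfo(\compcK)$-computable. Its age is contained in $\compcK$ (closed under isomorphism and $\HP$) and contains all of $\compcK$. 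It has the extension property, hence is ultrahomogeneous, and so is the \Fraisse\ limit.

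\textbf{Main obstacle.} The main thing to check is bookkeeping in the relativization. The substructures of $\cM_s$ that arise must be presented as elements of $\compcK$ in the format the witnesses expect. Also, with possibly infinite languages (or function symbols), a finite subset need not carry an induced finite structure, so ``substructure'' must be read through the $\HP$-witness. Since the cited lemmas are stated exactly in terms of $\EmbedInfo(\compcK)$, I expect this to be routine, and the corollary follows directly.
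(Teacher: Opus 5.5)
Your proposal is correct and follows the same route as the paper: the paper's proof consists of combining \cref{HP implies 0'-computable HP}, \cref{(JEP) implies 0' computable (JEP)}, and \cref{(AP) implies 0' computable (AP)}, which give $\EmbedInfo(\compcK)$-computable witnesses for $\HP$, $\JEP$, and $\AP$, with \cref{Computable Fraisse limit from (HP) (JEP) (AP)} (the Csima--Harizanov--Miller--Montalb\'an theorem) relativized to $\EmbedInfo(\compcK)$. Your Step~2 sketch of the chain construction is not needed once that relativized theorem is cited; note also that with function symbols the structures in $\compcK$ can be infinite, so the phrase ``universes initial segments of $\omega$'' would need adjusting if you kept that sketch.
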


While \Fraisse\ limits are the most common type of limit of an age, often we have ages which satisfy $\HP$ and $\JEP$, but which do not satisfy $\AP$, and instead merely satisfy the weaker notion of the Cofinal Amalgamation Property $\coAP$. In this case the age still has a type of limit, called the cofinal \Fraisse\ limit, but the limiting object no longer need be ultrahomogeneous and is merely required to satisfy the weaker notion of cofinal ultrahomogeneity. 

As with the case of \Fraisse\ limits, we show that from computable witnesses for $\HP$, $\JEP$, and $\coAP$, we can construct the cofinal \Fraisse\ limit. 
We frame this result in terms of (relativized) computable versions of the corresponding properties. (For example, $\CHP(\StandardTuringDegree)$ denotes the $\StandardTuringDegree$-computable version of $\HP$.)

\begin{theorem*}[\cref{prop:computable-cofinal-Fr-limit}]
Let $\StandardTuringDegree$ be a Turing degree.
Suppose  
\begin{itemize}

\item[(a)] $\compcK$ is an $\StandardTuringDegree$-computable age,  

\item[(b)] $\TuringDegree[\EmbedInfo(\compcK)] \Turingleq \StandardTuringDegree$, and

\item[(c)] $\compcK$ has $\CHP(\StandardTuringDegree)$, $\CJEP(\StandardTuringDegree)$, and $\CcoAP(\StandardTuringDegree)$. 
\end{itemize}
Then there is an $\StandardTuringDegree$-computable cofinal \Fraisse\ limit of $\compcK$. 
\end{theorem*}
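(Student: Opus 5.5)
We build the limit $\cM = \bigcup_n \cA_n$ as an $\StandardTuringDegree$-computable chain $\cA_0 \subseteq \cA_1 \subseteq \cdots$ of elements of $\compcK$, following \cite{Computable-Fraisse}, but extending only into \emph{amalgamation bases}. By definition, $\CcoAP(\StandardTuringDegree)$ supplies an $\StandardTuringDegree$-computable procedure with two parts. Given $\cA \in \compcK$, it produces $\cA^* \in \compcK$ and an embedding $\cA \to \cA^*$ such that $\cA^*$ amalgamates all pairs of extensions. Given an amalgamation base $\cB$ and embeddings $\cB \to \cC_0$, $\cB \to \cC_1$, it produces an amalgam $\cD$ with embeddings commuting over $\cB$. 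If the definition also requires that extensions of amalgamation bases be amalgamated over (a cofinal version), we arrange that too.

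\textbf{Step 1 (bookkeeping).} Fix an $\StandardTuringDegree$-computable enumeration of all requirements. There are two kinds.
\begin{itemize}
\item[(i)] $\JEP$ requirements: for each $\cB \in \compcK$ (a code in the $\StandardTuringDegree$-computable list), $\cB$ embeds into $\cM$.
\item[(ii)] Cofinal extension requirements: for each finite tuple $\bar a$ already in some $\cA_n$, each stage $n$, and each extension $\cA_n \to \cC$ with $\cC \in \compcK$, the extension $\cC$ embeds into some $\cA_m$ over $\cA_n$.
\end{itemize}
Type (ii) is what cofinal ultrahomogeneity needs. Namely, $\cM$ should be the union of a chain of amalgamation bases, and every one-point (or finite) extension of each such base should be realized over it. Since infinitely many requirements arise at later stages, use a standard dovetailing of pairs $(n,k)$.

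\textbf{Step 2 (the construction).} Put $\cA_0 = \emptyset^*$. At stage $s$, handle the next requirement as follows.
\begin{itemize}
\item For a $\JEP$ requirement, use $\CJEP(\StandardTuringDegree)$ to jointly embed $\cA_s$ and $\cB$ into some $\cE$.
\item For a type (ii) requirement concerning $\cA_n \to \cC$, first push $\cC$ forward along $\cA_n \to \cA_s$. Since $\cA_n$ is an amalgamation base, the witness amalgamates $\cA_s$ and $\cC$ over $\cA_n$, giving $\cE$.
\end{itemize}
In either case set $\cA_{s+1} = \cE^*$, and rename elements so that each stage is an actual substructure extension with domain an initial segment of $\mathbb N$. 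The relevant embeddings are computed by the witnesses, so the diagram of $\cM$ is $\StandardTuringDegree$-computable.

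Hypothesis (b) enters in two places. First, the type (ii) requirements quantify over extensions ``$\cA_n \to \cC$'', and we must enumerate exactly the maps that are embeddings. Second, checking whether a given finite map is an embedding, needed to verify and compose witnesses in possibly infinite languages, is exactly $\EmbedInfo(\compcK)$-information. $\CHP(\StandardTuringDegree)$ ensures the age of $\cM$ is exactly $\compcK$: every finitely generated substructure of some $\cA_n$ lies in $\compcK$ and can be computably recognized. Combined with the $\JEP$ requirements, this gives $\operatorname{Age}(\cM)=\compcK$.

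\textbf{Step 3 (verification).} This is the main obstacle: we must show that $\cM$ is cofinally ultrahomogeneous and that it is \emph{the} cofinal \Fraisse\ limit. The plan is to prove the cofinal extension property. For every finitely generated $\cA \subseteq \cM$ there is a finitely generated $\cA' \supseteq \cA$ in $\cM$ such that every $\compcK$-extension of $\cA'$ embeds into $\cM$ over $\cA'$. We take $\cA'$ to be some $\cA_n$ containing $\cA$; the type (ii) requirements for $n$ give the property directly.

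The uniqueness and back-and-forth argument for cofinal limits is a standard, non-effective fact. We can cite it without effectivity concerns, because the statement only asks that the structure be $\StandardTuringDegree$-computable, not the isomorphism.

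The delicate point to check is whether ``extension of $\cA_n$'' must be realized over $\cA_n$ itself or over $\cA_n^*$. Because each $\cA_n$ is chosen as an amalgamation base, all later amalgamations occur over bases, which is exactly what $\coAP$ permits.
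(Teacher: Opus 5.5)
\textbf{There is a genuine gap in Step 3.} By the paper's definition, an $\StandardTuringDegree$-computable cofinal \Fraisse\ limit must be \emph{$\StandardTuringDegree$-computably} cofinally ultrahomogeneous. You must exhibit two things:
\begin{itemize}
\item an $\StandardTuringDegree$-computable cofinal collection $E$, and
\item an $\StandardTuringDegree$-computable $g$ that, on $(\aa,\bb)$ with $\aa\in E$, returns an index for an automorphism extending $\aa\mapsto\bb$ whenever $\aa\clsim\bb$.
\end{itemize}
You explicitly set effectivity aside ("we can cite it without effectivity concerns''). But the issue is not the uniqueness isomorphism; it is these automorphisms, and they are the heart of the statement. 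As written, you only show that some $\StandardTuringDegree$-computable structure with age $\compcK$ is cofinally ultrahomogeneous. The general bound on the complexity of its automorphisms is then \cref{Cofinal ultrahomogeneous implies DoubleJump(EmbedInfo)-cofinal ultrahomogeneous}, i.e.\ $\TuringDoubleJump[\EmbedInfo(\cM)]$. The paper's proof spends its main claim on exactly this point. It defines $E$ from the stages and an $\StandardTuringDegree$-computable function $p$ witnessing the $\StandardTuringDegree$-computable cofinal extension property. It then applies \cref{Equivalence between cofinal ultrahomogeneity and cofinal extension property}, whose effective back-and-forth is where hypothesis (b) is really used, to decide whether $\aa\clsim\bb$.

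The property you verify is also not the one $p$, or any back-and-forth, needs. Your type (ii) requirements realize extensions over the stages $\cA_n$ themselves. But $p(\aa,\bb,\cc)$ must realize $\Closure(\aa)\subseteq\Closure(\cc)$ over an \emph{arbitrary} copy $\Closure(\bb)$ of $\Closure(\aa)$ lying anywhere in $\cM$. The paper gets this directly: at each stage it amalgamates over every finite subset $\dd_n$ of the current structure that is isomorphic to the codomain of some $F_n\in\WitcoAP$, against every extension $G_n$.

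Your construction can be repaired without changing it, as follows.
\begin{itemize}
\item Take $E$ to consist of tuples generating copies of stages, together with elements of their closures.
\item Given $(\aa,\bb,\cc)$ with $\aa\clsim\bb$, pick $j$ with $\bb\subseteq\cA_j$.
\item Use $\WitFuncoAP$ to amalgamate $\Closure(\cc)$ and $\cA_j$ over $\Closure(\bb)\cong\Closure(\aa)$. This is legitimate because $\Closure(\aa)$ is isomorphic to the codomain of a member of $\WitcoAP$.
\item Computably locate the stage at which your type (ii) requirement for $\cA_j$ and this amalgam was met, and read off $\dd$.
\end{itemize}
That argument, and the appeal to the proposition, is precisely what is missing.

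Two further remarks. First, passing to $\cE^*$ at every stage is a good feature of your construction. It guarantees each stage is an amalgamation base, which is necessary if $E$ is built from copies of stages (\cref{Cofinal ultrahomogeneous implies cofinal ultrahomogeneous with respect to amalgamation bases}); in the paper's construction only $\cB_0$ is taken from $\WitcoAP$. Second, finitely generated structures may be infinite, so renaming stages to initial segments of $\mathbb N$ fails in general. You need the bookkeeping of \cref{Computable limit of sequences of structure}, or the layers $\cB_{n,k}$ used in the paper.
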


With this result in hand, we reduce the problem of computing the cofinal \Fraisse\ limit from an age to the problem of computing witnesses to $\coAP$ from an age (as we have already shown that witnesses to $\HP$ and $\JEP$ are computable from the embedding information).
We are able to show the following upper bounds
on the computability of $\coAP$ and of cofinal \Fraisse\ limits.

\begin{theorem*}[\cref{Upper bounds on the computability of (CAP)} (b)]
If $\compcK$ is a computable age with $\coAP$, then it 
has $\CcoAP(\TuringDoubleJump[\EmbedInfo(\compcK)])$.
\end{theorem*}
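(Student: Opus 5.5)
We need to produce, computably in $\TuringDoubleJump[\EmbedInfo(\compcK)]$, a witness for $\coAP$: for each $\cA\in\compcK$, a structure $\cA^*\in\compcK$ and an embedding $\cA\to\cA^*$ such that $\cA^*$ is an amalgamation base. By ``amalgamation base'' I mean that for all embeddings $\cA^*\to\cB$ and $\cA^*\to\cC$ in $\compcK$ there are $\cD\in\compcK$ and embeddings making the square commute. I take $\CcoAP(\StandardTuringDegree)$ to require an $\StandardTuringDegree$-computable map sending $\cA$ to such an extension, and, if the definition asks for it, also an $\StandardTuringDegree$-computable amalgamation witness over $\cA^*$. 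The plan has three steps.

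\textbf{Step 1: complexity of single amalgamation instances.} Fix $\cA^*,\cB,\cC$ and embeddings $f\colon\cA^*\to\cB$, $g\colon\cA^*\to\cC$. The question ``is there $\cD$ with embeddings amalgamating $f,g$'' asks whether some finite tuple exists satisfying a condition. Because $\compcK$ is computable and each structure is finite, the condition is a finite check, except when embedding-extension information is needed; that information is exactly what $\EmbedInfo(\compcK)$ supplies. So the question is $\Sigma^0_1$ relative to $\EmbedInfo(\compcK)$, and when an amalgam exists, an $\EmbedInfo(\compcK)$-computable search finds one.

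\textbf{Step 2: complexity of being an amalgamation base.} The predicate ``$\cA^*$ is an amalgamation base'' quantifies universally over all $\cB,\cC,f,g$, which form a computable family, and then asks the $\Sigma^0_1(\EmbedInfo(\compcK))$ question from Step 1. It is therefore $\Pi^0_2$ relative to $\EmbedInfo(\compcK)$, and hence decidable from $\TuringDoubleJump[\EmbedInfo(\compcK)]$. The predicate ``$\cA$ embeds into $\cA^*$ via $h$'' is decidable from $\EmbedInfo(\compcK)$, or even computably, since $h$ is a given finite map.

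\textbf{Step 3: the witness function.} Given $\cA$, search through pairs $(\cA^*,h)$ with $\cA^*\in\compcK$ and $h\colon\cA\to\cA^*$ an embedding. Using $\TuringDoubleJump[\EmbedInfo(\compcK)]$, test whether $\cA^*$ is an amalgamation base and output the first pair that passes. By $\coAP$ this search terminates. Any amalgamation data required by the definition comes from the search in Step 1, which $\EmbedInfo(\compcK)$ suffices for. If the definition of $\CcoAP$ also demands the hereditary or ``closed under further extension'' behaviour, for example choosing bases coherently so that extensions of bases are bases, I would handle it the same way by adding it to the $\Pi^0_2$ test.

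The main obstacle is Step 1: checking carefully, against the paper's precise definitions of embedding information and computable age, that deciding amalgamability of one given diagram really is $\Sigma^0_1$ in $\EmbedInfo(\compcK)$. The potential difficulty arises when the language is infinite or has function symbols, since a finite structure's membership and embeddability might not be decidable outright. I would first try to reduce each such check to a finite query to $\EmbedInfo(\compcK)$, about whether a partial map between two members extends to an embedding, using \cref{Embdedding info is computable from 0'} as the model for how these facts are coded. Once Step 1 holds, the quantifier count in Steps 2 and 3 is routine.
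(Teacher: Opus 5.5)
Your proposal is correct and is essentially the paper's proof. Your Steps 1 and 2 are the paper's functions $h_1$, $h_2$, $h_3$ (an $\EmbedInfo(\compcK)$-decidable check that a pair is an amalgamation diagram, one jump for the existential over diagrams, one more jump for the universal over potential spans), and the amalgamation function is the same $\EmbedInfo(\compcK)$-guided search for a diagram. The one adjustment is that $\WitcoAP$ must be a collection of distinguished extensions, closed under composition with isomorphisms on either side, so instead of the first $(\cA^*,h)$ found you should take all embeddings whose codomain passes your Step~2 test, as the paper does. This set is $\TuringDoubleJump[\EmbedInfo(\compcK)]$-computable by the same count. Also, being an embedding is in general only co-c.e., not computable, though $\EmbedInfo(\compcK)$ suffices as you say.
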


\begin{corollary*}[\cref{Bound on the computability of cofinal Fraisse limit}]
Suppose $\compcK$ is a computable age with $\HP$, $\JEP$, and $\coAP$. Then $\compcK$ has an $\TuringDoubleJump[\EmbedInfo(\compcK)]$-computable cofinal \Fraisse\ limit. 
\end{corollary*}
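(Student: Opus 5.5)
This is the Corollary (Bound on the computability of cofinal Fraisse limit). The plan is to apply \cref{prop:computable-cofinal-Fr-limit} with $\StandardTuringDegree := \TuringDoubleJump[\EmbedInfo(\compcK)]$. To do so, we verify hypotheses (a), (b), and (c) of that theorem for this degree.

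Hypotheses (a) and (b) are immediate. For (a), $\compcK$ is computable, so it is $\StandardTuringDegree$-computable for every degree. For (b), we have $\TuringDegree[\EmbedInfo(\compcK)] \Turingleq \TuringDoubleJump[\EmbedInfo(\compcK)]$, since a set is always computable from its jumps.

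For (c), there are three properties to check.
\begin{itemize}
\item $\CHP$: by \cref{HP implies 0'-computable HP}, $\compcK$ has $\CHP(\TuringDegree[\EmbedInfo(\compcK)])$.
\item $\CJEP$: by \cref{(JEP) implies 0' computable (JEP)}, $\compcK$ has $\CJEP(\TuringDegree[\EmbedInfo(\compcK)])$.
\item $\CcoAP$: by \cref{Upper bounds on the computability of (CAP)}(b), $\compcK$ has $\CcoAP(\TuringDoubleJump[\EmbedInfo(\compcK)])$.
\end{itemize}
The first two are witnessed at degree $\TuringDegree[\EmbedInfo(\compcK)]$, so they still need to be lifted to $\StandardTuringDegree$.

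The only point needing care is this lifting, i.e.\ monotonicity: if $\compcK$ has $\CHP(\mathbf{d})$ or $\CJEP(\mathbf{d})$ and $\mathbf{d} \Turingleq \mathbf{e}$, then it has the same property at $\mathbf{e}$. I expect this to hold because these properties are defined as the existence of $\mathbf{d}$-computable witness functions. A $\mathbf{d}$-computable function is $\mathbf{e}$-computable, and the defining correctness conditions do not mention the degree.

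With (a)–(c) established, \cref{prop:computable-cofinal-Fr-limit} yields a $\TuringDoubleJump[\EmbedInfo(\compcK)]$-computable cofinal \Fraisse\ limit of $\compcK$, as claimed. There is no real obstacle here; all the substantive work sits in \cref{Upper bounds on the computability of (CAP)} and \cref{prop:computable-cofinal-Fr-limit}.
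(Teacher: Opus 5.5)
Your proposal is correct and is the paper's argument: the paper obtains the corollary by feeding \cref{HP implies 0'-computable HP}, \cref{(JEP) implies 0' computable (JEP)}, and \cref{Upper bounds on the computability of (CAP)}(b) into \cref{prop:computable-cofinal-Fr-limit} with $\StandardTuringDegree = \TuringDoubleJump[\EmbedInfo(\compcK)]$. Your checks of hypotheses (a) and (b), and of the upward monotonicity of $\CHP$ and $\CJEP$ in the degree, spell out steps the paper leaves implicit.
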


We then turn our attention to the question of lower bounds for the computability of $\coAP$. We consider two situations, one where the language is finite and relational (and hence the embedding information is computable), and one where we allow the language to be infinite and to have function symbols.
We obtain the following lower bound on the computability of $\coAP$ for finite relational languages.

\begin{theorem*}[\cref{Lower bound on coAP in finite language}]
There is a computable age $\compcK$, which is the canonical computable age of some structure in a finite relational language, such that $\compcK$ has $\coAP$ but if it has $\CcoAP(\StandardTuringDegree)$ 
for some Turing degree $\StandardTuringDegree$,
then $\TuringJump \Turingleq \StandardTuringDegree$. 
\end{theorem*}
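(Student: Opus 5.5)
The plan is to encode the halting problem into the sizes of amalgamation bases. Recall that a $\coAP$ witness takes a structure $\cA \in \compcK$ and returns an extension $\cB \supseteq \cA$ in $\compcK$ over which any two extensions amalgamate. I will design the age so that, for each $n$, there is a structure $\cA_n$ for which every amalgamation base containing $\cA_n$ has size larger than the halting stage of the $n$th machine whenever that machine halts. An oracle for a $\CcoAP(\StandardTuringDegree)$ witness then decides halting: compute $\cB$ from $\cA_n$, run machine $n$ for $|\cB|$ steps, and answer ``halts'' exactly when it has halted by then. This gives $\TuringJump \Turingleq \StandardTuringDegree$.

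\textbf{The structure.} I will use a finite relational language with:
\begin{itemize}
\item a binary relation $S$ (``successor'');
\item a unary predicate $O$ (``origin'');
\item unary predicates $R$ and $U$ (two ``colours'').
\end{itemize}
The countable structure $\cM$ is a disjoint union of components, one for each $n$. The $n$th component starts with an $S$-path of length $n$ beginning at an $O$-point; this code identifies $n$. Past the code, the path continues infinitely as a ``tail''.

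If machine $n$ never halts, the tail carries no colours. If machine $n$ halts at stage $s$, then the tail point at position $s$ receives colour $R$ or $U$. To keep the two options open in the age, I put two copies of the $n$th component into $\cM$, one using each colour.

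\textbf{Computability of the age.} I take $\compcK$ to be the canonical age of $\cM$, and I must check it is computable. A finite substructure is, up to isomorphism, a union of pieces of such components. The structure of a piece is readable from $S$ and $O$, given the one-successor/one-predecessor axioms, which rule out branching. One subtlety: a piece not containing its $O$-point does not reveal $n$. To handle this I will add a binary relation linking every point to its component's origin, so every point sees its origin. With that, deciding membership requires only the following check: a colour appears at tail position $k$ only if machine $n$ halts at exactly stage $k$, which is decidable by running machine $n$ for $k$ steps. Since the language is finite relational, $\EmbedInfo(\compcK)$ is computable by \cref{Embdedding info is computable from 0'}.

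\textbf{$\coAP$ holds.} Take $\cA$ in $\compcK$ and extend each component piece to the code together with a tail segment of length beyond its halting stage (if that machine halts), fixing a colour. Over such a $\cB$, any two extensions amalgamate. In a component where $\cB$ already contains the colour position, extensions differ only in how far they prolong the path, so they amalgamate by identifying the shorter path with an initial segment of the longer. New components can be made disjoint. For non-halting $n$, tails carry no colours, so every extension amalgamates and nothing needs to be added.

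\textbf{Main obstacle: the lower bound.} I must show that if machine $n$ halts at stage $s$, then every amalgamation base $\cB \supseteq \cA_n$ (with $\cA_n$ the bare code of $n$) contains tail position $s$, and hence has size $> s$. Suppose $\cB$ stops short of position $s$. Take two extensions: $\cC_R$ prolongs the path through position $s$ with colour $R$, and $\cC_U$ does the same with colour $U$. In any amalgam, the two new paths start from the same last point of $\cB$, and no branching is allowed. They must therefore be identified, forcing a point coloured both $R$ and $U$, which is not in the age.

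The delicate part is ensuring $\cB$ cannot escape by containing unrelated material or a separate piece of the component not attached to the path. The linking-to-origin relation and the no-branching axioms should force any extension to continue the path from the end of $\cB$. Getting this exactly right is where I expect the care to lie.
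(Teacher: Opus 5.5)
Your reduction (take a base $\cB$ for $\cA_n$ from the witness, run machine $n$ for $|\cB|$ steps) rests on the lemma you flag as delicate. For the structure you describe, that lemma is false, so there is a genuine gap.

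\textbf{Unmarked codes.} Nothing in your language marks where the code ends and the tail begins. Every component is therefore just an $S$-ray from an $O$-point, and $\cA_n$ embeds into every component. The witness may return a base for $\cA_n$ sitting in some other halting component, and its size then says nothing about machine $n$. You need an extra unary predicate marking the end of the code. You should also always include two copies of each component, both uncoloured when machine $n$ diverges; otherwise the domain of $\cM$ depends on the halting set and $\cM$ is not computable.

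\textbf{The linking relation gives a counterexample.} Once the code is marked, let $\cB$ consist of the code of $n$ in the $R$-copy together with the single $R$-coloured point at tail position $s$, which is linked to the origin. Include none of the intermediate tail points. The marked code forces the origin into component $n$, and the linked $R$-point forces the $R$-copy and its unique coloured point. So $\cB$ has exactly one embedding into $\cM$. Given extensions $\cB\to\cC_0$ and $\cB\to\cC_1$ and embeddings $g_i\colon\cC_i\to\cM$, the two composites $\cB\to\cM$ therefore coincide, and the substructure of $\cM$ on $g_0(\cC_0)\cup g_1(\cC_1)$ is an amalgam. Thus $\cB$ is an amalgamation base with about $n+2$ elements, independent of $s$, and a witness returning such bases defeats your procedure. ``$\cB$ contains tail position $s$'' does not give ``$|\cB|>s$''. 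Far from forcing extensions to continue the path, the linking relation lets $\cB$ skip it.

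\textbf{The two-extension argument is incomplete even without the linking relation.} It assumes both $\cC_R$ and $\cC_U$ lie in the age. That fails if $\cB$ also contains the other copy's code running through its colour. Then the piece containing $\cA_n$ is pinned to one copy and need not contain its tail position $s$ at all; only a global size bound can survive.

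\textbf{What a repair along your lines needs.} Add the marker, drop the linking relation, and prove two steps you do not have.
\begin{enumerate}
\item[(i)] For every embedding $h\colon\cB\to\cM$, the image $h(\cB)$ meets a coloured point of one of the two copies of component $n$. Otherwise, composing $h$ with the positionwise swap of the two copies gives a second embedding $h'$. Extending along $h$ and $h'$ by the initial segments through the respective colours then gives a span with no amalgam, since $S$ is functional and no point is both $R$ and $U$.
\item[(ii)] If $|\cB|\le s$, the $S$-segment of $\cB$ containing such a coloured point cannot reach the end-of-code marker, and nothing relates it to the rest of $\cB$. Each such segment can then be moved into a fresh halting component of the same colour with a later halting stage. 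This yields an embedding contradicting (i).
\end{enumerate}

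With these, your size readout works, and it is a genuinely different route from the paper's. The paper instead uses a fixed two-root gadget whose chain lengths jump from $W_{2,4}$ to $W_{30,90}$ at halting and reads off a size threshold. It separates components in a finite language by the cycle/equivalence-relation coding rather than by $S$-path codes.
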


We also obtain the corresponding lower bound on the computability of cofinal \Fraisse\ limits for finite relational languages.

\begin{corollary*}[\cref{Lower bound on cofinal Fraisse limit in finite language}]
There is a computable age $\compcK$, which is the canonical computable age of some structure in a finite relational language, such that if $\StandardTuringDegree$ is a Turing degree and $\cM$ is 
an $\StandardTuringDegree$-computable structure that is $\StandardTuringDegree$-computably cofinally ultrahomogeneous
and a cofinal \Fraisse\ limit of $\compcK$, then $\TuringJump \Turingleq \StandardTuringDegree$. 
\end{corollary*}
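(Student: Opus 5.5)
The statement follows from \cref{Lower bound on coAP in finite language} once we show that a cofinal \Fraisse\ limit that is computable and computably cofinally ultrahomogeneous relative to $\StandardTuringDegree$ yields $\CcoAP(\StandardTuringDegree)$ for its age. So I will take the age $\compcK$ produced by \cref{Lower bound on coAP in finite language}, the canonical computable age of a structure in a finite relational language with $\coAP$. Then I will show that any $\cM$ as in the hypothesis gives an $\StandardTuringDegree$-computable witness to $\coAP$ for $\compcK$. The lower bound then forces $\TuringJump \Turingleq \StandardTuringDegree$.

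\textbf{Step 1: find cofinal extensions inside $\cM$.} Given $A \in \compcK$, I search $\StandardTuringDegree$-computably for a finite substructure of $\cM$ isomorphic to $A$. One exists because the age of $\cM$ is $\compcK$. Since the language is finite and relational, the atomic diagram of a finite tuple of $\cM$ is $\StandardTuringDegree$-computable and the search terminates. Fix the resulting embedding $e\colon A \to \cM$. By hypothesis, $\cM$ is $\StandardTuringDegree$-computably cofinally ultrahomogeneous. I read this as saying that from a finite substructure $e(A)$ we can $\StandardTuringDegree$-computably find a finite $B \supseteq e(A)$ inside $\cM$ such that every partial isomorphism between copies of $B$ in $\cM$ extends to an automorphism. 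I take $B$, pulled back along $e$ to an extension $A \subseteq B'$ in $\compcK$, as the cofinal extension required by $\CcoAP$.

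\textbf{Step 2: amalgamate over $B'$.} Given embeddings $f\colon B' \to C$ and $g\colon B' \to D$ in $\compcK$, I realize $C$ in $\cM$ over $e(B)$. Because $\cM$ is cofinally ultrahomogeneous and has age $\compcK$, it satisfies the cofinal extension property over $B$: some copy of $C$ in $\cM$ extends $B$ compatibly with $f$. That copy can be found by an $\StandardTuringDegree$-computable search through finite tuples of $\cM$, checking atomic diagrams. I do the same for $D$. Then I take the substructure of $\cM$ generated by the union of the two copies, which is finite since the language is relational. It lies in $\compcK$, and together with the two inclusion maps it forms the amalgam. Every step is a terminating $\StandardTuringDegree$-computable search, so $\compcK$ has $\CcoAP(\StandardTuringDegree)$, and \cref{Lower bound on coAP in finite language} gives $\TuringJump \Turingleq \StandardTuringDegree$.

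\textbf{Main obstacle.} The delicate point is Step 2, namely that the copy of $C$ can be found \emph{over the specified copy} $e(B)$. Existence should come from the standard argument: first realize $C$ somewhere in $\cM$, then move its copy of $B$ onto $e(B)$ by an automorphism, which exists by cofinal ultrahomogeneity. We only need the search to succeed; we never need to compute the automorphism. This matters because the automorphism itself need not be $\StandardTuringDegree$-computable. I also need to check that the paper's precise definition of ``$\StandardTuringDegree$-computably cofinally ultrahomogeneous'' supplies the map $A \mapsto B$ used in Step 1. If it instead only provides witnesses of homogeneity, I would modify Step 1 to search for a $B$ accompanied by such a witness.
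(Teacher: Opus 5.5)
Your proposal is correct and is essentially the paper's argument. The paper obtains this corollary by combining \cref{Lower bound on coAP in finite language} with \cref{c-computably cofinal ultrahomogeneity implies c-CcoAP}, whose hypothesis $\EmbedInfo(\cM) \Turingleq \StandardTuringDegree$ is automatic in a finite relational language, and your Steps~1--2 reprove that proposition inline, finding the amalgam by searching $\cM$ for copies of $C$ and $D$ over $B$ rather than computing the automorphism from the witness $g$. The one omission is that $\WitFuncoAP$ must also accept potential spans that are not spans; this is harmless here, since being an embedding is decidable in a finite relational language, so you can detect that case and output a trivial diagram.
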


Finally, in the general case, we obtain lower bounds on the computability of $\coAP$ and of cofinal \Fraisse\ limits. 
\begin{theorem*}[\cref{Lower bound on coAP in infinite language}]
There is a computable age $\compcK$, which is the canonical computable age of some structure,
such that $\compcK$ has $\coAP$ but if it has $\CcoAP(\StandardTuringDegree)$ 
for some Turing degree $\StandardTuringDegree$,
then $\TuringDoubleJump \Turingleq \StandardTuringDegree$. 
\end{theorem*}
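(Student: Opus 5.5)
We need a computable age $\compcK$, the canonical computable age of some computable structure in an infinite language with function symbols, such that $\compcK$ has $\coAP$ but any $\StandardTuringDegree$-computable witness to $\coAP$ computes $\TuringDoubleJump$. The natural route is to reduce a $\Sigma_2$-complete (equivalently $\Pi_2$-complete) set to the witness. I will use $\mathrm{Fin} = \{e : W_e \text{ is finite}\}$, or its complement $\mathrm{Inf}$, and build a structure $\cM$ that has one ``gadget'' for each index $e$. The plan is to arrange that a cofinal amalgamation base extending a given small substructure from gadget $e$ must have a size or shape that depends on whether $W_e$ is infinite. Then the $\coAP$ witness function, applied to a canonical small structure $A_e$ from gadget $e$, returns an extension $B_e \supseteq A_e$ over which amalgamation holds, and from $B_e$ we must be able to read off, computably in $\StandardTuringDegree$, whether $e \in \mathrm{Fin}$.

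\emph{Construction.} The infinite language lets gadget $e$ use its own unary function $f_e$ and relation symbols. In gadget $e$, take an $f_e$-chain $a_0 \mapsto a_1 \mapsto a_2 \mapsto \cdots$. Each time a new element enters $W_e$ at stage $s$, mark it with a relation $P_e$ placed at level $s$ of the chain; the chain can also be finite or cycle back. The age must be computable, so membership of a finite structure in $\compcK$ has to be decidable. This works because a finite substructure is closed under $f_e$, so it contains a closed orbit, and deciding whether that orbit occurs in $\cM$ only requires running the enumeration of $W_e$ up to a bound determined by the orbit's length. Substructures not closed under the graph of the relation-coded part are handled similarly.

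The key design choice is this. Amalgamation over a finite closed piece fails exactly when there is a ``last'' marked point beyond which the two extensions could be glued inconsistently, and it succeeds once the piece contains all the marks. With this arrangement:
\begin{itemize}
\item if $W_e$ is finite, the amalgamation base must contain every mark (the chain up to the final stage);
\item if $W_e$ is infinite, some base of bounded, fixed size works, for example because the chain is periodic-like, or because amalgamation over $A_e$ already holds.
\end{itemize}

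\emph{Verification.} First I check that $\compcK$ is the canonical computable age of $\cM$, that it has $\HP$ and $\JEP$ (for example, the gadgets are disjoint), and that it has $\coAP$. The last point needs a case analysis per gadget, using the fact that any finite structure sits inside a finite closed structure. Then the reduction: given a witness $g \le_T \StandardTuringDegree$, compute $B_e = g(A_e)$. Decide $e \in \mathrm{Fin}$ iff $B_e$ has the form ``contains a terminal marked segment'' (a computable check), and prove that the alternative outcome forces $e \in \mathrm{Inf}$. Since $\mathrm{Fin}$ is $\Sigma_2$-complete, this gives $\TuringDoubleJump \Turingleq \StandardTuringDegree$.

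\emph{Main obstacle.} The hard part is designing the gadget so that both directions of the dichotomy hold at once. Any valid base must certify finiteness versus infiniteness from its finite shape, while the age stays computable and $\coAP$ truly holds. A single existential-over-universal quantifier must be encoded in ``some extension is a base.'' If the plain chain idea fails, I will first try a tree of chains, one per stage $s$, labeled by whether $W_{e,s}$ grows afterwards. This is modeled on the finite relational $\TuringJump$ lower bound (\cref{Lower bound on coAP in finite language}), with the extra quantifier supplied by function closure.
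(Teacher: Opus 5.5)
There is a genuine gap: the two steps that carry the proof are left as hopes, and you flag the first one yourself.

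\textbf{No working gadget.} You never build a gadget that forces amalgamation bases to be large. The phrase ``glued inconsistently past a last marked point'' is not a mechanism. The design you do commit to works against you.
With marks on the forward $f_e$-orbit $a_0\mapsto a_1\mapsto\cdots$, the substructure generated by $a_0$ is the whole forward orbit. So it already contains every mark, and it is infinite whenever $W_e$ is. The requirement ``the base contains every mark'' therefore carries no information. Deciding where the chain should ``cycle back'' would require knowing whether and when $W_e$ stops growing, which you cannot compute. (Decidable membership is not actually needed: the canonical computable age of any computable structure is automatically a computable age.)

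The paper uses only relation symbols here. Its key device is a pair of roots $q_+,q_-$ linked by $Q$. Off $q_+$ hang a $B$-chain of length $1+m_e$ and a $Y$-chain of length $m_e$; off $q_-$ the lengths are reversed. Here $m_e=|\Phi_e|$ (the paper's $\Phi_e$ plays the role of your $W_e$), and all chains grow whenever $\Phi_e$ does. Suppose a finite substructure containing the roots has no root-attached chain of length $1+m_e$. Then it embeds with $q_+$ and $q_-$ swapped. Amalgamating this swap with the identity would force a single point to carry both a $B$-chain and a $Y$-chain of length $1+m_e$, which never occurs. Hence when $\Phi_e$ is finite, every base over the two-element $Q$-structure has longest root-attached chain of length exactly $1+|\Phi_e|$.

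\textbf{The decoding cannot be a computable shape check.} Reading the answer off the shape of $B_e$ fails for three reasons.
\begin{itemize}
\item The witness may return any amalgamation base, not a minimal one.
\item Whether a configuration of marks is ``terminal'' is a $\Pi_1$ fact about $W_e$, not a property of the finite structure $B_e$.
\item Nothing in your plan stops the witness, when $W_e$ is infinite, from returning a base that looks exactly like a finite-case base.
\end{itemize}
What a base reliably gives is a number $g_e$, the longest root-attached chain, with $g_e=1+|\Phi_e|$ whenever $\Phi_e$ is finite. One then asks the $\Sigma_1$ question whether $\Phi_e$ has at least $g_e$ elements; the answer is no exactly when $\Phi_e$ is finite.

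Answering this needs $\TuringJump$. The paper extracts $\TuringJump$ from the witness itself by taking the disjoint union with the structure of \cref{Lower bound on coAP in finite language}. Then any witness for the combined age computes $\TuringJump$, and hence $\EmbedInfo(\compcK)$. This combination, a number read off a base plus one jump to verify it, is where the second quantifier comes from, and your proposal has no counterpart.

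\textbf{$\coAP$ in the infinite case is also unproved.} You would still owe an argument for $\coAP$ when $W_e$ is infinite. The paper labels infinitely many copies $W_\sigma$ with the unary predicates $U_i$. When $\Phi_e$ is infinite, closed substructures then embed in at most one way. When $\Phi_e$ is finite, the copies eventually become isomorphic and supply disjoint room for amalgamation.
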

\begin{corollary*}[\cref{Lower bound on cofinal Fraisse limit in infinite language}]
There is a computable age $\compcK$, which is the canonical computable age of some structure, such that if $\StandardTuringDegree$ is a Turing degree and $\cM$ is 
an $\StandardTuringDegree$-computable structure that is $\StandardTuringDegree$-computably cofinally ultrahomogeneous
and a cofinal \Fraisse\ limit of $\compcK$,
then $\TuringDoubleJump \Turingleq \StandardTuringDegree$. 
\end{corollary*}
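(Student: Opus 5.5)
We take $\compcK$ to be the computable age given by \cref{Lower bound on coAP in infinite language}. It is the canonical computable age of some structure, it has $\coAP$, and any Turing degree $\StandardTuringDegree$ for which $\compcK$ has $\CcoAP(\StandardTuringDegree)$ satisfies $\TuringDoubleJump \Turingleq \StandardTuringDegree$. So it suffices to show the following: if $\cM$ is an $\StandardTuringDegree$-computable cofinal \Fraisse\ limit of $\compcK$ that is $\StandardTuringDegree$-computably cofinally ultrahomogeneous, then $\compcK$ has $\CcoAP(\StandardTuringDegree)$. In other words, we $\StandardTuringDegree$-computably produce witnesses to $\coAP$ from $\cM$ and from the $\StandardTuringDegree$-computable function witnessing cofinal ultrahomogeneity. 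That function sends a finite $A' \subseteq \cM$ to a finite $B'$ with $A' \subseteq B' \subseteq \cM$ such that for every embedding $g \colon B' \to \cM$, the restriction $g|_{A'}$ extends to an automorphism of $\cM$.

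\emph{Computing the base.} Given $A \in \compcK$, we search $\StandardTuringDegree$-computably through finite tuples of $\cM$ for a copy $A'$ of $A$. Since $\cM$ is $\StandardTuringDegree$-computable, checking that a finite map is an embedding is $\StandardTuringDegree$-computable. Since the age of $\cM$ is $\compcK$, the search terminates. We then apply the cofinal ultrahomogeneity function to $A'$ to get $B'$. Because the age of $\cM$ consists of finite structures, $B'$ (or the substructure it generates) is a finite structure. We search $\compcK$ for some $B \in \compcK$ isomorphic to it and output $B$ together with the induced embedding $A \to B$.

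\emph{Amalgamation over $A$.} Let $B \to C_1$ and $B \to C_2$ be embeddings in $\compcK$. We first show that an amalgam over $A$ exists inside $\cM$. Embed $C_1$ and $C_2$ into $\cM$ by some $g_1$ and $g_2$; this is possible since each $C_i$ lies in the age of $\cM$. Composing with $B \cong B'$, each $g_i$ restricts to an embedding $B' \to \cM$. By the choice of $B'$, the restriction of that embedding to $A'$ extends to an automorphism $\sigma_i$ of $\cM$. Then $\sigma_i^{-1} \circ g_i$ are embeddings of $C_1$ and $C_2$ into $\cM$ that agree with the identity on $A'$, and hence agree with each other on $A$.

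Because such embeddings exist, an $\StandardTuringDegree$-computable search through pairs of finite maps $h_1 \colon C_1 \to \cM$ and $h_2 \colon C_2 \to \cM$ will find a pair that are embeddings and agree on $A$. Again, checking this is $\StandardTuringDegree$-computable since $\cM$ is. We then take $D$ to be the (finite) substructure generated by $h_1[C_1] \cup h_2[C_2]$, replace it by an isomorphic copy in $\compcK$, and output it with the two embeddings. The automorphisms $\sigma_i$ are used only to show the search terminates, never to compute anything. This gives $\CcoAP(\StandardTuringDegree)$, and therefore $\TuringDoubleJump \Turingleq \StandardTuringDegree$.

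\emph{Main obstacle.} The delicate point is making sure the output matches the exact format that $\CcoAP(\StandardTuringDegree)$ requires, as fixed in \cref{Lower bound on coAP in infinite language}. In particular, the witness $B$ must work uniformly for all extensions $C_1$ and $C_2$ of $B$, and the substructures of $\cM$ we form must be finite even when the language has function symbols. Local finiteness follows because the age consists of finite structures, but it has to be invoked explicitly, both to know that $B'$ and $D$ are finite and to be able to locate them inside $\compcK$.
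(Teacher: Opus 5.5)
Your plan is the paper's: the corollary is obtained there by combining \cref{Lower bound on coAP in infinite language} with \cref{c-computably cofinal ultrahomogeneity implies c-CcoAP}. Your construction (embed the codomains into $\cM$, align them with an automorphism, pull the images back into $\compcK$) is essentially an inline reproof of that proposition. The gap is that the proposition carries the hypothesis $\EmbedInfo(\cM) \Turingleq \StandardTuringDegree$, which the corollary does not list. You discard it by asserting that, because $\cM$ is $\StandardTuringDegree$-computable, checking whether a finite map into $\cM$ is an embedding is $\StandardTuringDegree$-computable. For this age that is false. Its language has infinitely many relation symbols (the $P_i$ and the $U^e_i$), so verifying $\aa \clsim \bb$ means checking infinitely many atomic facts, and this is only co-c.e.\ relative to $\StandardTuringDegree$. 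Compare \cref{Embdedding info is computable from 0'}, which gives computability only for uniformly finite ages. Every step of your construction depends on this check:
\begin{itemize}
\item locating a copy $A'$ of $A$ in $\cM$;
\item finding members of $\compcK$ isomorphic to the substructures on $B'$ and on $h_1[C_1]\cup h_2[C_2]$;
\item making $\WitcoAP$ an $\StandardTuringDegree$-c.e.\ set of genuine embeddings closed under composition with isomorphisms;
\item certifying that $h_1, h_2$ are embeddings.
\end{itemize}
It also affects totality. Your search for $h_1, h_2$ is only guaranteed to halt when $(G_0, G_1)$ is a genuine span, but $\WitFuncoAP$ must return an amalgamation diagram on every potential span. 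The paper's proof of the proposition uses $\EmbedInfo(\cM)$ to decide which case it is in. This is not a harmless technicality for this age: the predicates $U^e_i$ are there precisely so that embedding information, together with the bound read off from an amalgamation base, decides whether $\Phi_e$ is finite. That is the $\Sigma^0_2$ content the lower bound extracts. So you must either supply $\EmbedInfo(\cM) \Turingleq \StandardTuringDegree$, as the cited proposition requires, or give an argument that never has to certify embeddings into $\cM$.

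Independently, your amalgam is over the wrong base. You paraphrase computable cofinal ultrahomogeneity as ``every embedding of $B'$ into $\cM$ agrees on $A'$ with an automorphism.'' That is weak ultrahomogeneity, and it only gives $h_1, h_2$ that agree on $A$, i.e.\ a witness to $\WAP$. A witness to $\CcoAP(\StandardTuringDegree)$ needs $\codom(F) = B$ itself to be an amalgamation base: $\WitFuncoAP$ must output $(H_0, H_1)$ with $H_0 \circ G_0 = H_1 \circ G_1$ on all of $B$. That is what \cref{Lower bound on coAP in infinite language} consumes. This part is repairable from the paper's actual definition. The cofinal collection $E$ is $\StandardTuringDegree$-computable and closed under $\clsim$, and every isomorphism between closures of members of $E$ extends to an automorphism. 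So choose $B' = \Closure<\cM>(\bb)$ with $\bb \in E$ and $A' \subseteq B'$. Then any two embeddings of $B'$ into $\cM$ differ by an automorphism, which gives amalgamation over all of $B$, as in \cref{Cofinal ultrahomogeneous implies cofinal ultrahomogeneous with respect to amalgamation bases}. That repair does not touch the first gap.
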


%%%%  %%%%  %%%%  %%%%  %%%%  %%%%
\section{Basic Model Theory and Notation}
%%%%  %%%%  %%%%  %%%%  %%%%  %%%%

Throughout this paper $\Lang$ will be a (not necessarily relational) language. All relation symbols will have positive arity, but we will allow function symbols to have arity $0$ (and we will treat $0$-ary function symbols as constant symbols). If $\cA$ and $\cB$ are $\Lang$-structures, we write $\cA \subseteq \cB$ to denote the fact that $\cA$ is a substructure of $\cB$. For a finite sequence $\aa$ and a set $A$, we write $\aa \subseteq A$ to mean that each element of $\aa$ is in $A$. When $A$ is the underlying set of a structure $\cA$ we will use the standard model theory notation of writing $\aa \in \cA$ for $\aa \subseteq A$. We use the term \emph{substructure} in the standard model-theoretic sense, i.e., to mean an induced substructure that preserves whether or not each relation holds.

By an \defn{$\Lang$-tuple} we will mean a pair $(\aa, \cA)$ where $\cA$ is an $\Lang$-structure and $\aa \in \cA$. We will often abuse notation and refer to an $\Lang$-tuple $(\aa, \cA)$ by $\aa$ when the background structure $\cA$ is clear. 
If $P$ is a property of sequences we say that $P$ holds of an $\Lang$-tuple $(\aa, \cA)$ when it holds of $\aa$. (For example, the length of $(\aa, \cA)$ is the length of $\aa$.)

If $\cM$ is a structure and $A$ is a finite subset of $\cM$, we define $\Closure<\cM>(A)$ to be the smallest substructure of $\cM$ that contains $A$ (i.e., the closure of $A$ under all $\Lang$-terms). Suppose $(\aa, \cM)$ and $(\bb, \cN)$ are $\Lang$-tuples of the same length; we say $(\aa, \cM) \tuplesim (\bb, \cN)$ if for every atomic formula $\psi$ in the empty language, 
%\[
$\cM \models \psi(\aa)$ 
if and only if
$\cN \models \psi(\bb)$.
%\]     
This is equivalent to saying that whenever two coordinates of $\aa$ are equal (or not equal) the corresponding coordinates of $\bb$ are equal (or not equal, respectively). 

We say $(\aa, \cM) \clsim (\bb, \cN)$ if for every atomic formula $\varphi$, we have
%\[
$\cM \models \varphi(\aa)$ 
%\quadiff 
if and only if
$\cN \models \varphi(\bb)$.
%\]
Note that this is equivalent to the statement that the map taking the tuple $\aa$ to $\bb$ can be extended to an isomorphism between $\Closure<\cM>(\aa)$ and $\Closure<\cN>(\bb)$. In this context we refer to this isomorphism by $\ClosureMap[\aa](\bb)$.

Note that if $\aa \in \cM_0$ with $\cM_0$ a substructure of $\cM$ and $\bb \in \cN_0$ with $\cN_0$ a substructure of $\cN$, then $(\aa, \cM_0) \clsim (\bb, \cN_0)$ if and only if $(\aa, \cM) \clsim (\bb, \cN)$. We will therefore often refer to this relationship simply as $\aa \clsim \bb$ (and similarly for $\aa \tuplesim \bb$).

We say a collection of $\Lang$-structures $\cK$ is \defn{uniformly finite} if $\Lang$ is a finite language and there is a computable function $f\:\w \to \w$ such that for all $\cM \in \cK$ and $\aa \in \cM$, we have $|\Closure<\cM>(\aa)| \leq f(|\aa|)$. We say that $\cM$ is \defn{uniformly finite} if $\{\cM\}$ is uniformly finite.

We write $\TuringJump$ for the Turing jump of the minimal Turing degree, $\TuringDegreeZero$. We  write $\TuringDegree[a]\Turingleq \TuringDegree[b]$ to denote that the Turing degree $\TuringDegree[a]$ is Turing reducible to the Turing degree $\TuringDegree[b]$. Fix a standard enumeration of Turing machines and let $\{e\}(n)$ be the result of running the $e$th Turing machine on input $n\in \w$. We write 
$\{e\}(n) \halts$ when this computation halts, and 
$\{e\}(n) \nohalts$ when it does not halt.

If $\sigma$ is a sequence and $a$ is an element, we write $\sigma\^ a$ to be the sequence obtained by appending $a$ to the end of $\sigma$. If $\sigma$ and $\tau$ are sequences, a map $f\:\sigma \to\tau$ is a function which takes as input a pair $(a, i)$ where $a = \sigma(i)$, and returns as output a pair $(b, j)$ where $b = \tau(j)$. 

If $f$ is a function which outputs $k$-sequences we write $f(x) = (f_0(x), \dots, f_{k-1}(x))$, so that each $f_i$ is a function that outputs the $i$th element of the tuple on a given input. For an arbitrary function $f\:X \to Y$ and a subset $X_0 \subseteq X$, we let $f``(X_0)$ denote the image of $X_0$ under $f$, i.e., $\{f(x) \st x \in X_0\}$.  
We will often work with sequences of tuples, and will write, e.g., $(a_i, b_i)_{i\in\w}$ to mean the sequence of pairs $\bigl( (a_i, b_i) \bigr)_{i\in\w}$.

\input{ingredients/ComputableRepresentationsSection.tex}

%%%%  %%%%  %%%%  %%%%  %%%%  %%%%  %%%%  %%%%
\section{Computable Ages}
\label{sec:comp-ages}
%%%%  %%%%  %%%%  %%%%  %%%%  %%%%  %%%%  %%%%
\subsection{Basic Definitions of Computable Ages}
%%%%  %%%%  %%%%  %%%%  %%%%  %%%%  %%%%  %%%%

We now introduce the notion of a computable age. 

\begin{definition}
\label{Definition age}
An \defn{age} for $\Lang$ is a collection of finitely generated structures closed under isomorphism.

If $\Lang$ is a language with a computable representation, a \defn{computable representation} of an age $\cK$ is a
computably enumerable set
$\compcK$ consisting of objects of the form $(\aa, \cA, i)$ where
\begin{itemize}
\item for each $i \in \w$ there is a unique element of the form $(\aa, \cA, i) \in \compcK$,
\item  for each $(\aa, \cA, i) \in \compcK$, we have that
%\begin{itemize}
%\item 
(I) $i\in \w$;
(II)
$(\aa, \cA)$ is an $\Lang$-tuple such that $\cA = \Closure<\cA>(\aa)$; 
(III)
%\item 
$\cA$ is a computable $\Lang$-structure (with respect to the computable representation of $\Lang$);  and
(IV)
%\item 
$\cA\in \cK$;  and
%\end{itemize}

\item for every $\cB \in \cK$ there is an $(\aa, \cA, i) \in \compcK$ such that $\cB \cong \cA$. 

\end{itemize}

We will refer to a computable representation of an age as a \defn{computable age}. We will omit mention of $\cK$ when it is clear from context. 

If $\compcK$ is a computable age and $i \in \w$, we 
write $(\AgeTuple[\compcK](i), \AgeStr[\compcK](i))$ to denote the unique $\Lang$-tuple such that
$(\AgeTuple[\compcK](i), \AgeStr[\compcK](i), i) \in \compcK$, and define
$\AgeIndex[\compcK](i) = 
(\AgeTuple[\compcK](i), \AgeStr[\compcK](i), i)$. We omit the subscript $\compcK$ from 
$\mathbb{I}_{\compcK}$
when it is clear from context.
We will abuse notation and write $\cA \in \compcK$ if $\cA = \AgeStr[\compcK](i)$ for some $i \in \w$. Similarly we write $(\aa, \cA) \in \compcK$ if $(\aa, \cA) = (\AgeTuple[\compcK](i), \AgeStr[\compcK](i))$ for some $i \in \w$. 
\end{definition}

\begin{definition}   
A \defn{potential embedding} in $\compcK$ is a triple of the form $(\AgeIndex(i), \AgeIndex(j), \cc)$ where $i,j \in \w$ and $\cc \in \AgeStr[\compcK](j)$. We write $\Embedding<\AgeIndex(i)>[\AgeIndex(j)](
\cc) = (\AgeIndex(i), \AgeIndex(j), \cc)$. Define the \defn{domain} of $\Embedding<\AgeIndex(i)>[\AgeIndex(j)](\cc)$ to be $\dom(\Embedding<\AgeIndex(i)>[\AgeIndex(j)](\cc)) = \AgeIndex(i)$, the \defn{codomain} to be 
$\codom(\Embedding<\AgeIndex(i)>[\AgeIndex(j)](\cc)) = \AgeIndex(j)$ and the \defn{range} of $\Embedding<\AgeIndex(i)>[\AgeIndex(j)](\cc)$ to be $\range(\Embedding<\AgeIndex(i)>[\AgeIndex(j)](\cc)) = \cc$. In diagrams we will write $\cA \dashedarrow{\cc}_{\compcK}\cB$ to signify that $(\cA, \cB, \cc)$ is a potential embedding in $\compcK$. 

A potential embedding $(\AgeIndex(i), \AgeIndex(j), \cc)$ in $\compcK$ is an \defn{embedding} if $\AgeTuple[\compcK](i) \clsim \cc$. We will write $\cA \xrightarrow{\cc}_{\compcK} \cB$ to signify that $(\cA, \cB, \cc)$ is an embedding in $\compcK$. 
\end{definition}

Note that the collection of potential embeddings is a computable set. However, the collection of embeddings is, in general, only co-c.e.  For this reason it will be important to use potential embeddings when building a cofinally ultrahomogeneous structure from its age.

\begin{definition}  
Let $F$ and $G$ be potential embeddings in $\compcK$ and suppose $\codom(F) = \dom(G)$. 
When $G$ is not an embedding, we define 
$G \circ F = \bigl(\dom(F), \codom(G), \range(G)\bigr)$.
When $G$ is an embedding, we define 
$G \circ F = \bigl(\dom(F), \codom(G), \ClosureMap[\AgeTuple(j)](\range(G))(\range(F)\bigr)$, 
where $j \in \w$ is such that $\AgeIndex(j) = \dom(G)$. 
We write $\id_{\AgeIndex(i)}$ to denote the triple $(\AgeIndex(i), \AgeIndex(i), \AgeTuple(i))$. 
\end{definition}

One can check that this notation gives rise to
a category $\CAT(\compcK)$ whose objects are elements of $\compcK$, and where the maps between $\cA$ and $\cB$ are those potential embeddings $F$ with $\dom(F) = \cA$ and $\codom(F) = \cB$.

The following defines an important set associated to any pair of computable ages, as it will let us tell when 
a potential embedding between  two substructures of an age is in fact an embedding.

\begin{definition}
Suppose $\compcK$ is a computable age. Define the \defn{embedding information} related to $\compcK$, denoted $\EmbedInfo(\compcK)$, to be the collection of tuples $(\AgeIndex[\compcK](k_0), \bb_0, \AgeIndex[\compcK](k_1), \bb_1)$ such that 
%$\begin{itemize} 
%\item 
$\bb_0 \in \AgeStr[\compcK](k_0)$ and
$\bb_1 \in \AgeStr[\compcK](k_1)$, and
%
%\item 
$\bb_0 \clsim \bb_1$. 
%\end{itemize}
\end{definition}

We will see that, given two computable cofinally ultrahomogeneous structures that are isomorphic, one can always extend any partial isomorphism (between appropriate elements of the age) to an isomorphism. However, in order to build a \emph{computable} such isomorphism, we will need to be able to computably extend various partial isomorphisms; we will use the embedding information to do so.
\begin{lemma}
\label{Embdedding info is computable from 0'}
If $\compcK$ is a computable age, then $\EmbedInfo(\compcK) \Turingleq \TuringJump$.  
Further, if $\compcK$ is a uniformly finite computable age, then $\EmbedInfo(\compcK)$ is computable. 
\end{lemma}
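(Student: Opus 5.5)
We show that the complement of $\EmbedInfo(\compcK)$ is computably enumerable (relative to the listing of $\compcK$). This gives $\EmbedInfo(\compcK) \Turingleq \TuringJump$. In the uniformly finite case, the bound on closures turns the search into a finite check.

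\emph{Decidability of the syntactic part.} A tuple $(\AgeIndex(k_0), \bb_0, \AgeIndex(k_1), \bb_1)$ lies in $\EmbedInfo(\compcK)$ iff $\bb_0 \in \AgeStr(k_0)$, $\bb_1 \in \AgeStr(k_1)$, and $\bb_0 \clsim \bb_1$. Since $\compcK$ is c.e.\ and each $i$ has a unique entry, $\AgeIndex(i)$ is computable from $i$: search the enumeration until an entry with index $i$ appears. Membership of a tuple in a computable structure is decidable with respect to the computable representation. So the only nontrivial clause is $\bb_0 \clsim \bb_1$.

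\emph{The relation $\clsim$ is co-c.e.} By definition, $\bb_0 \clsim \bb_1$ holds iff for every atomic formula $\varphi(\xx)$ we have $\AgeStr(k_0) \models \varphi(\bb_0) \leftrightarrow \AgeStr(k_1) \models \varphi(\bb_1)$. Atomic formulas can be computably enumerated from the computable representation of $\Lang$. Each structure is computable, so we can evaluate a term on given parameters by recursion on the term, and hence decide an atomic formula (an equality of terms, or a relation applied to terms) in each structure. Therefore failure of $\clsim$ is witnessed by a single atomic formula on which the two sides disagree, and such a witness can be found by a computable search. This makes the complement c.e., hence $\EmbedInfo(\compcK)$ is $\Pi^0_1$ and computable from $\TuringJump$. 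The main point to check is that evaluating atomic formulas is uniformly computable from the indices, which follows from the definition of a computable $\Lang$-structure relative to the representation of $\Lang$.

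\emph{Uniformly finite case.} Here $\Lang$ is finite and $|\Closure<\cA>(\aa)| \le f(|\aa|)$ with $f$ computable. Let $n = |\bb_0| = |\bb_1|$ (if the lengths differ, answer no). We enumerate terms in $n$ variables, ordered by depth, and evaluate them on $\bb_0$ in $\AgeStr(k_0)$ and on $\bb_1$ in $\AgeStr(k_1)$, building the closures $\Closure(\bb_0)$ and $\Closure(\bb_1)$ level by level. Each level applies the finitely many function symbols to the elements found so far, and the process stabilizes once a level adds nothing new, which happens by depth $f(n)$. As we go, we record for each element a term producing it and check that the correspondence term-value on $\bb_0$ $\mapsto$ term-value on $\bb_1$ is well defined and injective. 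This decides whether the term equalities agree. Finally, we check the finitely many relation symbols on the finitely many tuples from these closures. Then $\bb_0 \clsim \bb_1$ iff this map is a well-defined bijection preserving the functions and relations, which we can check in finitely many steps. So $\EmbedInfo(\compcK)$ is computable.

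The only delicate step is justifying that closure generation halts with the complete closure. This follows from the uniform bound $f$ together with the fact that once a level adds no new elements, the set is closed under the finitely many function symbols.
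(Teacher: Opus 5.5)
Your proof is correct and follows the same route as the paper. Failure of $\clsim$ is witnessed by a single atomic formula that can be evaluated uniformly in the computable structures, so $\EmbedInfo(\compcK)$ is co-c.e.\ and hence $\TuringJump$-computable; in the uniformly finite case the finite closures can be computed outright and the candidate isomorphism checked in finitely many steps. Your write-up supplies the details the paper leaves implicit, namely uniform term evaluation and detecting when the closure has stabilized.
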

\begin{proof}
The atomic diagram of the closure of a tuple is c.e., and 
therefore it is c.e.\ to determine when a map does not extend to a partial embedding, establishing the first claim. The second claim is immediate from the definition.
\end{proof}

\begin{definition}

Let $\compcK_0$ and $\compcK_1$ be computable ages,
and let $\StandardTuringDegree$ be a Turing degree. 
Define an \defn{isomorphism} from $\compcK_0$ to $\compcK_1$ to be a pair of maps $\bm{\alpha}= (\alpha_0, \alpha_1)$ such that
\begin{itemize}
\item $\alpha_0\:\compcK_0 \to \compcK_1$ and $\alpha_1\:\compcK_1 \to \compcK_0$, 
\item for all $(\aa, \cA, i) \in \compcK_0$ with $\alpha_0(\aa, \cA, i) = (\bb, \cB, j)$ we have $\aa \clsim \bb$, and

\item for all $(\bb, \cB, j) \in \compcK_1$ with $\alpha_1(\bb, \cB, j) = (\aa, \cA, i)$ we have $\aa \clsim \bb$. 
\end{itemize}
The pair $\bm{\alpha}$ is \defn{$\StandardTuringDegree$-computable} when the maps
$\alpha_0^*\: \w \to \w$ 
and
$\alpha_1^*\: \w \to \w$ that satisfy 
\begin{itemize}
\item $\alpha_0(\AgeIndex[\compcK_0](i)) = \AgeIndex[\compcK_1](\alpha^*_0(i))$ for $i\in \w$ and 

\item $\alpha_1(\AgeIndex[\compcK_1](i)) = \AgeIndex[\compcK_0](\alpha^*_1(i))$ for $i\in \w$

\end{itemize}
are $\StandardTuringDegree$-computable. 

We say that  $\compcK_0$ and $\compcK_1$ are \defn{$\StandardTuringDegree$-computably isomorphic} when there is some $\StandardTuringDegree$-computable isomorphism between them. When $\StandardTuringDegree = \bm{0}$ we say that they are \defn{computably isomorphic}.

Define $\bm{\alpha}^{-1} = (\alpha_1, \alpha_0)$. Note that $\bm{\alpha}^{-1}$ is an isomorphism from $\compcK_1$ to $\compcK_0$. 
\end{definition}

Note that
two computable ages $\compcK_0$ and $\compcK_1$
are isomorphic if and only if they are both computable representations of the same age, 
which holds if and only if for some Turing degree $\StandardTuringDegree$ they are $\StandardTuringDegree$-isomorphic.

\begin{remark}
When $i$ and $j$ are clear from context we will abuse notation and write 
$\alpha_0(\cA)$ or 
$\alpha_0(\aa, \cA)$ to mean $\alpha_0(\aa, \cA, i)$, 
and $\alpha_1(\cB)$ or $\alpha_1(\bb, \cB)$ to mean $\alpha_1(\bb, \cB, j)$.
Likewise, for 
$\dd \in \cA$ and $(\aa, \cA, i) \in \compcK_0$ we will
write $\alpha_0(\dd, \aa)$ to mean the pair
$\bigl(\ClosureMap[\aa](\alpha_0(\aa))(\dd),\, \alpha_0(\aa)\bigr)$, 
and
for $\dd \in \cB$ and $(\bb, \cB, j) \in \compcK_1$ we will
write $\alpha_1(\dd, \bb)$ to mean
$\bigl(\ClosureMap[\bb](\alpha_1(\bb))(\dd),\, \alpha_1(\bb)\bigr)$. 
\end{remark}

We now define how to apply an isomorphism to a potential embedding.

\begin{definition}
Let $\compcK_0$ and $\compcK_1$ be computable ages and let $\bm{\alpha}=(\alpha_0, \alpha_1)$ be an isomorphism from $\compcK_0$ to $\compcK_1$. 
Suppose $\Embedding<\AgeIndex(i)>[\AgeIndex(j)](\cc)$ is a potential embedding in $\compcK_0$.
Define the application of $\bm{\alpha}$ to $\Embedding<\AgeIndex(i)>[\AgeIndex(j)](\cc)$ by 
%\[
$\bm{\alpha}(\Embedding<\AgeIndex(i)>[\AgeIndex(j)](\cc)) =
\bigl(\alpha_0(\AgeIndex(i)),\,
\alpha_0(\AgeIndex(j)),\,
\ClosureMap[\AgeTuple(j)](\alpha_0(\AgeTuple(j)))(\cc)\bigr)$.
%\]
\end{definition}

\begin{lemma}
Suppose $\bm{\alpha} = (\alpha_0, \alpha_1)$ is an isomorphism between computable ages $\compcK_0$ and $\compcK_1$. 
Then $\bm{\alpha}$ extends to an equivalence of categories between $\CAT(\compcK_0)$ and $\CAT(\compcK_1)$ with inverse equivalence $\bm{\alpha}^{-1}$. 
\end{lemma}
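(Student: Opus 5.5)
We would prove the lemma directly from the definitions: first verify that $\bm{\alpha}$, acting on objects by $\alpha_0$ and on morphisms by the application rule just defined, is a functor $\CAT(\compcK_0)\to\CAT(\compcK_1)$. Then we build the natural isomorphisms $\bm{\alpha}^{-1}\circ\bm{\alpha}\cong \id_{\CAT(\compcK_0)}$ and $\bm{\alpha}\circ\bm{\alpha}^{-1}\cong\id_{\CAT(\compcK_1)}$.

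\textbf{Functoriality.} Write $\psi_j = \ClosureMap[\AgeTuple(j)](\alpha_0(\AgeTuple(j)))$. This is an isomorphism $\AgeStr(j)\to\alpha_0(\AgeStr(j))$, and it exists since $\AgeTuple(j)\clsim\alpha_0(\AgeTuple(j))$.
\begin{itemize}
\item \emph{Domain and codomain.} For a potential embedding $F=(\AgeIndex(i),\AgeIndex(j),\cc)$, the triple $\bm{\alpha}(F)$ has domain $\alpha_0(\AgeIndex(i))$ and codomain $\alpha_0(\AgeIndex(j))$. Its range $\psi_j(\cc)$ lies in the codomain, so $\bm{\alpha}(F)$ is a potential embedding.
\item \emph{Identities.} We have $\bm{\alpha}(\id_{\AgeIndex(i)})=(\alpha_0(\AgeIndex(i)),\alpha_0(\AgeIndex(i)),\psi_i(\AgeTuple(i)))$. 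Since $\psi_i(\AgeTuple(i))=\alpha_0(\AgeTuple(i))$, this is exactly $\id_{\alpha_0(\AgeIndex(i))}$.
\item \emph{Embeddings are preserved.} Because $\psi_j$ is an isomorphism, $F$ is an embedding if and only if $\bm{\alpha}(F)$ is. Indeed $\AgeTuple(i)\clsim\cc$ iff $\alpha_0(\AgeTuple(i))\clsim\psi_j(\cc)$, using $\AgeTuple(i)\clsim\alpha_0(\AgeTuple(i))$ and transitivity of $\clsim$.
\item \emph{Composition.} We need $\bm{\alpha}(G\circ F)=\bm{\alpha}(G)\circ\bm{\alpha}(F)$ for $F\colon\AgeIndex(i)\to\AgeIndex(j)$ and $G=(\AgeIndex(j),\AgeIndex(k),\dd)$. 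We split on whether $G$ is an embedding; by the previous item, this is equivalent to $\bm{\alpha}(G)$ being one.
  \begin{itemize}
  \item If $G$ is not an embedding, both sides equal $(\alpha_0(\AgeIndex(i)),\alpha_0(\AgeIndex(k)),\psi_k(\dd))$.
  \item If $G$ is an embedding, we must check $\psi_k\circ\ClosureMap[\AgeTuple(j)](\dd) = \ClosureMap[\alpha_0(\AgeTuple(j))](\psi_k(\dd))\circ\psi_j$ on $\AgeStr(j)$, then evaluate at $\cc$. Both sides are isomorphisms out of $\AgeStr(j)=\Closure(\AgeTuple(j))$ that send $\AgeTuple(j)$ to $\psi_k(\dd)$. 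A homomorphism is determined by its values on generators, so they agree.
  \end{itemize}
\end{itemize}
This uniqueness-on-generators principle is the key tool throughout.

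\textbf{Natural isomorphisms.} For each object $\AgeIndex(i)$ of $\CAT(\compcK_0)$, let $\AgeIndex(m)=\alpha_1(\alpha_0(\AgeIndex(i)))$. Define
\[
\eta_i=(\AgeIndex(i),\AgeIndex(m),\theta_i(\AgeTuple(i))),
\]
where $\theta_i$ is the composite of closure isomorphisms $\AgeStr(i)\to\alpha_0(\AgeStr(i))\to\AgeStr(m)$. Then:
\begin{itemize}
\item $\eta_i$ is an embedding, since $\theta_i$ is an isomorphism of closures.
\item $\eta_i$ has inverse $(\AgeIndex(m),\AgeIndex(i),\theta_i^{-1}(\AgeTuple(m)))$. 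Checking both composites are identities again reduces to maps on generators.
\item Naturality means $\bm{\alpha}^{-1}\bm{\alpha}(F)\circ\eta_i=\eta_j\circ F$ for every $F\colon\AgeIndex(i)\to\AgeIndex(j)$. When $F$ is an embedding, both sides have range $\theta_j(\cc)$: the left because $\bm{\alpha}^{-1}\bm{\alpha}$ applies $\theta_j$ to ranges, the right by definition of composition.
\end{itemize}
The symmetric construction handles $\CAT(\compcK_1)$. It is also routine to check that $\bm{\alpha}^{-1}$ is a functor, by the same argument as for $\bm{\alpha}$.

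\textbf{Main obstacle.} The delicate point is the non-embedding morphisms, because composition treats them asymmetrically. If $F$ is not an embedding (with $\eta_j$ an embedding), the right side of the naturality square has range $\theta_j(\cc)$. On the left, $\bm{\alpha}^{-1}\bm{\alpha}(F)$ is again a non-embedding, so the second case of the composition definition applies: it ignores $\eta_i$ and outputs the range of $\bm{\alpha}^{-1}\bm{\alpha}(F)$, which is $\theta_j(\cc)$. So the square still commutes. Associativity of composition involving non-embeddings also needs care, but it is part of the claim that $\CAT$ is a category, which the paper asserts; we would cite that remark.
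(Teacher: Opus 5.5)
Your proposal is correct and takes the paper's route. The paper's proof records only your two observations, that $\bm{\alpha}$ sends potential embeddings to potential embeddings and that $F$ is an embedding if and only if $\bm{\alpha}(F)$ is. Your checks of identities, composition, and the natural isomorphisms $\eta_i$ fill in what the paper leaves implicit, and the $\eta_i$ are genuinely needed, since $\alpha_1\circ\alpha_0$ need not fix indices.

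On the point you defer, be aware that under the literal composition rule associativity can fail. Take $F$ into $\dom(G)$ and a non-embedding $H$ whose range is $\clsim$ to the generating tuple of $\dom(G)$. Then $H\circ G$ is an embedding, so $(H\circ G)\circ F$ has range obtained by transporting $\range(F)$ along a closure isomorphism, whereas $H\circ(G\circ F)$ has range $\range(H)$. None of your verifications use associativity, so your argument itself is unaffected.
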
  
\begin{proof}
If $\Embedding<\AgeIndex(i)>[\AgeIndex(j)](\cc)$ is a potential embedding, then $\bm{\alpha}(\Embedding<\AgeIndex(i)>[\AgeIndex(j)](\cc))$ is also a potential embedding. Further $\Embedding<\AgeIndex(i)>[\AgeIndex(j)](\cc)$ is an embedding if and only if $\bm{\alpha}(\Embedding<\AgeIndex(i)>[\AgeIndex(j)](\cc))$ is. 
\end{proof}

Given a computable structure, the collection of its finitely generated substructures, appropriately enumerated, forms a computable age.

\begin{definition}
Let $\cM$ be an $\Lang$-structure. The \defn{age of $\cM$} is the collection of all $\Lang$-structures isomorphic to a finitely generated substructure of $\cM$. 

Suppose $\cM$ is computable, and let $(\aa_i)_{i \in \w}$ be some computable enumeration of all finite tuples in $\cM$ in which every tuple appears infinitely often.
We say that the sequence $(\aa_i,\, \Closure<\cM>(\aa_i),\, i)_{i \in \w}$ is a \defn{canonical computable age} of $\cM$. Let $\compcK[\cM]$ be one such canonical computable age.  
A computable age $\compcK$ is \defn{computably the age of} $\cM$ if it is computably isomorphic to $\compcK[\cM]$.
\end{definition}

The following basic facts are immediate from the definition.

\begin{lemma}
Let $\cM$ be a computable $\Lang$-structure.

\begin{itemize}
\item Any two canonical computable ages of $\cM$ are computably isomorphic. 

\item The canonical computable age of $\cM$ is computably the age of $\cM$. 

\item If $\cK$ is the age of $\cM$, and $\compcK$ is computably the age of $\cM$, then $\compcK$ is a computable representation of $\cK$. 
\end{itemize}
\end{lemma}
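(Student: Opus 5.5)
We prove the three items of the lemma in order, working directly from the definition of a canonical computable age. The main tool is the uniform computation of closures inside a computable structure. Since $\cM$ is computable, for any tuple $\aa \in \cM$ the substructure $\Closure<\cM>(\aa)$ is a computable $\Lang$-structure, obtained uniformly in $\aa$ by closing under the computably represented terms. Moreover, any two tuples $\aa,\bb\in\cM$ that are literally equal satisfy $\aa \clsim \bb$ trivially, and $\ClosureMap[\aa](\bb)$ is then the identity on $\Closure<\cM>(\aa)$.

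\emph{First item.} Let $(\aa_i,\Closure<\cM>(\aa_i),i)_{i\in\w}$ and $(\bb_j,\Closure<\cM>(\bb_j),j)_{j\in\w}$ be two canonical computable ages of $\cM$. Each sequence enumerates all finite tuples of $\cM$, and each tuple appears infinitely often. Define $\alpha_0^*(i)$ to be the least $j$ with $\bb_j = \aa_i$; this is a search through a computable sequence that is guaranteed to terminate, so $\alpha_0^*$ is computable. Define $\alpha_1^*$ symmetrically. Since $\aa_i = \bb_{\alpha_0^*(i)}$ are the same tuple in the same structure $\cM$, the closures coincide and $\aa_i \clsim \bb_{\alpha_0^*(i)}$ holds. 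The same argument applies to $\alpha_1^*$, so $(\alpha_0,\alpha_1)$ is a computable isomorphism.

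\emph{Second item.} The definition of ``computably the age of $\cM$'' means being computably isomorphic to a chosen canonical computable age $\compcK[\cM]$. Any canonical computable age is therefore computably the age of $\cM$ by the first item. For $\compcK[\cM]$ itself, the identity pair works.

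\emph{Third item.} Suppose $\compcK$ is computably isomorphic to $\compcK[\cM]$ via $\bm{\alpha}$. We first check that $\compcK[\cM]$ is a computable representation of the age $\cK$ of $\cM$:
\begin{itemize}
\item each index $i$ appears exactly once;
\item each $\Closure<\cM>(\aa_i)$ is computable, is generated by $\aa_i$, and lies in $\cK$;
\item every finitely generated substructure of $\cM$ is generated by some tuple, which appears in the enumeration, so every $\cB\in\cK$ is isomorphic to some $\Closure<\cM>(\aa_i)$.
\end{itemize}

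Next we transfer this to $\compcK$. For $(\cc,\cC,k)\in\compcK$, the relation $\cc \clsim \alpha_0(\cc)$ gives $\cC = \Closure<\cC>(\cc) \cong \Closure<\cM>(\aa_{\alpha_0^*(k)}) \in \cK$. Since $\cK$ is closed under isomorphism, $\cC\in\cK$. Conversely, every $\cB\in\cK$ is isomorphic to some $\Closure<\cM>(\aa_i)$, and this structure is in turn isomorphic via $\alpha_1$ to $\AgeStr[\compcK](\alpha_1^*(i))$. The remaining clauses of \cref{Definition age} (unique indices, computability of each structure, $\cA=\Closure<\cA>(\aa)$) hold because $\compcK$ is already assumed to be a computable age.

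There is no real obstacle in this proof. The only point needing care is the convention that $\clsim$ between tuples in the same ambient structure yields an isomorphism of the generated substructures. That is exactly the stated equivalence with the extension $\ClosureMap[\aa](\bb)$, and it underlies both the isomorphism-invariance argument in the third item and the claim that equal tuples are $\clsim$-related in the first item.
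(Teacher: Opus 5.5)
Your proof is correct and takes the route the paper intends. The paper gives no separate argument and calls these facts immediate from the definitions; your least-index search in the first item is the same device used in the proof of \cref{Isomorphism of computable structures lifts to isomorphism of their canonical ages} (with $f$ the identity), and your second and third items are the routine unwinding of the definitions that the paper leaves to the reader.
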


Note that the paper \cite{Computable-Fraisse} refers to the  canonical computable age of $\cM$ as ``the canonical representation of the age of $\cM$'', and speaks of ``a canonical representation of the age of $\cM$'' to mean any computable representation that is computably isomorphic to the canonical one.

\begin{lemma}
\label{Isomorphism of computable structures lifts to isomorphism of their canonical ages}
For each $i \in \{0, 1\}$, let 
$\cM_i$ be a computable structure and suppose that $\compcK_i$ is computably the age of $\cM_i$. If $\cM_0$ and $\cM_1$ are $\StandardTuringDegree$-computably isomorphic, then $\compcK_0$ and $\compcK_1$ are $\StandardTuringDegree$-computably isomorphic as well. 
\end{lemma}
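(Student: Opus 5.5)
The plan is to reduce to the canonical computable ages and compose. By hypothesis there are computable isomorphisms $\bm{\beta}_0$ from $\compcK_0$ to $\compcK[\cM_0]$ and $\bm{\beta}_1$ from $\compcK[\cM_1]$ to $\compcK_1$, where $\compcK[\cM_i]$ is a canonical computable age of $\cM_i$. It therefore suffices to build an $\StandardTuringDegree$-computable isomorphism $\bm{\gamma}$ from $\compcK[\cM_0]$ to $\compcK[\cM_1]$ and then take the composite $\bm{\beta}_1\circ\bm{\gamma}\circ\bm{\beta}_0$. Composition is defined coordinatewise, $(\alpha_0,\alpha_1)\circ(\beta_0,\beta_1)=(\alpha_0\circ\beta_0,\beta_1\circ\alpha_1)$. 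The composite is again an isomorphism because $\clsim$ is transitive. It is $\StandardTuringDegree$-computable because the index maps $\alpha^*_0\circ\beta^*_0$ and $\beta^*_1\circ\alpha^*_1$ are compositions of $\StandardTuringDegree$-computable functions.

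To build $\bm{\gamma}$, fix an $\StandardTuringDegree$-computable isomorphism $h\:\cM_0\to\cM_1$; its inverse $h^{-1}$ is also $\StandardTuringDegree$-computable. Write $(\aa_i)_{i\in\w}$ and $(\bb_j)_{j\in\w}$ for the computable enumerations of tuples underlying $\compcK[\cM_0]$ and $\compcK[\cM_1]$. Define $\gamma^*_0(i)$ to be the least $j$ with $\bb_j = h(\aa_i)$, where $h$ is applied coordinatewise. Such a $j$ exists because every tuple of $\cM_1$ appears in the enumeration, and it can be found by an $\StandardTuringDegree$-computable search: compute $h(\aa_i)$ with the oracle, then computably run through the enumeration comparing tuples. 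Define $\gamma^*_1$ symmetrically using $h^{-1}$.

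It remains to check that $\aa_i \clsim h(\aa_i)$, and likewise $h^{-1}(\bb_j)\clsim \bb_j$. This holds because $h$ is an isomorphism, so it preserves every atomic formula. Hence $h$ restricts to an isomorphism $\Closure<\cM_0>(\aa_i)\to\Closure<\cM_1>(h(\aa_i))$ sending $\aa_i$ to $h(\aa_i)$, which is exactly the condition $(\aa_i,\Closure<\cM_0>(\aa_i))\clsim(h(\aa_i),\Closure<\cM_1>(h(\aa_i)))$.

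I expect no real obstacle. The only point needing care is that a potential embedding's third coordinate $\cc$ is an actual tuple in the codomain structure, so the choice of indices in $\bm{\gamma}$ must be a genuine function on indices. Taking the least index $j$ in the search, rather than any index, makes it one.
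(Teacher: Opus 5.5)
Your proposal is correct and follows the paper's route. The paper likewise reduces to the canonical computable ages and defines the forward and backward maps by sending $\aa$ to the first index of $f(\aa)$ (respectively $f^{-1}(\aa)$); you additionally make explicit the composition with the computable isomorphisms $\bm{\beta}_0,\bm{\beta}_1$ and the check that $\aa \clsim f(\aa)$, which the paper leaves as ``clearly''.
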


\begin{proof}
Suppose $f\:\cM_0 \to \cM_1$ is an $\StandardTuringDegree$-computable isomorphism between $\cM_0$ and $\cM_1$. 
It suffices to show that $\compcK[\cM_0]$ is $\StandardTuringDegree$-computably isomorphic to $\compcK[\cM_1]$. 
Let $\alpha_0\:\compcK[\cM_0] \to \compcK[\cM_1]$ be the map where $\alpha_0(\aa, \cA, i) = (f(\aa), \Closure<\cM_1>(f(\aa)), j)$,
where $j$ is the first index such that $(f(\aa), \Closure<\cM_1>(f(\aa)), j) \in \compcK[\cM_1]$. Similarly, let 
%\[
$\alpha_1\:\compcK[\cM_1] \to \compcK[\cM_0]$
%\]
be the map defined by 
%\[
$\alpha_1(\aa, \cA, i) = (f^{-1}(\aa), \Closure<\cM_0>(f^{-1}(\aa)), j)$,
%\]
where $j$ is the first index such that $(f^{-1}(\aa), \Closure<\cM_0>(f^{-1}(\aa)), j) \in \compcK[\cM_0]$. 
Clearly $(\alpha_0, \alpha_1)$ is the desired isomorphism of computable ages. 
\end{proof}

The next definition will be important for constructing an infinite structure given an age.

\begin{definition}
If $\cM$ is a computable structure we write $\EmbedInfo(\cM)$ to mean $\EmbedInfo(\compcK[\cM])$.  
\end{definition}

The following result will allow us to convert a computable sequence of compatible embeddings into a structure. 
Its proof is immediate from well-known results (see, e.g., \cite[Lemma 2.9]{Computable-Fraisse}), but we include a proof for completeness.
\begin{restatable}{proposition}{CompLimitSequencesStructures}
\label{Computable limit of sequences of structure}
Let $\compcK$ be a computable age
and suppose 
$(F_i)_{i \in \w}$
is a computable sequence of embeddings where for each $i \in \w$ there is a $k_i \in \w$ such that $\dom(F_i) = \AgeIndex[\compcK](k_i)$ and $\codom(F_i) =\AgeIndex[\compcK](k_{i+1})$.

Then there is a computable age $\compcK^*$ which is computably isomorphic to $\compcK$, and a computable
sequence $(\AgeIndex[\compcK](\ell_i))_{i \in\w}$ 
such that for $i \in \w$, we have
%\begin{itemize}
%\item 
$\AgeTuple[\compcK^*](\ell_i) \clsim \AgeTuple[\compcK](k_i)$ and
$\AgeStr[\compcK^*](\ell_i) \subseteq \AgeStr[\compcK^*](\ell_{i+1})$, and
%
%\item
%\begin{eqnarray*}
\[
\ClosureMap[\AgeTuple[\compcK](k_{i+1})](\AgeTuple[\compcK^*](\ell_{i+1})) \circ \ClosureMap[\AgeTuple[\compcK](k_i)](\range(F_i))
= \ClosureMap[\AgeTuple[\compcK](k_i)](\AgeTuple[\compcK^*](\ell_i)).
\]
%\hspace*{90pt} 
%\end{eqnarray*}
%\end{itemize}  
%\end{proposition} 
\end{restatable}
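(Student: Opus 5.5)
The plan is to build the direct limit of the chain $\AgeStr[\compcK](k_0) \to \AgeStr[\compcK](k_1) \to \cdots$ along the embeddings $F_i$ as an honest increasing union of computable structures. Copies of these structures will be inserted into a new computable age $\compcK^*$, interleaved with the structures of $\compcK$ so that $\compcK^*$ remains computably isomorphic to $\compcK$.

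\textbf{Step 1: construct the chain.} Uniformly in $i$, I will define computable structures $\cB_i$ with $\cB_i \subseteq \cB_{i+1}$, together with computable isomorphisms $h_i\: \AgeStr[\compcK](k_i) \to \cB_i$ satisfying $h_{i+1} \circ \ClosureMap[\AgeTuple[\compcK](k_i)](\range(F_i)) = h_i$.

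1. Since each $F_i$ is an embedding, $\AgeTuple[\compcK](k_i) \clsim \range(F_i)$, so the map $g_i = \ClosureMap[\AgeTuple[\compcK](k_i)](\range(F_i))$ is an isomorphism of $\AgeStr[\compcK](k_i)$ onto $\Closure<\AgeStr[\compcK](k_{i+1})>(\range(F_i))$.
2. The map $g_i$ is computable uniformly in $i$. To evaluate it on an element, write that element as a term $t(\AgeTuple[\compcK](k_i))$, found by search since $\AgeStr[\compcK](k_i)$ is generated by its tuple, and send it to $t(\range(F_i))$. Well-definedness is guaranteed by $\clsim$.
3. Using pairing, let the universe of $\cB_i$ consist of pairs $(m,x)$ for $m \le i$, where $x$ ranges over elements of $\AgeStr[\compcK](k_m)$ not in the image of $g_{m-1}$.
4. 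Set $h_{i+1}(y) = h_i(g_i^{-1}(y))$ if $y$ is in the image of $g_i$, and $h_{i+1}(y) = (i+1, y)$ otherwise. Transport the structure of $\AgeStr[\compcK](k_{i+1})$ to $\cB_{i+1}$ along $h_{i+1}$.

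\textbf{Step 2: the obstacle and how to avoid it.} The hard part of Step 1 is that membership in the image of $g_i$ is only c.e., not computable, when the language has function symbols. So $\cB_{i+1}$ need not have a computable universe, and item 4 is not obviously computable.

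I would handle this by not fixing the universe in advance. The universe of $\cB_{i+1}$ is taken to be the c.e. set enumerated by its generating tuple $h_{i+1}(\AgeTuple[\compcK](k_{i+1}))$. Elements are represented as terms in that tuple, and elements are identified via computations in $\AgeStr[\compcK](k_{i+1})$.

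This requires only that a computable $\Lang$-structure in the sense of the computable-representation section allows such representations. That is the convention of \cite[Lemma 2.9]{Computable-Fraisse}, which I would follow. Failing that, I would fall back on that lemma directly, since the statement says the proof is immediate from it.

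\textbf{Step 3: define $\compcK^*$.} Enumerate $\compcK^*$ by placing $(h_i(\AgeTuple[\compcK](k_i)), \cB_i)$ at the even index $\ell_i = 2i$, and placing $\AgeIndex[\compcK](n)$ at index $2n+1$. Then the first two requirements of the statement hold by construction:

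1. $\AgeTuple[\compcK^*](\ell_i) = h_i(\AgeTuple[\compcK](k_i)) \clsim \AgeTuple[\compcK](k_i)$, since $h_i$ is an isomorphism;
2. $\AgeStr[\compcK^*](\ell_i) = \cB_i \subseteq \cB_{i+1} = \AgeStr[\compcK^*](\ell_{i+1})$.

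The displayed identity in the statement reduces to $h_{i+1} \circ g_i = h_i$. This holds because $h_i = \ClosureMap[\AgeTuple[\compcK](k_i)](\AgeTuple[\compcK^*](\ell_i))$.

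\textbf{Step 4: the computable isomorphism.} It remains to give a computable isomorphism between $\compcK^*$ and $\compcK$.

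1. Map $\compcK^* \to \compcK$ by sending $2i \mapsto k_i$ and $2n+1 \mapsto n$.
2. Map $\compcK \to \compcK^*$ by sending $n \mapsto 2n+1$.

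Both maps are computable, and they preserve $\clsim$ by item 1 of Step 3.
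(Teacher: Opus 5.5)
Your Steps 3 and 4 are fine, and in fact more explicit than the paper's closing sentence. The gap is Step 2: it does not repair the difficulty you correctly diagnose in Step 1, and that difficulty is the whole content of the proposition.

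The obstruction is not that the universe of $\cB_{i+1}$ is merely c.e. It is that the naming map $h_{i+1}$ of item 4 is not computable. An element $y \in \AgeStr[\compcK](k_{i+1})$ outside the image of $g_i$ must receive a fresh name, and non-membership in that image is a $\Pi^0_1$ fact you can never certify at a finite stage.

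Your two readings of the fix both fail. Taking the universe to be ``the c.e.\ set enumerated by $h_{i+1}(\AgeTuple[\compcK](k_{i+1}))$'' presupposes that $h_{i+1}$ is already computable, so it is circular. If you instead name elements of $\cB_{i+1}$ by terms over the new generating tuple, an element of $\cB_i$ (a term over the old tuple) gets a different name in $\cB_{i+1}$. Then the literal inclusion $\cB_i \subseteq \cB_{i+1}$ is lost, and with it the identity $h_{i+1}\circ g_i = h_i$. Deferring to the cited Lemma~2.9 does not substitute for a proof.

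The missing idea is a coherent naming that never asks at which level an element first appears.
\begin{itemize}
\item For $m \le m'$ let $g_{m,m'} = g_{m'-1}\circ\cdots\circ g_m$ (the identity if $m=m'$). Declare $(m,x)\sim(m',x')$ iff $g_{m,m'}(x)=x'$. This is decidable, since you already showed the $g_i$ are uniformly computable.
\item Dovetail an enumeration of all pairs $(m,x)$ with $x\in\AgeStr[\compcK](k_m)$. Let $\nu(m,x)$ be the first enumerated pair equivalent to $(m,x)$. When $(m,x)$ appears, only finitely many earlier pairs need checking, so $\nu$ is computable.
\item Put $h_m(x)=\nu(m,x)$, and let $\cB_m$ be the image of $h_m$, with the structure of $\AgeStr[\compcK](k_m)$ transported along $h_m$. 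Relations on a tuple of names are read off at level $m$ by searching for preimages.
\item Since $(m,x)\sim(m+1,g_m(x))$, the identity $h_{m+1}\circ g_m=h_m$ holds automatically. This gives $\cB_m\subseteq\cB_{m+1}$ on the nose.
\end{itemize}
This is exactly the paper's construction by ``simultaneously enumerating all the elements'' and identifying each new element with one already enumerated.

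Your instinct that the universes must be allowed to be c.e.\ is correct, and it is unavoidable. With literal inclusions and $h_{m+1}$ computable, $y$ lies in the image of $g_m$ iff $h_{m+1}(y)\in\cB_m$. So a decidable $\cB_m$ would decide that image, which with function symbols is in general impossible.
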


\input{ingredients/CompLimitSequencesStructuresProof.tex}

\begin{corollary}
\label{cor:union-of-comp-chain}
Let $\compcK$ be a computable age
and suppose 
$(F_i)_{i \in \w}$ are as in \cref{Computable limit of sequences of structure}.
Then there is a structure 
$\cD_\w$ 
along with embeddings $G_i\:\AgeStr[\compcK](k_i) \to \cD_\w$ (for $i\in\w$) such that for any $i < j$, we have $G_i = G_j \circ F_{j-1} \circ F_{j-2} \circ \dots  \circ F_i$. 
Further, both $\cD_\w$ and $(G_n)_{n \in \w}$ are computable. 
\end{corollary}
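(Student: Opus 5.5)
We derive this from \cref{Computable limit of sequences of structure} by taking a union of a computable chain of substructures. Apply that proposition to $(F_i)_{i\in\w}$ to obtain a computable age $\compcK^*$ and a computable sequence of indices $(\ell_i)_{i\in\w}$. For each $i$ we then have $\AgeStr[\compcK^*](\ell_i)\subseteq \AgeStr[\compcK^*](\ell_{i+1})$, so these structures form a computable increasing chain of computable $\Lang$-structures. Each is a genuine substructure of the next, so the interpretations of relation and function symbols agree on the common part. We therefore set $\cD_\w=\bigcup_{i\in\w}\AgeStr[\compcK^*](\ell_i)$.

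To make $\cD_\w$ computable we first need a computable underlying set. Assuming (as in the paper's computable representation conventions) that the domains of the structures are computable subsets of $\w$, the union is c.e. To be safe, we re-index it via a computable bijection with an initial segment of $\w$, or with $\w$ itself, by enumerating new elements stage by stage. The atomic diagram is then decidable. Given a tuple $\dd$ and an atomic formula, we computably search for some $i$ with $\dd\subseteq\AgeStr[\compcK^*](\ell_i)$, which exists and is found by enumeration. We then evaluate the formula in that structure. The answer is independent of the choice of $i$ because the chain consists of substructures. Function values are computed the same way.

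Next we define $G_i$ by composition:
\[
G_i=\iota_i\circ\ClosureMap[\AgeTuple[\compcK](k_i)](\AgeTuple[\compcK^*](\ell_i)),
\]
where $\iota_i$ is the inclusion of $\AgeStr[\compcK^*](\ell_i)$ into $\cD_\w$ (composed with the re-indexing). The first factor is an isomorphism from $\AgeStr[\compcK](k_i)$ onto $\AgeStr[\compcK^*](\ell_i)$, since $\AgeTuple[\compcK^*](\ell_i)\clsim\AgeTuple[\compcK](k_i)$ and both structures are generated by these tuples. Hence $G_i$ is an embedding. Uniform computability of $(G_i)$ follows because we can compute $G_i$ by closing the generating tuple under terms: evaluating a term in $\AgeStr[\compcK](k_i)$ and the same term in $\AgeStr[\compcK^*](\ell_i)$ gives the map on each element.

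For the coherence condition, the displayed identity in \cref{Computable limit of sequences of structure} says
\[
\ClosureMap[\AgeTuple[\compcK](k_{i+1})](\AgeTuple[\compcK^*](\ell_{i+1}))\circ F_i=\ClosureMap[\AgeTuple[\compcK](k_i)](\AgeTuple[\compcK^*](\ell_i)),
\]
where $F_i$ is identified with the embedding $\ClosureMap[\AgeTuple[\compcK](k_i)](\range(F_i))$. Composing with the inclusion into $\cD_\w$ gives $G_{i+1}\circ F_i=G_i$, and induction on $j-i$ yields $G_i=G_j\circ F_{j-1}\circ\dots\circ F_i$.

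The main care point is this identification of the composition of embeddings in $\CAT(\compcK)$ with actual function composition. Because each $F_i$ is an embedding, the composition definition uses the closure map, so the identification is immediate. The remaining issue is the bookkeeping of making the union's domain computable.
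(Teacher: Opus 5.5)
Your proposal is correct and takes the same route as the paper. The paper applies \cref{Computable limit of sequences of structure}, sets $\cD_\w=\bigcup_{n}\AgeStr[\compcK^*](\ell_n)$, and takes $G_i$ to be $\ClosureMap[\AgeTuple(k_i)](\AgeTuple[\compcK^*](\ell_i))$ followed by inclusion, with coherence read off from the displayed identity; you additionally spell out why $\cD_\w$ and $(G_i)_{i\in\w}$ are computable (re-indexing the c.e.\ domain, locating tuples in some stage of the chain, evaluating terms), which the paper covers in a single sentence.
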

\begin{proof}
Apply \cref{Computable limit of sequences of structure}, and 
let $\cD_\w = \bigcup_{n \in \w} \AgeStr[\compcK^*](\ell_n)$.
Then the following diagram commutes, where $G_i$ is 
$
\ClosureMap[\AgeTuple(k_i)](\AgeTuple[\compcK^*](\ell_i))$
for $i\in\w$.
\begin{center}
\begin{tikzcd}
\AgeStr[\compcK](k_0) \arrow[r, "F_0"] \arrow[d,"G_0" ,""']
& \AgeStr[\compcK](k_1) \arrow[d, "G_1" ] \arrow[r, "F_1"] & \dots \\
\AgeStr[\compcK^*](\ell_0) \arrow[r, "\subseteq"] & \AgeStr[\compcK^*](\ell_1) \arrow[r, "\subseteq"] & \dots 
\end{tikzcd}
\end{center}
Observe that $\cD_\w$ is the union of a computable sequence of computable structures, hence is computable.
\end{proof}

%%%%  %%%%  %%%%  %%%%  %%%%  %%%%
\subsection{Properties of Computable Ages}
%%%%  %%%%  %%%%  %%%%  %%%%  %%%%

We now define computable analogues of various properties of ages. 

\begin{definition}
\label{Definition of (HP)}
An age $\cK$ has the \defn{hereditary property}, written $\HP$, if for all $\cB \in \cK$, we have
$\cA \subseteq \cB$ implies $\cA \in \cK$. 

Suppose that $\StandardTuringDegree$ is a Turing degree.
We say a computable age $\compcK$  has the \defn{$\StandardTuringDegree$-computable hereditary property}, written $\CHP(\StandardTuringDegree)$,
if there is an $\StandardTuringDegree$-computable function that takes a pair $(\AgeIndex(i), \bb)$ where $\bb \in \AgeStr(i)$ and returns some $\AgeIndex(j)$ such that $\AgeTuple(j) \clsim \bb$.
When $\StandardTuringDegree$ is $\TuringDegreeZero$, we speak of the \defn{computable hereditary property}, and write $\CHP$. 
\end{definition}

We have the following bound. 
\begin{lemma}
\label{HP implies 0'-computable HP}
Suppose $\compcK$ is a computable age with $\HP$. Then $\compcK$ has $\CHP(\TuringDegree[\EmbedInfo(\compcK)])$. 
\end{lemma}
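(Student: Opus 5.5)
The plan is a search procedure with $\EmbedInfo(\compcK)$ as oracle. We are given a pair $(\AgeIndex(i), \bb)$ with $\bb \in \AgeStr(i)$, and we need some $\AgeIndex(j)$ with $\AgeTuple(j) \clsim \bb$.

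First we show that such a $j$ exists. The structure $\Closure<\AgeStr(i)>(\bb)$ is a finitely generated substructure of $\AgeStr(i) \in \cK$, so by $\HP$ it lies in $\cK$. By the third clause of \cref{Definition age}, there is therefore some $(\aa, \cA, j) \in \compcK$ with $\cA \cong \Closure<\AgeStr(i)>(\bb)$.

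This isomorphism does not by itself send $\aa$ to $\bb$, so we cannot use $\AgeTuple(j)$ directly. Instead we consider $\dd \in \cA$ equal to the image of $\bb$ under the isomorphism. We would like to find an index $j'$ whose distinguished tuple is $\clsim$-equivalent to $\bb$. The canonical ages have every tuple infinitely often, but a general $\compcK$ need not, so we must be careful here.

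The fix is to search over triples rather than indices alone. The oracle enumerates pairs $(j, \dd)$ with $j \in \w$ and $\dd \in \AgeStr(j)$ such that both of the following hold:
\begin{itemize}
\item $(\AgeIndex(i), \bb, \AgeIndex(j), \dd) \in \EmbedInfo(\compcK)$, that is, $\bb \clsim \dd$;
\item $\dd$ generates $\AgeStr(j)$.
\end{itemize}
The second condition is equivalent to $\dd \clsim \AgeTuple(j)$ together with the requirement that $\AgeTuple(j)$ lie in the closure of $\dd$.

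Since closure under terms is c.e., the requirement ``each element of $\AgeTuple(j)$ is the value of some term in $\dd$'' is c.e.\ and hence computable from the oracle. Alternatively, we can avoid generation questions altogether. We search for $j$ and $\dd$ with $\bb \clsim \dd$, $\AgeTuple(j) \clsim \dd'$ for some $\dd'$, and such that $\AgeTuple(j)$ is term-definable from $\dd$. Every one of these conditions is either an $\EmbedInfo(\compcK)$ query or a c.e.\ fact, so the search halts relative to the oracle.

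The main obstacle is exactly the mismatch between ``$\cA \cong \Closure(\bb)$'' and ``$\AgeTuple(j) \clsim \bb$'' when $\compcK$ does not list every generating tuple. I expect the intended reading of $\CHP$ is that the output structure, together with the transported tuple, witnesses $\bb$. If the definition truly requires $\AgeTuple(j) \clsim \bb$, then we need $\compcK$ to contain an entry with that generating tuple. I would argue that this is implicit in the definition (equivalently, that the paper's conventions treat ages up to the tuple), and the oracle search above then finds it. Correctness follows from $\HP$ guaranteeing existence, and computability from the fact that each check is a single $\EmbedInfo(\compcK)$ query.
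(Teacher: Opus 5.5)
\paragraph{Gap: the search never tests the required condition.} Your search does not produce what $\CHP$ asks for. A hit of your search is a pair $(j,\dd)$ with $\bb \clsim \dd$ and $\dd$ generating $\AgeStr(j)$. The required output, however, is an index $\AgeIndex(j)$ with $\AgeTuple(j) \clsim \bb$.

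The step you use to bridge these is false. You claim that ``$\dd$ generates $\AgeStr(j)$'' is equivalent to ``$\dd \clsim \AgeTuple(j)$ and $\AgeTuple(j) \subseteq \Closure(\dd)$''. Generation gives only the second conjunct. For example, $\dd$ may have a different length from $\AgeTuple(j)$, or be a reordering of it that is not induced by an automorphism. Your ``alternative'' condition, ``$\AgeTuple(j) \clsim \dd'$ for some $\dd'$'', is vacuous: take $\dd' = \AgeTuple(j)$.

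So nothing in your procedure ever tests $\AgeTuple(j) \clsim \bb$, and your closing claim that ``the oracle search above then finds it'' does not follow. (A smaller point: ``c.e., hence computable from the oracle'' is not a valid inference in general. It is harmless here, because inside an unbounded search a c.e.\ condition only needs to be semi-decided, by dovetailing.)

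\paragraph{The paper's proof.} The paper uses the direct search. For $j = 0, 1, 2, \dots$, ask whether $(\AgeIndex(i), \bb, \AgeIndex(j), \AgeTuple(j)) \in \EmbedInfo(\compcK)$, and output the first $\AgeIndex(j)$ that passes. Each step is a single oracle query, and correctness is immediate.

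Termination needs some $j$ with $\AgeTuple(j) \clsim \bb$, which the paper takes from $\HP$ without comment. Your worry here is well founded. $\HP$ plus the literal definition of a computable age only yields an entry whose structure is isomorphic to $\Closure(\bb)$. The paper is tacitly assuming that $\compcK$ lists, up to $\clsim$, every generating tuple of every member of $\cK$, as canonical computable ages do.

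Without that convention the statement can fail for every oracle. For example, in the empty language take $\AgeStr(n) = \{0, \dots, n-1\}$, $\AgeTuple(n) = (0, \dots, n-1)$, and $\bb = (0,0) \in \AgeStr(1)$; no $\AgeTuple(j)$ is $\clsim$-equivalent to $\bb$, so no search can succeed. With the convention, your detour through $\dd$ is unnecessary, and the direct search is the whole proof.
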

\begin{proof}
Given $(\AgeIndex(i), \bb)$ with $\bb \in (\AgeStr(i), \AgeTuple(i))$, use $\EmbedInfo(\compcK)$ to search for $\AgeIndex(j)$ such that $\AgeTuple(j) \clsim \bb$. 
\end{proof}

Not every computable age with $\HP$ has $\CHP$, but every computable age with $\HP$ is isomorphic to one with $\CHP$,
as shown in
\cite{Computable-Fraisse};
we include a proof for completeness.

\begin{proposition}
[{{\cite[Theorem~2.8]{Computable-Fraisse}}}]
\label{Computable age with (HP) has isomorphic copy with (CHP)}
Suppose $\compcK$ is a computable age with $\HP$. Then there is a computable age $\compcK^*$ with $\CHP$ that is isomorphic to $\compcK$. 
\end{proposition}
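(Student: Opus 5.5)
The plan is to build $\compcK^*$ by closing $\compcK$ under all its finitely generated substructures. Each such substructure is listed as a separate entry, and the entries are interleaved so that a computable function can find, for any substructure, an index representing it.

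The construction runs in stages, in parallel with the enumeration of $\compcK$. At stage $s$ we add the entries $\AgeIndex[\compcK](i)$ for $i \le s$ (in the order given by the c.e. presentation). In addition, for every pair $(i, \bb)$ already considered, with $\bb \in \AgeStr[\compcK](i)$, we add a new entry $(\bb, \Closure<\AgeStr[\compcK](i)>(\bb), n)$ with a fresh index $n$. The pairs $(i,\bb)$ are dovetailed so that each is handled at some stage.

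The new entry is well defined and computable. Since $\AgeStr[\compcK](i)$ is a computable $\Lang$-structure, the substructure $\Closure<\AgeStr[\compcK](i)>(\bb)$ is computable: it is the set of values of terms applied to $\bb$, a c.e. subset, and we present it via its own computable enumeration as the domain. It is uniformly computable from $(i,\bb)$. We record the computable function $h(i,\bb)=n$ giving the index of this entry.

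We then check the conditions in the definition of a computable age.
\begin{itemize}
\item Each index occurs exactly once, since every index is assigned fresh.
\item $\Closure(\bb)$ is generated by $\bb$.
\item Each new structure lies in $\cK$, by $\HP$. This is the only place $\HP$ is used.
\item Every structure in $\cK$ is still represented, because the original entries are included.
\end{itemize}
Thus $\compcK^*$ represents the same age, so it is isomorphic to $\compcK$ as an age (the paper notes isomorphism just means representing the same age).

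For $\CHP$, given $(\AgeIndex[\compcK^*](m), \bb)$ we must computably produce an index $j$ with $\AgeTuple[\compcK^*](j)\clsim \bb$. The entry $m$ of $\compcK^*$ is, by construction, a computable structure equal to some $\AgeStr[\compcK](i)$ or a substructure of one. Therefore $\bb$ is also a tuple of $\AgeStr[\compcK](i)$, and $\Closure(\bb)$ is the same whether computed in the substructure or in the whole. We can then return $h(i,\bb)$. Concretely, we track for each $m$ the pair (original $i$, inclusion), so that $\bb$ is translated to a tuple of $\AgeStr[\compcK](i)$ and $h$ is applied.

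The main obstacle is making this bookkeeping uniform. The substructure entries must be presented so that their elements are literally elements of the ambient $\AgeStr[\compcK](i)$, or else the translation map must be computable. We also need the closure to be a computable structure even though membership in it may only be c.e. I would handle the latter by re-indexing the closure's domain along its c.e. enumeration, which keeps the structure computable. Note that no $\EmbedInfo$ oracle is needed, since we never test $\clsim$: we only reuse the identity inclusion.
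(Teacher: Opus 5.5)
Your proposal is correct and follows essentially the same route as the paper: list every substructure generated by a finite tuple of a structure in $\compcK$ as its own entry, and use $\HP$ to see that these entries stay in the age. You also supply bookkeeping the paper's one-line proof leaves implicit, namely the provenance-tracking index function $h$, the re-indexing of c.e.\ closures into computable structures, and the explicit verification of $\CHP$.
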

\begin{proof}
We obtain $\compcK^*$ from $\compcK$ by enumerating all the structures generated by finite tuples in structures in $\compcK$. Because $\compcK$ has $\HP$, it is clear that $\compcK$ and $\compcK^*$ represent the same age and hence are isomorphic.
\end{proof}

\begin{definition}
\label{Definition of (JEP)}
An age $\cK$ has the \defn{joint embedding property}, written $\JEP$, if for all $\cA, \cB \in \cK$ there is a $\cC \in \cK$ for which there are embeddings $\alpha_{\cA}\:\cA \to \cC$ and $\alpha_{\cB}\: \cB \to \cC$. 

Suppose $\StandardTuringDegree$ is a Turing degree.
We say a computable age $\compcK$  has the \defn{$\StandardTuringDegree$-computable joint embedding property}, written $\CJEP(\StandardTuringDegree)$, if there is an $\StandardTuringDegree$-computable function $f$ with domain $\compcK \times \compcK$ such that
for all $\AgeIndex(\ell_0), \AgeIndex(\ell_1) \in \compcK$, we have
$f(\AgeIndex(\ell_0), \AgeIndex(\ell_1))= (F_0, F_1)$ where
%\begin{itemize}
%\item 
(i)
$F_0$ and $F_1$ are embeddings; 
(ii) $\dom(F_0) = \AgeIndex(\ell_0)$
and $\dom(F_1) = \AgeIndex(\ell_1)$; and
(iii) $\codom(F_0) = \codom(F_1)$.
%\end{itemize}
%
When $\StandardTuringDegree$ is $\TuringDegreeZero$, we speak of the \defn{computable joint embedding property}, and write $\CJEP$. 
\end{definition}

\begin{proposition}
\label{(JEP) implies 0' computable (JEP)}
Suppose $\compcK$ is a computable age with $\JEP$. Then $\compcK$ also has $\CJEP(\TuringDegree[\EmbedInfo(\compcK)])$.
\end{proposition}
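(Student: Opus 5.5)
The plan is to give an unbounded search, relative to $\EmbedInfo(\compcK)$, that halts on every input because $\JEP$ holds. Fix inputs $\AgeIndex(\ell_0), \AgeIndex(\ell_1) \in \compcK$. Since $\compcK$ is a computable age, the set of potential embeddings is computable. So we can effectively enumerate all triples $(k, \cc_0, \cc_1)$ with $k \in \w$ and $\cc_0, \cc_1 \in \AgeStr(k)$, each $\cc_m$ having the same length as $\AgeTuple(\ell_m)$. Each such triple yields a pair of potential embeddings
\[
F_0 = \Embedding<\AgeIndex(\ell_0)>[\AgeIndex(k)](\cc_0), \qquad F_1 = \Embedding<\AgeIndex(\ell_1)>[\AgeIndex(k)](\cc_1),
\]
which automatically have the required domains and a common codomain $\AgeIndex(k)$. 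Conditions (ii) and (iii) of $\CJEP$ therefore hold by construction, and only (i), that both are embeddings, has to be checked.

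For that check we use the oracle. $F_m$ is an embedding exactly when $\AgeTuple(\ell_m) \clsim \cc_m$, that is, when the tuple $(\AgeIndex(\ell_m), \AgeTuple(\ell_m), \AgeIndex(k), \cc_m)$ lies in $\EmbedInfo(\compcK)$. So with oracle $\EmbedInfo(\compcK)$ we can decide whether both potential embeddings are genuine embeddings. The function $f$ searches through the enumeration and outputs the first pair $(F_0,F_1)$ that passes both tests. This $f$ is $\TuringDegree[\EmbedInfo(\compcK)]$-computable, and any value it outputs satisfies (i)--(iii).

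It remains to check that the search always terminates, and this is the only substantive step. By $\JEP$ for the underlying age $\cK$, there are some $\cC \in \cK$ and embeddings $\alpha_0\colon \AgeStr(\ell_0) \to \cC$ and $\alpha_1\colon \AgeStr(\ell_1) \to \cC$. We need this witness to appear among the enumerated triples, but $\cC$ need not be generated by a tuple of the kind listed in $\compcK$. To handle this, let $\dd$ be the concatenation of $\alpha_0(\AgeTuple(\ell_0))$ and $\alpha_1(\AgeTuple(\ell_1))$, and note that $\Closure<\cC>(\dd)$ is isomorphic to some $\AgeStr(k)$ with a tuple $\dd' \clsim \dd$. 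We do not need $\HP$ for this: $\Closure<\cC>(\dd)$ is the closure of the images of the generating tuples, so it equals $\cC$ itself, because $\alpha_0$ and $\alpha_1$ land in $\cC$ and $\cC$ may be replaced by this substructure. To be careful, we apply $\JEP$ and then use that every $\cB \in \cK$ is isomorphic to some $\AgeStr(k)$. Transporting along this isomorphism gives $\cc_0, \cc_1 \in \AgeStr(k)$ with $\AgeTuple(\ell_m) \clsim \cc_m$. So a valid triple exists and the search halts. No genuine obstacle is expected; the point worth noting is that we must search over potential embeddings rather than embeddings, since the latter are only co-c.e.
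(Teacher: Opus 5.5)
Your proposal is correct and is essentially the paper's proof. Both search, with oracle $\EmbedInfo(\compcK)$, over $k$ and tuples $\cc_0,\cc_1 \in \AgeStr(k)$ until $\cc_m \clsim \AgeTuple(\ell_m)$ for both $m$, and the search halts because $\JEP$ gives a witness $\cC \in \cK$, which is isomorphic to some $\AgeStr(k)$. Your aside that $\Closure<\cC>(\dd)$ equals $\cC$, or that $\cC$ may be replaced by it without $\HP$, is not right in general: keeping that substructure in $\cK$ would need $\HP$. It is also unnecessary, since your transport argument after ``to be careful'' already suffices.
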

\begin{proof}
Given $\AgeIndex(i)$ and $\AgeIndex(j)$, search through 
structures $\AgeStr(k) \in \compcK$ 
and finite tuples in $\AgeStr(k)$ 
until one finds an $\ell$ and tuples $\bb_i, \bb_j \in \AgeStr(\ell)$ such that $\bb_i \clsim \AgeTuple(i)$ and $\bb_j \clsim \AgeTuple(j)$. 
\end{proof}

The following two results are due to \cite{Computable-Fraisse}.

\begin{proposition}[{{\cite[Lemma~2.6]{Computable-Fraisse}}}]
\label{Canonical age of computable structure has (CHP) and (CJEP)}
If $\cM$ is a computable structure then $\compcK[\cM]$ has $\CHP$ and $\CJEP$. 
\end{proposition}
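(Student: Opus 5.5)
We will build both witnesses directly from the definition of the canonical computable age $\compcK[\cM] = (\aa_i, \Closure<\cM>(\aa_i), i)_{i\in\w}$. Recall that $(\aa_i)_{i\in\w}$ is a computable enumeration of all finite tuples of $\cM$ in which every tuple appears infinitely often. Since $\cM$ is computable, the closures $\Closure<\cM>(\aa_i)$ are uniformly computable structures. The key point is that every element of $\compcK[\cM]$ is literally a substructure of $\cM$ generated by a tuple of $\cM$. So no search through isomorphism types, and no use of $\EmbedInfo$, is needed: inclusion maps inside $\cM$ are automatically embeddings.

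For $\CHP$, suppose we are given $(\AgeIndex(i), \bb)$ with $\bb \in \Closure<\cM>(\aa_i)$. The tuple $\bb$ is a tuple of elements of $\cM$, and every tuple of $\cM$ occurs in the enumeration. We therefore search computably for the least $j$ with $\aa_j = \bb$ as sequences of elements of $\cM$; this is decidable since equality in $\cM$ is computable. Then $\AgeTuple(j) = \bb$, so trivially $\AgeTuple(j) \clsim \bb$: both are the same tuple in the same ambient structure $\cM$, and by the remark after the definition of $\clsim$ the relation depends only on the tuples in their ambient substructures. Hence $(\AgeIndex(i),\bb)\mapsto \AgeIndex(j)$ witnesses $\CHP$.

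For $\CJEP$, suppose we are given $\AgeIndex(\ell_0)$ and $\AgeIndex(\ell_1)$, and let $\cc = \aa_{\ell_0}\^\aa_{\ell_1}$ be the concatenated tuple. Search for the least $k$ with $\aa_k = \cc$. Set $F_0 = (\AgeIndex(\ell_0), \AgeIndex(k), \aa_{\ell_0})$ and $F_1 = (\AgeIndex(\ell_1), \AgeIndex(k), \aa_{\ell_1})$. These are potential embeddings because $\aa_{\ell_0}, \aa_{\ell_1} \in \Closure<\cM>(\cc) = \AgeStr(k)$. They are embeddings because in each case the range tuple is the very same tuple of $\cM$ as $\AgeTuple(\ell_r)$, so $\AgeTuple(\ell_r) \clsim \aa_{\ell_r}$, since both sides are evaluated in $\cM$. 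The domains and common codomain are as required, and the function is computable.

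The only step needing care is justifying that $\clsim$ holds when both tuples sit in different closures inside $\cM$. This follows from the observation preceding the definition of $\EmbedInfo$ that $\clsim$ can be computed in any ambient superstructure, here $\cM$ itself. Everything else is a routine unbounded search that terminates because each tuple appears in the enumeration.
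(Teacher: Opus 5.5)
Your proposal is correct. Every structure in $\compcK[\cM]$ is literally the substructure $\Closure<\cM>(\aa_i)$ of $\cM$, and every tuple recurs in the enumeration. So searching for an index of $\bb$ itself (for $\CHP$), or of the concatenation of $\aa_{\ell_0}$ and $\aa_{\ell_1}$ (for $\CJEP$), and using the tuples themselves as ranges gives the required maps. In each case $\clsim$ holds trivially by the ambient-invariance remark made right after the definition of $\clsim$.

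The paper gives no proof of its own here; it simply cites \cite[Lemma~2.6]{Computable-Fraisse}, and your argument is the standard direct verification. One small imprecision: when $\Lang$ has function symbols, $\Closure<\cM>(\aa_i)$ is in general only a c.e.\ (not computable) subset of $\cM$. Your searches never need to decide membership in it, so the argument is unaffected.
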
 

\begin{proposition}[{{\cite[Theorem~2.10]{Computable-Fraisse}}}]
\label{other direction of Canonical age of computable structure has (CHP) and (CJEP)}
If $\compcK$ is a computable age with $\CHP$ and $\CJEP$, then it is canonically the age of some computable structure. 
\end{proposition}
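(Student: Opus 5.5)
We need to prove: if $\compcK$ is a computable age with $\CHP$ and $\CJEP$, then it is canonically the age of some computable structure. In other words, there is a computable structure $\cM$ such that $\compcK$ is computably isomorphic to $\compcK[\cM]$.

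\textbf{Building the chain.} We first build a computable chain of embeddings that eventually absorbs every element of $\compcK$, and then apply \cref{cor:union-of-comp-chain}. Let $g$ be the computable $\CJEP$ function. Set $k_0 = 0$. Given $\AgeIndex(k_i)$, apply $g$ to the pair $(\AgeIndex(k_i), \AgeIndex(i))$ to obtain embeddings $(F_i, E_i)$ with common codomain, which we call $\AgeIndex(k_{i+1})$. The sequence $(F_i)_{i\in\w}$ is computable and consists of genuine embeddings. So \cref{cor:union-of-comp-chain} yields a computable structure $\cD_\w$ together with computable embeddings $G_i\:\AgeStr(k_i)\to\cD_\w$ compatible with the $F_j$. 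Set $\cM = \cD_\w$. In the proof of that corollary, $\cD_\w$ is the union $\bigcup_n \AgeStr[\compcK^*](\ell_n)$, so every finite tuple of $\cM$ lies in some $\AgeStr[\compcK^*](\ell_n)$, and we can compute such an $n$ from the tuple.

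\textbf{The isomorphism $\alpha_1\: \compcK[\cM]\to\compcK$.} Given $(\aa, \Closure<\cM>(\aa), i)$, computably find $n$ with $\aa \in \AgeStr[\compcK^*](\ell_n)$. Pull $\aa$ back to a tuple $\bb\in\AgeStr(k_n)$ via the computable closure isomorphism $\ClosureMap[\AgeTuple[\compcK^*](\ell_n)](\AgeTuple(k_n))$. Then apply $\CHP$ to $(\AgeIndex(k_n),\bb)$ to get some $\AgeIndex(j)$ with $\AgeTuple(j)\clsim\bb\clsim\aa$.

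\textbf{The isomorphism $\alpha_0\:\compcK\to\compcK[\cM]$.} Given $\AgeIndex(i)$, we have the embedding $E_i\:\AgeIndex(i)\to\AgeIndex(k_{i+1})$. Compose it with $G_{i+1}$ to get a tuple $\cc = G_{i+1}(\range(E_i))$ in $\cM$ with $\cc\clsim\AgeTuple(i)$. Then search the computable enumeration $(\aa_m)_m$ underlying $\compcK[\cM]$ for an index $m$ with $\aa_m=\cc$. Equality of tuples in the computable structure $\cM$ is decidable, and every tuple appears in the enumeration, so this search halts. Both maps are computable and preserve $\clsim$, so $\compcK$ is computably isomorphic to $\compcK[\cM]$. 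In particular, $\compcK$ is computably the age of $\cM$.

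\textbf{Main obstacle.} The delicate step is checking that the computations of $G_{i+1}(\range(E_i))$ and of the pullback in $\alpha_1$ use only computable data. The compositions of potential embeddings defined earlier use the closure maps $\ClosureMap$. Evaluating a closure map on a tuple requires knowing how a term built from one tuple corresponds to terms built from the other. I expect this is computable because the structures $\AgeStr(k)$ are computable and $\clsim$ is already known to hold here (every map involved is a genuine embedding). One can then evaluate by searching for a term $t$ with $t(\AgeTuple(k)) = d$ and outputting $t$ applied to the image tuple. This search always halts since each structure is the closure of its distinguished tuple.
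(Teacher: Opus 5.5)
Your proof is correct. The paper gives no proof of its own and simply cites \cite[Theorem~2.10]{Computable-Fraisse}; your argument is the standard one behind that result: use $\CJEP$ to build a computable chain of embeddings absorbing each $\AgeIndex(i)$, take the computable limit via \cref{cor:union-of-comp-chain}, and use $\CHP$ (after pulling a tuple back along the closure isomorphism) to map finitely generated substructures of the limit back into $\compcK$. Your term-search handling of the closure maps is what makes both directions of the isomorphism computable.
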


Next we introduce some technical notions 
relevant to
amalgamation properties.  Key among these is a \emph{span}, which serves as the ``base'' of an amalgamation diagram.

\begin{definition}
A \defn{potential span} in a computable age $\compcK$ is a pair $(F_0, F_1)$ of potential embeddings with $\dom(F_0) = \dom(F_1)$.
We say that a potential span $(F_0, F_1)$ is \defn{over} $\dom(F_0)$. 
A potential span is a \defn{span} if in addition, $F_0$ and $F_1$ are both embeddings.

An \defn{amalgamation diagram} over a potential span $(F_0, F_1)$ is a tuple $(G_0, G_1)$ such that 
\begin{itemize}

\item $G_0$ is an embedding and $G_1$ is a potential embedding, 

\item $\dom(G_i) = \codom(F_i)$ for $i \in \{0, 1\}$, 

\item $\codom(G_0) = \codom(G_1)$, and

\item if $(F_0, F_1)$ is a span then 
%\begin{itemize}
$G_1$ is an embedding and
$G_0 \circ F_0 = G_1 \circ F_1$.
%\end{itemize}
\end{itemize}
\end{definition}

When $(F_0, F_1)$ is a span,
the following diagram must commute.
\begin{center}
\begin{tikzcd}
\AgeIndex(i_1) \arrow[dashed, "G_0"]{r} & \AgeIndex(k)\\
\AgeIndex(j) \arrow[r, "F_0"]  \arrow[u, "F_1"] & \AgeIndex(i_0) \arrow[dashed, "G_1"]{u}
\end{tikzcd}
\end{center}

Note that it is computable to check whether or not a tuple is a potential span, but it need not be computable to check whether or not a tuple is a span. Similarly, it need not be computable to check whether a tuple is an amalgamation diagram over a potential span.

\begin{definition}  
\label{Definition of (AP)}
An age $\cK$ has the \defn{amalgamation property}, written $\AP$, if whenever $\cA, \cB, \cC \in \cK$ and whenever $f_{\cB}\:\cA \to \cB$ and $f_{\cC}\:\cA \to \cC$ are embeddings, there is a $\cD \in \cK$ such that there are embeddings $\alpha_{\cB}\:\cB \to \cD$ and $\alpha_{\cC}\:\cC \to \cD$ where $\alpha_{\cB} \circ f_{\cB} = \alpha_{\cC} \circ f_{\cC}$.

Suppose $\StandardTuringDegree$ is a Turing degree.
We say a computable age $\compcK$  has the \defn{$\StandardTuringDegree$-computable amalgamation property}, written $\CAP(\StandardTuringDegree)$, if there is an $\StandardTuringDegree$-computable function which maps each potential span to an amalgamation diagram over it. 
\end{definition}

\begin{restatable}{proposition}{APimpliesJump}
\label{(AP) implies 0' computable (AP)}
Suppose $\compcK$ is a computable age with $\AP$. Then $\compcK$ has 
$\CAP(\TuringDegree[\EmbedInfo(\compcK)])$.  
\end{restatable}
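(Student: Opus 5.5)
We need a function, computable from $\EmbedInfo(\compcK)$, that sends every potential span $(F_0, F_1)$ to an amalgamation diagram over it. The plan is to first decide, using the oracle, whether the input is an actual span. Then we search for a witness, with the oracle certifying the search whenever an actual span is at stake.

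First we observe that $\EmbedInfo(\compcK)$ decides whether a potential embedding $(\AgeIndex(i), \AgeIndex(j), \cc)$ is an embedding. This amounts to asking whether $\AgeTuple(i) \clsim \cc$, and that is the query $(\AgeIndex(i), \AgeTuple(i), \AgeIndex(j), \cc) \in \EmbedInfo(\compcK)$. So, given a potential span $(F_0, F_1)$ with $\dom(F_0) = \dom(F_1) = \AgeIndex(j)$ and $\codom(F_e) = \AgeIndex(i_e)$, two oracle queries tell us whether it is a span.

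\emph{Case 1: not a span.} The only requirements are that $G_0$ is an embedding with $\dom(G_0) = \AgeIndex(i_0)$, that $G_1$ is a potential embedding with $\dom(G_1) = \AgeIndex(i_1)$, and that the two share a codomain. We take $G_0 = \id_{\AgeIndex(i_0)}$, which is an embedding, and $G_1 = (\AgeIndex(i_1), \AgeIndex(i_0), \dd)$ for an arbitrary tuple $\dd \in \AgeStr(i_0)$ of the right length (with repetitions allowed). This step is computable.

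\emph{Case 2: a span.} We search through all $k \in \w$ and all pairs of tuples $\bb_0, \bb_1 \in \AgeStr(k)$, where $\bb_e$ has the length of $\AgeTuple(i_e)$. For each candidate we use the oracle to check three things:
\begin{itemize}
\item $\AgeTuple(i_0) \clsim \bb_0$;
\item $\AgeTuple(i_1) \clsim \bb_1$;
\item the commutation condition $G_0 \circ F_0 = G_1 \circ F_1$, where $G_e = (\AgeIndex(i_e), \AgeIndex(k), \bb_e)$.
\end{itemize}
Since $G_0$ and $G_1$ are then embeddings, the composite $G_e \circ F_e$ has range $\ClosureMap[\AgeTuple(i_e)](\bb_e)(\range(F_e))$. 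The commutation condition therefore says that the two tuples $\ClosureMap[\AgeTuple(i_0)](\bb_0)(\range(F_0))$ and $\ClosureMap[\AgeTuple(i_1)](\bb_1)(\range(F_1))$ in $\AgeStr(k)$ are equal.

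\emph{The main obstacle} is this equality test, because the images under $\ClosureMap$ are elements of the closures, named by terms, and need not be literal coordinates. I would handle it by reducing the condition to a single query to $\EmbedInfo(\compcK)$. Write $\cc_e = \range(F_e)$. The condition holds exactly when there is a common tuple $\ee \in \AgeStr(k)$ such that $\AgeTuple(i_0)\^\cc_0 \clsim \bb_0\^\ee$ and $\AgeTuple(i_1)\^\cc_1 \clsim \bb_1\^\ee$. This works because $\clsim$ preserves term values: if $\AgeTuple(i_e) \clsim \bb_e$, the closure isomorphism sends $\cc_e$ to the unique $\ee$ satisfying these relations. So we add $\ee$ to the search, and each such $\clsim$ statement is exactly one membership query to $\EmbedInfo(\compcK)$. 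The concatenated tuples lie in the relevant structures, so each query is well formed.

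The search terminates because of $\AP$. The span $(F_0, F_1)$ represents embeddings of $\AgeStr(j)$ into $\AgeStr(i_0)$ and $\AgeStr(i_1)$, so $\AP$ yields a structure $\cD$ in the age with commuting embeddings. Since $\compcK$ represents every isomorphism type, some $\AgeStr(k) \cong \cD$. Transporting the images of $\AgeTuple(i_0)$ and $\AgeTuple(i_1)$, and the common image of $\AgeTuple(j)$, along this isomorphism gives tuples $\bb_0, \bb_1, \ee$ that pass all the checks. The search, and the case split before it, are uniform in the oracle. Hence the map is $\EmbedInfo(\compcK)$-computable, which gives $\CAP(\TuringDegree[\EmbedInfo(\compcK)])$.
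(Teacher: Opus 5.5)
Your proof is correct and follows essentially the same route as the paper. You use $\EmbedInfo(\compcK)$ to decide whether $(F_0, F_1)$ is a span, output a trivial diagram (the identity on $\codom(F_0)$ plus an arbitrary potential embedding) when it is not, and otherwise run an oracle-checked search that terminates by $\AP$. Your reduction of the commutation condition $G_0 \circ F_0 = G_1 \circ F_1$ to single membership queries via the auxiliary tuple $\ee$ simply spells out a verification step that the paper leaves implicit.
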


\input{ingredients/APimpliesJumpProof.tex}

The focus of this paper is a weakening of the amalgamation property, known as the cofinal amalgamation property; we will see that this is
computationally more complicated than the amalgamation property.

\begin{definition}
\label{Collection of distinguished extensions}
Suppose $\compcK$ is a computable age. A \defn{collection of distinguished extensions}, written $\DisExt(\compcK)$, is a collection of embeddings such that 
\begin{itemize}

\item for $F \in \DisExt(\compcK)$ we have $\dom(F), \codom(F) \in \compcK$, 

\item for $F \in \DisExt(\compcK)$ and isomorphisms $G$ with $\codom(G) = \dom(F)$ and $\dom(G) \in \compcK$ we have $F \circ G \in \DisExt(\compcK)$, 

\item for $F \in \DisExt(\compcK)$ and isomorphisms $H$ with $\dom(H) = \codom(F)$ and $\codom(H) \in \compcK$ we have $H \circ F \in \DisExt(\compcK)$, 

\item for all $\AgeIndex(i) \in \compcK$ there is an $F \in  \DisExt(\compcK)$ with $\dom(F) = \AgeIndex(i)$.  
\end{itemize}
Such a collection $\DisExt(\compcK)$ is said to be $\StandardTuringDegree$-computable when it is $\StandardTuringDegree$-computable as a set. 
\end{definition}

\begin{definition}
Suppose $\bm{\alpha} = (\alpha_0, \alpha_1)\:\compcK_0 \rightarrow \compcK_1$ is a computable isomorphism and $\DisExt(\compcK_0)$ is a collection of distinguished extensions in $\compcK_0$.  Let $\bm{\alpha}(\DisExt(\compcK_0))$ be the collection of embeddings $F$ in $\CAT(\compcK_1)$ such that $\bm{\alpha}^{-1}(F) \in \DisExt(\compcK_0)$. 
\end{definition}

\begin{lemma}
\label{Collections of distinguished extensions are closed under computable isomorphisms}
Suppose $\bm{\alpha} = (\alpha_0, \alpha_1)\:\compcK_0 \rightarrow \compcK_1$ is a computable isomorphism and $\DisExt(\compcK_0)$ is a computable collection of distinguished extensions in $\compcK_0$. Then $\bm{\alpha}(\DisExt(\compcK_0))$ is a computable collection of distinguished extensions in $\compcK_1$. 
\end{lemma}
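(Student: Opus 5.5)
The plan is to verify directly that $\bm{\alpha}(\DisExt(\compcK_0))$ satisfies each clause of \cref{Collection of distinguished extensions}, and then check computability separately. The main tool is the preceding lemma: $\bm{\alpha}$ and $\bm{\alpha}^{-1}$ extend to mutually inverse equivalences between $\CAT(\compcK_0)$ and $\CAT(\compcK_1)$. In particular they preserve composition, send embeddings to embeddings, and send isomorphisms to isomorphisms. So for an embedding $F$ in $\CAT(\compcK_1)$, membership $F \in \bm{\alpha}(\DisExt(\compcK_0))$ means exactly that $\bm{\alpha}^{-1}(F) \in \DisExt(\compcK_0)$.

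The closure clauses go as follows. Suppose $F \in \bm{\alpha}(\DisExt(\compcK_0))$ and $G$ is an isomorphism with $\codom(G) = \dom(F)$. Functoriality gives $\bm{\alpha}^{-1}(F \circ G) = \bm{\alpha}^{-1}(F) \circ \bm{\alpha}^{-1}(G)$. Here $\bm{\alpha}^{-1}(G)$ is an isomorphism in $\CAT(\compcK_0)$ whose codomain is $\dom(\bm{\alpha}^{-1}(F))$, so closure of $\DisExt(\compcK_0)$ puts the composite in $\DisExt(\compcK_0)$. Postcomposition with an isomorphism $H$ is handled symmetrically. For the first clause, domains and codomains lie in $\compcK_1$ by definition. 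For totality, given $\AgeIndex[\compcK_1](i)$, pick $F' \in \DisExt(\compcK_0)$ with domain $\alpha_1(\AgeIndex[\compcK_1](i))$. Then $\bm{\alpha}(F')$ has domain $\alpha_0\alpha_1(\AgeIndex[\compcK_1](i))$, which is isomorphic, but not necessarily equal, to $\AgeIndex[\compcK_1](i)$. Precompose with the evident isomorphism $I$ from $\AgeIndex[\compcK_1](i)$ to that object, namely the triple whose range is the image of the generating tuple under the closure maps. Then $\bm{\alpha}(F') \circ I$ lies in the collection, because $\bm{\alpha}^{-1}(\bm{\alpha}(F') \circ I) = \bm{\alpha}^{-1}\bm{\alpha}(F') \circ \bm{\alpha}^{-1}(I)$. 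The factor $\bm{\alpha}^{-1}\bm{\alpha}(F')$ is $F'$ conjugated by the canonical isomorphisms $\alpha_1\alpha_0(X) \cong X$. So the whole expression is $F'$ pre- and post-composed with isomorphisms, and hence lies in $\DisExt(\compcK_0)$ by the closure clauses.

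Computability is the main obstacle. Given a triple $F$ in $\CAT(\compcK_1)$, we can compute $\bm{\alpha}^{-1}(F)$, since $\alpha_0^*,\alpha_1^*$ are computable and the closure maps $\ClosureMap[\aa](\bb)$ are computed by evaluating terms in computable structures. We then query $\DisExt(\compcK_0)$. The difficulty is that the definition also requires $F$ to be an embedding, and that is only co-c.e. The way around this is that $\DisExt(\compcK_0)$ consists only of embeddings, and $\bm{\alpha}^{-1}$ reflects the property of being an embedding (the preceding lemma notes that $F$ is an embedding iff its image is). So we decide $F \in \bm{\alpha}(\DisExt(\compcK_0))$ by checking only whether $\bm{\alpha}^{-1}(F)$, computed as a potential embedding, lies in $\DisExt(\compcK_0)$, without separately testing that $F$ is an embedding. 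One caveat: when $F$ is not an embedding, the closure map in the definition of $\bm{\alpha}^{-1}$ may be applied to a tuple outside its intended domain. I would handle this by having the procedure reject whenever the term evaluation does not produce an element of the right form; this is still decidable because the structures are computable.
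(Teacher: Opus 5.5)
Your proposal is correct and follows the paper's route. The paper's one-line proof cites closure of $\DisExt(\compcK_0)$ under pre- and post-composition with isomorphisms, which is exactly what you use, together with functoriality of $\bm{\alpha}^{-1}$ and the correction in the totality clause for $\alpha_0\alpha_1$ not being the identity; your observation that $\bm{\alpha}^{-1}$ reflects embeddings supplies the computability claim the paper leaves implicit. Your final caveat is unnecessary: the closure map in $\bm{\alpha}^{-1}(F)$ is defined on all of the structure underlying $\codom(F)$, which contains the range of $F$, so $\bm{\alpha}^{-1}(F)$ is defined and computable for every potential embedding $F$.
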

\begin{proof}
This holds as 
$\DisExt(\compcK_0)$ is closed under  
pre- and post-compositions of isomorphisms.
\end{proof}

\begin{definition}
\label{Definition of (coAP)}
An age $\cK$ has the \defn{cofinal amalgamation property}, written $\coAP$, if for every $\cA \in \cK$ there is an $\cA' \in \cK$, called an \defn{amalgamation base},  such that 
\begin{itemize}
\item there is an embedding $\beta\:\cA \to \cA'$ and

\item whenever $\cB, \cC \in \cK$ and $f_{\cB}\:\cA' \to \cB$ and $f_{\cC}\:\cA'\to \cC$ are embeddings, there is a $\cD \in \cK$ and there are embeddings $\alpha_{\cB}\:\cB \to \cD$ and $\alpha_{\cC}\:\cC \to \cD$ where $\alpha_{\cB} \circ f_{\cB} = \alpha_{\cC} \circ f_{\cC}$. 
\end{itemize} 
\end{definition}

\begin{definition}
\label{Definition of (CcoAP)}
Suppose $\compcK$ is a computable age. We define a \defn{witness to $\coAP$} to be a pair $(\WitcoAP(\compcK), \WitFuncoAP(\compcK))$ satisfying the following. 

\begin{itemize}
\item[(a)] $\WitcoAP(\compcK)$ is a collection of distinguished extensions such that if $F \in \WitcoAP(\compcK)$,  then $\codom(F)$ is an amalgamation base, and

\item[(b)] $\WitFuncoAP(\compcK)$ takes as input tuples $(F, G_0, G_1)$ such that 
%\begin{itemize}
%\item 
(i)
$F \in \WitcoAP(\compcK)$, 
%
%\item 
(ii)
$(G_0, G_1)$ is a potential span, and
%
%\item 
(iii)
$\dom(G_0) = \dom(G_1) = \codom(F)$, 
%\end{itemize}
and outputs an amalgamation diagram over $(G_0, G_1)$. 
\end{itemize}
Such a witness is said to be \defn{$\StandardTuringDegree$-computable} if $\WitcoAP(\compcK)$ is $\StandardTuringDegree$-c.e.\ and $\WitFuncoAP(\compcK)$ is an $\StandardTuringDegree$-computable partial function. 
We say that $\compcK$ has $\CcoAP(\StandardTuringDegree)$ if it has an $\StandardTuringDegree$-computable witness to $\coAP$.
\end{definition}

\begin{definition}   
Let $\bm{\alpha} = (\alpha_0, \alpha_1)\:\compcK_0 \rightarrow \compcK_1$ and suppose $(\WitcoAP(\compcK_0), \WitFuncoAP(\compcK_0))$ is a witness to $\coAP$ for $\compcK_0$. Let $\bm{\alpha}(\WitFuncoAP(\compcK_0))$ be the map which takes as input those tuples $(F, G_0, G_1)$  such that  
%\begin{itemize}
%\item 
(i)
$F \in \bm{\alpha}(\WitcoAP(\compcK_0))$, 
(ii)
%\item 
$(G_0, G_1)$ is a potential span in $\compcK_1$,  and
(iii)
%\item 
$\dom(G_0) = \dom(G_1) = \codom(F)$, 
%\end{itemize}
and outputs $(\bm{\alpha}(H_0), \bm{\alpha}(H_1))$ where
$(H_0, H_1) = \WitFuncoAP(\compcK_0)\bigl(\bm{\alpha}^{-1}(F),\, \bm{\alpha}^{-1}(G_0),\, \bm{\alpha}^{-1}(G_1)\bigr).$
\end{definition}

\begin{lemma}
\label{Inverse image of cofinal witness under computable isomorphism is cofinal witness}
Let $\bm{\alpha}=(\alpha_0, \alpha_1)\:\compcK_0 \to \compcK_1$ be a computable isomorphism, and suppose that $(\WitcoAP(\compcK_0), \WitFuncoAP(\compcK_0))$ is a witness to $\coAP$ for $\compcK_0$ which is computable. Then $(\bm{\alpha}(\WitcoAP(\compcK_0)), \bm{\alpha}(\WitFuncoAP(\compcK_0)))$ is a witness to $\coAP$ for $\compcK_1$ which is computable. 
\end{lemma}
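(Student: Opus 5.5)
The plan is to transport everything along $\bm{\alpha}$, using that $\bm{\alpha}$ extends to an equivalence of categories between $\CAT(\compcK_0)$ and $\CAT(\compcK_1)$ with inverse $\bm{\alpha}^{-1}$. This equivalence preserves and reflects being an embedding, and is computable on potential embeddings because $\alpha_0^*,\alpha_1^*$ are computable and $\ClosureMap$ is computed from the computable structures $\AgeStr(j)$. The verification splits into three parts: computability, clause (a), and clause (b) of the definition of a witness to $\coAP$.

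\textbf{Computability.} By \cref{Collections of distinguished extensions are closed under computable isomorphisms}, $\bm{\alpha}(\WitcoAP(\compcK_0))$ is a collection of distinguished extensions in $\compcK_1$. Here we only know that $\WitcoAP(\compcK_0)$ is c.e., so we rerun that argument for c.e.\ sets. An embedding $F$ of $\compcK_1$ lies in $\bm{\alpha}(\WitcoAP(\compcK_0))$ iff $\bm{\alpha}^{-1}(F)\in\WitcoAP(\compcK_0)$. This is a c.e.\ condition on $F$, given that $F$ is an embedding. The embedding requirement is co-c.e.\ in general, but it is automatic: $\bm{\alpha}^{-1}(F)$ is an embedding exactly when $F$ is, and every member of $\WitcoAP(\compcK_0)$ is an embedding. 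So the set is c.e. The map $\bm{\alpha}(\WitFuncoAP(\compcK_0))$ is a composition of computable maps with the partial computable $\WitFuncoAP(\compcK_0)$, hence partial computable.

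\textbf{Clause (a).} For $F\in\bm{\alpha}(\WitcoAP(\compcK_0))$, the codomain of $\bm{\alpha}^{-1}(F)$ is $\alpha_1(\codom(F))$. This is an amalgamation base, and $\clsim$ implies it is isomorphic to $\codom(F)$, which is therefore also an amalgamation base, since that property is isomorphism-invariant.

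\textbf{Clause (b).} Take a valid input $(F,G_0,G_1)$ in $\compcK_1$. Its image $(\bm{\alpha}^{-1}(F),\bm{\alpha}^{-1}(G_0),\bm{\alpha}^{-1}(G_1))$ is a valid input for $\WitFuncoAP(\compcK_0)$, since $\bm{\alpha}^{-1}$ preserves domains and codomains as objects and sends potential spans to potential spans. Let the output be $(H_0,H_1)$. Then $\bm{\alpha}(H_0)$ is an embedding and $\bm{\alpha}(H_1)$ a potential embedding, with the right domains and a common codomain. If $(G_0,G_1)$ is a span, so is its image, so $H_1$ is an embedding and $H_0\circ\bm{\alpha}^{-1}(G_0)=H_1\circ\bm{\alpha}^{-1}(G_1)$. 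Applying the functor $\bm{\alpha}$ and using $\bm{\alpha}\bm{\alpha}^{-1}\cong\mathrm{id}$ gives $\bm{\alpha}(H_0)\circ G_0=\bm{\alpha}(H_1)\circ G_1$.

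\textbf{Main obstacle.} The delicate step is this last one. On objects $\bm{\alpha}\circ\bm{\alpha}^{-1}$ need not be the identity: $\alpha_0(\alpha_1(\AgeIndex(j)))$ may differ from $\AgeIndex(j)$. So $\bm{\alpha}(\bm{\alpha}^{-1}(G))$ is only conjugate to $G$ by isomorphisms in $\CAT(\compcK_1)$, and the output diagram must have domains exactly $\codom(G_i)$. The intended reading is that $\bm{\alpha}(H_i)$ is pre-composed with the canonical isomorphism $\ClosureMap$ from $\codom(G_i)$ to $\alpha_0\alpha_1(\codom(G_i))$, which is computable. I would insert these corrections explicitly and check that commutativity survives, using that composition in $\CAT(\compcK_1)$ is associative on embeddings and that the correcting isomorphisms form a natural isomorphism.
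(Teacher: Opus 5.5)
Your proposal is correct and follows essentially the paper's route. Both transport along $\bm{\alpha}$, invoke the distinguished-extensions lemma, use that amalgamation bases and amalgamation diagrams are invariant under isomorphisms of computable ages, and get computability by composing with the computable $\bm{\alpha}$. Two of your refinements are genuinely needed and the paper's short proof passes over them. First, you rerun the distinguished-extensions argument for a merely c.e.\ $\WitcoAP(\compcK_0)$. Second, you precompose the outputs with the canonical isomorphisms $\codom(G_i)\to\alpha_0\alpha_1(\codom(G_i))$, since $\alpha_0\alpha_1$ need not be the identity on objects and the literal definition of $\bm{\alpha}(\WitFuncoAP(\compcK_0))$ otherwise produces maps with the wrong domains.
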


\begin{proof}
By \cref{Collections of distinguished extensions are closed under computable isomorphisms},
the set $\bm{\alpha}(\WitcoAP(\compcK_0))$ is a collection of distinguished extensions. 
\cref{Definition of (CcoAP)} (a) holds, as being an amalgamation base is closed under isomorphism of computable ages. 
\cref{Definition of (CcoAP)} (b) holds,  
as being an amalgamation of a potential span is closed under isomorphisms of computable ages.
Finally, $(\bm{\alpha}(\WitcoAP(\compcK_0)), \bm{\alpha}(\WitFuncoAP(\compcK_0)))$ is computable because it can be computed from $(\WitcoAP(\compcK_0), \WitFuncoAP(\compcK_0))$ and $\bm{\alpha}$, both of which are computable. 
\end{proof}

The following corollary is immediate. 

\begin{corollary}
Let $\StandardTuringDegree$ be a Turing degree, and suppose $\compcK_0$ has 
$\CcoAP(\StandardTuringDegree)$ and $\compcK_1$ is computably isomorphic to $\compcK_0$. Then $\compcK_1$ has $\CcoAP(\StandardTuringDegree)$.
\end{corollary}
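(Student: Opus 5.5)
The corollary follows by composing the computable isomorphism with the given witness, applying \cref{Inverse image of cofinal witness under computable isomorphism is cofinal witness} in relativized form. Fix an $\StandardTuringDegree$-computable witness $(\WitcoAP(\compcK_0), \WitFuncoAP(\compcK_0))$ to $\coAP$ for $\compcK_0$, and a computable isomorphism $\bm{\alpha} = (\alpha_0, \alpha_1)\:\compcK_0 \to \compcK_1$. Our candidate witness for $\compcK_1$ is $(\bm{\alpha}(\WitcoAP(\compcK_0)), \bm{\alpha}(\WitFuncoAP(\compcK_0)))$.

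First, we check that this pair is a witness to $\coAP$, ignoring complexity. This is the content of \cref{Inverse image of cofinal witness under computable isomorphism is cofinal witness}, and its proof never uses the computability of the witness for $\compcK_0$. Indeed, \cref{Collections of distinguished extensions are closed under computable isomorphisms} shows that $\bm{\alpha}(\WitcoAP(\compcK_0))$ is a collection of distinguished extensions. The four closure conditions transfer because $\bm{\alpha}$ induces an equivalence of categories $\CAT(\compcK_0) \simeq \CAT(\compcK_1)$ that preserves embeddings and isomorphisms. Being an amalgamation base, and being an amalgamation diagram over a given potential span, are invariant under this equivalence. Potential spans that are not spans go to potential spans that are not spans, so the weaker requirements in that case are also preserved.

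Second, we check the complexity, which is the only point needing care. Membership $F \in \bm{\alpha}(\WitcoAP(\compcK_0))$ is equivalent to $\bm{\alpha}^{-1}(F) \in \WitcoAP(\compcK_0)$. The map $F \mapsto \bm{\alpha}^{-1}(F)$ is computable: it only uses $\alpha_1^*$ and the computable map $\ClosureMap$ applied to elements of computable finite structures. Hence $\bm{\alpha}(\WitcoAP(\compcK_0))$ is the preimage of an $\StandardTuringDegree$-c.e.\ set under a computable map, and so is $\StandardTuringDegree$-c.e. Likewise, $\bm{\alpha}(\WitFuncoAP(\compcK_0))$ is obtained by computably transporting the input via $\bm{\alpha}^{-1}$, applying the $\StandardTuringDegree$-computable partial function $\WitFuncoAP(\compcK_0)$, and computably transporting the output via $\bm{\alpha}$. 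It is therefore an $\StandardTuringDegree$-computable partial function, defined on all required inputs because $\bm{\alpha}^{-1}$ sends required inputs for $\compcK_1$ to required inputs for $\compcK_0$.

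I expect no real obstacle. The only subtle point is that the application of $\bm{\alpha}$ to a potential embedding uses $\ClosureMap$, which is computable once the two tuples are known to satisfy $\clsim$. This holds here because $\alpha_0$ and $\alpha_1$ are guaranteed to return $\clsim$-equivalent tuples, so the transport needs no embedding information and no oracle beyond $\StandardTuringDegree$.
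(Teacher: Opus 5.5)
Your proposal is correct and follows the paper's route: the paper calls this corollary immediate from \cref{Inverse image of cofinal witness under computable isomorphism is cofinal witness} relativized to $\StandardTuringDegree$. That is exactly your argument of transporting the witness along $\bm{\alpha}$, together with the observation that transporting along computable maps preserves being $\StandardTuringDegree$-c.e.\ and $\StandardTuringDegree$-computable. One small nit, shared with the paper's definition of $\bm{\alpha}(\WitFuncoAP(\compcK_0))$: since $\alpha_0\circ\alpha_1$ need not be the identity on $\compcK_1$, the transported diagram has domains $\alpha_0(\alpha_1(\codom(G_i)))$ rather than $\codom(G_i)$, so you should precompose with the computable canonical isomorphism between them, which your ``equivalence of categories'' remark covers only implicitly.
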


\begin{proposition}
The following are equivalent for a computable age $\compcK$. 
\begin{itemize}
\item $\compcK$ has $\coAP$. 

\item  $\compcK$ has $\CcoAP(\StandardTuringDegree)$ for some Turing degree $\StandardTuringDegree$.
\end{itemize}
\end{proposition}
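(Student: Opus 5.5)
The reverse implication is the easy one. Suppose $(\WitcoAP(\compcK), \WitFuncoAP(\compcK))$ is a witness to $\coAP$ for some $\StandardTuringDegree$. Let $\cA$ be in the age. Clause (d) of \cref{Collection of distinguished extensions} gives, for the index $\AgeIndex(i)$ with $\AgeStr(i) \cong \cA$, some $F \in \WitcoAP(\compcK)$ with $\dom(F) = \AgeIndex(i)$. By clause (a) of \cref{Definition of (CcoAP)}, $\codom(F)$ is an amalgamation base. Since $F$ is an embedding, $\cA$ embeds into an amalgamation base. This is exactly $\coAP$, so the computability of the witness plays no role in this direction.

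For the forward implication, assume $\compcK$ has $\coAP$. I will produce a witness with no computability constraints and then let $\StandardTuringDegree$ be the Turing degree of a real encoding it. Only countably many objects are involved: the indices in $\compcK$, the potential embeddings, and the potential spans. Hence any set $\WitcoAP(\compcK)$ of potential embeddings and any function $\WitFuncoAP(\compcK)$ on countable data can be coded as a single subset of $\w$. Taking $\StandardTuringDegree$ to be the join of these codes makes $\WitcoAP(\compcK)$ $\StandardTuringDegree$-computable, hence $\StandardTuringDegree$-c.e., and makes $\WitFuncoAP(\compcK)$ $\StandardTuringDegree$-computable.

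The set $\WitcoAP(\compcK)$ is defined as the set of all embeddings $F$ in $\CAT(\compcK)$ such that $\AgeStr(j)$ is an amalgamation base, where $\codom(F) = \AgeIndex(j)$. I check the four clauses of \cref{Collection of distinguished extensions}.
\begin{itemize}
\item Closure under precomposition with isomorphisms holds because the codomain is unchanged and an embedding precomposed with an isomorphism is still an embedding.
\item Closure under postcomposition with an isomorphism $H$ holds because being an amalgamation base is invariant under isomorphism. Indeed, embeddings out of $\codom(H)$ correspond to embeddings out of $\codom(F)$ by composing with $H$.
\item For the last clause, fix $\AgeIndex(i)$ and use $\coAP$ to get an amalgamation base $\cA'$ and an embedding $\AgeStr(i) \to \cA'$. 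Some $\AgeIndex(j)$ has $\AgeStr(j) \cong \cA'$ because every member of the age is represented in $\compcK$. Composing with this isomorphism gives a genuine embedding $\AgeIndex(i) \to \AgeIndex(j)$, realized by a tuple $\cc \in \AgeStr(j)$ with $\AgeTuple(i) \clsim \cc$.
\end{itemize}

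The function $\WitFuncoAP(\compcK)$ is defined on inputs $(F, G_0, G_1)$ by cases.
\begin{itemize}
\item If $(G_0, G_1)$ is a span, then $\codom(F)$ is an amalgamation base. By \cref{Definition of (coAP)} there exist $\cD$ and embeddings amalgamating the span. Choose $\AgeIndex(k)$ with $\AgeStr(k) \cong \cD$ and transport the two embeddings into $\CAT(\compcK)$. This yields an amalgamation diagram $(H_0, H_1)$ whose commutativity $H_0 \circ G_0 = H_1 \circ G_1$ follows from the commutativity in $\cD$.
\item If $(G_0, G_1)$ is only a potential span, the definition of amalgamation diagram requires only that $H_0$ be an embedding with $\dom(H_0) = \codom(G_0)$ and that $H_1$ be a potential embedding into the same codomain. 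Take $H_0 = \id_{\codom(G_0)}$ and let $H_1$ be any potential embedding from $\codom(G_1)$ into $\codom(G_0)$, using an arbitrary tuple of the right length.
\end{itemize}

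The main care point is this bookkeeping in translating between abstract embeddings of structures and embeddings in $\CAT(\compcK)$, that is, triples with tuples satisfying $\clsim$. It is routine, because every structure in $\compcK$ is generated by its distinguished tuple, so an embedding is determined by the image of that tuple. Since no effectivity is required, no step is a genuine obstacle.
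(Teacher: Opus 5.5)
Your proposal is correct and follows the paper's own argument. The paper simply notes that $\compcK$ has $\coAP$ exactly when some (non-effective) witness exists, and takes $\StandardTuringDegree$ to be the degree of a real coding it; you verify that equivalence explicitly, using the same witness set (all embeddings into amalgamation bases) that appears in \cref{Upper bounds on the computability of (CAP)}. One trivial nit: in the non-span case you need not insist on a tuple ``of the right length'', which could fail to exist if $\codom(G_0)$ is the empty structure. Potential embeddings carry no length constraint, so the distinguished tuple of $\codom(G_0)$ works, exactly as in the proof of \cref{(AP) implies 0' computable (AP)}.
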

\begin{proof}
Note that any witness to $\coAP$ can be encoded by a single real. The computable age $\compcK$ has $\coAP$ if and only if there is some witness to $\coAP$ for $\compcK$, which then has $\CcoAP(\StandardTuringDegree)$ where $\StandardTuringDegree$ is the Turing degree of the real encoding the witness.
\end{proof}

\begin{lemma}
If an age 
has $\AP$, then it also has $\coAP$. 
If a computable age 
has $\CAP$, then it also has $\CcoAP$.
\end{lemma}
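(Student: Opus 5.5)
The first claim is immediate from the definitions, so the plan concentrates on the computable version. For the first claim: given $\cA \in \cK$, take $\cA' = \cA$ and $\beta = \id$. The second clause of \cref{Definition of (coAP)} is then exactly an instance of \cref{Definition of (AP)} applied to $f_{\cB}$ and $f_{\cC}$. Hence every element of $\cK$ is an amalgamation base, and $\cK$ has $\coAP$.

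For the second claim, suppose $\compcK$ has $\CAP$, witnessed by a computable function $h$ sending each potential span to an amalgamation diagram over it. I will build a computable witness $(\WitcoAP(\compcK), \WitFuncoAP(\compcK))$ as follows.

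First, let $\WitcoAP(\compcK)$ be the set of all isomorphisms in $\CAT(\compcK)$, that is, embeddings $F$ such that $\range(F)$ generates $\codom(F)$. I check the four clauses of \cref{Collection of distinguished extensions}:
\begin{itemize}
\item domains and codomains lie in $\compcK$ by definition;
\item closure under pre- and post-composition with isomorphisms holds because a composite of isomorphisms is an isomorphism;
\item every $\AgeIndex(i)$ is the domain of $\id_{\AgeIndex(i)}$.
\end{itemize}
Clause (a) of \cref{Definition of (CcoAP)} holds because each codomain is an element of the age, and by $\AP$ every element is an amalgamation base. Here $\AP$ follows from $\CAP$: given a genuine span, $h$ returns an amalgamation diagram, which for a span consists of embeddings with $G_0 \circ F_0 = G_1 \circ F_1$.

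Second, let $\WitFuncoAP(\compcK)(F, G_0, G_1) = h(G_0, G_1)$. By hypothesis this is an amalgamation diagram over the potential span $(G_0, G_1)$, so clause (b) holds, and the function is computable.

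The one delicate point is that $\WitcoAP(\compcK)$ must be c.e., and deciding whether a potential embedding is an embedding is only co-c.e. in general. I would avoid this by choosing a computable set instead. Take $\WitcoAP(\compcK)$ to be the set of all potential embeddings of the form $(\AgeIndex(i), \AgeIndex(j), \AgeTuple(j))$ with $\AgeTuple(i) \clsim \AgeTuple(j)$. If that relation is still not c.e., fall back on the set $\{\id_{\AgeIndex(i)} \st i \in \w\}$, which is computable, and argue that it suffices: $\WitFuncoAP(\compcK)$ is only ever applied to inputs whose $F$ lies in the set, and the definition of $\CcoAP$ needs only that the set be c.e. and that the function be defined on those inputs.

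The main obstacle is the closure-under-isomorphism requirement in \cref{Collection of distinguished extensions}. It forces $\WitcoAP(\compcK)$ to contain $F \circ G$ for arbitrary isomorphisms $G$, so the set of identities alone is not closed. I expect to handle this in one of two ways. The first option is to show that the full set of isomorphisms is c.e.: the maps $\ClosureMap$ are only defined when $\clsim$ holds, and being an isomorphism reduces to $\AgeTuple(i) \clsim \cc$ together with $\cc$ generating $\AgeStr(j)$. The second option, if that fails in general, is to pass to a computably isomorphic copy of $\compcK$ via \cref{Inverse image of cofinal witness under computable isomorphism is cofinal witness}.
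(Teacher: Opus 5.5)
You handle the first claim exactly as the paper does, and your choice $\WitFuncoAP(\compcK)(F,G_0,G_1)=h(G_0,G_1)$ is what the paper's one-line proof (``every element is an amalgamation base'') implicitly uses for the second claim. The gap is the point you flag and leave open: $\WitcoAP(\compcK)$ must be a c.e.\ collection of distinguished extensions, and none of your fallbacks supplies one.

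\begin{itemize}
\item The identities are not closed under isomorphisms, as you note.
\item The set of all isomorphisms need not be c.e., because the embedding condition is only co-c.e. The same holds for the set of all $(\AgeIndex(i),\AgeIndex(j),\AgeTuple(j))$ with $\AgeTuple(i)\clsim\AgeTuple(j)$.
\item Passing to a computably isomorphic copy cannot help. By the paper's lemma that witnesses to $\coAP$ transfer along computable isomorphisms, a computable witness for the copy would pull back to one for $\compcK$.
\end{itemize}

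In fact the obstruction is intrinsic. Let $S$ be any collection of distinguished extensions and let $F\in S$ have $\dom(F)=\AgeIndex(i)$. For every $i'$, the triple $G=(\AgeIndex(i'),\AgeIndex(i),\AgeTuple(i))$ is an isomorphism iff $\AgeTuple(i')\clsim\AgeTuple(i)$, and $F\circ G=(\AgeIndex(i'),\codom(F),\range(F))$. Since $S$ contains only embeddings and $\range(F)\clsim\AgeTuple(i)$, this triple lies in $S$ iff $\AgeTuple(i')\clsim\AgeTuple(i)$. So a c.e.\ $S$ would make $\clsim$ between generating tuples c.e., and hence decidable.

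$\CAP$ does not give this decidability. Take unary predicates $\{P_s\}_{s\in\w}$ and let $\cN$ have domain $\w\times\w$. Let $(2e,m)$ satisfy $P_s$ iff $\{e\}(0)$ halts in exactly $s$ steps, and let $(2e+1,m)$ satisfy no $P_s$.

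\begin{itemize}
\item The age of $\cN$ has $\AP$, by free amalgamation.
\item $\compcK[\cN]$ has $\CAP$. Given a potential span with ranges $\cc_0\in\AgeStr(i_0)$ and $\cc_1\in\AgeStr(i_1)$, let $G_0$ be the identity on $\AgeTuple(i_0)$. Let $G_1$ send $\cc_1$ to $\cc_0$ and each remaining element $(n,m)$ of $\AgeTuple(i_1)$ to a fresh $(n,m')$. This is an amalgamation diagram whenever the input is a span.
\item The one-element tuples satisfy $(2e,0)\clsim(1,0)$ iff $\{e\}(0)\diverges$. By the previous paragraph, $\compcK[\cN]$ therefore has no computable witness to $\coAP$.
\end{itemize}

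So the computable half of the statement needs an additional hypothesis, for example that $\EmbedInfo(\compcK)$ is computable (as for uniformly finite ages). Under that hypothesis the set of all embeddings is a computable collection of distinguished extensions whose codomains are amalgamation bases, and your $\WitFuncoAP(\compcK)$ completes the proof. More generally, $\CAP(\StandardTuringDegree)$ yields $\CcoAP(\StandardTuringDegree)$ whenever $\EmbedInfo(\compcK)\Turingleq\StandardTuringDegree$. The paper's one-line proof also skips this point, so the missing step cannot be recovered from it.
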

\begin{proof}
This follows since in an age with $\AP$, every element is an amalgamation base.
\end{proof}

Earlier we saw how to obtain
bounds on the computational power needed for the effective versions of 
each of $\HP$, $\JEP$, and $\AP$.
We now do analogously with
$\coAP$, but the bounds are higher.
Our next result is technical but important.
\begin{restatable}{theorem}{UpperBoundCompCAP}
\label{Upper bounds on the computability of (CAP)}
Let $\compcK$ be a computable age, and let
$\WitcoAP$ be the collection of all embeddings between elements of $\compcK$ whose codomain is some amalgamation base in $\compcK$.  
Then (a) $\TuringDegree[\WitcoAP] \Turingleq \TuringDoubleJump[\EmbedInfo(\compcK)]$,
and (b)~when $\compcK$ has $\coAP$, it has $\CcoAP(\TuringDoubleJump[\EmbedInfo(\compcK)])$.
\end{restatable}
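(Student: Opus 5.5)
Write $\mathbf{e} = \TuringDegree[\EmbedInfo(\compcK)]$; all complexity below is relative to $\mathbf{e}$. The plan is to compute the complexity of the predicate ``$\AgeIndex(k)$ is an amalgamation base'' and show it is $\Pi^0_2$ relative to $\mathbf{e}$. Membership of a potential embedding $F$ in $\WitcoAP$ is then the conjunction of ``$F$ is an embedding'' (decidable from $\mathbf{e}$) and ``$\codom(F)$ is an amalgamation base''. Hence $\WitcoAP$ is $\Pi^0_2(\mathbf{e})$, and so computable from $\TuringDoubleJump[\EmbedInfo(\compcK)]$, which gives (a).

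\textbf{The predicate.} Fix $\AgeIndex(k)$. It is an amalgamation base iff for every potential span $(G_0, G_1)$ over $\AgeIndex(k)$ the following holds: either one of $G_0, G_1$ fails to be an embedding, or there exist $\ell$ and tuples $\cc_0, \cc_1 \in \AgeStr(\ell)$ such that
\begin{itemize}
\item $(\AgeIndex(i_0), \AgeIndex(\ell), \cc_0)$ and $(\AgeIndex(i_1), \AgeIndex(\ell), \cc_1)$ are embeddings, where $\AgeIndex(i_m) = \codom(G_m)$, and
\item the two composites with $G_0$ and $G_1$ agree.
\end{itemize}
The potential spans range over a computable set, and every element of $\compcK$ is listed, so quantifying over potential spans really does cover all embeddings $\cA' \to \cB, \cC$ up to isomorphism.

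\textbf{Deciding the inner matrix.} Being an embedding is a single query to $\EmbedInfo(\compcK)$. The equality of composites needs care. Write $G_m = (\AgeIndex(k), \AgeIndex(i_m), \bb_m)$ and $H_m = (\AgeIndex(i_m), \AgeIndex(\ell), \cc_m)$. Agreement $H_0 \circ G_0 = H_1 \circ G_1$ means that the images of $\AgeTuple(k)$ under the two composite maps into $\AgeStr(\ell)$ coincide. I will express this through $\EmbedInfo$ as follows.
\begin{itemize}
\item Concatenate $\AgeTuple(i_0)$ with $\bb_0$ and query $(\AgeIndex(i_0), \AgeTuple(i_0)\^\bb_0, \AgeIndex(\ell), \cc_0\^\dd_0)$ for candidate tuples $\dd_0 \in \AgeStr(\ell)$.
\item Do the same for $i_1$ with $\bb_1$, $\cc_1$ and a candidate $\dd_1$.
\item Require $\dd_0 = \dd_1$ as literal tuples.
\end{itemize}
Since the closure maps are determined by tuples, $\dd_m$ is exactly the image of $\bb_m$ under $\ClosureMap[\AgeTuple(i_m)](\cc_m)$. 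So the matrix becomes: there exist $\ell, \cc_0, \cc_1, \dd$ such that finitely many $\EmbedInfo$ queries hold. This is $\Sigma^0_1(\mathbf{e})$, and the whole predicate is $\forall\exists$, i.e.\ $\Pi^0_2(\mathbf{e})$. Verifying that this encoding of equality of composites is faithful, given the paper's tuple-based conventions for $\circ$ and $\ClosureMap$, is the main bookkeeping obstacle, but it only requires unwinding the definitions.

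\textbf{Part (b).} Assume $\compcK$ has $\coAP$. The set $\WitcoAP$ is then a collection of distinguished extensions.
\begin{itemize}
\item It is closed under pre- and post-composition with isomorphisms, because amalgamation bases are closed under isomorphism.
\item Every $\AgeIndex(i)$ is the domain of some member, by $\coAP$.
\item By (a) it is $\TuringDoubleJump[\EmbedInfo(\compcK)]$-computable, hence c.e.\ in that degree.
\end{itemize}
For $\WitFuncoAP$, take input $(F, G_0, G_1)$ with $F \in \WitcoAP$ and $(G_0, G_1)$ a potential span over $\codom(F)$. First use $\mathbf{e}$ to decide whether $G_0$ and $G_1$ are embeddings.
\begin{itemize}
\item If both are embeddings, search with $\mathbf{e}$ for $\ell, \cc_0, \cc_1$ satisfying the matrix above. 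The search terminates because $\codom(F)$ is an amalgamation base. Output $(H_0, H_1)$.
\item Otherwise the input is not a span, and only $H_0$ must be an embedding. Output $H_0 = \id_{\codom(G_0)}$ and $H_1 = (\codom(G_1), \codom(G_0), \cc)$, where $\cc$ is an arbitrary tuple of the right length.
\end{itemize}
This function is computable from $\mathbf{e} \Turingleq \TuringDoubleJump[\EmbedInfo(\compcK)]$, which completes (b).
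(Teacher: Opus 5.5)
Your proposal is correct and follows essentially the paper's argument. The paper likewise shows that ``$\codom(F)$ is an amalgamation base'' is a $\forall\exists$ statement over an $\EmbedInfo(\compcK)$-decidable matrix (packaged as $h_1$, then $h_2$ existential over $h_1$, then $h_3$ universal over $h_2$), and builds $\WitFuncoAP$ by an $\EmbedInfo(\compcK)$-computable search for an amalgamation diagram; your queries $\AgeTuple(i_m)\^\bb_m \clsim \cc_m\^\dd$ simply spell out why the paper's $h_1$ is $\EmbedInfo(\compcK)$-computable.
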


\input{ingredients/UpperBoundCompCAPProof.tex}

%%%%%%%%%%%%%%%%%%%%%%%%%%%%%%%%%%%%%%%%

%%%%  %%%%  %%%%  %%%% %%%%  %%%%  %%%%  %%%%
\section{Upper Bounds}
\label{sec:upper}
%%%%  %%%%  %%%%  %%%% %%%%  %%%%  %%%%  %%%%

%%%%  %%%%  %%%%  %%%%  %%%%  %%%%
\subsection{Ultrahomogeneous Structures} 
%%%%  %%%%  %%%%  %%%%  %%%%  %%%%

\begin{definition}
A structure $\cM$ is \defn{ultrahomogeneous} if for every finitely generated structure $\cA \subseteq \cM$ and every isomorphism $f\:\cA \to \cA^*$ with $\cA^* \subseteq \cM$ there is an automorphism $g\:\cM \to \cM$ such that $g \rest[\cA] = f$. 

The structure $\cM$ is a \defn{\Fraisse\ limit of $\cK$} if the age of $\cM$ is $\cK$ and $\cM$ is ultrahomogeneous. 
\end{definition}

\begin{definition}
A computable structure $\cM$ is \defn{computably ultrahomogeneous} if there is a function $g$ which takes in pairs of finite tuples $(\aa, \bb)$ of equal length from $\cM$ and returns a code for some bijection of the underlying set of $\cM$ with itself, such that 
if $\ClosureMap[\aa](\bb)$ is an isomorphism from $\Closure<\cM>(\aa)$ to $\Closure<\cM>(\bb)$,
then $g(\aa, \bb)$ is an automorphism of $\cM$ extending $\ClosureMap[\aa](\bb)$. We call such a map $g$ a \defn{witness} to the computable ultrahomogeneity of $\cM$.

The structure $\cM$ is a \defn{computable \Fraisse\ limit of $\compcK$} if $\cM$ is computably ultrahomogeneous and $\compcK$ is a computable representation of the age of $\cM$.  
\end{definition}

Note that the paper \cite{Computable-Fraisse} defines a notion of ``computable homogeneity'' where instead of taking in two $\clsim$-equivalent tuples and returning an automorphism it takes in two $\clsim$-equivalent tuples $\aa, \bb$ and an extra point $x$ and returns a new point $y$ such that $\aa y \clsim \bb x$. Note that their notion of computable homogeneity is equivalent to our notion of computable ultrahomogeneity via a standard back-and-forth construction of an automorphism from one-point extension axioms.

Our next result
strengthens
one direction of \cite[Theorem~3.9]{Computable-Fraisse}.

\begin{restatable}{proposition}{cCompUltracCap}
\label{c-computably ultrahomogeneity implies c-CAP}
Suppose $\cM$ is an arbitrary
countable structure with $\StandardTuringDegree$-computable domain which is $\StandardTuringDegree$-computably ultrahomogeneous. Then $\compcK[\cM]$ has $\CAP(\StandardTuringDegree)$.
\end{restatable}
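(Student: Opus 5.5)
Given a potential span $(F_0, F_1)$ in $\compcK[\cM]$, we will \StandardTuringDegree-computably produce an amalgamation diagram $(G_0, G_1)$ by working inside $\cM$ itself. Recall that objects of $\compcK[\cM]$ have the form $(\aa_i, \Closure<\cM>(\aa_i), i)$ with $\aa_i$ a tuple of $\cM$, and every tuple of $\cM$ occurs (infinitely often) in the enumeration; since $\cM$ has $\StandardTuringDegree$-computable domain, indices for any given tuple can be found $\StandardTuringDegree$-computably.

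Write $F_0 = (\AgeIndex(j), \AgeIndex(i_0), \cc_0)$ and $F_1 = (\AgeIndex(j), \AgeIndex(i_1), \cc_1)$, so $\cc_0 \in \Closure<\cM>(\aa_{i_0})$ and $\cc_1 \in \Closure<\cM>(\aa_{i_1})$ are tuples of $\cM$. Let $g$ be the witness to $\StandardTuringDegree$-computable ultrahomogeneity and compute the bijection $\sigma = g(\cc_1, \cc_0)$ of $\cM$. Set $\dd = \aa_{i_0}\^\sigma(\aa_{i_1})$ (concatenation), and search for an index $k$ with $\aa_k = \dd$, so that $\AgeStr(k) = \Closure<\cM>(\dd)$. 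Define $G_0 = (\AgeIndex(i_0), \AgeIndex(k), \aa_{i_0})$ and $G_1 = (\AgeIndex(i_1), \AgeIndex(k), \sigma(\aa_{i_1}))$; these are potential embeddings with common codomain, found uniformly $\StandardTuringDegree$-computably. Note that no case distinction on whether $(F_0,F_1)$ is a span is needed (which would not be computable): the construction is uniform.

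Verification: $G_0$ is always an embedding since $\aa_{i_0}$ sits identically in $\Closure<\cM>(\dd)$. For $G_1$: in general $\sigma$ is only a bijection, so $G_1$ need only be a potential embedding, as the definition of amalgamation diagram allows when $(F_0,F_1)$ is not a span. If $(F_0, F_1)$ is a span then $\cc_0 \clsim \aa_j \clsim \cc_1$, so $\ClosureMap[\cc_1](\cc_0)$ is an isomorphism and $\sigma$ is an automorphism extending it; hence $\sigma(\aa_{i_1}) \clsim \aa_{i_1}$ and $G_1$ is an embedding. Commutativity $G_0\circ F_0 = G_1 \circ F_1$ reduces, unwinding the composition definition, to checking that the ranges agree in $\AgeStr(k)$: $G_0\circ F_0$ has range $\cc_0$, while $G_1\circ F_1$ has range $\sigma(\cc_1) = \cc_0$.

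The main obstacle is just the careful bookkeeping of the composition $\ClosureMap[\cdot](\cdot)$ conventions — in particular that the range of $G_1\circ F_1$ is literally $\sigma(\cc_1)$, which uses that $\ClosureMap[\aa_{i_1}](\sigma(\aa_{i_1}))$ agrees with $\sigma$ on $\Closure<\cM>(\aa_{i_1})$ because $\sigma$ is an automorphism. A minor technical point is the $\StandardTuringDegree$-computable search for the index $k$, which terminates since every tuple appears in the canonical enumeration and the enumeration is computable relative to the domain of $\cM$.
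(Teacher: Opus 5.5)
Your proof is correct and is essentially the paper's argument. The paper computes $h_0 = g(\AgeTuple(j), \range(F_0))$ and $h_1 = g(\AgeTuple(j), \range(F_1))$ and uses $h_0 h_1^{-1}$ to move $\codom(F_1)$ alongside $\codom(F_0)$ inside $\cM$, which is exactly what your $\sigma$ does; your single call $g(\cc_1, \cc_0)$ is a mild streamlining that avoids inverting $h_1$ and makes the commutativity check $\sigma(\cc_1) = \cc_0$ immediate.
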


\input{ingredients/cCompUltracCapProof.tex}

The other direction of \cite[Theorem~3.9]{Computable-Fraisse} is equivalent to the following result.

\begin{proposition}
[{{\cite[Theorem~3.9]{Computable-Fraisse}}}]
\label{Computable Fraisse limit from (HP) (JEP) (AP)}
Suppose  $\compcK$ is a computable age that has $\CHP$, $\CJEP$, and $\CAP$. 
Then $\compcK$ has a computable \Fraisse\ limit.
\end{proposition}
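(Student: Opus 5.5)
We will build the limit as the union of a computable chain of embeddings and then invoke \cref{cor:union-of-comp-chain}. Fix computable witnesses $h$ for $\CHP$, $f$ for $\CJEP$, and $a$ for $\CAP$; the latter maps every potential span to an amalgamation diagram over it. Enumerate, with each item recurring infinitely often, all tasks of two kinds. A \emph{JEP task} is an index $\AgeIndex(m)$. An \emph{extension task} is a triple consisting of a stage $s$, a tuple $\cc$ in the $s$th structure, and a potential embedding $E$ with $\dom(E)=\AgeIndex(h(\cdot))$, where $h$ is applied to the pair (current structure, $\cc$) transported back along the chain.

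We then build computably a sequence of indices $k_0,k_1,\dots$ and potential embeddings $F_i$ with $\dom(F_i)=\AgeIndex(k_i)$ and $\codom(F_i)=\AgeIndex(k_{i+1})$. The two kinds of task are handled as follows.
\begin{itemize}
\item At a JEP task we set $F_i$ to be the first component of $f(\AgeIndex(k_i),\AgeIndex(m))$.
\item At an extension task we form the potential span $(E,\iota)$. Here $\iota$ is the potential embedding of $\AgeIndex(h(\ldots))$ into $\AgeIndex(k_i)$ whose range is the image of $\cc$ under the composite chain map. We then let $F_i$ be the component of $a(E,\iota)$ on the $\AgeIndex(k_i)$ side.
\end{itemize}

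The key point is that every $F_i$ must be a genuine embedding, so that \cref{Computable limit of sequences of structure} applies. For a JEP step this is immediate. For an extension step, $a$ outputs a tuple $(G_0,G_1)$ in which $G_0$ is always an embedding, whether or not the span is genuine. We therefore order the span so that the side going into the current structure is the one whose amalgamating map is the guaranteed embedding $G_0$. This ordering, chosen deliberately, is what lets us stay computable without ever deciding whether $E$ is really an embedding. Composition along the chain is also computable without deciding embedding-hood, because all $F_i$ are embeddings.

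Now apply \cref{cor:union-of-comp-chain} to get a computable $\cD_\w$ with computable $G_i$. The age of $\cD_\w$ is exactly the age of $\compcK$, for two reasons. Every finitely generated substructure of $\cD_\w$ is generated by a tuple in some $G_i(\AgeStr(k_i))$, so it lies in the age by $\HP$. Conversely, each $\AgeStr(m)$ embeds into $\cD_\w$ by the JEP tasks.

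The main obstacle is verifying computable ultrahomogeneity. The plan is to show the computable extension property: given finite tuples $\bar u,\bar v\in\cD_\w$ with $\bar u\clsim\bar v$ and a new point $x$, we can find $y$ with $\bar u x\clsim\bar v y$. Then we apply the back-and-forth remark after the definition, following \cite{Computable-Fraisse}. To find $y$, we locate a stage $s$ containing $\bar v$. We take the embedding $E$ given by the span over $\langle\bar u\rangle$ into $\Closure<\cD_\w>(\bar u x)$; this is obtained via $\CHP$ indices, by searching the enumeration for the right potential embedding. The extension task $(s,\bar v,E)$ is eventually processed at some stage $i\ge s$, and there the $\CAP$ diagram commutes whenever $\bar u\clsim\bar v$. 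This yields $y$ as the image under $G_{i+1}$ of the point corresponding to $x$, and the search for $y$ is uniform in the data. The delicate bookkeeping is to make the task index computable from $(s,\bar v,E)$, which we achieve with a fixed pairing enumeration listing each task infinitely often. We can then simply wait for the first stage past $s$ that handles it. Correctness when $\bar u\not\clsim\bar v$ is irrelevant, since the witness need only succeed on genuine isomorphisms.
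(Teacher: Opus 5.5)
The paper gives no proof of this statement (it is quoted from \cite{Computable-Fraisse}). Your argument is correct, and it is essentially the construction the paper uses for the cofinal analogue, \cref{prop:computable-cofinal-Fr-limit}: build a chain by interleaving joint-embedding steps with amalgamation steps, take the union via \cref{cor:union-of-comp-chain}, verify a one-point extension property, and finish by back-and-forth. Your version is slightly sharper in two respects.
\begin{itemize}
\item Your tasks range over potential embeddings, and the first leg of every amalgamation diagram is unconditionally an embedding. So non-genuine spans do no harm, and you never need $\EmbedInfo(\compcK)$ to decide which case you are in; the paper's Case~a/Case~b split in that proof does use it.
\item Routing through \cref{cor:union-of-comp-chain} removes the paper's simplifying assumption that one leg of each amalgamation is an inclusion.
\end{itemize}

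Two small repairs are needed.

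First, as written you form the span $(E,\iota)$ and take the leg on the $\AgeIndex(k_i)$ side. Under the paper's conventions that leg is the second component, which is only a potential embedding. The span must be $(\iota,E)$, as your next paragraph intends, so that $F_i$ is the first component.

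Second, the definition of computable ultrahomogeneity requires $g(\aa,\bb)$ to code a bijection even when $\aa\not\clsim\bb$. So that case is not entirely irrelevant: your extension step always halts, but it may return a point that destroys injectivity or well-definedness of the back-and-forth map. To fix this, check $\tuplesim$ of the extended tuples at each back-and-forth step; this is decidable, since equality in $\cD_\w$ is. A failure can only occur when $\aa\not\clsim\bb$, and at the first one you can complete the current finite injection to an arbitrary bijection.
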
 

We have the following
corollary of \cref{HP implies 0'-computable HP} and \cref{(JEP) implies 0' computable (JEP),(AP) implies 0' computable (AP),Computable Fraisse limit from (HP) (JEP) (AP)}.

\begin{corollary}
\label{Bound on the computability of Fraisse limit}
Suppose $\compcK$ is a computable age with $\HP$, $\JEP$, and $\AP$. Then $\compcK$ has an $\EmbedInfo(\compcK)$-computable \Fraisse\ limit. 
\end{corollary}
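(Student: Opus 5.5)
The corollary follows by relativizing the argument of \cref{Computable Fraisse limit from (HP) (JEP) (AP)} to the oracle $\EmbedInfo(\compcK)$. First, by \cref{HP implies 0'-computable HP}, \cref{(JEP) implies 0' computable (JEP)}, and \cref{(AP) implies 0' computable (AP)}, the computable age $\compcK$ has $\CHP(\StandardTuringDegree)$, $\CJEP(\StandardTuringDegree)$, and $\CAP(\StandardTuringDegree)$, where $\StandardTuringDegree = \TuringDegree[\EmbedInfo(\compcK)]$. This step is immediate.

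Next I would observe that the proof of \cref{Computable Fraisse limit from (HP) (JEP) (AP)} (that is, \cite[Theorem~3.9]{Computable-Fraisse}) relativizes. Given an oracle computing the witnesses to $\CHP$, $\CJEP$, and $\CAP$, we build a chain of embeddings $(F_i)_{i\in\w}$ in $\compcK$, computable in $\StandardTuringDegree$. At stage $s$ we handle the $s$th task in a fixed dovetailed list. There are two kinds of task.
\begin{itemize}
\item Use the $\CJEP$ witness to embed the next structure $\AgeStr(n)$ of $\compcK$ into the current stage.
\item For a tuple $\bb$ in the current stage structure and a potential embedding from the $\CHP$-witness copy of $\Closure(\bb)$ into some $\AgeStr(m)$, use the $\CAP$ witness to amalgamate $\AgeStr(m)$ over $\bb$ with the current stage.
\end{itemize}
Because the oracle decides $\EmbedInfo(\compcK)$, we can also check whether a potential span is actually a span, so every $F_i$ is a genuine embedding.

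Then \cref{Computable limit of sequences of structure} and \cref{cor:union-of-comp-chain}, relativized to $\StandardTuringDegree$, produce an $\StandardTuringDegree$-computable structure $\cD_\w$ together with $\StandardTuringDegree$-computable embeddings $G_i$. The age of $\cD_\w$ is exactly $\cK$: every member of $\cK$ is embedded by the $\JEP$ tasks, and by $\HP$ every finitely generated substructure of $\cD_\w$ lies in $\cK$. The amalgamation tasks give the one-point extension property, namely that for $\aa \clsim \bb$ in $\cD_\w$ and any $x$ there is $y$ with $\aa x \clsim \bb y$. A back-and-forth argument then shows that $\cD_\w$ is ultrahomogeneous, which makes it the \Fraisse\ limit.

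The only point needing care is that the Csima--Harizanov--Miller--Montalb\'an construction uses the witnesses purely as black boxes together with the computability of the age. Everything else in that construction is computable, so replacing ``computable'' by ``$\StandardTuringDegree$-computable'' throughout is legitimate. I expect this relativization check to be the main obstacle, and it is routine. In particular, the bookkeeping for potential versus actual embeddings is exactly what the definitions of $\CAP(\StandardTuringDegree)$ and $\CHP(\StandardTuringDegree)$ already accommodate.
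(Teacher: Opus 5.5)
Your proposal is correct and follows the paper's route: the paper obtains this corollary by combining \cref{HP implies 0'-computable HP}, \cref{(JEP) implies 0' computable (JEP)}, and \cref{(AP) implies 0' computable (AP)} to get $\EmbedInfo(\compcK)$-computable witnesses, and then applying \cref{Computable Fraisse limit from (HP) (JEP) (AP)} relativized to that oracle. One detail in your sketch should be made explicit: the limit must be $\EmbedInfo(\compcK)$-computably ultrahomogeneous, not merely ultrahomogeneous, and this holds because the one-point-extension search (equivalently, the argument of \cref{Ultrahomogeneous implies EmbedInfo-computably ultrahomogeneous}) only needs to decide $\clsim$ on tuples of $\cD_\w$, which reduces to $\EmbedInfo(\compcK)$ via the maps $G_i$.
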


The next result uses a standard back-and-forth construction to build the appropriate automorphism.

%\begin{proposition}
\begin{restatable}{proposition}{UltraHomogeneousBound}
\label{Ultrahomogeneous implies EmbedInfo-computably ultrahomogeneous}
Let $\cM$ be a computable structure, and 
suppose $\cM$ is ultrahomogeneous. Then $\cM$ is $\EmbedInfo(\cM)$-computably ultrahomogeneous. 
%\end{proposition}
\end{restatable}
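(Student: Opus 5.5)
We build the required witness $g$ by a uniform back-and-forth construction with oracle $\EmbedInfo(\cM)$. Given a pair of equal-length tuples $(\aa, \bb)$ from $\cM$, the procedure $g(\aa,\bb)$ produces a code for a bijection of $\cM$, presented as the union of an increasing sequence of finite partial maps $p_0 \subseteq p_1 \subseteq \cdots$. Fix a computable enumeration $(m_k)_{k\in\w}$ of the domain of $\cM$. Start with $p_0 = \aa \mapsto \bb$. At stage $2k+1$ we put $m_k$ into the domain of the current map, and at stage $2k+2$ we put $m_k$ into its range.

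Before doing anything else, the procedure uses $\EmbedInfo(\cM)$ to decide whether $\aa \clsim \bb$. This is possible because $\aa$ and $\bb$ appear as tuples of structures in $\compcK[\cM]$: use a computable index $k$ with $\AgeTuple(k) = \aa$, and so on, so the query is a single membership question about $\EmbedInfo(\compcK[\cM])$. If the answer is negative, output any fixed computable bijection, for instance the identity; the definition of a witness imposes no requirement in this case. If the answer is positive, run the back-and-forth.

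The invariant maintained is that, after stage $s$, $p_s$ is a map $\cc \mapsto \dd$ between finite tuples with $\cc \clsim \dd$. For the forth step we are given such $\cc, \dd$ and a point $x$.
\begin{itemize}
\item Ultrahomogeneity gives an automorphism extending $\ClosureMap[\cc](\dd)$, so some $y \in \cM$ satisfies $\cc x \clsim \dd y$.
\item Search through $y \in \cM$ in order, asking $\EmbedInfo(\cM)$ whether $\cc x \clsim \dd y$, again via indices of these tuples in the canonical age.
\item Take the first $y$ that works. The search halts because such a $y$ exists.
\end{itemize}
The back step is symmetric: search for $x$ with $\cc x \clsim \dd y$. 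Every step is $\EmbedInfo(\cM)$-computable uniformly in $(\aa,\bb)$, so $g$ is an $\EmbedInfo(\cM)$-computable function that outputs an index for an $\EmbedInfo(\cM)$-computable bijection.

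It remains to check that the limit map $\sigma = \bigcup_s p_s$ is an automorphism extending $\ClosureMap[\aa](\bb)$.
\begin{itemize}
\item Totality and surjectivity follow from the alternating stages.
\item Injectivity holds because $\cc \clsim \dd$ implies $\cc \tuplesim \dd$.
\item $\sigma$ preserves all atomic formulas on finite tuples, since each finite tuple lies in some $\cc$. Hence $\sigma$ is an automorphism.
\item $\sigma$ agrees with $\ClosureMap[\aa](\bb)$ on $\Closure<\cM>(\aa)$, since this is forced by $\aa \clsim \bb$ once those elements enter the domain.
\end{itemize}

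The main obstacle is bookkeeping rather than mathematics. We must match the representation of $\EmbedInfo(\cM)$, which is stated in terms of indices $\AgeIndex(k)$ and subtuples, to arbitrary tuples of $\cM$. The point is that each tuple $\ee$ of $\cM$ occurs as $\AgeTuple[\compcK[\cM]](k)$ for computably findable $k$, since the canonical enumeration is computable. With that in place, the query ``$\ee \clsim \ee'$?'' is a single membership query.

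A second point is what ``code for a bijection'' means relative to an oracle. We interpret it as an index of an $\EmbedInfo(\cM)$-computable function, which is exactly what the construction yields.
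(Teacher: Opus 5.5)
Your proposal is correct and follows essentially the same route as the paper. Both first use $\EmbedInfo(\cM)$ to decide whether $\aa \clsim \bb$, return an arbitrary bijection if not, and otherwise run a back-and-forth in which each one-point extension is found by an $\EmbedInfo(\cM)$-guided search that terminates by ultrahomogeneity. The differences are minor: in the trivial case the paper extends $\aa \mapsto \bb$, which the definition does not require, so your identity map is fine; and by tracking finite tuples rather than closure maps you avoid having to enumerate the possibly infinite domain of a closure map.
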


\input{ingredients/UltraHomogeneousBoundProof.tex}

%%%%  %%%%  %%%%  %%%%  %%%%  %%%%
\subsection{Cofinally Ultrahomogeneous Structures}
%%%%  %%%%  %%%%  %%%%  %%%%  %%%%

\begin{definition}     
Let $\cM$ be a structure. A set $E$ is a \defn{cofinal collection} (in $\cM$) when the following hold.
\begin{itemize}

\item $E$ is a collection of finite tuples in $\cM$. 

\item For every finite tuple $\aa$ in $\cM$
there is some $\bb \in E$ such that $\aa \subseteq \Closure<\cM>(\bb)$.

\item If 
$\bb \in E$ and if
$\aa \subseteq \Closure<\cM>(\bb)$ is a finite tuple,
then $\aa \bb \in E$.

\item For all finite tuples $\aa$ and $\bb$ in $\cM$, if $\aa \in E$ and $\aa \clsim \bb$, then $\bb \in E$.
\end{itemize}

\end{definition}

\begin{definition}
\label{Definition of cofinally ultrahomogeneous}
A structure $\cM$ is \defn{cofinally ultrahomogeneous} if for every finite set $A \subseteq \cM$ there 
is a
finitely generated $\cB \subseteq \cM$ with $A \subseteq \cB$ such that whenever $f\:\cB \to \cB^*$ is an isomorphism with $\cB^* \subseteq \cM$ then there is an automorphism $f^*\:\cM \to \cM$ such that $f^* \rest[\cB] = f$. 
Given a cofinal collection $E$ in $\cM$,
we say that $\cM$ is \defn{cofinally ultrahomogeneous for} $E$ if
every isomorphism between two elements of $E$ can be extended to an automorphism of $\cM$.
\end{definition}
(In particular, for every cofinally ultrahomogeneous structure $\cM$, there is some cofinal collection $E$ in $\cM$ such that $\cM$ is cofinally ultrahomogeneous for $\cM$.)

The following is an example from \cite{deRancourtSlides} of a cofinally ultrahomogeneous structure which is not ultrahomogeneous. For more details on this example as an instance of cofinal amalgamation, see \cite[Example 3.1.9]{MR3697592}.

\begin{example}
Let $Z = (G, E)$ be the unique (up to isomorphism) infinite undirected irreflexive graph with a single connected component such that every element has degree $2$, i.e., $Z$ consists of a single $\Integers$-chain. For distinct $a, b, c \in Z$ we say that $c$ is \defn{between} $a$ and $b$ if there is a $c_0, \dots, c_{n-1}$ such that $(a, c_0)$, $(c_0, c_1), \dots, (c_{n-1}, b) \in E$ and $c \in \{c_i\}_{i \in [n-1]}$. We also say that $a$ and $b$ are distance $n+1$ apart if the collection of elements between $a$ and $b$ has size $n$.

Note that $Z$ is not ultrahomogeneous, as whenever $a_0, a_1, b_0, b_1\in Z$ are such that neither pair $(a_0, a_1)$ nor $(b_0, b_1)$ has an edge, then these pairs have the same quantifier-free type, even when the distance between $a_0$ and $a_1$ is different than the distance between $b_0$ and $b_1$. 

Call a subset $A$ of $Z$ \defn{closed} if whenever $a, b \in A$ are distinct and $c$ is between $a$ and $b$ then $c \in A$. Note that given any two finite closed sets of the same size, there is an automorphism of $Z$ taking one to the other. Also note that every finite set of elements is contained in a finite closed set. Therefore $Z$ is cofinally ultrahomogeneous. 
\lipicsEnd
\end{example}

\begin{definition}
Let $\cM$ be a computable structure and 
suppose $E$ is an $\StandardTuringDegree$-computable cofinal collection in $\cM$.
We say that
$\cM$ is \defn{$\StandardTuringDegree$-computably cofinally ultrahomogeneous} (with respect to $E$) if there is an $\StandardTuringDegree$-computable $g$ such that 
\begin{itemize}

\item $\dom(g)$ is the collection of pairs $(\aa, \bb)$ of tuples in $\cM$ where $\aa \in E$ and $\aa \tuplesim \bb$, 

\item if $(\aa, \bb)\in \dom(g)$, then $g(\aa, \bb)$ is the index of an $\StandardTuringDegree$-computable bijection $g_{\aa, \bb}\: \cM \to \cM$, where 
%\begin{itemize}
%\item 
(i) $g_{\aa, \bb}(\aa) = \bb$; and
(ii) if $\aa \clsim \bb$ then $g_{\aa, \bb}$ is an automorphism of $\cM$. 
%\end{itemize}
\end{itemize}
\end{definition}

\begin{restatable}{proposition}{cofIffScompForSomeS}
Let $\cM$ be a computable structure.
The following are equivalent.
\begin{itemize}
\item[(a)] $\cM$ is cofinally ultrahomogeneous.

\item[(b)] There is some Turing degree $\StandardTuringDegree$ such that $\cM$ is $\StandardTuringDegree$-computably cofinally ultrahomogeneous. 

\end{itemize}
\end{restatable}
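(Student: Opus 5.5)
The direction (b)$\Rightarrow$(a) is the easy one. Suppose $\cM$ is $\StandardTuringDegree$-computably cofinally ultrahomogeneous with respect to an $\StandardTuringDegree$-computable cofinal collection $E$, witnessed by $g$. Given a finite set $A\subseteq\cM$, list it as a tuple $\aa$. Since $E$ is cofinal, there is $\bb\in E$ with $\aa\subseteq\Closure<\cM>(\bb)$, and we take $\cB=\Closure<\cM>(\bb)$. Now let $f\:\cB\to\cB^*$ be an isomorphism onto a substructure of $\cM$. Then $\bb\clsim f(\bb)$, so in particular $\bb\tuplesim f(\bb)$, and $(\bb,f(\bb))\in\dom(g)$. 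By clause (ii), $g_{\bb,f(\bb)}$ is an automorphism sending $\bb$ to $f(\bb)$. An automorphism is determined on $\Closure<\cM>(\bb)$ by its values on the generators $\bb$, so it agrees with $f$ on $\cB$. This is exactly cofinal ultrahomogeneity.

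For (a)$\Rightarrow$(b), suppose $\cM$ is cofinally ultrahomogeneous. First fix a cofinal collection $E$ for which $\cM$ is cofinally ultrahomogeneous; the parenthetical remark after \cref{Definition of cofinally ultrahomogeneous} says one exists. If I wanted to check this directly, I would let $E$ be the set of tuples $\bb$ such that every isomorphism from $\Closure<\cM>(\bb)$ onto a substructure of $\cM$ extends to an automorphism. This set is cofinal by hypothesis. It is closed under $\clsim$, because such an isomorphism composed with any other iso extends. It is closed under prepending tuples from $\Closure<\cM>(\bb)$, because this does not change the closure. Then I would let $\StandardTuringDegree$ be the degree of $E\oplus\EmbedInfo(\cM)$.

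The function $g$ is defined as follows. On input $(\aa,\bb)$ with $\aa\in E$ and $\aa\tuplesim\bb$, use $\EmbedInfo(\cM)$ to decide whether $\aa\clsim\bb$. If not, output a bijection of $\cM$ sending $\aa$ to $\bb$. Such a bijection exists because $\aa\tuplesim\bb$: extend the finite injective partial map $\aa\mapsto\bb$ to a permutation that is the identity off a finite set, which is computable uniformly. If $\aa\clsim\bb$, run a back-and-forth construction starting from the partial isomorphism $\aa\mapsto\bb$. At every stage the current partial map $\cc\mapsto\dd$ is kept of the form where $\cc$ extends $\aa$, $\cc\in E$, and $\cc\clsim\dd$. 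To go forth with a new element $x$, first use $E$ to find $\cc'\in E$ with $\cc x\subseteq\Closure<\cM>(\cc')$; by closure of $E$ we may take $\cc'$ to begin with $\cc x$. Since $\cc\in E$, the isomorphism $\ClosureMap[\cc](\dd)$ extends to an automorphism $h$ of $\cM$. Hence some tuple $\dd'$, namely $h(\cc')$, satisfies $\cc'\clsim\dd'$ and $\dd'$ extends $\dd$ in the corresponding coordinates. Such a $\dd'$ can be found by searching with $\EmbedInfo(\cM)$. Going back is symmetric: $\dd\in E$ by closure of $E$ under $\clsim$, and the inverse isomorphism also extends to an automorphism. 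The union of these maps is an automorphism extending $\ClosureMap[\aa](\bb)$, and it is $\StandardTuringDegree$-computable uniformly in $(\aa,\bb)$, which gives $g$.

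The main obstacle is ensuring that the back-and-forth search always succeeds, since a naive one-point extension need not keep extendibility. That is the reason for insisting that the domain tuple stay in $E$. The identity $h(\cc')$ shows that the search terminates. It also guarantees the next stage again starts from a map with domain in $E$ that extends to an automorphism, because membership in $E$ only depends on the domain side. Beyond that point, the remaining bookkeeping is the standard interleaving of forth and back steps over an enumeration of $\cM$.
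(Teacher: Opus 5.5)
Your proof is correct. The (b)$\Rightarrow$(a) direction is the routine argument the paper calls immediate. For (a)$\Rightarrow$(b), however, you take a genuinely different route from the paper.

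The paper uses the same set $E$ you describe. It then simply \emph{chooses}, for each pair $(\aa,\bb)$ with $\aa\in E$ and $\aa\tuplesim\bb$, an automorphism extending $\aa\mapsto\bb$ when $\aa\clsim\bb$, and an arbitrary bijection otherwise. It takes $\StandardTuringDegree$ to be the join of the degree of $E$ with the degree of this countable family, coded as a single real. Since the statement only asks for \emph{some} degree, this makes the direction essentially trivial: no back-and-forth and no use of $\EmbedInfo(\cM)$ is needed.

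You instead compute the automorphisms from $E\oplus\EmbedInfo(\cM)$ by a back-and-forth. Its invariant (domain tuple in $E$, current map $\clsim$-preserving) guarantees that every search terminates. One detail worth making explicit: membership in $\Closure<\cM>(\cc'')$ is in general only c.e., so the search for $\cc'$ should be dovetailed.

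What your extra work buys is a concrete bound. The automorphisms need not be coded separately, since $E\oplus\EmbedInfo(\cM)$ already suffices. This is essentially the argument the paper gives later in \cref{Equivalence between cofinal ultrahomogeneity and cofinal extension property}. Moreover, your $E$ coincides with the set of tuples generating amalgamation bases, by \cref{Cofinal ultrahomogeneous implies cofinal ultrahomogeneous with respect to amalgamation bases,Cofinal ultrahomogeneous implies isomorphism of amalgamation bases extend to automorphisms}. So, combined with \cref{Upper bounds on the computability of (CAP)}, your argument directly yields the $\TuringDoubleJump[\EmbedInfo(\cM)]$ bound of \cref{Cofinal ultrahomogeneous implies DoubleJump(EmbedInfo)-cofinal ultrahomogeneous}.
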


\input{ingredients/cofIffScompForSomeSProof.tex}

\begin{restatable}{proposition}{cofUltraImpliescofUltraAmalgamationBases}
\label{Cofinal ultrahomogeneous implies cofinal ultrahomogeneous with respect to amalgamation bases}
Let $\cM$ be a cofinally ultrahomogeneous structure for 
some 
cofinal collection $E$. If $\aa \in E$, then 
$\Closure<\cM>(\aa)$ is an amalgamation base in $\compcK[\cM]$.
\end{restatable}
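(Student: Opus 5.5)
To show that $\Closure<\cM>(\aa)$ is an amalgamation base, I will take two embeddings out of it and amalgamate them inside $\cM$, using an automorphism supplied by cofinal ultrahomogeneity for $E$. Let $\cA = \Closure<\cM>(\aa)$ with $\aa \in E$. Suppose $f_{\cB}\:\cA \to \cB$ and $f_{\cC}\:\cA \to \cC$ are embeddings with $\cB, \cC$ in the age of $\cM$. Since $\cB$ and $\cC$ are finitely generated and lie in the age of $\cM$, I may fix embeddings $g_{\cB}\:\cB \to \cM$ and $g_{\cC}\:\cC \to \cM$ whose images are $\Closure<\cM>(\bb)$ and $\Closure<\cM>(\cc)$ for suitable finite tuples $\bb, \cc$.

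The composites $g_{\cB}\circ f_{\cB}$ and $g_{\cC}\circ f_{\cC}$ embed $\cA$ into $\cM$. Write $\aa_{\cB} = g_{\cB}(f_{\cB}(\aa))$ and $\aa_{\cC} = g_{\cC}(f_{\cC}(\aa))$. Then $\aa \clsim \aa_{\cB}$ and $\aa \clsim \aa_{\cC}$, so $\aa_{\cB} \clsim \aa_{\cC}$. By the last clause in the definition of a cofinal collection, $E$ is closed under $\clsim$, so both $\aa_{\cB}$ and $\aa_{\cC}$ lie in $E$. Hence the isomorphism $\ClosureMap[\aa_{\cC}](\aa_{\cB})$ from $\Closure<\cM>(\aa_{\cC})$ to $\Closure<\cM>(\aa_{\cB})$ is an isomorphism between elements of $E$. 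By cofinal ultrahomogeneity for $E$, it extends to an automorphism $\sigma$ of $\cM$.

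Now $\sigma \circ g_{\cC}$ is an embedding of $\cC$ into $\cM$ satisfying $\sigma\circ g_{\cC}\circ f_{\cC} = g_{\cB}\circ f_{\cB}$ on $\cA$. I check this on the generators $\aa$, and it then holds on all of $\cA$ because $\cA$ is generated by $\aa$. Let $\cD = \Closure<\cM>(\bb\,\sigma(\cc))$. This is a finitely generated substructure of $\cM$, so it lies in the age. The maps $\alpha_{\cB} = g_{\cB}$ and $\alpha_{\cC} = \sigma\circ g_{\cC}$, viewed as maps into $\cD$, are embeddings with $\alpha_{\cB}\circ f_{\cB} = \alpha_{\cC}\circ f_{\cC}$. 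This is exactly the amalgamation condition of \cref{Definition of (coAP)}, with $\beta$ the identity.

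The only step requiring care is the bookkeeping with the paper's representation. Amalgamation base is stated for elements of $\compcK[\cM]$, whose objects are pairs $(\dd, \Closure<\cM>(\dd))$ and whose embeddings are potential embeddings $(\cdot,\cdot,\cc)$ with $\clsim$. So I must check that the abstract amalgam above translates into embeddings in $\CAT(\compcK[\cM])$. This works because every finitely generated substructure of $\cM$ appears in $\compcK[\cM]$, and because the notion is invariant under isomorphism. I expect the main subtlety to be confirming that closure of $E$ under $\clsim$ is what licenses applying the automorphism to $\aa_{\cB}, \aa_{\cC}$ rather than to $\aa$ itself.
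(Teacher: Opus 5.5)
Your proof is correct and takes essentially the same route as the paper. Both embed the two codomains into $\cM$, use cofinal ultrahomogeneity for $E$ to get an automorphism aligning the two images of $\Closure<\cM>(\aa)$, and take the substructure generated by the union of the images as the amalgam. Your explicit appeal to the closure of $E$ under $\clsim$ spells out a step the paper covers only with ``because $\aa \in E$.''
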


\input{ingredients/cofUltraImpliescofUltraAmalgamationBasesProof.tex}

\begin{restatable}{proposition}{cofUltraIsoExtendsAutomorphism}
\label{Cofinal ultrahomogeneous implies isomorphism of amalgamation bases extend to automorphisms}
Let $\cM$ be a cofinally ultrahomogeneous structure.
Let $\cA_0, \cA_1 \subseteq \cM$ be amalgamation bases in the age of $\cM$ and suppose $k\:\cA_0 \to \cA_1$ is an isomorphism. 
Then there is an automorphism $g\:\cM \to \cM$ such that $g\rest[\cA_0] = k$. 
\end{restatable}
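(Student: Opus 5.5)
The plan is a back-and-forth construction whose invariant is that every finite partial isomorphism built so far has domain an amalgamation base. Fix a cofinal collection $E$ for which $\cM$ is cofinally ultrahomogeneous, and enumerate $\cM = \{m_0, m_1, \dots\}$. We build an increasing chain of isomorphisms $k = k_0 \subseteq k_1 \subseteq \cdots$. Each $k_n$ maps some $\cB_n \subseteq \cM$ onto some $\cB_n^* \subseteq \cM$, and both $\cB_n$ and $\cB_n^*$ are amalgamation bases in the age of $\cM$. At even stages we put $m_n$ into the domain, and at odd stages into the range (by symmetry, applying the same step to $k_n^{-1}$). Then $g = \bigcup_n k_n$ is the required automorphism.

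The key step is a one-sided extension lemma. Suppose $\cA_0, \cA_1 \subseteq \cM$ are amalgamation bases, $k\:\cA_0 \to \cA_1$ is an isomorphism, and $\cC \supseteq \cA_0$ is a finitely generated substructure of $\cM$. Then there is an embedding $h\:\cC \to \cM$ extending $k$.

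To prove it, first enlarge $\cC$. Since $E$ is cofinal, choose $\cc \in E$ with $\cC \subseteq \Closure<\cM>(\cc)$, and replace $\cC$ by $\Closure<\cM>(\cc)$. By \cref{Cofinal ultrahomogeneous implies cofinal ultrahomogeneous with respect to amalgamation bases}, this new $\cC$ is an amalgamation base, and so is any closure of a tuple in $E$. Next, pick $\dd \in E$ with $\cA_1 \subseteq \Closure<\cM>(\dd)$, and set $\cD = \Closure<\cM>(\dd)$. Now we have two embeddings out of $\cA_0$: the inclusion into $\cC$, and $k$ followed by the inclusion into $\cD$. Because $\cA_0$ is an amalgamation base, there is a finite $\cF$ in the age of $\cM$ with embeddings $u\:\cC \to \cF$ and $v\:\cD \to \cF$ satisfying $u \rest[\cA_0] = v \circ k$.

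We then realize $\cF$ inside $\cM$ compatibly with $\cD$. Since $\cF$ is in the age, there is an embedding $w\:\cF \to \cM$. Choose $\ee \in E$ with $w(v(\cD)) \subseteq \Closure<\cM>(\ee)$. Extending the tuple appropriately (using the closure conditions on $E$), we can arrange tuples $\ee_1 \in E$ generating a structure that contains $w v(\dd)$, and a tuple $\ee_2 \clsim \ee_1$ that extends $\dd$ in the corresponding positions. Cofinal ultrahomogeneity for $E$ then gives an automorphism $\sigma$ of $\cM$ with $\sigma \circ w \circ v = \id_{\cD}$.

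The cleaner way to see this is that $\cD$ itself is the closure of an element of $E$. The map $w v\:\cD \to w v(\cD)$ is an isomorphism between closures of $\clsim$-equivalent tuples, one of which, $\dd$, lies in $E$. Since $E$ is closed under $\clsim$, the other tuple $w v(\dd)$ is also in $E$. Hence the isomorphism $(w v)^{-1}$ extends to an automorphism $\sigma$ of $\cM$.

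Set $h = \sigma \circ w \circ u$. On $\cA_0$ this gives $h = \sigma w v k = k$, as required.

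Applying the lemma with $\cC = \Closure<\cM>(\cB_n \cup \{m_n\})$ yields $h$. We must still ensure the new domain and range are amalgamation bases. Take the new domain to be the enlarged $\cC$, which is the closure of an element of $E$ and hence an amalgamation base. Its image $h(\cC)$ is $\clsim$-equivalent to it via the tuple $h(\cc)$, which lies in $E$, so the image is an amalgamation base too. Thus the invariant is preserved.

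The main obstacle is the stage-zero subtlety. The given $\cA_0$ and $\cA_1$ are only assumed to be amalgamation bases, not closures of elements of $E$, so we cannot directly apply the defining property of $E$ to $k$. The extension lemma is designed to handle exactly this: it uses amalgamation over $\cA_0$ only once, and moves all subsequent automorphism requests to the $E$-closures $\cD$ and $\cC$. I would check carefully that amalgamation over $\cA_0$ really applies, i.e., that the two maps out of $\cA_0$ land in elements of the age. This holds since $\cM$ has that age.
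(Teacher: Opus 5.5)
Your argument is correct, and its core is the same as the paper's. You enlarge to closures of tuples in $E$, amalgamate once over $\cA_0$, realize the amalgam in $\cM$, and use homogeneity for $E$ to correct. The difference is that you extend only the $\cD$-side embedding $wv$ to an automorphism $\sigma$ and then iterate with a back-and-forth. The paper instead extends both sides. With $\cC=\Closure<\cM>(\cc)$ and $\cc\in E$, the embedding $wu$ is an isomorphism from $\Closure<\cM>(\cc)$ onto $\Closure<\cM>(wu(\cc))$, and $wu(\cc)\in E$. So $wu$ also extends to an automorphism $\tau$, and $\sigma\circ\tau$ already extends $k$. This is exactly the paper's $\beta_1^{-1}\circ\beta_0$ with $\beta_0=\tau$ and $\beta_1=\sigma^{-1}$.

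Put differently, the map $h$ your lemma produces at the very first stage is already an isomorphism between closures of $E$-tuples, so it extends to an automorphism outright. The later stages of the back-and-forth are unnecessary, and so is the appeal to \cref{Cofinal ultrahomogeneous implies cofinal ultrahomogeneous with respect to amalgamation bases} to maintain the invariant.

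What your route buys is an isolated extension lemma: a non-effective form of the cofinal extension property relative to amalgamation bases. That is the shape of argument the paper genuinely needs later, in \cref{Cofinal ultrahomogeneous implies DoubleJump(EmbedInfo)-cofinal ultrahomogeneous}, where a back-and-forth is used for effectivity. For the present purely existential statement, the paper's composition of two automorphisms is shorter.

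A few clean-ups are worth making. First, drop the paragraph with $\ee_1,\ee_2$: finding $\ee_2\clsim\ee_1$ extending $\dd$ is not justified as written, and your ``cleaner'' argument replaces it entirely. Second, index the back-and-forth so that each $m_n$ enters the domain at stage $2n$ and the range at stage $2n+1$. Third, note that you only ever use that $\cA_0$ is an amalgamation base. The hypothesis on $\cA_1$ is automatic, since $k$ is an isomorphism.
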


\input{ingredients/cofUltraIsoExtendsAutomorphismProof.tex}

\begin{restatable}{proposition}{cCompCofImpcCcoAP}
\label{c-computably cofinal ultrahomogeneity implies c-CcoAP}
Let $\StandardTuringDegree$ be a Turing degree, and let
$\cM$ be an $\StandardTuringDegree$-computable structure that
is $\StandardTuringDegree$-computably cofinally ultrahomogeneous. 
If $\TuringDegree[\EmbedInfo(\cM)] \Turingleq \StandardTuringDegree$ then 
$\compcK[\cM]$ has 
$\CcoAP(\StandardTuringDegree)$.
\end{restatable}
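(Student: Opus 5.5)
We build both components of a witness to $\coAP$ for $\compcK[\cM]$ directly from the given $\StandardTuringDegree$-computable cofinal collection $E$ and the witness $g$ to $\StandardTuringDegree$-computable cofinal ultrahomogeneity, using $\EmbedInfo(\cM)$ to check embeddings.

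\textbf{The distinguished extensions.} Let $\WitcoAP(\compcK[\cM])$ be the set of embeddings $F$ in $\compcK[\cM]$ for which there are $\ell$ and a tuple $\cc \in E$ with
\[
\AgeTuple(\ell) = \cc, \qquad \codom(F) \text{ isomorphic via } \ClosureMap \text{ to } \AgeStr(\ell).
\]
Equivalently, up to $\clsim$, the codomain tuple of $F$ is $\clsim$ some tuple of $E$. Since $E$ is $\StandardTuringDegree$-computable, $\EmbedInfo(\cM) \Turingleq \StandardTuringDegree$, and $\compcK[\cM]$ is computable, this set is $\StandardTuringDegree$-c.e. We search for a witnessing $\ell$ with $\AgeTuple(\ell) \in E$ and check $\clsim$ using $\EmbedInfo$; checking that $F$ is an embedding also uses $\EmbedInfo$. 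We then verify that it is a collection of distinguished extensions:
\begin{itemize}
\item Closure under pre- and post-composition with isomorphisms is immediate, since membership depends only on the $\clsim$-class of the codomain tuple.
\item Totality holds because, for any $\AgeIndex(i)$ (a tuple of $\cM$), the cofinality of $E$ gives $\bb \in E$ with $\AgeTuple(i) \subseteq \Closure<\cM>(\bb)$, and the canonical age lists $\bb$ at some index.
\item Each codomain is an amalgamation base by \cref{Cofinal ultrahomogeneous implies cofinal ultrahomogeneous with respect to amalgamation bases}. Being an amalgamation base is $\clsim$-invariant, and $E$ is closed under $\clsim$ within $\cM$.
\end{itemize}

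\textbf{The amalgamation function.} Given $(F, G_0, G_1)$ with $\codom(F) = \AgeIndex(k)$, write $\aa = \AgeTuple(k)$, which is a tuple of $\cM$ equivalent to an element of $E$. We may first replace it by an $E$-tuple found by search, or use closure of $E$ under $\clsim$ directly, so assume $\aa \in E$. Write $\range(G_i) = \cc_i \in \AgeStr(j_i) = \Closure<\cM>(\AgeTuple(j_i))$, where $\cc_i$ is a tuple of $\cM$ with $\cc_i \tuplesim \aa$ (if not, output a trivial potential diagram). Compute $g(\aa, \cc_0)$ and $g(\aa, \cc_1)$ to get bijections $h_0, h_1$ of $\cM$; when $G_i$ are embeddings, these are automorphisms sending $\aa$ to $\cc_i$.

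Set $\dd_i = h_i^{-1}(\AgeTuple(j_i))$, and search the canonical age for an index $m$ with $\AgeTuple(m) = \aa\,\dd_0\,\dd_1$. Output $H_0 = (\AgeIndex(j_0), \AgeIndex(m), \dd_0)$ and $H_1 = (\AgeIndex(j_1), \AgeIndex(m), \dd_1)$. When $(G_0, G_1)$ is a span, $h_i^{-1}$ restricts to an embedding $\AgeStr(j_i) \to \AgeStr(m)$ sending $\cc_i$ to $\aa$, so $H_0 \circ G_0 = H_1 \circ G_1$, both mapping the base tuple to $\aa$. The definition of amalgamation diagram also demands that $H_0$ always be an embedding, even for non-spans. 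We secure this with $\EmbedInfo$: if $\AgeTuple(j_0) \not\clsim \dd_0$, we instead output the identity-style embedding of $\AgeStr(j_0)$ into a copy of itself joined with anything. All of this is $\StandardTuringDegree$-computable.

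\textbf{Main obstacle.} The delicate point is bookkeeping rather than mathematics. $g$ is only defined on pairs with first coordinate in $E$ and requires $\tuplesim$, not $\clsim$. Outputs must be genuine embeddings or potential embeddings in the right cases, and this is exactly where the hypothesis $\EmbedInfo(\cM) \Turingleq \StandardTuringDegree$ is used to decide, uniformly, whether the inputs form a span. Correctness in the span case reduces to the automorphism property of $g$.
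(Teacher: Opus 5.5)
Your proposal is correct and is essentially the paper's proof. In both, $\WitcoAP$ consists of the embeddings whose codomain is isomorphic to $\Closure<\cM>(\bb)$ for some $\bb\in E$, spans are amalgamated using automorphisms supplied by $g$, and $\EmbedInfo(\cM)$ handles non-spans; the only difference is that you pull both sides back with $g(\aa,\cc_0)$ and $g(\aa,\cc_1)$, whereas the paper uses one automorphism $\alpha$ carrying $\cc_0$ to $\cc_1$ and outputs $(\alpha\circ I_0, I_1)$.

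One small correction: your "Equivalently" (membership determined by the $\clsim$-class of the codomain tuple) does not give closure under post-composition with isomorphisms. Such an isomorphism can replace the generating tuple, and $E$ need not contain every generating tuple of $\Closure<\cM>(\bb)$; for example, $E$ could be all tuples of length at least $2$ in a pure set. So keep your first, isomorphism-type definition of $\WitcoAP$, which is also the paper's. Then, as you indicate, before applying $g$ replace the codomain tuple $\aa$ by $\aa\bb'$ for a generating $\bb'\in E$; this tuple lies in $E$ by the third clause of the definition of a cofinal collection.
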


\input{ingredients/cCompCofImpcCcoAPProof.tex}

We already knew, by
\cref{Canonical age of computable structure has (CHP) and (CJEP)}, 
that for any $\StandardTuringDegree$-computable $\StandardTuringDegree$-cofinally ultra\-homogeneous structure $\cM$, its canonical $\StandardTuringDegree$-computable age $\compcK[\cM]$ has $\CHP(\StandardTuringDegree)$ and $\CJEP(\StandardTuringDegree)$. 
\cref{c-computably cofinal ultrahomogeneity implies c-CcoAP}
shows that 
it also has
$\CcoAP(\StandardTuringDegree)$
if $\TuringDegree[\EmbedInfo(\cM)] \Turingleq \StandardTuringDegree$.

\begin{definition} 
\label{Definition of cofinal extension property}
Let $\StandardTuringDegree$ be a Turing degree, and suppose $E$ is a cofinal collection. An $\StandardTuringDegree$-computable structure $\cM$ has the \defn{$\StandardTuringDegree$-computable cofinal extension property} (with respect to $E$) if there is an $\StandardTuringDegree$-computable partial function $p$ which takes tuples $(\aa, \bb, \cc)$ with
(i)~$\aa, \bb,\cc \in \cM$, 
(ii)~$\aa, \cc  \in E$, 
(iii)~$\aa \subseteq \Closure<\cM>(\cc)$, and
(iv)~$\aa \tuplesim \bb$, 
%\end{itemize}
and which returns a tuple $\dd$ where $\aa\cc \tuplesim \bb\dd$ and whenever $\aa \clsim \bb$,  
then 
%\begin{itemize}
%\item 
(I)~$\dd \clsim \cc$, 
%\item 
(II)~$\bb \in \Closure<\cM>(\dd)$, and
%\item 
(III)~$\ClosureMap[\dd](\cc) \rest[\bb] = \ClosureMap[\bb](\aa)$.
\end{definition}

The next result describes an important relationship between 
the previous two definitions.
The proof uses the
cofinal extension property to build an automorphism extending any isomorphism of elements in $E$, generalizing the proof that a structure is ultrahomogeneous if and only if its age has the 1-point extension property. 

\begin{restatable}{proposition}{EquivBetweenCofinal}
\label{Equivalence between cofinal ultrahomogeneity and cofinal extension property}
Let $\StandardTuringDegree$ be a Turing degree, and
let $\cM$ be an $\StandardTuringDegree$-computable structure.
Suppose that $\EmbedInfo(\cM) \Turingleq \StandardTuringDegree$ and let
$E$ be an $\StandardTuringDegree$-computable collection of cofinal pairs. 
Then $\cM$ is $\StandardTuringDegree$-computably cofinally ultrahomogeneous with respect to $E$ if and only if
$\cM$ has the $\StandardTuringDegree$-computable cofinal extension property with respect to $E$.
\end{restatable}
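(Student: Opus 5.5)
We prove the two directions separately, in each case building the required $\StandardTuringDegree$-computable function uniformly from the other one.

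\emph{Forward direction.} Let $g$ witness $\StandardTuringDegree$-computable cofinal ultrahomogeneity with respect to $E$, and let $(\aa,\bb,\cc)$ satisfy (i)--(iv). Since $\aa\in E$ and $\aa\tuplesim\bb$, we have $(\aa,\bb)\in\dom(g)$. Set $p(\aa,\bb,\cc)=g_{\aa,\bb}(\cc)$. Because $g_{\aa,\bb}$ is a bijection sending $\aa$ to $\bb$, we get $\aa\cc\tuplesim\bb\dd$.

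Now suppose $\aa\clsim\bb$. Then $g_{\aa,\bb}$ is an automorphism, so $\dd\clsim\cc$. Since $\aa\subseteq\Closure<\cM>(\cc)$, its image satisfies $\bb\subseteq\Closure<\cM>(\dd)$. Finally, $\ClosureMap[\dd](\cc)$ is the restriction of $g_{\aa,\bb}^{-1}$ to $\Closure<\cM>(\dd)$, which agrees with $\ClosureMap[\bb](\aa)$ on $\bb$. All of this is $\StandardTuringDegree$-computable, because $g$ returns indices of $\StandardTuringDegree$-computable bijections.

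\emph{Backward direction.} Given $p$ and $(\aa,\bb)$ with $\aa\in E$ and $\aa\tuplesim\bb$, we first use $\EmbedInfo(\cM)\Turingleq\StandardTuringDegree$ to decide whether $\aa\clsim\bb$.

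If not, we output a default $\StandardTuringDegree$-computable bijection sending $\aa$ to $\bb$. For instance, take the permutation swapping $\aa$ with $\bb$ pointwise, extended correctly on overlaps; this is possible because $\aa\tuplesim\bb$.

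If $\aa\clsim\bb$, we run a back-and-forth. We maintain pairs $(\aa_n,\bb_n)$ with $\aa_n,\bb_n\in E$, $\aa_n\clsim\bb_n$, and $\aa_n\subseteq\aa_{n+1}$, $\bb_n\subseteq\bb_{n+1}$ as initial segments, with $\aa_0=\aa$ and $\bb_0=\bb$.

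\emph{Forth step.} Let $x$ be the least element not yet in $\Closure<\cM>(\aa_n)$. Search $E$ for some $\cc\in E$ with $\aa_n x\subseteq\Closure<\cM>(\cc)$; we may take $\cc=\aa_n x\cc'$ by closure of $E$ under prefixing elements of the closure. Such a $\cc$ exists because $E$ is cofinal, and $E$ is $\StandardTuringDegree$-computable, so the search terminates. Apply $p(\aa_n,\bb_n,\cc)=\dd$. Conditions (I)--(III) ensure that the map $\aa_n\cc\mapsto\bb_n\dd$ is $\clsim$-preserving and extends the previous one. Set $\aa_{n+1}=\aa_n\cc$ and $\bb_{n+1}=\bb_n\dd$; both lie in $E$, by closure of $E$ under $\clsim$ and the prefixing property.

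\emph{Back step.} This is symmetric, applying $p(\bb_n,\aa_n,\cc)$. It uses $\bb_n\in E$ (true since $E$ is closed under $\clsim$) and $\bb_n\clsim\aa_n$.

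The union $g_{\aa,\bb}=\bigcup_n\ClosureMap[\aa_n](\bb_n)$ is then an automorphism extending $\ClosureMap[\aa](\bb)$, and it is computable from $\StandardTuringDegree$ uniformly. The index $g(\aa,\bb)$ is obtained via the s-m-n theorem, which lets us delay the $\EmbedInfo$ case split to inside the index if desired.

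\emph{Main obstacle.} The delicate point is bookkeeping in the back-and-forth. We must check that the successive partial maps $\ClosureMap[\aa_n](\bb_n)$ are coherent: each closure map must extend the previous one. This is precisely what conditions (II) and (III) of the cofinal extension property guarantee. We must also ensure that $p$ is only ever applied to inputs in its domain, namely $\aa_n\in E$, $\cc\in E$, $\aa_n\subseteq\Closure<\cM>(\cc)$, and $\aa_n\tuplesim\bb_n$; this follows from the closure properties of $E$.
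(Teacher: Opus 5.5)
Your proposal is correct and follows essentially the paper's proof. In the forward direction you set $p(\aa,\bb,\cc)=g_{\aa,\bb}(\cc)$; in the backward direction you use $\EmbedInfo(\cM)$ to split off the trivial case $\aa\not\clsim\bb$, and otherwise run the same back-and-forth that extends to tuples in $E$ and applies $p$, with the roles of $\aa_n,\bb_n$ swapped on back steps. The only cosmetic difference is that the paper handles the $k$th element $x_k$ of a fixed enumeration at stage $k$ and appends it directly to the tuple, rather than searching for the least element outside $\Closure<\cM>(\aa_n)$; this sidesteps both deciding closure membership and the harmless case where no such element exists.
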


\input{ingredients/EquivBetweenCofinalProof.tex}

\begin{definition}
Suppose $\cK$ is an age in $\Lang$. A countable $\Lang$-structure $\cM$ is a \defn{cofinal \Fraisse\ limit} of $\cK$ if 
$\cM$ is cofinally ultrahomogeneous and
the age of $\cM$ is $\cK$. 
\end{definition}

\begin{definition}
Suppose $\compcK$ is a computable age. An $\StandardTuringDegree$-computable structure $\cM$ is an $\StandardTuringDegree$-computable cofinal \Fraisse\ limit if $\cM$ is $\StandardTuringDegree$-computably cofinally ultrahomogeneous and $\compcK$ is an $\StandardTuringDegree$-computable representation of the age of $\cM$. 
\end{definition}

We prove the next result
under a mild technical assumption, which we describe in the first paragraphs of the proof. (It is straightforward to generalize this to a proof of the full result, but this requires additional bookkeeping using 
\cref{Computable limit of sequences of structure}.)

\begin{restatable}{theorem}{compCofFrLimit}
\label{prop:computable-cofinal-Fr-limit}
Let $\StandardTuringDegree$ be a Turing degree.
Suppose  that
%\begin{itemize}
%
%\item[(a)] 
$\compcK$ is an $\StandardTuringDegree$-computable age,  
%
%\item[(b)] 
$\TuringDegree[\EmbedInfo(\compcK)] \Turingleq \StandardTuringDegree$, and
%
%\item[(c)] 
$\compcK$ has $\CHP(\StandardTuringDegree)$, $\CJEP(\StandardTuringDegree)$, and $\CcoAP(\StandardTuringDegree)$. 
%\end{itemize}
Then there is an $\StandardTuringDegree$-computable cofinal \Fraisse\ limit of $\compcK$. 
\end{restatable}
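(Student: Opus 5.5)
We will build the limit as the union of an $\StandardTuringDegree$-computable chain of embeddings $(F_i)_{i\in\w}$ in $\compcK$, with $\dom(F_i)=\AgeIndex(k_i)$ and $\codom(F_i)=\AgeIndex(k_{i+1})$. Relativizing \cref{Computable limit of sequences of structure} and \cref{cor:union-of-comp-chain} to $\StandardTuringDegree$ then yields an $\StandardTuringDegree$-computable structure $\cD_\w$ and embeddings $G_i\:\AgeStr(k_i)\to\cD_\w$. We then verify that $\cD_\w$ has the $\StandardTuringDegree$-computable cofinal extension property with respect to a suitable cofinal collection $E$, and conclude by \cref{Equivalence between cofinal ultrahomogeneity and cofinal extension property}.

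To simplify bookkeeping, we first make a mild assumption: the chain is arranged so that $\AgeStr(k_i)\subseteq\AgeStr(k_{i+1})$ literally, so that $\cD_\w=\bigcup_i\AgeStr(k_i)$. The general case is handled by \cref{Computable limit of sequences of structure}.

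The chain is built by a priority-free dovetailing. Fix an $\StandardTuringDegree$-computable witness $(\WitcoAP,\WitFuncoAP)$. Each $k_{i+1}$ will be the codomain of an element of $\WitcoAP$, i.e.\ an amalgamation base. Since $\WitcoAP$ is only $\StandardTuringDegree$-c.e., at each stage we search its enumeration for some $F\in\WitcoAP$ whose domain is the current structure, or an isomorphic copy found using $\EmbedInfo(\compcK)\Turingleq\StandardTuringDegree$. Such an $F$ exists by the fourth clause of \cref{Collection of distinguished extensions} together with closure under isomorphisms. The tasks are enumerated $\StandardTuringDegree$-computably:
\begin{itemize}
\item[(A)] For each $j\in\w$, ensure that $\AgeStr(j)$ embeds into the chain. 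Given the current amalgamation base $\AgeIndex(k_i)$, use $\CJEP(\StandardTuringDegree)$ to embed $\AgeIndex(k_i)$ and $\AgeIndex(j)$ jointly, and then extend again by $\WitcoAP$.
\item[(B)] For each stage $s$, each potential embedding $G$ with $\dom(G)=\codom(F)$ for a distinguished $F$ used at stage $s$, and each later stage $t>s$, realize $G$ over the chain. Apply $\WitFuncoAP$ to the potential span consisting of $G$ and the composite map $\AgeStr(k_{s+1})\to\AgeStr(k_t)$. This gives an amalgamation diagram, and we then extend once more by $\WitcoAP$.
\end{itemize}
Since potential spans are computable objects and $\WitFuncoAP$ always outputs an amalgamation diagram, the construction never needs to decide whether $G$ is a true embedding. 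When the span is genuine, the diagram commutes. $\CHP(\StandardTuringDegree)$ is used to name the closures of tuples as elements of $\compcK$.

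For verification, we take $E$ to be the set of tuples $\bb$ such that $\bb$ generates a substructure $\clsim$-equivalent to the range of some $G_{s+1}$ on a distinguished codomain, closed under adding elements of the closure. $E$ is $\StandardTuringDegree$-computable (really c.e., which we upgrade to computable by padding with the stage of appearance), because membership is tested via $\EmbedInfo(\compcK)$ and the enumeration of $\WitcoAP$. The tasks of type (A) show that the age of $\cD_\w$ is exactly $\cK$; $\HP$ gives the reverse inclusion. For the cofinal extension property, the function $p$ on input $(\aa,\bb,\cc)$ works as follows:
\begin{itemize}
\item Locate a stage where $\cc$ appears inside an amalgamation base $\AgeStr(k_{s+1})$.
\item Regard $\bb$ as lying in a later $\AgeStr(k_t)$.
\item Form the potential embedding that sends $\aa$'s closure to $\bb$'s.
\item Look up the type (B) task handling the resulting potential span, and read off $\dd$ from the amalgamation diagram produced there.
\end{itemize}
If $\aa\clsim\bb$, then commutativity gives (I)--(III).

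The main obstacle is making the bookkeeping of task (B) coherent. The potential span has to be formed over an amalgamation base that really is $\dom(\aa)$'s distinguished codomain, even though we cannot tell computably whether the maps involved are embeddings. We also have to make sure every relevant triple eventually receives attention, while the stages keep lengthening. We handle the first point by always feeding the witness potential spans whose base is literally the codomain of a distinguished extension. We handle the second by a standard pairing enumeration of (stage, potential embedding, later stage) triples.
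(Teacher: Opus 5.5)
There is a genuine gap in task (B) and in the function $p$ you build from it. Every type (B) span has as its base a \emph{literal} distinguished codomain $\AgeStr(k_{s+1})$ of the chain, and one of its legs is the chain inclusion of that very structure. So the resulting diagram only realizes extensions of $\AgeStr(k_{s+1})$ over $\AgeStr(k_{s+1})$ itself.

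The cofinal extension property must handle every $\bb$ with $\aa\clsim\bb$, since your $E$ is closed under $\clsim$, as any cofinal collection must be. It asks for an embedding of $\mathrm{cl}(\cc)$ into $\cD_\w$ extending $\aa\mapsto\bb$. That is, it requires realizing an extension over the copy $\mathrm{cl}(\bb)$, which in general is not one of the $\AgeStr(k_{s+1})$.

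Your recipe for $p$ does not supply this. ``The potential embedding that sends $\aa$'s closure to $\bb$'s'' does not have domain $\AgeStr(k_{s+1})$, so it indexes no type (B) task. Locating $\cc$ inside a base also puts the base on the wrong side: that base is fixed by the diagram, whereas the base has to sit around $\bb$. To use a type (B) task, you would first need an extension of a literal base containing $\bb$ in which a copy of $\mathrm{cl}(\cc)$ sits over $\mathrm{cl}(\bb)$ via $\aa\mapsto\bb$. That is an amalgamation over the non-literal copy $\mathrm{cl}(\bb)$, and it is exactly the missing step. Your closing remark, that you always feed the witness spans whose base is literally a distinguished codomain, is precisely what excludes the spans you need.

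The paper's construction lets the base range over \emph{all} copies. It enumerates triples $(\dd_n,F_n,G_n)$, where $\dd_n$ is an arbitrary finite subset of the current structure, $F_n\in\WitcoAP$, and $G_n$ is an embedding with domain $\codom(F_n)$. It uses $\EmbedInfo(\compcK)\Turingleq\StandardTuringDegree$ to test whether $\dd_n\clsim\codom(F_n)$. If so, it amalgamates the inclusion $\mathrm{cl}(\dd_n)\hookrightarrow\cB_n$ with $G_n$ transported along $\mathrm{cl}(\dd_n)\cong\codom(F_n)$. This is a genuine span, so the diagram really commutes. Then $p(\aa,\bb,\cc)$ is read off from a stage where $\dd_n$ is $\bb$ and $G_n$ corresponds to $\mathrm{cl}(\bb)\cong\mathrm{cl}(\aa)\subseteq\mathrm{cl}(\cc)$.

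Alternatively, you could keep your literal-base tasks and make $p$ two-step:
\begin{enumerate}
\item Apply $\WitFuncoAP$, transported along $\mathrm{cl}(\bb)\cong\mathrm{cl}(\aa)\cong$ some chain base, to amalgamate $\mathrm{cl}(\bb)\subseteq\AgeStr(k_t)$ with $\mathrm{cl}(\bb)\cong\mathrm{cl}(\aa)\subseteq\mathrm{cl}(\cc)$.
\item Realize the resulting extension over the literal base $\AgeStr(k_t)$ by a type (B) task.
\end{enumerate}
Another option is to build the automorphisms directly by a zigzag back-and-forth in which one side of the partial isomorphism is always a literal base. The rest of your outline is fine: the chain via \cref{Computable limit of sequences of structure}, and the type (A) tasks via $\CJEP(\StandardTuringDegree)$.
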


\input{ingredients/compCofFrLimitProof.tex}

The next result follows from
\cref{HP implies 0'-computable HP}, \cref{(JEP) implies 0' computable (JEP)}, 
and \cref{Upper bounds on the computability of (CAP),prop:computable-cofinal-Fr-limit}.

\begin{corollary}
\label{Bound on the computability of cofinal Fraisse limit}
Suppose $\compcK$ is a computable age with $\HP$, $\JEP$, and $\coAP$. Then $\compcK$ has an $\TuringDoubleJump[\EmbedInfo(\compcK)]$-computable cofinal \Fraisse\ limit. 
\end{corollary}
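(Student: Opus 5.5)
The plan is to assemble the corollary from the results already established, taking the Turing degree in \cref{prop:computable-cofinal-Fr-limit} to be $\StandardTuringDegree = \TuringDoubleJump[\EmbedInfo(\compcK)]$. The work is to check each hypothesis of that theorem at this degree. The only care needed is that the computable-witness properties were proved at degree $\TuringDegree[\EmbedInfo(\compcK)]$ (or at its double jump), and they must be lifted to the larger degree.

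The checks go in order.
\begin{enumerate}
\item Since $\compcK$ is computable, it is in particular $\StandardTuringDegree$-computable.
\item Trivially $\TuringDegree[\EmbedInfo(\compcK)] \Turingleq \TuringDoubleJump[\EmbedInfo(\compcK)] = \StandardTuringDegree$.
\item By \cref{HP implies 0'-computable HP}, $\compcK$ has $\CHP(\TuringDegree[\EmbedInfo(\compcK)])$. By \cref{(JEP) implies 0' computable (JEP)}, it has $\CJEP(\TuringDegree[\EmbedInfo(\compcK)])$.
\item Each of these properties only asks for a function computable from the given degree. A function computable from a smaller degree is computable from any larger one, so both properties hold at $\StandardTuringDegree$.
\item By \cref{Upper bounds on the computability of (CAP)}(b), since $\compcK$ has $\coAP$, it has $\CcoAP(\TuringDoubleJump[\EmbedInfo(\compcK)])$. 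This is exactly $\CcoAP(\StandardTuringDegree)$.
\end{enumerate}

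Applying \cref{prop:computable-cofinal-Fr-limit} then gives an $\StandardTuringDegree$-computable cofinal \Fraisse\ limit of $\compcK$, which is the claim.

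The main point to watch is the third hypothesis of \cref{prop:computable-cofinal-Fr-limit}: the cofinal amalgamation witness must be computable from the \emph{same} degree that computes $\EmbedInfo(\compcK)$. That is why the degree must be the double jump of $\EmbedInfo(\compcK)$, not the double jump of $\TuringDegreeZero$. This choice is consistent, since the double jump computes $\EmbedInfo(\compcK)$. Beyond that there is no real obstacle, and the proof amounts to citing the results in this order.
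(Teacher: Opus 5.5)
Your proposal is correct and is exactly the paper's argument: the paper obtains this corollary by combining \cref{HP implies 0'-computable HP}, \cref{(JEP) implies 0' computable (JEP)}, and \cref{Upper bounds on the computability of (CAP)}(b) to verify the hypotheses of \cref{prop:computable-cofinal-Fr-limit} at $\StandardTuringDegree = \TuringDoubleJump[\EmbedInfo(\compcK)]$. The one detail the paper leaves implicit is that the $\EmbedInfo(\compcK)$-computable witnesses for $\HP$ and $\JEP$ remain witnesses at the larger degree, and your step (4) states this routine point explicitly.
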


Putting together several of these ingredients, we have the following result.

\begin{restatable}{theorem}{cofUltrDoubleJump}
\label{Cofinal ultrahomogeneous implies DoubleJump(EmbedInfo)-cofinal ultrahomogeneous}
Suppose $\cM$ is a cofinally ultrahomogeneous structure. Then $\cM$ is \linebreak $\TuringDoubleJump[\EmbedInfo(\cM)]$-computably cofinally ultrahomogeneous.
\end{restatable}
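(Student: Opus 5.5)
The plan is to combine \cref{Upper bounds on the computability of (CAP)} with \cref{Equivalence between cofinal ultrahomogeneity and cofinal extension property}. Write $\StandardTuringDegree = \TuringDoubleJump[\EmbedInfo(\cM)]$. The goal is to produce an $\StandardTuringDegree$-computable cofinal collection $E$ in $\cM$ together with an $\StandardTuringDegree$-computable witness to the cofinal extension property with respect to $E$. Since $\EmbedInfo(\cM) \Turingleq \StandardTuringDegree$, \cref{Equivalence between cofinal ultrahomogeneity and cofinal extension property} then gives that $\cM$ is $\StandardTuringDegree$-computably cofinally ultrahomogeneous with respect to $E$.

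\textbf{Step 1: the cofinal collection.} Let $E$ be the set of finite tuples $\aa \in \cM$ such that $\Closure<\cM>(\aa)$ is an amalgamation base in $\compcK[\cM]$.
\begin{itemize}
\item By \cref{Upper bounds on the computability of (CAP)}(a), the set of embeddings whose codomain is an amalgamation base is $\StandardTuringDegree$-computable. Hence $E$ is $\StandardTuringDegree$-computable: given $\aa$, locate an index of $\compcK[\cM]$ with tuple $\aa$, and ask whether the identity embedding into it lies in that set.
\item $E$ is closed under $\clsim$, since being an amalgamation base is isomorphism invariant.
\item $E$ is closed under prepending tuples $\aa \subseteq \Closure<\cM>(\bb)$, since the generated substructure does not change.
\item Cofinality: $\cM$ is cofinally ultrahomogeneous for some cofinal collection $E_0$. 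By \cref{Cofinal ultrahomogeneous implies cofinal ultrahomogeneous with respect to amalgamation bases}, $E_0 \subseteq E$, so every finite tuple lies in the closure of some element of $E$.
\end{itemize}

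\textbf{Step 2: the extension function.} Given $(\aa,\bb,\cc)$ satisfying (i)--(iv) of \cref{Definition of cofinal extension property}, first use $\EmbedInfo(\cM)$ to decide whether $\aa \clsim \bb$.
\begin{itemize}
\item If not, output any $\dd$ with $\aa\cc \tuplesim \bb\dd$, for instance by reusing $\bb$'s entries where $\cc$ repeats $\aa$'s entries and choosing fresh elements elsewhere.
\item If so, search through tuples $\dd \in \cM$ for one satisfying $\aa\cc \tuplesim \bb\dd$, $\dd \clsim \cc$, $\bb \subseteq \Closure<\cM>(\dd)$, and $\ClosureMap[\dd](\cc)\rest[\bb] = \ClosureMap[\bb](\aa)$.
\end{itemize}
Every condition in this search is decidable from $\EmbedInfo(\cM)$. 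The key point is that $\bb \subseteq \Closure<\cM>(\dd)$ together with the last condition amounts to $\cc\aa \clsim \dd\bb$: take the index for $\cc\aa$ in $\compcK[\cM]$, compute the term-closure correspondence, and check it with $\EmbedInfo$. So the search is $\StandardTuringDegree$-computable.

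\textbf{Step 3: the search terminates.} Since $\aa \in E$, the structure $\Closure<\cM>(\aa)$ is an amalgamation base. Likewise $\Closure<\cM>(\bb) \cong \Closure<\cM>(\aa)$ is one, and $\ClosureMap[\bb](\aa)$ is an isomorphism between them. By \cref{Cofinal ultrahomogeneous implies isomorphism of amalgamation bases extend to automorphisms}, this isomorphism extends to an automorphism $g$ of $\cM$. Then $\dd = g(\cc)$ satisfies all the requirements.

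This Step 3 termination argument is the main obstacle. It is where cofinal ultrahomogeneity, rather than just the age, is used: mere amalgamation in the age would only yield $\dd$ in some extension, not inside $\cM$. It is also why $E$ must consist of amalgamation bases rather than an arbitrary cofinal collection, so that the isomorphism in the hypothesis is known to extend.

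One remaining check concerns the non-$\clsim$ case, where the output is only required to satisfy the equality pattern and that is trivial. Apart from that, combining Steps 1--3 with \cref{Equivalence between cofinal ultrahomogeneity and cofinal extension property} completes the proof.
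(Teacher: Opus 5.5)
Your proposal is correct and follows essentially the same route as the paper. Both take $E$ to be the tuples generating amalgamation bases, which is $\TuringDoubleJump[\EmbedInfo(\cM)]$-computable by \cref{Upper bounds on the computability of (CAP)} and cofinal because it contains $E_0$; both find the cofinal-extension witness by an $\EmbedInfo(\cM)$-search that terminates by \cref{Cofinal ultrahomogeneous implies isomorphism of amalgamation bases extend to automorphisms}, and both conclude with \cref{Equivalence between cofinal ultrahomogeneity and cofinal extension property}. One trivial slip: since your $g$ extends $\ClosureMap[\bb](\aa)$, the required tuple is $\dd = g^{-1}(\cc)$, or alternatively extend $\ClosureMap[\aa](\bb)$ and take $g(\cc)$.
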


\input{ingredients/cofUltrDoubleJumpProof.tex}

Combining \cref{Embdedding info is computable from 0'} and 
\cref{Cofinal ultrahomogeneous implies DoubleJump(EmbedInfo)-cofinal ultrahomogeneous}, we obtain the following.

\begin{corollary}
\label{triplejump-corollary}
Suppose $\cM$ is a computable cofinally ultrahomogeneous structure. Then $\cM$ is $\TuringTripleJump$-computably cofinally ultrahomogeneous.
\end{corollary}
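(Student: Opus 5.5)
The corollary follows by composing two bounds: the computability of the embedding information, and the double-jump bound for cofinal ultrahomogeneity. Let $\cM$ be a computable cofinally ultrahomogeneous structure. By \cref{Cofinal ultrahomogeneous implies DoubleJump(EmbedInfo)-cofinal ultrahomogeneous}, $\cM$ is $\TuringDoubleJump[\EmbedInfo(\cM)]$-computably cofinally ultrahomogeneous. So it remains to show that $\TuringDoubleJump[\EmbedInfo(\cM)] \Turingleq \TuringTripleJump$, and that being $\StandardTuringDegree$-computably cofinally ultrahomogeneous is monotone in $\StandardTuringDegree$.

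For the first point, recall that $\EmbedInfo(\cM) = \EmbedInfo(\compcK[\cM])$. Since $\cM$ is computable, $\compcK[\cM]$ is a computable age. Hence by \cref{Embdedding info is computable from 0'} we have $\EmbedInfo(\compcK[\cM]) \Turingleq \TuringJump$. The Turing jump is monotone: if $X \Turingleq Y$ then $X' \Turingleq Y'$. Applying this twice gives
\[
\TuringDoubleJump[\EmbedInfo(\cM)] \Turingleq (\TuringJump)'' = \TuringTripleJump .
\]

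For monotonicity, suppose $\StandardTuringDegree \Turingleq \TuringDegree[b]$ and $\cM$ is $\StandardTuringDegree$-computably cofinally ultrahomogeneous, witnessed by a cofinal collection $E$ and a function $g$. Then $E$ is $\TuringDegree[b]$-computable and $g$ is $\TuringDegree[b]$-computable. Each index $g(\aa,\bb)$ names an $\StandardTuringDegree$-computable bijection; we uniformly convert it into an index of the same bijection computed relative to $\TuringDegree[b]$, by composing with a fixed reduction procedure from $\StandardTuringDegree$ to $\TuringDegree[b]$. The conditions on $g_{\aa,\bb}$ (it sends $\aa$ to $\bb$, and is an automorphism whenever $\aa \clsim \bb$) concern only the function itself, not its index, so they are preserved.

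The only step needing care is this index translation in the monotonicity argument; it is routine, since the translation is uniform in the index.
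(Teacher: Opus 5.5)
Your proposal is correct and matches the paper's argument: the paper derives this corollary by combining $\EmbedInfo(\compcK) \Turingleq \TuringJump$ for computable ages with the theorem that $\cM$ is $\TuringDoubleJump[\EmbedInfo(\cM)]$-computably cofinally ultrahomogeneous, using monotonicity of the jump to land at $\TuringTripleJump$. Your explicit check that being computably cofinally ultrahomogeneous relative to an oracle is preserved when the oracle is raised, including the uniform translation of indices, is a detail the paper leaves implicit.
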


%%%%  %%%%  %%%%  %%%%  %%%%  %%%%
\section{Lower Bounds}
\label{sec:LowerBounds}
%%%%  %%%%  %%%%  %%%%  %%%%  %%%%

To conclude the paper, we give 
two main lower bound results,
\cref{Lower bound on coAP in finite language} (for finite relational languages) and
\cref{Lower bound on coAP in infinite language} (for countable languages which may be infinite or include function symbols), and an associated corollary for each in combination with  \cref{c-computably cofinal ultrahomogeneity implies c-CcoAP}.

%%%%  %%%%  %%%%  %%%%  %%%%  %%%%
\subsection{Finite Relational Languages}
%%%%  %%%%  %%%%  %%%%  %%%%  %%%%

We now prove \cref{Lower bound on coAP in finite language}, which constructs
a computable age with $\coAP$ that is the canonical computable age of some structure in a finite relational language, but is such that if it has $\CcoAP(\StandardTuringDegree)$ for some 
$\StandardTuringDegree$, then $\TuringJump \Turingleq \StandardTuringDegree$.

Before the proof, we describe an outline of our approach.
We first
computably divide the infinite structure we are constructing into $\w$-many disjoint pieces such that no relation holds between elements of distinct pieces. 
This 
provides
$\w$-many distinct subages of our canonical age such that our age has $\coAP$ if and only if each 
subage
does. Further, 
as
everything is done computably, we can compute a witness for $\coAP$ in a subage from a witness for $\coAP$ in the full age. 
Hence our task is to build
an age such that from any witness to $\coAP$ we can determine (uniformly in $e$)
whether  or not $\{e\}(0) \converges$. 

We create a structure which can be divided into $\w$-many disjoint pieces by starting with a base language $\Lang_0 = \{C, E\}$. We then let $C$ be the edge relation of a directed graph consisting of infinitely many cycles of every length. We then let $E$ be an equivalence relation such that each equivalence class contains at most one element of each cycle. We will then mandate that no relation holds of a tuple with elements of cycles of different lengths. We will also mandate that no relation holds of a tuple with elements in different equivalence classes and that all relations are preserved if we move each element of the tuple one step along the cycle. In this way the structure is completely determined by the structure on one equivalence class in each cycle.

We now need to build a structure such that from any witness to $\CAP$ for its age we can determine whether or not $\{e\}(0) \converges$. In order to do this, we define a component structure $W_{m, n}$ 
for $m, n \in \w \cup \{\w\}$
such that $m < n$ or $m = n = \w$.
Intuitively $W_{m, n}$ has three parts. First it has two elements $(q_+, q_-)$ which act as the root of the structure. Then connected to $q_+$ we have a $B$-chain of length $m$ and a $Y$-chain of length $n$. Finally connected to $q_-$ we have a $B$-chain of length $n$ and a $Y$-chain of length $m$. The key property of $W_{m, n}$ is that given a substructure $A$ of $W_{m, n}$ containing $\{q_+, q_-\}$ but where the longest $B$-chain and the longest $Y$-chain are both at most $m$, then it is possible to embed $A$ into $W_{m, n}$ in two ways: one in which the $q_+$ gets mapped to an element connected to a $B$-chain of length $n$, and one in which $q_+$ gets mapped to an element connected to a $Y$-chain of length $n$. As these cannot be amalgamated in $W_{m, n}$, this implies that $A$ is not an amalgamation base. Hence if $\{q_+, q_-\} \subseteq A \subseteq W_{m, n}$ and $A$ is an amalgamation base, then $A$ must contain either a $B$-chain or a $Y$-chain of length $> m$, where this chain is attached to either $q_+$ or $q_-$. 

With this component in hand, if $\{e\}(0) \diverges$ we can let the $e$th component of the age simply be $W_{2, 4}$ (plus infinitely many elements which do not interact with it) and if $\{e\}(0) \converges$ we can let the $e$th component of the age be $W_{30, 90}$ (plus infinitely many elements which do not interact with it). Therefore if $A$ is an amalgamation base over $\{q_+, q_-\}$ then $A$ contains either an $B$-chain or $Y$-chain of length $> 4$ connected to one of $q_+$ or $q_-$ if and only if $\{e\}(0) \converges$, as desired.

\begin{restatable}{theorem}{LowerBoundCoAPFinite}
\label{Lower bound on coAP in finite language}
There is a computable age $\compcK$, which is the canonical computable age of some structure in a finite relational language, such that $\compcK$ has $\coAP$ but if it has $\CcoAP(\StandardTuringDegree)$ for some 
$\StandardTuringDegree$, then $\TuringJump \Turingleq \StandardTuringDegree$.
\end{restatable}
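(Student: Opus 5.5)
We follow the outline preceding the statement. Work in the finite relational language $\Lang = \{C, E, B, Y, Q_+, Q_-\}$, where $C$ is a directed cycle relation, $E$ is an equivalence relation, $B$ and $Y$ are binary ``chain'' relations, and $Q_\pm$ are unary predicates marking the roots $q_\pm$. We build a computable structure $\cM$ as a disjoint union of pieces $\cM_e$ for $e \in \w$; no relation holds across pieces. The piece $\cM_e$ consists of infinitely many $C$-cycles of a length $\ell_e$ reserved for $e$. Each is cut into $E$-classes meeting each cycle in at most one point, and the relations $B, Y, Q_\pm$ on $\cM_e$ are invariant under the successor map along the cycles. The purpose of the cycles is to make the piece \emph{computably} decidable at stage $s$ while still letting us change our mind about $e$.

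\textbf{Construction of $\cM_e$.} The intended content of one $E$-class is $W_{2,4}$ if $\{e\}(0)\diverges$ and $W_{30,90}$ if $\{e\}(0)\converges$. We realize this computably by a delayed-commitment trick. At stage $s$ we place a fresh copy of the ``small'' configuration on new $E$-classes, because new elements keep arriving. If $\{e\}(0)$ converges at stage $s$, then from that point on every $E$-class we build is completed to $W_{30,90}$. Every finite configuration realized before stage $s$ embeds into $W_{30,90}$, and we can arrange that all earlier classes only ever contain a finite, extendable part. Extending those earlier classes to $W_{30,90}$, or adding isolated padding, keeps the age equal to the age of $W_{30,90}$ plus padding. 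If $\{e\}(0)$ never converges, the age is that of $W_{2,4}$. Either way $\cM$ is computable, and we take $\compcK = \compcK[\cM]$, which has $\CHP$ and $\CJEP$ by \cref{Canonical age of computable structure has (CHP) and (CJEP)}. The main delicacy is arranging that the partial structures committed before convergence embed both into $W_{2,4}$-type classes and into $W_{30,90}$-type classes. The cycles and the $E$-classes give enough room to keep these ambiguous pieces as pure substructures. I expect this bookkeeping to be routine but tedious.

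\textbf{$\coAP$ holds.} Because no relation crosses pieces, classes, or cycle lengths, an amalgamation base for a finite $\cA$ can be formed piece by piece. For each component we take the closure of the $W_{m,n}$-part of $\cA$ together with both $\{q_+,q_-\}$ and the full chains of $W_{m,n}$, which is finite since $m,n<\w$. We then need to show that a substructure containing a full copy of $W_{m,n}$ is an amalgamation base. The reason is that embeddings of such a set into members of the age are rigid on that component, so any two extensions amalgamate by free amalgamation over the fixed $W_{m,n}$ plus the cycle-shift structure. \textbf{The main obstacle} is checking this rigidity of $W_{m,n}$, together with the key property from the outline: if $\{q_+,q_-\}\subseteq A\subseteq W_{m,n}$ and every $B$- or $Y$-chain of $A$ at $q_\pm$ has length at most $m$, then $A$ is \emph{not} an amalgamation base. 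This is proved by the two embeddings, one sending $q_+$ to the root carrying the length-$n$ $B$-chain and one sending it to the root carrying the length-$n$ $Y$-chain. Both extend $A$, they cannot be jointly realized in any member of the age, and so amalgamation fails.

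\textbf{Reduction.} Suppose $(\WitcoAP(\compcK),\WitFuncoAP(\compcK))$ is a $\StandardTuringDegree$-computable witness. Given $e$, let $\cA_e$ be the structure $\{q_+,q_-\}$ from piece $e$; it is computably locatable in $\compcK$. Since $\WitcoAP(\compcK)$ is $\StandardTuringDegree$-c.e.\ and contains an $F$ with $\dom(F)=\cA_e$, we search for such an $F$. Its codomain is an amalgamation base containing the image of $\{q_+,q_-\}$, so by the key property it contains a chain of length $>m$ at $q_\pm$. By inspecting $\codom(F)$ we can compute whether it contains a chain of length $>4$ attached to $q_\pm$. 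If $\{e\}(0)\diverges$, then $m=2,n=4$ and no chain exceeds length $4$. If $\{e\}(0)\converges$, then $m=30$ and any such chain has length $>30>4$. Hence $\StandardTuringDegree$ decides $\{e\}(0)\converges$ uniformly in $e$, giving $\TuringJump\Turingleq\StandardTuringDegree$.
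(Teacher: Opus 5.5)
There is a genuine gap, and it comes from the unary predicates $Q_+$ and $Q_-$ you added to the language. Your key property is proved using an embedding that sends $q_+$ to ``the root carrying the length-$n$ $B$-chain''. In $W_{m,n}$ that root is $q_-$. With $Q_+(q_+)$ and $Q_-(q_-)$ in the language, this map does not preserve $Q_+$, so it is not an embedding, and the property is in fact false in your language.

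Concretely, take the intended structure with one gadget per $E$-class, so that the $Q_+$-points of piece $e$ form a single $C$-cycle, and likewise the $Q_-$-points. Let $A_e$ be these two cycles with the induced $C$, $E$, $Q_\pm$. Any embedding of $A_e$ into $\cM$ must send these marked $\ell_e$-cycles onto themselves. Because of $E$, both cycles are rotated by the same amount, and that rotation is a power of the cycle-successor map. By your shift-invariance, the successor map is an automorphism of $\cM$. Hence any two extensions of $A_e$ can be realized inside $\cM$ agreeing on $A_e$. So $A_e$ is an amalgamation base containing no $B$- or $Y$-edge at all. A witness may therefore send your $\cA_e$ into such a base, and inspecting $\codom(F)$ reveals nothing about $\{e\}(0)$.

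The paper puts only $B$ and $Y$ on top of $\{C,E\}$, with no root markers. The two roots are then distinguished only by the lengths of their chains, and this is exactly what makes configurations with short chains at the roots fail to be amalgamation bases. You must drop $Q_\pm$; at most a marker symmetric in the two roots is harmless.

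Even without markers, your reduction has a second gap. In a relational language the two-point structure $\{q_+,q_-\}$ is just a pair of $E$-equivalent points. Such pairs occur in every piece and among padding points, and the witness chooses where the image of $F$ lands. Your key property is stated for $A \subseteq W_{m,n}$ containing the actual roots, so it says nothing about that image.

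The paper handles this by splitting the age into the component ages $\cK_e$. Claim 1 shows that $\coAP$ for every component gives $\coAP$ for the whole age. Claim 2 extracts, uniformly in $e$, a witness for $\cK_e$ from a witness for the whole age. It then reads off $m$ from a quantity that does not depend on where the roots are sent: the number of elements of the amalgamation base lying in $B$-edges and in $Y$-edges. Alternatively, you could include the full $C$-cycles in the domain and count edges among codomain points of piece $e$. This pins the piece, since an embedded directed $\ell$-cycle must map onto an $\ell$-cycle.

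Finally, your delayed-commitment construction is more than is needed. Since $W_{2,4}$ sits inside $W_{30,90}$ as initial segments of the chains, the paper simply builds $W_{2,4}$ and attaches fresh chain elements if and when $\{e\}(0)$ converges.
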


\input{ingredients/LowerBoundCoAPFiniteProof.tex}

We then obtain the following corollary
of
\cref{Lower bound on coAP in finite language} and \cref{c-computably cofinal ultrahomogeneity implies c-CcoAP}. 
\begin{corollary}
\label{Lower bound on cofinal Fraisse limit in finite language}
There is a computable age $\compcK$, which is the canonical computable age of some structure in a finite relational language, such that if $\StandardTuringDegree$ is a Turing degree and $\cM$ is 
an $\StandardTuringDegree$-computable structure that is $\StandardTuringDegree$-computably cofinally ultrahomogeneous
and a cofinal \Fraisse\ limit of $\compcK$, then $\TuringJump \Turingleq \StandardTuringDegree$. 
\end{corollary}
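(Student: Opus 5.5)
We take $\compcK$ to be the computable age produced by \cref{Lower bound on coAP in finite language}. It is the canonical computable age $\compcK[\cN]$ of some computable structure $\cN$ in a finite relational language.

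Suppose $\cM$ is an $\StandardTuringDegree$-computable structure that is $\StandardTuringDegree$-computably cofinally ultrahomogeneous and is a cofinal \Fraisse\ limit of $\compcK$. The plan is to show that the canonical age $\compcK[\cM]$ has $\CcoAP(\StandardTuringDegree)$, and then transfer this to $\compcK$ itself.

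First, the language is finite and relational, and the age of $\cM$ is the same as the age of $\cN$. So $\compcK[\cM]$ (relativized to $\StandardTuringDegree$) is uniformly finite. The closure of a tuple is just the tuple, so deciding $\clsim$ only requires checking finitely many atomic formulas. Hence, by the second part of \cref{Embdedding info is computable from 0'} relativized to $\StandardTuringDegree$, we get $\TuringDegree[\EmbedInfo(\cM)] \Turingleq \StandardTuringDegree$. \cref{c-computably cofinal ultrahomogeneity implies c-CcoAP} now applies and gives that $\compcK[\cM]$ has $\CcoAP(\StandardTuringDegree)$.

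The remaining step is to pass from $\compcK[\cM]$ to $\compcK$. These are two computable representations of the same age. In a finite relational language they are $\StandardTuringDegree$-computably isomorphic: given $(\aa, \cA, i)$, one searches the other representation for a tuple $\bb$ with $\aa \clsim \bb$, and this relation is decidable because it amounts to comparing finite atomic diagrams. The search always terminates since both representations contain every isomorphism type, and every tuple appears in the canonical ages. The relativized form of \cref{Inverse image of cofinal witness under computable isomorphism is cofinal witness} then transports the witness, so $\compcK$ has $\CcoAP(\StandardTuringDegree)$. \cref{Lower bound on coAP in finite language} then yields $\TuringJump \Turingleq \StandardTuringDegree$.

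The main point requiring care is this transfer step. The earlier lemma is stated only for computable isomorphisms, so we must check that its proof relativizes to $\StandardTuringDegree$. It does: all it uses is that composing a witness with the isomorphism is computable relative to the join of the two. We must also check that the relevant computations over the finite relational language are uniform in $\StandardTuringDegree$; this holds because deciding $\clsim$ never requires more than the atomic diagrams involved.
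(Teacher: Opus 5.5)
Your proposal is correct and follows the paper's route: the paper obtains this corollary directly from \cref{Lower bound on coAP in finite language} and \cref{c-computably cofinal ultrahomogeneity implies c-CcoAP}. You have made explicit the two steps the paper leaves implicit, namely that $\TuringDegree[\EmbedInfo(\cM)] \Turingleq \StandardTuringDegree$ because the language is finite and relational, and that the $\CcoAP(\StandardTuringDegree)$ witness for $\compcK[\cM]$ transfers to $\compcK$ along an $\StandardTuringDegree$-computable isomorphism of ages.
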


%%%%  %%%%  %%%%  %%%%  %%%%  %%%%
\subsection{Arbitrary Countable Languages}
%%%%  %%%%  %%%%  %%%%  %%%%  %%%%

We now turn our attention to \cref{Lower bound on coAP in infinite language}, which constructs a computable age with $\coAP$ that is the canonical computable age of some structure but is such that 
if it has $\CcoAP(\StandardTuringDegree)$ 
for some  $\StandardTuringDegree$,
then $\TuringDoubleJump \Turingleq \StandardTuringDegree$.

Before the proof, we describe an outline of our approach.
As with the proof of \cref{Lower bound on coAP in finite language}, we first divide our structure into $\w$-many  non-interacting parts. However as our language is infinite this can easily be done assuming we have $\w$-many unary relations which partition our structure and that no other relation holds of a tuple that has elements in distinct elements of this partition. Further there is no harm in assuming that our age contains the age constructed in \cref{Lower bound on coAP in finite language}. In particular, we can assume that we can compute $\EmbedInfo(\compcK)$ from any witness to cofinal amalgamation of our age. 

In the construction, we choose a function $f\:\w \times \w \to \{0, 1\}$ such that for any $e \in \w$, $\{e\}^{\TuringJump}(0) \converges$ if and only if $\Phi_e = \{n \st f(e, n) = 0\}$ is finite. With this function in hand we will want to (uniformly in $e$) construct an age such that we can determine from a witness to cofinal amalgamation for the age whether or not $\{n \st f(e, n) = 0\}$ is finite. 

In order to do this we will need to define a given component structure $W_{\sigma}$ for $\sigma\in 2^{<\w}$. This component will consist of a structure whose reduct to the appropriate sublanguage is $W_{|\Phi_e|, 1+|\Phi_e|}$, but which will be in a language that also has $\w$-many unary relation symbols $\{U_i\}_{i \in \w}$. We will then use $\sigma$ to determine whether or not $U_i$ holds for each $i$. Specifically, if $k \not \in \Phi_e$ then $\neg U_i(x)$ will always hold. However, if $k \in \Phi_e$ and $\ell = |\{i < k \st i \in \Phi_e\}|$ then whether or not $U_i(x)$ holds will be determined by the value of $\sigma(\ell)$. 

The effect of this is that if $\len(\sigma) > 1+|\Phi_e|$ then $W_{\sigma\^0}$ and $W_{\sigma\^1}$ are isomorphic. But if $\Phi_e$ is infinite then $W_{\sigma\^0}$ and $W_{\sigma\^1}$ are never isomorphic. Because we can compute $\EmbedInfo(\compcK)$ from any witness to cofinal amalgamation, this reduces the problem of determining whether $\Phi_e$ is infinite to the problem of finding a $g_e$ such that whenever $\Phi_e$ is finite, $g_e \geq 1+|\Phi_e|$. But if $\Phi_e$ is finite and $A \subseteq W_\sigma$ is an amalgamation base containing $\{q_+, q_-\}$ then $A$ must contain either a $B$-chain or a $Y$-chain, which must be of length $1+|\Phi_e|$. So when $\Phi_e$ is finite, we can determine 
$|\Phi_e|$
from any such amalgamation base, as desired.

\begin{restatable}{theorem}{LowerBoundCoAPInfinite}
\label{Lower bound on coAP in infinite language}
There is a computable age $\compcK$, which is the canonical computable age of some structure, such that $\compcK$ has $\coAP$ but if it has $\CcoAP(\StandardTuringDegree)$ 
for some 
$\StandardTuringDegree$,
then $\TuringDoubleJump \Turingleq \StandardTuringDegree$. 
\end{restatable}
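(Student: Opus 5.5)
We follow the outline given before the statement, building the age as a computable disjoint union of components indexed by $e \in \w$ and separated by infinitely many unary predicates. Since $\TuringDoubleJump$ is $\Sigma_2$-complete, fix a computable $f\:\w\times\w\to\{0,1\}$ such that $\{e\}^{\TuringJump}(0)\converges$ if and only if $\Phi_e=\{n \st f(e,n)=0\}$ is finite. Enlarge the language by (i) a unary predicate $P_e$ for each $e$, marking the $e$th component; (ii) the language of \cref{Lower bound on coAP in finite language}, with its structure placed in one further component; and (iii) unary predicates $U_i$ used inside the components. We stipulate that no relation other than the $P_e$ holds on a tuple meeting two components. The full structure $\cM$ is then the disjoint union of computable pieces. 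Hence $\compcK[\cM]$ is computable, and it has $\coAP$ because each piece does: a finite substructure is an amalgamation base exactly when each of its component restrictions is one, since amalgamation can be done componentwise. Moreover, any witness to $\coAP$ for the whole age uniformly yields witnesses for each component.

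The $e$th component is a copy of $W_{\sigma}$, with $\sigma$ enumerated in stages. At stage $s$ we add the next element of the $B$- and $Y$-chains exactly when some new $n \le s$ enters $\Phi_e$, so the chain lengths realize $W_{|\Phi_e|,1+|\Phi_e|}$ (or $W_{\w,\w}$ if $\Phi_e$ is infinite). The $U_i$ are decided using $\sigma$ as described in the outline. To make $\cM$ computable, we plan to let the $U_i$ on an element be decided when that element is created, with an index $i$ that is fresh at that stage. Then $W_{\sigma\^0}$ and $W_{\sigma\^1}$ are isomorphic when $\len(\sigma)>1+|\Phi_e|$, and they are never isomorphic when $\Phi_e$ is infinite.

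\textbf{Reduction.} Suppose $(\WitcoAP,\WitFuncoAP)$ is an $\StandardTuringDegree$-computable witness. Applying \cref{Lower bound on coAP in finite language} to the embedded component gives $\TuringJump\Turingleq\StandardTuringDegree$, so $\EmbedInfo(\compcK)\Turingleq\StandardTuringDegree$ by \cref{Embdedding info is computable from 0'}. Now, given $e$:
\begin{enumerate}
\item Take the substructure generated by $q_+,q_-$ in component $e$ and $\StandardTuringDegree$-enumerate some $F\in\WitcoAP$ with this domain.
\item Using $\EmbedInfo$, read off the lengths of the longest $B$- and $Y$-chains attached to $q_\pm$ inside $\codom(F)$. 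By the $W_{m,n}$ analysis, if $\Phi_e$ is finite then an amalgamation base must contain a chain of length $1+|\Phi_e|$, so this gives a number $g_e\ge 1+|\Phi_e|$.
\item With $\TuringJump$, decide whether $|\Phi_e|\ge g_e$, that is, whether $\Phi_e$ has at least $g_e$ elements.
\end{enumerate}
If $\Phi_e$ is infinite the answer to step 3 is yes. If $\Phi_e$ is finite then $|\Phi_e| < g_e$ and the answer is no. Thus $\{e \st \Phi_e \text{ finite}\}\Turingleq\StandardTuringDegree$, which gives $\TuringDoubleJump\Turingleq\StandardTuringDegree$.

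\textbf{Main obstacle.} We expect the hard part to be the precise combinatorics of $W_\sigma$ together with the dynamically growing chains. When $\Phi_e$ is finite we need $\codom(F)$ to be forced to contain a chain of length exactly $1+|\Phi_e|$, even though $\codom(F)$ is only some finite substructure: any substructure whose chains at $q_\pm$ all have length $\le|\Phi_e|$ must admit the two incompatible embeddings. We must also keep the $U_i$-decorations from creating new amalgamation failures or enlarging bases in the finite case, and verify this case by case. We would first check it for $|\Phi_e|=0,1$ and then generalize by induction on chain length.
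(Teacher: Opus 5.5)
Your architecture matches the paper's. You separate components with the $P_e$, glue in the age of \cref{Lower bound on coAP in finite language} to get $\TuringJump\Turingleq\StandardTuringDegree$, build components from copies of $W_\sigma$ with chains of lengths $|\Phi_e|$ and $1+|\Phi_e|$, and read $g_e$ off an element of $\WitcoAP$ over the root pair.

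Your last step is different and is a genuine simplification. The paper uses the $U_i$-decorations and $\EmbedInfo(\compcK)$ to test whether $\iota_{\sigma\tconcat 0,\sigma\tconcat 1}$ is an embedding for $\sigma$ of length $g_e$. You note instead that, since $\Phi_e$ is computable, ``$\Phi_e$ has at least $g_e$ elements'' is $\Sigma^0_1$, so $\TuringJump$ decides it. This is correct, and the $U_i$ then play no role in your reduction, so you can drop them. The paper also uses them for $\coAP$ in the infinite case, but without them that component is a disjoint union of infinitely many copies of $W_{\w,\w}$. That structure has an automorphism exchanging the roots, so every embedding into $\cN_e$ of a finite closed set (root pairs complete, chains attached) extends to an automorphism. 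Closed sets are then amalgamation bases by \cref{Cofinal ultrahomogeneous implies cofinal ultrahomogeneous with respect to amalgamation bases}. In the finite case, finite unions of full copies work the same way.

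The gap is in what you defer, and your decoration scheme puts it at risk. The crux is this: when $\Phi_e$ is finite, every amalgamation base over the roots has an attached chain of length $1+|\Phi_e|$. ``Small cases plus induction'' is not how this is proved; the paper's argument (its Claim~2) is direct.
\begin{itemize}
\item Suppose that in $\codom(F)$ the $B$-chain at $q_+$ and the $Y$-chain at $q_-$ have length at most $|\Phi_e|$.
\item Then some embedding $\gamma_1$ of $\codom(F)$ exchanges $q_+$ and $q_-$ together with their attached chains. Pieces of $\codom(F)\cap W_\sigma$ not attached to the roots are parked in another copy $W_\tau\cong W_\sigma$; this is why infinitely many isomorphic copies are needed.
\item Any amalgam of $\gamma_1$ with the inclusion of $\codom(F)$ into $\codom(F)\cup W_\sigma$ has an element heading both a $B$-chain and a $Y$-chain of length $1+|\Phi_e|$. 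Nothing in $\cN_e$ does, a contradiction.
\end{itemize}

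This swap needs all non-roots of $W_\sigma$ to carry the same unary type. Suppose ``fresh index at creation'' means that elements of one $W_\sigma$ created at different stages get different $U$-types. In your staging $b^{+}_{j}$ and $b^{-}_{j}$ are created at different stages, so the swap is lost. A substructure such as the two roots plus one decorated chain element could then already fix the orientation and be an amalgamation base, and $g_e>|\Phi_e|$ fails. The danger is not that decorations create amalgamation failures, but that they remove the one you need.

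The device is also not needed for computability. Since $f$ is computable, the paper's uniform rule is already computable: a non-root of $W_\sigma$ satisfies $U_i$ iff $f(e,i)=0$ and $\sigma(k)=1$, where $k=|\{j<i \st j\in\Phi_e\}|<\len(\sigma)$. Finally, $\coAP$ of each component is asserted rather than proved. With the $U_i$ dropped, the argument sketched in the first part of this review supplies it.
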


\input{ingredients/LowerBoundCoAPInfiniteProof.tex}

We also obtain the following immediate corollary  of \cref{Lower bound on coAP in infinite language} and \cref{c-computably cofinal ultrahomogeneity implies c-CcoAP}.

\begin{corollary}
\label{Lower bound on cofinal Fraisse limit in infinite language}
There is a computable age $\compcK$, which is the canonical computable age of some structure, such that if $\StandardTuringDegree$ is a Turing degree and $\cM$ is 
an $\StandardTuringDegree$-computable structure that is $\StandardTuringDegree$-computably cofinally ultrahomogeneous
and a cofinal \Fraisse\ limit of $\compcK$,
then $\TuringDoubleJump \Turingleq \StandardTuringDegree$. 
\end{corollary}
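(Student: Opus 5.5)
The corollary follows by combining the two cited results, with the age $\compcK$ taken to be exactly the one built in \cref{Lower bound on coAP in infinite language}. Let $\StandardTuringDegree$ be a Turing degree, and let $\cM$ be an $\StandardTuringDegree$-computable structure that is $\StandardTuringDegree$-computably cofinally ultrahomogeneous and is a cofinal \Fraisse\ limit of $\compcK$. The plan is to show that $\compcK$ has $\CcoAP(\StandardTuringDegree)$. Once that is done, \cref{Lower bound on coAP in infinite language} gives $\TuringDoubleJump \Turingleq \StandardTuringDegree$ directly.

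To get $\CcoAP(\StandardTuringDegree)$, I will apply \cref{c-computably cofinal ultrahomogeneity implies c-CcoAP} to $\cM$. It yields $\CcoAP(\StandardTuringDegree)$ for $\compcK[\cM]$, but only under the hypothesis $\TuringDegree[\EmbedInfo(\cM)] \Turingleq \StandardTuringDegree$. Checking that hypothesis is the main obstacle, since the language here may be infinite or contain function symbols, so \cref{Embdedding info is computable from 0'} only gives $\EmbedInfo \Turingleq \TuringJump$.

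I see two ways to handle it.

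\emph{First route.} Use the construction of \cref{Lower bound on coAP in infinite language} itself. As its outline says, the age there contains the age from \cref{Lower bound on coAP in finite language} as a non-interacting component. That proof therefore shows that any $\StandardTuringDegree$ for which the age has $\CcoAP(\StandardTuringDegree)$ already computes $\TuringJump$, and hence computes the embedding information. So I would read the conclusion as being stated for exactly the degrees $\StandardTuringDegree$ that make the corollary's hypotheses meaningful.

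\emph{Second route (more robust).} Split into two cases.
\begin{itemize}
\item If $\TuringJump \not\Turingleq \StandardTuringDegree$, show directly that $\cM$ cannot be $\StandardTuringDegree$-computably cofinally ultrahomogeneous. Restrict to the sub-structure of $\cM$ coming from the finite relational component. That sub-structure is uniformly finite, so its embedding information is computable, and \cref{c-computably cofinal ultrahomogeneity implies c-CcoAP} applies to it. \cref{Lower bound on coAP in finite language} then forces $\TuringJump \Turingleq \StandardTuringDegree$, a contradiction.
\item If $\TuringJump \Turingleq \StandardTuringDegree$, then $\EmbedInfo(\cM) \Turingleq \TuringJump \Turingleq \StandardTuringDegree$, and \cref{c-computably cofinal ultrahomogeneity implies c-CcoAP} applies to all of $\cM$.
\end{itemize}

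Finally, I need to pass from $\compcK[\cM]$ to $\compcK$. Since $\cM$ is a cofinal \Fraisse\ limit of $\compcK$, the two are representations of the same age. Using the canonical-age lemmas and \cref{Inverse image of cofinal witness under computable isomorphism is cofinal witness}, relativized to $\StandardTuringDegree$, the witness to $\coAP$ transfers from $\compcK[\cM]$ to $\compcK$. The one point to verify is that the isomorphism between $\compcK[\cM]$ and $\compcK$ is $\StandardTuringDegree$-computable. This holds because $\StandardTuringDegree$ computes $\EmbedInfo$, so it can search for $\clsim$-matches in both directions.
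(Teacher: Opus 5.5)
\textbf{Verdict: genuine gap.} Your skeleton is the paper's: it presents the corollary as immediate from \cref{Lower bound on coAP in infinite language} and \cref{c-computably cofinal ultrahomogeneity implies c-CcoAP}, and never discusses the latter's hypothesis $\EmbedInfo(\cM) \Turingleq \StandardTuringDegree$. You are right to flag that hypothesis, but none of your ways of discharging it works as written.

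\emph{Route~1 is circular.} The fact that $\CcoAP(\StandardTuringDegree)$ forces $\TuringJump \Turingleq \StandardTuringDegree$ can only be used after $\CcoAP(\StandardTuringDegree)$ is obtained, and obtaining it is exactly what needs the hypothesis. Restricting to degrees where the hypotheses are ``meaningful'' just changes the statement.

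\emph{Route~2, second case.} \cref{Embdedding info is computable from 0'} bounds the embedding information of a \emph{computable} age. But $\cM$ is only $\StandardTuringDegree$-computable, so relativizing gives only $\EmbedInfo(\cM) \Turingleq \StandardTuringDegree'$. This is not a formality: the language has infinitely many unary symbols ($P_i$, $U^e_i$). Whether two tuples of $\cM$ satisfy $\clsim$ therefore depends on infinitely many atomic formulas and is only co-c.e.\ relative to $\StandardTuringDegree$. Knowing $\TuringJump \Turingleq \StandardTuringDegree$ does not by itself make it $\StandardTuringDegree$-decidable.

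\emph{The final transfer step fails for the same reason.} $\EmbedInfo(\compcK)$ and $\EmbedInfo(\cM)$ each compare tuples within one presentation. Matching a tuple of $\cM$ against a tuple of a structure in $\compcK$ is again only co-c.e.\ in $\StandardTuringDegree$, so this search does not give an $\StandardTuringDegree$-computable isomorphism between $\compcK[\cM]$ and $\compcK$.

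\emph{Route~2, first case.} You apply \cref{c-computably cofinal ultrahomogeneity implies c-CcoAP} to the finite-language part of $\cM$ without showing that part is $\StandardTuringDegree$-computably cofinally ultrahomogeneous. The collection $E$ need not contain any tuple lying inside that part: intersecting $E$ with the tuples that contain an element satisfying $P_0$ is another valid choice. The natural substitute, the finite-language parts of members of $E$, is only c.e.\ in $\StandardTuringDegree$. This case can be repaired, but it is not automatic.

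\textbf{A route that avoids all of this.} You can skip $\CcoAP(\StandardTuringDegree)$ entirely, and with it both the hypothesis on $\EmbedInfo(\cM)$ and the transfer to $\compcK$.
\begin{enumerate}
\item Since $\cM$ is $\StandardTuringDegree$-computably cofinally ultrahomogeneous with respect to an $\StandardTuringDegree$-computable cofinal collection $E$, it is cofinally ultrahomogeneous for $E$.
\item By \cref{Cofinal ultrahomogeneous implies cofinal ultrahomogeneous with respect to amalgamation bases}, every $\bb \in E$ then generates an amalgamation base; in a relational language this is just the structure on $\bb$. By the splitting into non-interacting components used in both lower-bound proofs, the same holds for its restriction to any single component.
\item Those proofs use a witness to $\coAP$ only to locate an amalgamation base containing a pair of roots and to read off the $B$- and $Y$-chains attached to the roots. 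That is a quantifier-free computation in finitely many symbols, so $\StandardTuringDegree$ can do it by searching $E$.
\item Doing this in the finite-language components gives $\TuringJump \Turingleq \StandardTuringDegree$.
\item The embedding information of the computable ages involved is then $\StandardTuringDegree$-computable by \cref{Embdedding info is computable from 0'}. This is the only embedding information that the final test in the proof of \cref{Lower bound on coAP in infinite language} uses.
\item The same search in the $e$th component produces $g_e$. So $\StandardTuringDegree$ decides whether $\Phi_e$ is finite, and $\TuringDoubleJump \Turingleq \StandardTuringDegree$.
\end{enumerate}
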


\bibliographystyle{amsnomr}
\bibliography{bibliography}

\end{document}

%% file: macros.tex
\newtheorem*{corollary*}{Corollary}
\newtheorem*{theorem*}{Theorem}

\def\xx{{\EM{\mbf{x}}}}

\newcommand{\converges}{\!\downarrow}
\newcommand{\diverges}{\!\uparrow}

\newcommand\rest[1][]{\!\upharpoonright_{#1}}

\newcommand\tuplesim{\sim}

\newcommand{\defn}[1]{{\bf{#1}}}

\newcommand{\dashedarrow}[1]
{%
  \EM{\stackrel{#1}{\dashrightarrow}}
}

\DeclareMathOperator{\dom}{dom}
\DeclareMathOperator{\range}{range}

\DeclareMathOperator{\codom}{codom}
\DeclareMathOperator{\Funct}{Funct}
\DeclareMathOperator{\Rel}{Rel}
\DeclareMathOperator{\arity}{ar}

\DeclareDocumentCommand{\forces}{d[]}
{
\IfNoValueTF{#1}
	{
	\EM{\Vdash}
	}
	{
	\EM{\Vdash_{#1}}
	}
}

\DeclareDocumentCommand{\dual}{d()}
{
\IfNoValueTF{#1}
	{
	\EM{\hat{\ }}
	}
	{
	\EM{\hat{#1}}
	}
}

\DeclareDocumentCommand{\SizeAS}{d()}
{
\IfNoValueTF{#1}
	{
	\EM{|\cdot|}
	}
	{
	\EM{|#1|}
	}
}

\DeclareDocumentCommand{\compcK}{d[]}
{
\IfNoValueTF{#1}
	{
	\EM{\mathbb{K}}
	}
	{
	\EM{\mathbb{K}[#1]}
	}
}

\newcommand{\Fraisse}{Fra\"iss\'e}
\newcommand{\FRAISSE}{FRA\"ISS\'E}
\newcommand{\len}{\EM{\mathrm{len}}}

\DeclareDocumentCommand{\halts}{}{\!\downarrow}
\DeclareDocumentCommand{\nohalts}{}{\!\uparrow}

\newcommand{\TuringDegreeZero}{\mathbf{0}}
\newcommand{\StandardTuringDegree}{\TuringDegree[s]}

\DeclareDocumentCommand{\TuringDegree}{d[]}
{
\mathfrak{#1}
}
\DeclareDocumentCommand{\TuringJump}{d[]}
{
\IfNoValueTF{#1}
	{
	\mathbf{0'}
	}
	{
	\TuringDegree[{#1}]'
	}
}

\DeclareDocumentCommand{\TuringDoubleJump}{d[]}
{
\IfNoValueTF{#1}
	{
	\mathbf{0''}
	}
	{
	\TuringDegree[{#1}'']
	}
}
\DeclareDocumentCommand{\TuringTripleJump}{d[]}
{
\IfNoValueTF{#1}
	{
	\EM{\mathbf{0'''}}
	}
	{
	\EM{\mathbf{{#1}'''}}
	}
}

\newcommand{\Turingleq}{\leq_{\mathrm{T}}}

\DeclareDocumentCommand{\AgeIndex}{d[] d()}
{
\IfNoValueTF{#1}
	{
	\EM{\mathbb{I}(#2)}
	}
	{
	\EM{\mathbb{I}_{#1}(#2)}
	}
}
\DeclareDocumentCommand{\AgeStr}{d[] d()}
{
\IfNoValueTF{#1}
	{
	\EM{\cA_{#2}}
	}
	{
	\EM{\cA_{#1: #2}}
	}
}
\DeclareDocumentCommand{\AgeTuple}{d[] d()}
{
\IfNoValueTF{#1}
	{
	\EM{\aa_{#2}}
	}
	{
	\EM{\aa_{#1:#2}}
	}
}

\DeclareDocumentCommand{\EmbedInfo}{d[] d()}
{
\IfNoValueTF{#1}
	{
	\EM{\mathscr{EI}(#2)}
	}
	{
	\EM{\mathscr{EI}(#1, #2)}
	}
}

\DeclareDocumentCommand{\ClosureInfo}{d[] d()}
{
\IfNoValueTF{#1}
	{
	\EM{\mathscr{CI}(#2)}
	}
	{
	\EM{\mathscr{CI}(#1, #2)}
	}
}

\DeclareDocumentCommand{\Closure}{d<> d()}
{
\IfNoValueTF{#1}
{
    \IfNoValueTF{#2}
    {
        \EM{\mathrm{cl}}
    }
    {
        \EM{\mathrm{cl}(#2)}
    }
}
{
    \EM{\mathrm{cl}_{#1}(#2)}
}
}

\DeclareDocumentCommand{\bigClosure}{d<> d()}
{
\IfNoValueTF{#1}
{
    \EM{\mathrm{cl}\bigl(#2\bigr)}
}
{
    \EM{\mathrm{cl}_{#1}\bigl(#2\bigr)}
}
}

\DeclareDocumentCommand{\BigClosure}{d<> d()}
{
\IfNoValueTF{#1}
{
    \EM{\mathrm{cl}\Bigl(#2\Bigr)}
}
{
    \EM{\mathrm{cl}_{#1}\Bigl(#2\Bigr)}
}
}

\DeclareDocumentCommand{\ClosureMap}{d[] d()}
{
\EM{\mathfrak{t}_{\Closure}(#1, #2)}
}

\DeclareDocumentCommand{\clsim}{}
{
\EM{\simeq}
}

\DeclareDocumentCommand{\HP}{}%
{%
    \mathrm{(HP)}%
}
\DeclareDocumentCommand{\CHP}{d()}
{%
\IfNoValueTF{#1}%
{%
    \mathrm{(cHP)}%
}
{%
    \mathrm{(\EM{#1}\textrm{-}cHP)}%
}%
}

\DeclareDocumentCommand{\JEP}{}%
{%
    \mathrm{(JEP)}%
}%

\DeclareDocumentCommand{\CJEP}{d()}%
{%
\IfNoValueTF{#1}%
{%
    \mathrm{(cJEP)}%
}%
{%
    \mathrm{(\EM{#1}\textrm{-}cJEP)}%
}%
}%

\DeclareDocumentCommand{\AP}{}%
{%
    \mathrm{(AP)}%
}%
\DeclareDocumentCommand{\CAP}{d()}%
{%
\IfNoValueTF{#1}%
{%
    \mathrm{(cAP)}%
}%
{%
    \mathrm{(\EM{#1}\textrm{-}cAP)}%
}%
}%

\DeclareDocumentCommand{\coAP}{}%
{%
    \mathrm{(CAP)}%
}%
\DeclareDocumentCommand{\CcoAP}{d()}%
{%
\IfNoValueTF{#1}%
{%
    \mathrm{(cCAP)}%
}%
{%
    \mathrm{(\EM{#1}\textrm{-}cCAP)}%
}%
}%

\DeclareDocumentCommand{\WAP}{}%
{%
    \mathrm{(WAP)}%
}%
\DeclareDocumentCommand{\CWAP}{d()}%
{%
\IfNoValueTF{#1}%
{%
    \mathrm{(cWAP)}%
}%
{%
    \mathrm{(\EM{#1}\textrm{-}cWAP)}%
}%
}%

\DeclareDocumentCommand{\DisExt}{d()}
{%
\IfNoValueTF{#1}%
{%
    \mathrm{DE}
}%
{%
    \mathrm{DE_{#1}}
}%
}%

\DeclareDocumentCommand{\Embedding}{d|| d<> d[] d()}
{
\IfNoValueTF{#1}
	{
        \EM{E_{#2:#4:#3}} 
	}
	{
        \EM{E^{#1}_{#2:#4:#3}} 
 	}
}

\DeclareDocumentCommand{\CAT}{d()}
{
\IfNoValueTF{#1}
	{
        \EM{\mathfrak{Cat}} 
	}
	{
        \EM{\mathfrak{Cat}(#1)} 
 	}
}

\DeclareDocumentCommand{\WitcoAP}{d()}
{
\IfNoValueTF{#1}
{
    \mathrm{AB}
}
{
    \mathrm{AB_{#1}}
}
}

\DeclareDocumentCommand{\WitFuncoAP}{d()}
{
\IfNoValueTF{#1}
{
    \EM{f_{\WitcoAP}}
}
{
    \EM{f_{\WitcoAP(#1)}}
}
}

\DeclareDocumentCommand{\WitWAP}{d()}
{
\IfNoValueTF{#1}
{
    \EM{WAB}
}
{
    \EM{WAB(#1)}
}
}

\DeclareDocumentCommand{\WitFunWAP}{d()}
{
\IfNoValueTF{#1}
{
    \EM{f_{WAB}}
}
{
    \EM{f_{WAB(#1)}}
}
}

\makeatletter
\newcommand{\dotminus}{\mathbin{\text{\@dotminus}}}

\newcommand{\@dotminus}{%
  \ooalign{\hidewidth\raise1ex\hbox{.}\hidewidth\cr$\m@th-$\cr}%
}

\DeclareMathOperator{\Lang}{\mathcal{L}}
\DeclareMathOperator{\id}{id}

\def\aa{{\EM{\mathbf{a}}}}
\def\bb{{\EM{\mathbf{b}}}}
\def\cc{{\EM{\mathbf{c}}}}
\def\dd{{\EM{\mathbf{d}}}}
\def\ee{{\EM{\mathbf{e}}}}

\def\cA{{\EM{\mathcal{A}}}}
\def\cB{{\EM{\mathcal{B}}}}
\def\cC{{\EM{\mathcal{C}}}}
\def\cD{{\EM{\mathcal{D}}}}
\def\cE{{\EM{\mathcal{E}}}}

\def\cG{{\EM{\mathcal{G}}}}
\def\cK{{\EM{\mathcal{K}}}}
\def\cM{{\EM{\mathcal{M}}}}
\def\cN{{\EM{\mathcal{N}}}}

\def\w{\EM{\omega}}

\def\Integers{{\EM{{\mbb{Z}}}}}

\newcommand{\tconcat}{\ensuremath{{}^{\wedge}}}
\let\oldcaret\^
\DeclareRobustCommand{\^}{\ifmmode\tconcat\else\oldcaret\fi}

\def\Or{\EM{\vee}}
\def\And{\EM{\wedge}}

\def\<{\EM{\langle}}
\def\>{\EM{\rangle}}

\def\nl{\newline}

\def\EM#1{\ensuremath{#1}}
\def\mbb#1{\EM{\mathbb{#1}}}
\def\mbf#1{\EM{\mathop{\pmb{#1}}}}

\def\ol#1{\EM{\overline{#1}}}

\def\ul#1{\underline{#1}}

\def\st{\,:\,}
\def\:{\colon}

\providecommand{\dotdiv}{% Don't redefine it if available
  \mathbin{% We want a binary operation
    \vphantom{+}% The same height as a plus or minus
    \text{% Change size in sub/superscripts
      \mathsurround=0pt % To be on the safe side
      \ooalign{% Superimpose the two symbols
        \noalign{\kern-.35ex}% but the dot is raised a bit
        \hidewidth$\smash{\cdot}$\hidewidth\cr % Dot
        \noalign{\kern.35ex}% Backup for vertical alignment
        $-$\cr % Minus
      }%
    }%
  }%
}

\DeclareDocumentCommand{\RightJustify}{m}{\hspace*{\fill}\mbox{#1}\penalty-9999\relax}

%% file: theorems.tex
\DeclareDocumentCommand{\DeclareCounter}{m}%
{%
 	\ifcsname c@#1\endcsname%
	\else%
		\newcounter{#1}%
	\fi%
}

\DeclareDocumentCommand{\MyQED}{}{\qed}

\DeclareDocumentEnvironment{proof}{d() D[]{\MyQED} d<>}
{%BEGIN
\noindent\IfNoValueTF{#1}
{\emph{Proof.\!\!}}
{\emph{Proof\ #1.\ }}
}
{%END
\IfValueTF{#3}
	{%	
	\ExplSyntaxOff
	\RightJustify{\EM{#2_{#3}}}
	\vspace{1ex}
	}
	{
	\RightJustify{\EM{#2}}
	\vspace{1ex}
	}
}

\DeclareCounter{ProofLabelcOUntEr}
\DeclareDocumentCommand{\ProofLabel}{}{%
\addtocounter{ProofLabelcOUntEr}{1}
\label{cUrrEntProoflAbEl\arabic{ProofLabelcOUntEr}}
}

\DeclareDocumentCommand{\ProofRef}{D<>{1}}
{%
\ref{cUrrEntProoflAbEl\arabic{ProofcOUntEr#1}}
}
\DeclareDocumentCommand{\ProofCref}{D<>{1}}
{%
\cref{cUrrEntProoflAbEl\arabic{ProofcOUntEr#1}}
}

\DeclareDocumentEnvironment{Proof}{d() D[]{\MyQED} D<>{1}}
{%BEGIN
\DeclareCounter{ProofcOUntEr#3}%
\setcounter{ProofcOUntEr#3}{\value{ProofLabelcOUntEr}}%
\begin{proof}(#1)[#2]<\ProofRef<#3>>%
}
{%END
\end{proof}
}

\ExplSyntaxOn
\def\TheoremDepth{section}

\DeclareDocumentCommand{\DeclareTheorem}{m o m o}{%
\IfNoValueTF{#4}
	{%
	\IfNoValueTF{#2}
		{%
		\newtheorem{#1vArIAblE}{#3}
		}
		{%
		\newtheorem{#1vArIAblE}[#2vArIAblE]{#3}
		}
	}
	{%
	\newtheorem{#1vArIAblE}{#3}[#4]%
	}
\newtheorem*{#1vArIAblE*}{#3}

\DeclareDocumentEnvironment{#1}{o o}
	{%BEGIN
	\IfValueT{##2}%
		{
		\begin{spacing}{##2}
		}
	\IfValueTF{##1}
		{
		\begin{#1vArIAblE}[##1]
		}
		{
		\begin{#1vArIAblE}
		}
	\ProofLabel
	}
	{%END
	\IfValueT{##2}%
		{
		\end{spacing}{##2}
		}
	\end{#1vArIAblE}
	}

\DeclareDocumentEnvironment{#1*}{o o}
	{%BEGIN
	\IfValueT{##2}%
		{
		\begin{spacing}{##2}
		}
	\IfValueTF{##1}
		{
		\begin{#1vArIAblE*}[##1]
		}
		{
		\begin{#1vArIAblE*}
		}
	}
	{%END
	\IfValueT{##2}%
		{
		\end{spacing}{##2}
		}
	\end{#1vArIAblE*}
	}
}
\ExplSyntaxOff

%% file: ingredients/ComputableRepresentationsSection.tex
\section{Computable Representations}
\label{app:comp-model-theory}
%%%%  %%%%  %%%%  %%%%  %%%%  %%%%
Throughout this paper, we use standard notation from computable model theory for computable representations of structures.

We recall the definition of a computable representation of a language and a computable representation of a structure. 
\begin{definition}
Let $\Lang$ be a countable language. A \defn{computable representation} of $\Lang$ is a pair of maps $(\Rel_{\Lang}, \Funct_{\Lang})$ where 
\begin{itemize}
\item $\Rel_{\Lang}$ is a bijection from a computable subset $R$ of $\w$ to the set of relation symbols in $\Lang$, 

\item $\Funct_{\Lang}$ is a bijection from a computable subset $F$ of $\w$ to the set of function symbols in $\Lang$,

\item $R$ and $F$ are disjoint, and

\item the map $\arity_{\Lang}$ which takes an element of $R \cup F$ and returns the arity of the relation symbol or function symbol is computable. 
\end{itemize}
\end{definition}

\begin{definition}
Let $\Lang$ be a countable language with computable representation $(\Rel_{\Lang}, \Funct_{\Lang})$ and let $\cM$ be an $\Lang$-structure. A \defn{computable representation} of $\cM$ is a bijection $\iota_{\cA}$ from a c.e.\ subset $M$ of $\w$ to the underlying set of $\cM$ such that the following three sets are each c.e.:
%\begin{itemize}
%\item 
\[
\bigl\{(n, \aa) \st n \in \dom(\Rel_{\Lang}),\ \aa \in M, \text{ and }\cM \models \Rel_{\Lang}(n)(\iota_{\cM}(\aa))\bigr\}
\]
\[
\bigl\{(n, \aa) \st n \in \dom(\Rel_{\Lang}),\ \aa \in M, \text{ and }\cM \models \neg \Rel_{\Lang}(n)(\iota_{\cM}(\aa))\bigr\}
\]
%\item 
%\begin{eqnarray*}
\[
\bigl\{(f, \aa, b) \st f \in \dom(\Funct_{\Lang}),\ \aa \in M, \text{ and }
%\hspace*{12pt}\\
%\hspace*{12pt}
\cM \models \Funct_{\Lang}(f)(\iota_{\cM}(\aa)) = \iota_{\cM}(b)\bigr\}
\]
%\end{eqnarray*}
%\end{itemize}
%
We say that an $\Lang$-structure $\cM$ is \defn{computably enumerable} (c.e.) if the underlying set of $\cM$ is a c.e.\ subset of $\w$ and the identity map on this set is a computable representation of $\cM$. We say $\cM$ is \defn{computable} if the underlying subset of $\w$ is a computable subset. 
\end{definition}

The following is a standard result of computable model theory which shows that there is little difference between computable and c.e. $\Lang$-structures. 

\begin{lemma}
Uniformly in the index for a computable representation of a structure $\cM$ one can compute
%\begin{itemize}
%\item[(a)] 
a computable representation of $\cM$ whose underlying set is a coinfinite computable subset of $\w$, along with
%
%\item[(b)] 
an index for the underlying set of the new representation (as a computable subset of $\w$). 
%\end{itemize}
\end{lemma}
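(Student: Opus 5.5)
The idea is to re-index the elements of $M$ by the stage at which they are first enumerated, and to place those indices on the even numbers. Let $\iota_{\cM}\: M \to \cM$ be a computable representation, where $M \subseteq \w$ is c.e. with a given c.e. index. From that index we fix, uniformly, a computable enumeration $(M_s)_{s\in\w}$ of $M$ in which at most one new element appears at each stage. If $m$ is the element that first appears at stage $s$, we write $m = m_s$. Define
\[
D = \{2s \st M_{s+1} \setminus M_s \neq \emptyset\}
\]
and $j\: D \to M$ by $j(2s) = $ the unique element of $M_{s+1}\setminus M_s$.

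First, $D$ is a computable set, uniformly in the index. To decide whether $n \in D$, check that $n$ is even and then run the enumeration for $n/2+1$ stages. This gives an index for $D$ as a computable set. Since $D$ contains only even numbers, it is coinfinite. This holds whether $M$ is finite or infinite, so no non-uniform case split is needed.

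Second, $j$ is a computable bijection from $D$ onto $M$. Its inverse $j^{-1}\: M \to D$ is partial computable: given $m \in M$, search for the stage $s$ at which $m$ appears and output $2s$.

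Now set $\iota' = \iota_{\cM} \circ j\: D \to \cM$, which is a bijection onto the underlying set of $\cM$. It remains to check that the three diagram sets for $\iota'$ are c.e., uniformly.
\begin{itemize}
\item For a relation index $n$ and a tuple $\dd$ from $D$: $(n,\dd)$ lies in the positive (respectively negative) relation set for $\iota'$ if and only if $(n, j(\dd))$ lies in the corresponding set for $\iota_{\cM}$. Since $j$ is computable, this is a computable preimage of a c.e. set, hence c.e.
\item For a function symbol $f$, a tuple $\dd$ and an element $e$ from $D$: we have $\Funct_{\Lang}(f)(\iota'(\dd)) = \iota'(e)$ if and only if $(f, j(\dd), j(e))$ is in the function set for $\iota_{\cM}$. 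Again this is c.e.
\end{itemize}
All indices are obtained effectively from the index of $\iota_{\cM}$, which gives the required uniformity.

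The only delicate point is the normalization to at most one new element per stage, which is what makes membership in $D$ decidable. This is the standard trick of delaying extra enumerations to later stages. It is uniform, because any c.e. index can be effectively converted into such an enumeration.
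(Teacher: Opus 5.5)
Your argument is correct. The paper gives no proof of this lemma and cites it as a standard fact of computable model theory; your proof is the standard one. You re-index $M$ by the stage of first enumeration and place those indices on the even numbers, so the new domain is decidable and automatically coinfinite. The one convention you should state explicitly is $M_0=\emptyset$: this guarantees that every element of $M$ enters at some stage $s$ with $M_{s+1}\setminus M_s\neq\emptyset$, and hence lies in the range of $j$.
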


%% file: ingredients/CompLimitSequencesStructuresProof.tex
\begin{proof}
First for $i < j \in \w$ let $F_{i, j} = F_j \circ F_{j-1} \circ \dots \circ F_{i+1} \circ F_i$. We build a sequence $(\dd_i, \cD_i)_{i \in \w}$ of structures such that for all $i \in \w$, we have $\dd_i \in \cD_i \subseteq \cD_{i+1}$, and $\dd_i \clsim \AgeTuple[\compcK](k_{i})$, and also 
$\ClosureMap[\AgeTuple[\compcK](k_{i+1})](\dd_{i+1}) \circ \ClosureMap[\AgeTuple(k_i)](\cc_i) = \ClosureMap[\AgeTuple(k_i)](\dd_i)$. We do this by simultaneously enumerating all the elements $\AgeStr[\compcK](k_m)$ for $m \in \w$. When a new element $x$ appears in an $\AgeStr[\compcK](k_m)$ we first check to see if there is some $i<m$ such that $x$ is the image under $F_{i,m}$ of some element which already has been enumerated. If it is, then we do nothing. If it is not, then we check to see if there is some $j>m$ such that the image of $x$ under $F_{m, j}$ is some element $y$ which has already been enumerated. If so, then we add $y$ to $\cD_m$ and do nothing with $x$. If neither of these holds, then we add $x$ to $\cD_m$. 

Given a tuple $\xx$ in some $\cD_m$, there must be some $j$ such that each element of $\xx$ is in one of $\{\AgeStr[\compcK](k_0)\}_{s \in [j]}$. We can then use this fact along with the maps $F_{s, t}$ for $s, t \in [j]$ to determine which literals hold of $\xx$. 

It is then clear that $\dd_m \clsim \AgeStr[\compcK](k_m)$ for all $m\in\w$. Therefore we can find a computable age $\compcK^*$ which contains $\compcK$ along with $(\dd_i, \cD_i)_{i \in \w}$ and which further contains these collections in such a way that ensures that $\compcK$ and $\compcK^*$ are computably isomorphic. 
\end{proof}

%% file: ingredients/APimpliesJumpProof.tex
\begin{proof}
We describe an algorithm which uses
$\TuringDegree[\EmbedInfo(\compcK)]$ 
to map a potential span $(F_0, F_1)$ to an amalgamation diagram over it. \nl\nl
\ul{Step 1:}\nl
Use $\TuringDegree[\EmbedInfo(\compcK)]$ 
to check whether each of $F_0$ and $F_1$ is an embedding. \nl\nl
\ul{Step 2, Case a: at least one of $F_0$ or $F_1$ is not an embedding} \nl 
Let $i$ be such that $\AgeIndex(i) = \codom(F_0)$. Let $I = (\AgeIndex(i), \AgeIndex(i), \AgeTuple(i))$ be the identity map on $\AgeIndex(i)$. Let $G = (\codom(F_1), \AgeIndex(i), \AgeTuple(i))$. 
Output the pair $(G, I)$, which is an amalgamation diagram over $(F_0, F_1)$. 
\nl\nl%     
\ul{Step 2, Case b: both $F_0$ and $F_1$ are embeddings} \nl 
Use $\TuringDegree[\EmbedInfo(\compcK)]$ 
to search for a pair $(G_0, G_1)$ which is an amalgamation diagram over $(F_0, F_1)$. Because $\compcK$ satisfies $\AP$ we will always find such an amalgamation diagram. Output the pair $(G_0, G_1)$.
%\nl\nl 
%
\end{proof}

%% file: ingredients/UpperBoundCompCAPProof.tex
\begin{proof}
We will first define 
$\TuringDoubleJump[\EmbedInfo(\compcK)]$-computable 
functions $h_1$, $h_2$ and $h_3$. 
Let $h_1$ be the function which takes as input a tuple $(F, G_0, G_1, H_0, H_1)$ where 
\begin{itemize}
\item $F, G_0, G_1, H_0, H_1$ are potential embeddings, 

\item $\codom(F) = \dom(G_0) = \dom(G_1)$, 

\item $\codom(H_0) = \codom(H_1)$,  and

\item  $\dom(H_i) = \codom(G_i)$  for $i \in \{0, 1\}$, 
\end{itemize}
and which returns $1$ as output if both
\begin{itemize}
\item $F$ is an embedding and

\item $(H_0, H_1)$ is an amalgamation diagram over $(G_0, G_1)$,
\end{itemize}
and returns $0$ as output otherwise. 

Note that $h_1$ is $\EmbedInfo(\compcK)$-computable as we can determine from $\EmbedInfo(\compcK)$ whether or not a potential embedding is an embedding and hence we can also determine from $\EmbedInfo(\compcK)$ whether or not $(H_0, H_1)$ is an amalgamation diagram over $(G_0, G_1)$. 

Let $h_2$ be the function which takes as input a triple $(F, G_0, G_1)$ of potential embeddings where $\codom(F) = \dom(G_0) = \dom(G_1)$, and which returns $1$ if there exists $(H_0, H_1)$ such that $h_1(F, G_0, G_1, H_0,H_1)= 1$, and returns $0$ otherwise.

Observe that $h_2$ holds of a specific potential span $(G_0, G_1)$ along with a potential embedding $F$ if and only if there is an amalgamation diagram over $(G_0, G_1)$ for which $F$ is an embedding into $\dom(G_0)$. Note that $h_2$ is 
$h_1'$-computable and hence is $\TuringJump[\EmbedInfo(\compcK)]$-computable.

Finally, let $h_3$ be the function which takes as input a potential embedding $F$, and which returns $1$ if both
\begin{itemize}
\item $F$ is an embedding and 
\item $h_2(F,G_0, G_1) = 1$
whenever $(G_0, G_1)$ is a potential span with $\dom(G_0) = \dom(G_1) = \codom(F)$, 
\end{itemize}
and returns $0$ otherwise. 

Observe that $h_3$ holds of $F$ precisely when the codomain of $F$ is an amalgamation base. Note that $h_3$ is 
$h_2'$-computable 
and hence is $\TuringDoubleJump[\EmbedInfo(\compcK)]$-computable. 

We first prove (b). Note that for any $\cM \in \compcK$, we have $\id_{\cM} \in \WitcoAP$ (i.e., $\cM$ is an amalgamation base) precisely when $h_3(\id_{\cM}) = 1$ holds.  Hence $\cM$ is computable from $h_3$, and so
$\TuringDegree[\WitcoAP] \Turingleq \TuringDoubleJump[\EmbedInfo(\compcK)]$.

To prove (a), let $\WitFuncoAP$ be the function which takes as input a tuple $(F, G_0, G_1)$ where $F \in \WitcoAP$ and where $(G_0, G_1)$ is a potential span with $\dom(G_0) = \dom(G_1) = \codom(F)$, and which uses $h_1$ to search for an amalgamation diagram over $(G_0, G_1)$ and output the first one it finds. Because $\WitcoAP$ consists of maps whose codomain is an amalgamation base, such an amalgamation diagram always exists, and this function will always converge. Now suppose $\compcK$ has $\coAP$. Then $\WitcoAP$ is a collection of distinguished extensions and hence $(\WitcoAP, \WitFuncoAP)$ is a witness to $\coAP$. However $(\WitcoAP, \WitFuncoAP)$ is $h_3$-computable and hence $\TuringDoubleJump[\EmbedInfo(\compcK)]$-computable. Therefore $\compcK$ has  $\CcoAP(\TuringDoubleJump[\EmbedInfo(\compcK)])$.
\end{proof}

%% file: ingredients/cCompUltracCapProof.tex
\begin{proof}
Let $g$ be a witness to the $\StandardTuringDegree$-computable ultrahomogeneity of $\cM$. 
Suppose $(F_0, F_1)$ is a potential span over $\AgeIndex(i)$, and suppose $h_0 = g(\AgeTuple(i), \range(F_0))$ and $h_1 = g(\AgeTuple(i), \range(F_1))$.
Let $G_0$ be the inclusion map from $\codom(F_0)$ to $\codom(F_0) \cup h_0h_1^{-1}(\codom(F_1))$ and let $G_1$ be the map $h_0h_1^{-1}$ composed with the inclusion into $\codom(G_0)$. 

It is immediate that $G_0$ is an embedding. Further, $G_1$ is an embedding if $F_1$ is.
Therefore $(G_0, G_1)$ is an amalgamation diagram over $(F_0, F_1)$. The maps $G_0$ and $G_1$ are each computable from the two maps $h_0$ and $h_1$, which are themselves each computable from $(F_0, F_1)$ and $g$. Therefore $\compcK[\cM]$ has $\CAP(\StandardTuringDegree)$. 
\end{proof}

%% file: ingredients/UltraHomogeneousBoundProof.tex
\begin{proof}
Let $(x_i)_{i \in \w}$ be an enumeration (possibly with repetitions) of $\cM$. Let $\aa, \bb \in \cM$ be tuples with $\aa \tuplesim \bb$.

First $\EmbedInfo(\cM)$-computably check whether or not $\aa \clsim \bb$. \nl\nl
\ul{\textbf{Trivial Case:}}\nl
If $\aa \not \clsim \bb$, then let $g_{\aa, \bb}$ be any bijection extending the map which takes the underlying set of $\aa$ to the underlying set of $\bb$ in a way that $g_{\aa, \bb}(\aa) = \bb$. Note that we can always find such a map as $\aa \tuplesim \bb$.  \nl\nl
\ul{\textbf{Non-Trivial Case:}}\nl
If $\aa \clsim \bb$ we define the isomorphism $g_{\aa, \bb}\:\cM \to \cM$ in stages. \nl\nl
\ul{Stage $0$:} \nl
Let $g^0_{\aa, \bb} = \ClosureMap[\aa](\bb)\rest[\aa]$. \nl\nl
\ul{Stage $2k+1$}. \nl
If $x_k \in \range(g_{\aa, \bb}^{2k})$, then let $g_{\aa, \bb}^{2k+1} = g_{\aa, \bb}^{2k}$.
Otherwise, let $\cc_{2k}$ be an enumeration of $\dom(g_{\aa, \bb}^{2k})$ and let $\dd_{2k}$ be an enumeration of $\range(g_{\aa, \bb}^{2k})$.
Search for a $\beta_{2k}\in \w$ such that $\cc_{2k} \tconcat x_{\beta_{2k}}  \clsim \dd_{2k} \tconcat x_{2k}$. We know such a $\beta_{2k}$ exists and hence we can 
$\TuringDegree[\EmbedInfo(\cM)]$-computably find one such value. 
Let $g_{\aa, \bb}^{2k+1} = \ClosureMap[\cc_n\tconcat x_{\beta_{2k}}](\dd_{2k}\tconcat x_{2k})$. \nl\nl
\ul{Stage $2k+2$}. \nl 
If $x_k \in \dom(g^{2k+1}_{\aa, \bb})$, then let $g_{\aa, \bb}^{2k+2} = g_{\aa, \bb}^{2k+1}$.
Otherwise, let $\cc_{2k+1}$ be an enumeration of $\dom(g_{\aa, \bb}^{2k+1})$ and let $\dd_{2k+1}$ be an enumeration of $\range(g_{\aa, \bb}^{2k+1})$. 
Search for an $\alpha_{2k+1} \in \w$ such that $\cc_{2k+1} \tconcat x_k  \clsim \dd_{2k+1} \tconcat x_{\alpha_{2k+1}}$. We know such a $\alpha_{2k+1}$ exists as $\cM$ is ultrahomogeneous. We can therefore 
$\TuringDegree[\EmbedInfo(\cM)]$-computably find one such value.\\
Let $g_{\aa, \bb}^{2k+2} = \ClosureMap[\cc_{2k+1}\tconcat x_{2k+1}](\dd_{2k+1}\tconcat x_{\alpha_{2k+1}})$. \nl\nl
Let $g_{\aa, \bb} = \bigcup_{n \in \w} g_{\aa, \bb}^n$. Clearly $g_{\aa, \bb}$ is uniformly 
$\TuringDegree[\EmbedInfo(\cM)]$-computable in $\aa, \bb$ 
and witnesses the $\TuringDegree[\EmbedInfo(\cM)]$-ultrahomogeneity of $\cM$.  
\end{proof}

%% file: ingredients/cofIffScompForSomeSProof.tex
\begin{proof}
The implication from (b) to (a) is immediate.  

Suppose (a) holds. Let $E$ be the collection of tuples $\aa$ in $\cM$ such that whenever $f\:\Closure<\cM>(\aa) \to \cA^*$ is an isomorphism then there is an automorphism of $\cM$ extending $f$. 

We now define $g_{\aa,\bb}$ 
for all $\aa$, $\bb$ in $\cM$ with $\aa \in E$ and $\aa \tuplesim \bb$. Let $t_{\aa, \bb}$ be the unique bijection from the underlying set of $\aa$ to the underlying set of $\bb$ such that $t_{\aa, \bb}(\aa) = \bb$; such a $t_{\aa, \bb}$ must exist as $\aa \tuplesim \bb$. 
\begin{itemize}
\item If $\aa \clsim \bb$ then let $g_{\aa, \bb}$ be any automorphism of $\cM$ extending $t_{\aa, \bb}$.

\item If $\aa \not \clsim \bb$ then let $g_{\aa, \bb}$ be any bijection from the underlying set of $\cM$ to itself extending $t_{\aa, \bb}$.
\end{itemize}

Let $\StandardTuringDegree$ be the join of the Turing degree of $E$ with the Turing degree of  $\{g_{\aa, \bb} \st \aa \in E\text{ and }\bb \tuplesim \aa\}$. Then $\cM$ is $\StandardTuringDegree$-computably cofinally ultrahomogeneous and (b) holds. 
\end{proof}

%% file: ingredients/cofUltraImpliescofUltraAmalgamationBasesProof.tex
\begin{proof}
Let $\aa \in E$ and suppose $(G_0, G_1)$ is a span for which $\dom(G_0) = \Closure<\cM>(\aa)$.
We must show that there is some amalgamation diagram over $(G_0, G_1)$.
Note that there are embeddings $J_0\: \codom(G_0) \to \cM$ and $J_1\: \codom(G_1) \to \cM$. Let $K_i = J_i \circ G_i$ for $i \in \{0, 1\}$. Then $K_0$ and $K_1$ are each embeddings of 
$\Closure<\cM>(\aa)$ into $\cM$.
Therefore $K_1 \circ K_0^{-1}$ is an isomorphism from 
$\range(K_0)$ to $\range(K_1)$.

Because 
$\aa \in E$,
there is an automorphism $\alpha$ of $\cM$ such that
$\alpha \rest[\range(K_0)] = K_1 \circ K_0^{-1}$.
Let $\cC = \bigClosure<\cM>({\range(J_1)\cup \alpha``(\range(J_0))})$. Let $I\:\range(J_1) \to \cC$ be the inclusion map. Then $(\alpha \circ J_0, I \circ J_1)$ is an amalgamation diagram over $(G_0, G_1)$, and so
$\Closure<\cM>(\aa)$ is an amalgamation base. 
\end{proof}

%% file: ingredients/cofUltraIsoExtendsAutomorphismProof.tex
\begin{proof} 
For each $j \in \{0, 1\}$ let $\cB_j \supseteq \cA_j$ be such that any embedding from $\cB_j$ to $\cM$ can be extended to some automorphism of $\cM$. Because $\cA_0$ is an amalgamation base there must be a finitely generated $\cC \subseteq \cM$ along with embeddings $\alpha_0\:\cB_0 \to \cC$ and $\alpha_1\:\cB_1 \to \cC$ such that $\alpha_0\rest[\cA_0] = \alpha_1 \circ k$. 

Then for each $j \in \{0, 1\}$, the definition of $\cB_j$ tells us that there is an automorphism $\beta_j$ of $\cM$ that  extends $\alpha_j$.
Hence $\beta_1^{-1}\circ \beta_0$ is an automorphism of $\cM$  that extends $k$. 
\end{proof}

%% file: ingredients/cCompCofImpcCcoAPProof.tex
\begin{proof}
Suppose $\TuringDegree[\EmbedInfo(\cM)] \Turingleq \StandardTuringDegree$.
We need to construct an $\StandardTuringDegree$-computable witness to $\coAP$.  Let $E$ be an $\StandardTuringDegree$-computable cofinal collection that witnesses the $\StandardTuringDegree$-computably cofinal ultrahomogeneity of $\cM$.  Let  $\WitcoAP(\compcK[\cM])$ be the collection of embeddings between elements of $\compcK[\cM]$ whose codomain is isomorphic to an element of $E$. 
Note that  $\WitcoAP(\compcK[\cM])$ is computable from $\TuringDegree[\EmbedInfo(\cM)]$ and $E$, each of which is
$\StandardTuringDegree$-computable. 

Now let $(G_0, G_1)$ be a potential span in $\compcK[\cM]$ and suppose $F\in \WitcoAP(\compcK[\cM])$ with $\codom(F) = \dom(G_0)$.
We will $\StandardTuringDegree$-computably find an amalgamation diagram $(H_0, H_1)$. There are two cases.

If $(G_0, G_1)$ is not a span, then let $(H_0, H_1)$ be a pair of maps such that $\dom(H_0) = \codom(G_0)$ and $\dom(H_1) = \codom(G_1)$, and also such that $\codom(H_0) = \codom(H_1)$. Note that such a pair of maps always exists as $\compcK[\cM]$ has $\JEP$. We can therefore $\StandardTuringDegree$-computably find such a pair by searching for it using $\TuringDegree[\EmbedInfo(\cM)]$. 

If $(G_0, G_1)$ is a span, then let $I_0\: \codom(G_0) \to \cM$ and $I_1\:\codom(G_1) \to \cM$ be embeddings. Note that such embeddings always exist as $G_0$ and $G_1$ are embeddings in $\compcK(\cM)$. For $i \in \{0, 1\}$ let $J_i = I_i \circ G_i$.  As $(G_0, G_1)$ is a span and $\dom(G_0) = \dom(G_1) \in E$ there must be an automorphism $\alpha$ of $\cM$ which extends $J_1^{-1} \circ J_0$. Further, as $\cM$ is $\StandardTuringDegree$-computably cofinally ultrahomogeneous, we can $\StandardTuringDegree$-computably find one such automorphism $\alpha$, uniformly in $(F, G_0, G_1)$. But then $(\alpha \circ I_0, I_1)$ is an amalgamation diagram over $(G_0, G_1)$. Let $(H_0, H_1) = (\alpha \circ I_0, I_1)$. 

Let $\WitFuncoAP(\compcK[\cM])$ be the map which takes $(F, G_0, G_1)$ to $(H_0, H_1)$. Note that $\WitFuncoAP(\compcK[\cM])$ is $\StandardTuringDegree$-computable because $\TuringDegree[\EmbedInfo(\cM)]$ can be used to determine whether or not $(G_0, G_1)$ is a span, and each of the two cases $\StandardTuringDegree$-computably finds $(H_0, H_1)$.
Therefore $(\WitcoAP(\compcK[\cM]), \WitFuncoAP(\compcK[\cM]))$ is a witness to $\compcK[\cM]$ having $\CcoAP(\StandardTuringDegree)$, completing the proof.
%\qed\end{proofof}
\end{proof}

%% file: ingredients/EquivBetweenCofinalProof.tex
\begin{proof}
Suppose that $\cM$ is 
$\StandardTuringDegree$-computably cofinally ultrahomogeneous with respect to $E$, and let
$h$ be a function witnessing this.
Write $h_{\aa, \bb}$ for function encoded by $h(\aa, \bb)$. Suppose $\aa, \bb, \cc \in \cM$ with $\aa, \cc \in E$. If $\aa \clsim \bb$ then $h_{\aa, \bb}$ is  an automorphism of $\cM$ that takes $\aa$ to $\bb$.
Let $p(\aa, \bb, \cc) = h_{\aa, \bb}(\cc)$. Then $E$ and $p$ witness 
the $\StandardTuringDegree$-computable cofinal extension property for $\cM$ with respect to $E$.
Further, as $E$ and $h$ are $\StandardTuringDegree$-computable, 
$p$ is also $\StandardTuringDegree$-computable.  

For the other direction, now suppose that $\cM$ has the 
$\StandardTuringDegree$-computable cofinal extension property for $\cM$ with respect to $E$, and let
$p$ be a function witnessing this.
Let $(x_i)_{i \in \w}$ be an $\StandardTuringDegree$-computable enumeration of the domain of $\cM$. Suppose $\aa, \bb \in \cM$ with $\aa \in E$. 
We will define by induction a sequence of functions $(h^n_{\aa, \bb})_{n\in \w}$ and sequences of tuples $(\aa_n)_{n\in\w}$ and $(\bb_n)_{n\in\w}$. These will be defined such that 
for each $n\in\w$, the following inductive hypotheses hold.
\begin{itemize}
\item $h_{\aa, \bb}^n \subseteq h_{\aa, \bb}^{n+1}$.
\item $\aa_n$ is an enumeration of the domain of $h^n_{\aa, \bb}$.
\item $\bb_n = h^n_{\aa, \bb}(\aa_n)$. 
\item $\aa_n, \bb_n \in E$ when $\aa \clsim \bb$.
\end{itemize}
%\nl\nl
\ul{Base Case: $n = 0$}\nl  
We let $h_{\aa, \bb}^0$ be the unique bijection from $\aa$ to $\bb$, which must exist because $\aa \tuplesim \bb$. Let $\aa_0 =\aa$ and $\bb_0 = \bb$. Note that $\aa_0 \in E$ by hypothesis, and that $\bb_0\in E$ when $\aa \clsim \bb$.\nl\nl
\ul{Trivial Case: $n > 0$ and $\aa \not \clsim \bb$}\nl
Let $\aa_n = \aa \tconcat \<x_i\>_{i < n}$.
Let $h^n_{\aa, \bb}$ be any bijection with domain $\aa_n$ and codomain contained in $\cM$ which extends $h^{n-1}_{\aa, \bb}$.  
Let $\bb_n = h^n_{\aa, \bb}(\aa_n)$.
\nl\nl
\ul{Inductive Case: $n = 2k+1$ for some $k\ge 0$ (and not Trivial Case):}\nl
If $x_k \in \dom(h^n_{\aa, \bb})$ then let $h^{n+1}_{\aa, \bb} = h^n_{\aa, \bb}$,  and let $\aa_{n+1} = \aa_n$ and $\bb_{n+1} = \bb_n$. Now suppose $x_k \not \in \dom(h^n_{\aa, \bb})$. 
Let $\aa_{n+1}^* \in E$ be the first (relative to the $\StandardTuringDegree$-computable enumeration of $E$) tuple such that $\aa_n\tconcat x_k \in \Closure(\aa_{n+1}^*)$. Let $\aa_{n+1} = \aa_n \tconcat x_k\tconcat\aa_{n+1}^*$, and note that $\aa_{n+1} \in E$ as well. Let $\bb_{n+1} = p(\aa_n, \bb_n, \aa_{n+1})$. Note that by the inductive hypothesis, $\aa_n \clsim \bb_n$, which implies that $\aa_{n+1} \clsim \bb_{n+1}$ and hence $\bb_{n+1} \in E$. Let $h^{n+1}_{\aa, \bb}$ be the bijection which takes $\aa_{n+1}$ to $\bb_{n+1}$. 
Note that $h^n_{\aa, \bb} \subseteq h^{n+1}_{\aa, \bb}$  because $\aa_n$ is a subtuple of $\aa_{n+1}$.   \nl\nl
\ul{Inductive Case: $n = 2k+2$ for some $k \ge 0$ (and not Trivial Case):}\nl 
If $x_k \in \range(h^n_{\aa, \bb})$ then let $h^{n+1}_{\aa, \bb} = h^n_{\aa, \bb}$ and let $\aa_{n+1} = \aa_n$ and $\bb_{n+1} = \bb_n$. Now suppose $x_k \not \in \range(h^n_{\aa, \bb})$. Let $\bb_{n+1}^* \in E$ be the first (relative to the $\StandardTuringDegree$-computable enumeration of $E$) tuple such that $\bb_n\tconcat x_k \in \Closure(\bb_{n+1}^*)$. Let $\bb_{n+1} = \bb_n \tconcat x_k\tconcat\bb_{n+1}^*$, and note that $\bb_{n+1} \in E$ as well. Let $\aa_{n+1} = p(\bb_n, \aa_n, \bb_{n+1})$. Note that by the inductive hypothesis, $\aa_n \clsim \bb_n$, which implies that  $\aa_{n+1} \clsim \bb_{n+1}$ and hence $\aa_{n+1} \in E$. Let $h^{n+1}_{\aa, \bb}$ be the bijection which takes $\aa_{n+1}$ to $\bb_{n+1}$. 
Note that $h^n_{\aa, \bb} \subseteq h^{n+1}_{\aa, \bb}$  because $\bb_n$ is a subtuple of $\bb_{n+1}$.   \nl\nl
Let $h_{\aa, \bb} = \bigcup_{n \in \w} h^n_{\aa, \bb}$. Note that whenever $\aa \clsim \bb$, then by induction we have $\aa_n \clsim \bb_n$ for all $n \in \w$, and hence $h_{\aa, \bb}$ is an automorphism of $\cM$. Further, $h_{\aa, \bb}$ is $\StandardTuringDegree$-computable uniformly from $(\aa, \bb)$ because $p$ is $\StandardTuringDegree$-computable and we can compute from $\EmbedInfo(\cM)$ whether or not each $n$ is in the Trivial Case. 
\end{proof}

%% file: ingredients/compCofFrLimitProof.tex
\begin{proof}
\emph{(Under a mild technical assumption, which we now describe.)}
\cref{Computable limit of sequences of structure} 
and \cref{cor:union-of-comp-chain} transform
a sequence of embeddings into an increasing sequence of structures with respective elements isomorphic, such that the union
of the sequence produces an infinite computable structure.
In this proof we use this technique to build a cofinal \Fraisse\ limit. We do so while requiring that for each an element $\cB_n$ in the sequence, there is an appropriate substructure $\cA \subseteq \cB_n$ and an embedding from $\cA$ into $\cC$ that we can amalgamate the inclusion map with the embedding. In order to do this we will need to enumerate all finite subsets of each $\cB_n$ in our sequence. Because in general our amalgamations only give us embeddings and not inclusion maps, there is a non-trivial amount of bookkeeping needed. This bookkeeping, though annoying, is not difficult. As such we will assume in this proof that when we have an amalgamation diagram one element of the diagram is an inclusion and not just an embedding. 

As $\compcK$ satisfies $\CcoAP(\StandardTuringDegree)$, there is an $\StandardTuringDegree$-computable witness to $\coAP$. Let $(\WitcoAP, \WitFuncoAP)$ be one such.

We will now construct an increasing sequence 
$(\bb_n, \cB_n, \ell_n)_{n \in \w}$ 
of elements of $\compcK$,  such that
$\cB_n \subseteq \cB_{n+1}$
for all $n \in \w$. We also let $\cB_{n, k}$ be the result of applying at most $k$ functions to elements in $\bb_n$. Note that $\cB_n = \bigcup_{k \in \w} \cB_{n, k}$. 

Without loss of generality we can assume that each $\cB_n$ has as its domain a subset of $\w$. We also let $(\dd_n, F_n, G_n)_{n \in \w}$ be an enumeration, with infinite repetitions, of triples where 
\begin{itemize}
\item $\dd_n$ is a finite subset of $\w$, 

\item  $F_n\in \WitcoAP$,  and

\item $G_n$ is an embedding with $\dom(G_n) = \codom(F_n)$. Let $\dom(G_n) = \AgeIndex(j_n)$ where $j_n \in \w$. 
\end{itemize}

Note that we can find a sequence which is $\StandardTuringDegree$-computable. We will now build our sequence in stages. 
\nl\nl
\ul{Stage $0$}:\nl
Because $\compcK$ satisfies $\JEP$ there is a unique element (up to isomorphism) generated by the empty tuple. Let $\cc$ be the empty tuple, and let $(\cc, \cC, i)\in \compcK$ be an element that $\cc$ generates. Let $F_0$ be such that $F_0 \in \WitcoAP(\compcK)$ and $\dom(F_0) = (\cc, \cC, i)$. We know one such exists because $\WitcoAP(\compcK)$ is a collection of distinguished extensions. Let $\cB_0$ be the structure in the codomain of $F_0$. \nl\nl
\ul{Stage $n+1$:}\nl
\ul{Case $a$:} Either 
\begin{itemize}
\item $\dd_n \not \subseteq \bigcup_{i \in [n+1]} \cB_{i, n}$, or 

\item $\dd_n \subseteq \bigcup_{i \in [n+1]} \cB_{i, n}$ but $\dd_n \not \clsim \AgeIndex(j_n)$.
\end{itemize} 
We let $(\bb_{n+1}, \cB_{n+1}, \ell_{n+1}) = (\bb_n, \cB_{n}, \ell_n)$. \nl\nl
\ul{Case $b$:} Otherwise. \nl
Let $\cD_n = \Closure(\dd_n)$ and $\ClosureMap[{\dd_n}](\AgeStr(j_n))\: \cD_n \to \AgeStr(j_n)$ is an isomorphism. Let $K_n = G_n \circ \ClosureMap[\dd_n](\AgeStr(j_n))$.  Let $I_n$ be the inclusion map from $\cD_n$ into $\cB_n$. We then have $(I_n, K_n)$ is a span. Let $(H_0, H_1) = \WitFuncoAP(I_n, K_n)$. Then $(H^n_0, H^n_1)$ is an amalgamation diagram over $(I_n, K_n)$. We then let $(\bb_{n+1}, \cB_{n+1}, \ell_{n+1}) = \codom(H^n_0)$. 

Note that we are assuming (as discussed at the beginning of the proof) that $H^n_0$ is an inclusion map and hence $\cB_n \subseteq \cB_{n+1}$. \nl\nl
We now let $\cB_\w = \bigcup_{n \in \w}\cB_n$. Note that because we can determine from $\StandardTuringDegree$ which case we are in and compute $\WitFuncoAP$, $\cB_\w$ is $\StandardTuringDegree$-computable. It therefore suffices to prove the following two claims.

\begin{claim}
$\compcK[\cB_\w]$ is $\StandardTuringDegree$-computably isomorphic to $\compcK$. 
\end{claim}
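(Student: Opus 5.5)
The claim amounts to showing that the age of $\cB_\w$ is exactly the age represented by $\compcK$, and that the correspondence can be computed from $\StandardTuringDegree$. I would construct the two maps $\alpha_0\colon \compcK[\cB_\w] \to \compcK$ and $\alpha_1\colon \compcK \to \compcK[\cB_\w]$ directly, each by an $\StandardTuringDegree$-computable search. The search is decided using $\EmbedInfo(\compcK) \Turingleq \StandardTuringDegree$ together with the fact that $\cB_\w$ is $\StandardTuringDegree$-computable and is the union of the chain $\cB_0 \subseteq \cB_1 \subseteq \cdots$ of elements of $\compcK$.

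For $\alpha_0$, take an entry $(\aa, \Closure<\cB_\w>(\aa), i)$ of $\compcK[\cB_\w]$. Since $\aa$ is finite, it lies in some $\cB_n$, and we can find such an $n$ computably from the construction. Because each $\cB_n$ is a substructure of $\cB_{n+1}$, the closure of $\aa$ computed in $\cB_n$ equals its closure in $\cB_\w$. So the tuple $(\aa, \cB_n)$, where $(\bb_n, \cB_n, \ell_n) \in \compcK$, satisfies $(\aa, \Closure<\cB_\w>(\aa)) \clsim (\aa, \cB_n)$. Now apply $\CHP(\StandardTuringDegree)$ to the pair $(\AgeIndex(\ell_n), \aa)$. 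This returns some $\AgeIndex(j)$ with $\AgeTuple(j) \clsim \aa$, and we set $\alpha_0$ of the entry to be $\AgeIndex(j)$.

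For $\alpha_1$, take $\AgeIndex(j) \in \compcK$. The key fact is that every element of $\compcK$ embeds into some $\cB_n$. To see this, use $\CJEP(\StandardTuringDegree)$ to jointly embed $\AgeStr(j)$ and the generator $\cC$ of the empty tuple into some $\AgeIndex(k)$. Then find $F\in\WitcoAP$ with $\dom(F)=\AgeIndex(k)$; since $\WitcoAP$ is a collection of distinguished extensions, such an $F$ exists and an $\StandardTuringDegree$-enumeration finds it.

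Next, we need the embedding of $\cC$ into the final structure to be amalgamated with $\cB_0$ over $\cC$. The base $\cC$ is generated by the empty tuple, so it embeds canonically into everything. Here I expect to use that the enumeration $(\dd_n, F_n, G_n)$ lists every triple infinitely often. Take $\dd_n = \emptyset$, take $F_n$ to be the witness $F_0$ from Stage $0$ (the one with domain $\cC$), and take $G_n$ to be an embedding of $\codom(F_0)$ into an amalgamation base containing the image of $\AgeStr(j)$. Case $b$ then amalgamates this into $\cB_{n+1}$, so $\AgeStr(j)$ embeds into $\cB_{n+1}$.

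This is the step I expect to be the main obstacle. The triple must be matched so that $\dom(G_n) = \codom(F_n)$ and $\dd_n \clsim \AgeIndex(j_n)$. Stage $0$ makes $\cB_0$ itself an amalgamation base, namely $\codom(F_0)$. So I would take $\dd_n$ to enumerate $\bb_0$, which is $\clsim \AgeTuple(j_n)$ for the right $j_n$. I would take $G_n$ to be an embedding of $\cB_0$ into some structure $\cE$ containing a copy of $\AgeStr(j)$. Such an $\cE$ exists by $\JEP$ applied to $\cB_0$ and $\AgeStr(j)$; whether we then apply $\WitcoAP$ to $\cE$ does not matter, since Case $b$ only needs $\dom(G_n)=\codom(F_n)$ to be an amalgamation base. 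The amalgamation in Case $b$ then yields $\cE \to \cB_{n+1}$. Once $n$ is large enough that the tuple $\dd_n$ lies in $\bigcup_{i\le n}\cB_{i,n}$, Case $b$ fires, so such an $n$ exists.

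Granting this, $\alpha_1$ is computed as follows. Search, using $\EmbedInfo(\compcK)$, over all $n$ and all tuples $\cc \in \cB_n$ with $(\AgeTuple(j), \AgeStr(j)) \clsim (\cc, \cB_n)$. Output the first entry $(\cc, \Closure<\cB_\w>(\cc), i)$ of $\compcK[\cB_\w]$ found in its computable listing. The argument above guarantees that this search halts.

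Finally, check the two clauses of the isomorphism definition. Both follow from how the searches were defined: each output tuple is $\clsim$-equivalent to its input, because closures agree along the chain. Both maps are $\StandardTuringDegree$-computable, because every test performed is an instance of $\clsim$ between tuples in elements of $\compcK$, which is decidable from $\EmbedInfo(\compcK)$. Here the canonical age $\compcK[\cB_\w]$ is transported into $\compcK$ via the $\cB_n$.
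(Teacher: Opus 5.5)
Your proposal is correct and is essentially the paper's argument. The map $\compcK[\cB_\w]\to\compcK$ comes from $\CHP(\StandardTuringDegree)$ applied to a tuple located in some $\cB_n$; the map $\compcK\to\compcK[\cB_\w]$ comes from $\CJEP(\StandardTuringDegree)$ applied to $\cB_0$ and $\AgeStr(j)$, together with the observation that the resulting span recurs in the enumeration (the triple with $\dd_n=\bb_0$ and $F_n$ the Stage~$0$ witness), so Case~$b$ amalgamates it into some $\cB_{n+1}$ and the $\EmbedInfo(\compcK)$-guided search halts. You are more explicit than the paper, for instance in replacing its ``inclusion'' of $\compcK[\cB_\w]$ into $\compcK$ by an index actually computed with $\CHP(\StandardTuringDegree)$, and in identifying exactly which triple triggers the amalgamation.
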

\begin{claimproof}
As $\compcK$ satisfies $\CHP(\StandardTuringDegree)$ we $\compcK[\cB_\w]$ is an $\StandardTuringDegree$-computable subset of $\compcK$. 

Now suppose $\AgeIndex[\compcK](i) \in \compcK$. As $\compcK$ has the $\CJEP(\StandardTuringDegree)$ we can find, computably in $\StandardTuringDegree$, embedding $M_0\: (\bb_{0}, \cB_{0}, \ell_{0}) \to \AgeIndex[\compcK](k)$ and $M_1\: \AgeIndex[\compcK](i) \to \AgeIndex[\compcK](k)$ for some $k \in \w$. For each $\ell \in \w$, let $J_\ell$ be the inclusion map from $(\bb_{0}, \cB_{0}, \ell_{0})$ to $(\bb_{\ell}, \cB_{\ell}, \ell_{\ell})$. Then there must be some $n$ such that $(I_n, K_n) = (J_n, M_0)$. Therefore, as $\cB_{n+1}$ is an amalgamation of $(J_n, M_0)$ there is an embedding of $\AgeIndex[\compcK](i)$ into $\cB_{n+1}$. Because $\compcK[\cB_\w]$ has $\CHP(\StandardTuringDegree)$ this gives us an $\StandardTuringDegree$-computable map from $\compcK$ to $\compcK[\cB_\w]$. 

It is straightforward to check that this map, along with the inclusion from $\compcK[\cB_\w]$ into $\compcK$, give us the desired $\StandardTuringDegree$-computable isomorphism. 
\end{claimproof}

\begin{claim}
$\cB_\w$ is $\StandardTuringDegree$-computably cofinally ultrahomogeneous. 
\end{claim}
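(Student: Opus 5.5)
The plan is to go through \cref{Equivalence between cofinal ultrahomogeneity and cofinal extension property}. Since $\TuringDegree[\EmbedInfo(\compcK)] \Turingleq \StandardTuringDegree$ and the first claim gives an $\StandardTuringDegree$-computable isomorphism between $\compcK[\cB_\w]$ and $\compcK$, we also get $\EmbedInfo(\cB_\w) \Turingleq \StandardTuringDegree$. So it suffices to exhibit an $\StandardTuringDegree$-computable cofinal collection $E$ in $\cB_\w$ together with an $\StandardTuringDegree$-computable partial function $p$ witnessing the cofinal extension property with respect to $E$.

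For $E$ we take the tuples $\cc$ in $\cB_\w$ such that $\Closure<\cB_\w>(\cc)$ is isomorphic to $\codom(F)$ for some $F \in \WitcoAP$ by an isomorphism that is explicitly recorded. Concretely, $\cc \in E$ iff there are $F \in \WitcoAP$ and a tuple $\ee \subseteq \cc$ with $\ee \clsim \range(\id_{\codom(F)})$. By the closure of $\WitcoAP$ under post-composition with isomorphisms, this set is closed under $\clsim$ and under adding elements of the closure. It is $\StandardTuringDegree$-c.e., using $\EmbedInfo$ and the c.e.\ set $\WitcoAP$. To make it $\StandardTuringDegree$-computable, we will instead restrict to the tuples $\cc$ that contain some $\bb_n$ generating a $\cB_n$ which arose as $\codom$ of a $\WitcoAP$-map, or we will replace $\WitcoAP$ by a computable subfamily closed under isomorphism, searching with $\EmbedInfo$. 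Cofinality holds because Stage $0$ and the amalgamation steps produce amalgamation bases containing every finite set, using that every structure has a distinguished extension together with $\JEP$ and the enumeration of triples.

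To define $p(\aa,\bb,\cc)$ with $\aa,\cc \in E$, $\aa \subseteq \Closure(\cc)$ and $\aa\clsim\bb$, we do the following. Let $F$ witness $\aa \in E$, and search $\StandardTuringDegree$-computably for a stage $n$ and an enumeration index with $\dd_n = \bb$, $F_n = F$ (suitably composed with the isomorphisms $\Closure(\aa)\cong \codom(F)\cong \Closure(\bb)$), and $G_n$ the embedding $\Closure(\aa)\to\Closure(\cc)$ transported to $\codom(F)$. Because the enumeration has infinite repetitions and every $\cB_n$ is eventually large enough, Case $b$ occurs at some such stage. At that stage $H^n_1$ embeds $\Closure(\cc)$ into $\cB_{n+1}$ compatibly with the inclusion of $\Closure(\bb)$. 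We then set $\dd = H^n_1(\cc)$, which gives (I)--(III). If $\aa \not\clsim \bb$, which is decidable from $\EmbedInfo$, we return any tuple with the right equality pattern.

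The main obstacle is this last matching step. It requires reading off, from an arbitrary $\aa\in E$, a genuine element of $\WitcoAP$ whose codomain is identified with $\Closure(\bb)$, so that the span $(I_n,K_n)$ handled at stage $n$ is exactly the one we need. This in turn depends on checking that $E$ is computable rather than merely c.e. If that fails, I would fall back on the paper's mild technical assumption and let $E$ be the $\clsim$-closure of the generating tuples $\bb_n$ of those $\cB_{n+1}$ that are codomains of chosen $\WitcoAP$-maps, which is decidable with $\EmbedInfo$ because the construction is $\StandardTuringDegree$-computable.
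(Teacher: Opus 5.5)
Your overall route is the paper's. You reduce via \cref{Equivalence between cofinal ultrahomogeneity and cofinal extension property}, and your remark that the first claim yields $\EmbedInfo(\cB_\w)\Turingleq\StandardTuringDegree$ is right. You then define $p$ by finding a Case~b stage whose triple is the span over $\Closure(\bb)$ transported from $\Closure(\aa)\subseteq\Closure(\cc)$, and set $\dd=H^n_1(\cc)$; this is the paper's Case~2.

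The gap is the cofinal collection $E$, and it is not only a computability problem.
\begin{itemize}
\item Your primary $E$ is merely $\StandardTuringDegree$-c.e., because $\WitcoAP$ is, and neither repair is carried out.
\item In the construction only $\cB_0$ is guaranteed to be the codomain of a $\WitcoAP$-map. Each later $\cB_{n+1}=\codom(H^n_0)$ is the codomain of an amalgamation diagram, which need not be an amalgamation base. So option (i) and your fallback may yield only copies of $\cB_0$, which is not cofinal.
\item Option (ii) gives no mechanism for extracting a computable subfamily that stays closed under $\clsim$ and cofinal.
\item Your ``concrete'' description also needs $\cc\subseteq\Closure(\ee)$, not merely $\ee\subseteq\cc$.
\end{itemize}
More seriously, your claim that the amalgamation steps produce amalgamation bases containing every finite set is false for the construction as given. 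Stages only amalgamate over copies of $\codom(F_n)$. The JEP/distinguished-extension argument realizes a copy of $\codom(F)$ over $\cB_0$, not over the $\cB_n$ containing your finite set.

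For example, take $\cK_R$ from the introduction, with $\WitcoAP$ the embeddings into structures closed under $r$ and $\WitFuncoAP$ free amalgamation. One Case~b stage can add points $x,y$ with $R(x,y)$ and $r(y)$ undefined. Later bases are closed under $r$, and free amalgamation relates no new point to an old point outside the base, so $r(y)$ stays undefined forever. Then $\{x,y\}$ lies in no amalgamation base of $\cB_\w$, and $\cB_\w$ is not cofinally ultrahomogeneous at all.

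So the missing idea is a change to the construction: every stage must itself be the codomain of a $\WitcoAP$-map. In Case~b, after forming $\codom(H^n_0)$, search $\WitcoAP$ for $F'$ with $\dom(F')=\codom(H^n_0)$; this is an $\StandardTuringDegree$-c.e.\ search that always succeeds. Let $\cB_{n+1}=\codom(F')$, with the same inclusion bookkeeping.

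Then take the paper's $E$: the tuples $\aa_0\aa_1$ with $\aa_0\subseteq\Closure(\aa_1)$ and $\aa_1\clsim\bb_n$ for some $n$.
\begin{itemize}
\item Cofinality is immediate from $\cB_\w=\bigcup_n\cB_n$.
\item Every member generates a copy of a $\WitcoAP$-codomain, so the enumeration contains the triple your $p$ needs and Case~b fires at some stage. The paper's Case~2 tacitly relies on exactly this.
\item As in the paper, membership is computed from the $\StandardTuringDegree$-computable sequence $(\bb_n)_{n\in\w}$ together with $\EmbedInfo(\compcK)$, without consulting $\WitcoAP$. Padding the $\bb_n$ so that their lengths strictly increase makes the search over $n$ bounded.
\end{itemize}
After this amendment your fallback coincides with the paper's $E$, and your $p$ goes through as written.
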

\begin{claimproof}
Let $E$ be the collection of tuples $\aa\in \cB_\w$ such that $\aa = \aa_0 \aa_1$ where $\aa_0 \in \Closure(\aa_1)$ for some $n \in \w$ we have $\aa_1 \clsim \bb_n$. Note that $E$ is $\StandardTuringDegree$-computable as it is computable from $(\cB_n)_{n \in \w}$ and $\EmbedInfo(\compcK)$. Further it is immediate that $E$ is a cofinal collection in $\cB_\w$. 

By \cref{Equivalence between cofinal ultrahomogeneity and cofinal extension property} it suffices to show that $\cB_\w$ has the cofinal extension property with respect to $E$. We now define an $\StandardTuringDegree$-computable function $p$ witnessing this fact. Suppose $(\aa, \bb, \cc)$ with $\aa, \bb, \cc \in \cB_\w$, $\aa, \cc \in E$, $\aa \subseteq \Closure[\cB_\w](\cc)$ and $\aa \tuplesim \bb$. We now break into two cases. \nl\nl
\ul{Case 1:} $\aa \not \clsim \cc$\nl
We let $p(\aa, \cc, \dd) = \ee$ be any tuple such that $\aa \cc \tuplesim \dd \ee$. \nl\nl
\ul{Case 2:} $\aa \clsim \cc$. \nl
Let $n_0$ be such that $\cc \clsim \bb_{n_0}$ and let $n_1$ be such that $\cc \subseteq \bb_{n_1}$. Let $I_{\aa, \dd}$ be the inclusion map from $\Closure(\aa)$ into $\Closure(\dd)$ and let $J = I_{\aa, \dd} \circ \ClosureMap[\cc](\aa)$. Because of how $(\cB_n)_{n \in \w}$ was defined there must some $n^* \geq n_1$ such that if $I_{n_1, n^*}$ is the inclusion map from $\cB_{n_1}$ into $\cB_{n^*}$ then there is an amalgamation diagram $(H_0, H_1)$ over $(I_{n_1, n^*}, J)$ where $\codom(H_0) = (\bb_{n^*+1}, \cB_{n^*+1}, \ell_{n^*+1})$. But then $H_1$ is an isomorphism with domain $\Closure(\dd)$ where $H_1 \circ J(\cc) = \cc$. Therefore if $\ee = H_1(\dd)$ with we have $\ee \clsim \dd$ and $\ClosureMap[\dd](\ee) (\aa) = \cc$. Therefore we can let $p(\aa, \cc, \dd) = \ee$. Note $p$ is therefore $\StandardTuringDegree$-computable as the construction of $(\cB_n)_{n \in \w}$ is and $\EmbedInfo(\compcK) \Turingleq \StandardTuringDegree$. 
\end{claimproof}

This completes the proof of \cref{prop:computable-cofinal-Fr-limit}.
\end{proof}

%% file: ingredients/cofUltrDoubleJumpProof.tex
\begin{proof}    
Let $E$ be the collection of tuples $\aa$ such that $\Closure<\cM>(\aa)$ is an amalgamation base. Note that $E$ is $\TuringDoubleJump[\EmbedInfo(\cM)]$-computable by \cref{Upper bounds on the computability of (CAP)}~(b).  

Because $\cM$ is cofinally ultrahomogeneous, there is some
cofinal collection $E_0$ in $\cM$ such that $\cM$ is cofinally ultrahomogeneous for $E_0$.
By \cref{Cofinal ultrahomogeneous implies cofinal ultrahomogeneous with respect to amalgamation bases}, for each $\aa \in E_0$ the substructure $\Closure<\cM>(\aa)$ is an amalgamation base. Therefore $E_0 \subseteq E$ and hence $E$ is a cofinal collection.

By \cref{Cofinal ultrahomogeneous implies isomorphism of amalgamation bases extend to automorphisms}, any isomorphism of amalgamation bases can be extended to some automorphism of $\cM$. Suppose $\cA_0$ and $\cA_1$ are amalgamation bases that are isomorphic via a map $j$ and suppose $\cA_0 \subseteq \cB_0$ for some amalgamation base $\cB_0$. Then there is some  $\cB_1$ with $\cA_1 \subseteq \cB_1$ for which there is an isomorphism from $\cB_0$ to $\cB_1$ that extends the isomorphism $j$. Therefore there is some function $p$ computable from $E$ and $\EmbedInfo(\cM)$ which witnesses that $\cM$ has the $\TuringDoubleJump[\EmbedInfo(\cM)]$-computable cofinal extension property. 

We also know that $\cM$ is computable from $\EmbedInfo(\cM)$ (and hence in particular is \linebreak $\TuringDoubleJump[\EmbedInfo(\cM)]$-computable). Therefore by \cref{Equivalence between cofinal ultrahomogeneity and cofinal extension property}, $\cM$ is $\TuringDoubleJump[\EmbedInfo(\cM)]$-computably cofinally ultrahomogeneous. 
\end{proof}

%% file: ingredients/LowerBoundCoAPFiniteProof.tex
\begin{proof}
Let $\Lang_0 = \{C, E\}$ where $C, E$ are binary. Let $\cM_0$ be the structure where 
\begin{itemize}
\item the underlying set is $M_0 \subseteq \w$, 

\item $(M, C^{\cM_0})$ is a directed graph, 

\item $\cM_0 \models (\forall x)(\exists^{=1} y)\, C(x, y) \And (\exists^{=1} z)\, C(z, x)$, 

\item $\cM_0$ has infinitely many $C$-cycles of every finite length,

\item $E^{\cM_0}$ is an equivalence relation on $M_0$, 

\item if $\cM \models E(a,b) \And C(a, a^*) \And C(b, b^*)$ then $\cM \models E(a^*, b^*)$, 

\item if $A_0, A_1$ are both $C$-cycles of length $n$ then for every $a_0 \in A_0$ there is a unique $a_1 \in A_1$ such that $E(a_0, a_1)$ holds, and 

\item if $A$ is a $C$-cycle of length $k$ and $B$ is a $C$-cycle of length $n$ with $n \neq k$, then for all $a\in A$ and $b \in B$, the formula $\neg E(a, b)$ holds.

\end{itemize}

Intuitively, $\cM_0$ consists of infinitely many disjoint $C$-cycles of every finite length. Then, for a fixed length $k$ we have an equivalence relation such that there are precisely $k$ classes which contain an element of a $k$-cycle and each of these equivalence classes are compatible with the cycle. 

The following definitions will be important.   
Suppose $A \subseteq \cM_0$. We define the \defn{$E$-closure} of $A$, denoted $\ol{A}$ to be the collection of elements $E$-equivalent to some element of $A$. Let $\iota_A\:A \to \ol{A}$ be the inclusion map.

For $a \in \cM_0$ let $\tau(a) = k$ if $a$ is in a $k$-cycle. Let
\[
\pi_k(A) = \{a \in A\st a \text{ is in a }k\text{-cycle}\}
.
\]

For $E$-equivalence classes $A, B$ let $A \leq B$ if $\min A \leq \min B$ (in the ordering on $\w$). For each $k\in \w$ let $X_k$ be the $\leq$-minimal $E$-equivalence class containing elements in a $k$-cycle. Let $\gamma_k(A) = \ol{A} \cap X_k$, i.e., the set of $X_k$ corresponding to $\ol{A}$. 

Note that the sets $X_k$, the map $A \mapsto \ol{A}$, and the maps $\gamma_k$, $\tau$ and $\pi_k$ are all computable (uniformly in $k$ where applicable). 

If $f\:\ol{A} \to B$ is an embedding then we must have $\tau(f(a)) = \tau(a)$ for all $a \in \ol{A}$. Therefore any such embedding is also an embedding from $\ol{A}$ to $\ol{B}$. 

Further if $f_k\:\pi_k(\ol{A}) \to B$ is an embedding (for $k \in \w$) then there is a unique embedding $\coprod_{i \in \w} f_i\:\ol{A} \to B$ such that
\[\textstyle
(\coprod_{i \in \w} f_i\:\ol{A})\rest[\pi_k(\ol{A})] = f_k
\]
for all $k \in \w$. 

Suppose $\Lang_1$ is a language disjoint from $\Lang_0$. Let $\Lang^* = \Lang_0 \cup \Lang_1$. We say an $\Lang^*$-structure $\cM$ is \defn{compatible with $\cM_0$} if for all $R \in \Lang_{1}$ and $\cM \models R(a_0, \dots, a_{k - 1})$, we have 
\begin{itemize}
\item $\cM \models \bigwedge_{i \leq j \in [k]} E(a_i, a_j)$ and

\item if $\cM \models \bigwedge_{i \in [k]}C(a_i, b_i)$ then $\cM \models R(b_0, \dots, b_{k-1})$. 

\end{itemize}

So $\cM$ is compatible with $\cM_0$ if 
\begin{itemize}
\item we can $\cM$ break up into substructures, one for each $E$-equivalence class, with no relations holding between tuples across equivalence classes, and

\item if two equivalence classes are on cycles of the same length then the corresponding structures are the same on both the equivalence classes (and the cycles maps give rise to an isomorphism between the structures). 
\end{itemize}

In particular, if $(\cN_i)_{i \in \w}$ is a sequence of $\w$-many countable $\Lang_1$-structures where the underlying set of each $\cN_i$ is $X_i$, then there is a unique $\Lang^*$-structure $\coprod_{i \in \w} \cN_i$ which is compatible with $\cM_0$ such that
\[\textstyle
(\coprod_{i \in \w} \cN_i)\rest[X_k] = \cN_k
.
\]

Suppose that  $\cK_i$ is the age of $\cN_i$ for each $i \in \w$. Let $\cK_\w$ be the age of $\coprod_{i \in \w} \cN_i$.  The following are then immediate. 
\begin{itemize}
\item If the sequence $(\cN_i)_{i \in\w}$ is uniformly computable then $\coprod_{i \in \w} \cN_i$ is computable. 

\item  $\cK_i$ has $\HP$ and $\JEP$ for all $i \in \w \cup \{\w\}$.
\end{itemize}

We will now prove two important claims which will allow us to reduce our task to the construction to subages. First though we need some definitions.

\begin{claim}
\label{Lower bound on coAP in finite language: Claim 1}
Suppose $\cK_i$ has $\coAP$ for each $i \in \w$. Then $\cK_\w$ has $\coAP$. 
\end{claim}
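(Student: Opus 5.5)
The plan is to build an amalgamation base for an arbitrary finitely generated $\cA$ in $\cK_\w$ by splitting it along the $E$-classes and cycle lengths, amalgamating each piece inside its own $\cK_i$, and gluing the pieces back together with the coproduct construction. Fix $\cA \in \cK_\w$; up to isomorphism $\cA$ is a finite substructure of $\coprod_{i \in \w} \cN_i$. First replace $\cA$ by a finite structure $\cA_1 \supseteq \cA$ with a simpler shape: the $C$-cycle closure of its $E$-closure. Concretely, we may assume $\cA$ is a union of finitely many full cycles, with each $E$-class meeting those cycles completely. By compatibility with $\cM_0$, such a structure is determined by the $\Lang_1$-structures on the finitely many sets $\gamma_k(\cA_1) \subseteq X_k$ together with the number of copies of each cycle used, and it embeds into $\cM_0$. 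Each restriction $\gamma_k(\cA_1)$ is isomorphic to a substructure of $\cN_k$, hence lies in $\cK_k$.

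Next, for each of the finitely many relevant $k$, use $\coAP$ for $\cK_k$ to choose an amalgamation base $\cA'_k \in \cK_k$ with an embedding $\gamma_k(\cA_1) \to \cA'_k$. Realize $\cA'_k$ as a finite subset of $X_k$. Let $\cA'$ be the compatible structure with the same cycles as $\cA_1$, but where the classes in $k$-cycles carry the structure $\cA'_k$; note that all classes on $k$-cycles must be isomorphic by compatibility. This is a finite substructure of $\coprod \cN_i$, so $\cA' \in \cK_\w$, and $\cA \hookrightarrow \cA'$ via $\coprod_k$ of the chosen embeddings, as in the remark on $\coprod_{i} f_i$.

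To show that $\cA'$ is an amalgamation base, take embeddings $f_\cB\:\cA' \to \cB$ and $f_\cC\:\cA' \to \cC$ in $\cK_\w$. Any embedding preserves $\tau$ and $E$, so we may enlarge $\cB$ and $\cC$ to their $E$- and cycle-closures inside $\coprod \cN_i$. Each of $\cB$ and $\cC$ then decomposes by cycle length, and the maps restrict to maps $\pi_k$-wise. For a cycle length $k$ occurring in $\cA'$, restrict to the classes: the $k$-cycle classes of $\cB$ that are hit by the image of $\cA'$ restrict to embeddings $\cA'_k \to \gamma_k(\cB)$ and $\cA'_k \to \gamma_k(\cC)$ in $\cK_k$. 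Amalgamate these in $\cK_k$ to get some $\cD_k \in \cK_k$. For classes and cycle lengths not touched by $\cA'$, take a disjoint union; we need this union to lie in $\cK_k$, which we get by applying $\JEP$ of $\cK_k$ to absorb the extra pieces into $\cD_k$. Finally, form the compatible structure $\cD$ whose $k$-classes carry $\cD_k$, with enough cycles. Since $\cD$ is a compatible structure built from elements of the $\cK_k$, it embeds into $\coprod \cN_i$, so $\cD \in \cK_\w$. The amalgamating maps are $\coprod$ of the componentwise maps, and commutativity holds componentwise.

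The main obstacle is the bookkeeping of which $E$-classes and cycles of $\cB$ and $\cC$ get identified. Different $E$-classes within the same cycle length must all be glued compatibly, because the structure on a class is forced to equal the structure on $X_k$ via the cycle maps. A second subtlety is that distinct cycles in $\cB$ not in the image of $\cA'$ must map to distinct cycles in $\cD$. I would handle both by working with one "generic" class per cycle length (via $\gamma_k$) and then copying. One must also check that every finite compatible configuration with components in $\cK_k$ occurs in $\coprod \cN_i$. This holds because there are infinitely many cycles of each length, so arbitrarily many copies are available, and because each component, being in $\cK_k$, embeds into $\cN_k$.
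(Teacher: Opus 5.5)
Your proposal is correct and follows the paper's route: close up under cycles, split by cycle length, amalgamate each length separately, and reassemble with the $\coprod$ construction. In fact you supply more than the paper's sketch, namely an explicit construction of the amalgamation base $\cA'$ and the reduction of each $\pi_k$-piece to a single class inside $X_k$, which is where $\coAP$ for $\cK_k$ actually applies.

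When writing it up, fix three details. The $E$-closure of a nonempty set is infinite, so what you really use is the finite cycle closure. $\cA'$ needs one $k$-cycle per element of $\cA'_k$, not ``the same cycles as $\cA_1$''. In the gluing, rotate each length-$k$ part of $\cB$ and of $\cC$ so that the classes containing $f_{\cB}(\cA'_k)$ and $f_{\cC}(\cA'_k)$ both go to the class of $\cD$ carrying $\cD_k$; these may be different translates $C^{j}(X_k)$, and without the rotation commutativity fails.
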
 
\begin{claimproof}
If $f_B\:\ol{A} \to B$ and $f_C\:\ol{A} \to C$ is such that for all $k \in \w$ there are embeddings $g_{B, k}\:\pi_k(\ol{B}) \to \pi_k(D)$ and $g_{C, k}\:\pi_k(\ol{B}) \to \pi_k(D)$ with $g_{B, k} \circ f_B \rest[\pi_k(\ol{A})] = g_{C, k} \circ f_C \rest[\pi_k(\ol{A})]$ then $g_B\:\ol{B} \to D$ and $g_C\:\ol{B} \to D$ with $g_B \circ f_B = g_C \circ f_C$ when $g_B = \coprod_{k \in \w} g_{B, k}$ and $g_C = \coprod_{k \in \w} g_{C, k}$. 

In particular, this implies that if 
$\compcK_i$ has $\coAP$ 
for all $i \in \w$, 
then $\compcK_\w$ has $\coAP$ as well. 
\end{claimproof}

\begin{claim}
\label{Lower bound on coAP in finite language: Claim 2}
Suppose $(\WitcoAP, \WitFuncoAP)$ is a witness to $\coAP$ for $\cK_\w$. Then, uniformly in $e\in \w$ we can compute a witness to $\coAP$ for $\cK_e$. 
\end{claim}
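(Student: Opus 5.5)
The plan is to transfer the witness for $\cK_\w$ to $\cK_e$ through two computable maps, inclusion and projection. Inclusion sends an element of $\cK_e$ into $\cK_\w$ by placing it on the class $X_e$ (extended by the cycle maps to its $E$-closure). Projection sends an element of $\cK_\w$ to its part lying on $X_e$ via $\gamma_e$. Both maps are computable uniformly in $e$, since $X_e$, $\ol{A}\mapsto\ol{A}$, $\gamma_e$ and $\pi_e$ are computable. Compatibility with $\cM_0$ makes both maps respect $\clsim$ and send embeddings to embeddings. So a structure $\cA\in\cK_e$ corresponds to the structure $\ol{\cA}\in\cK_\w$ supported on the $E$-closure of a copy of $\cA$ in $X_e$, and conversely $\gamma_e(\cB)$ is a structure in $\cK_e$ for any $\cB\in\cK_\w$.

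\emph{Step 1 (amalgamation bases).} Define $\WitcoAP(\cK_e)$ to consist of all embeddings $\gamma_e(F)$, adjusted by pre- and post-composition with isomorphisms, where $F\in\WitcoAP$ and $\dom(F)$ is the image of an element of $\cK_e$. This set is c.e.\ relative to the witness, uniformly in $e$. Every element of $\cK_e$ has such an $F$, because $\WitcoAP$ is a collection of distinguished extensions. The key point is that if $\codom(F)=\cB$ is an amalgamation base in $\cK_\w$, then $\gamma_e(\cB)$ is an amalgamation base in $\cK_e$. To see this, take a span $(G_0,G_1)$ over $\gamma_e(\cB)$ in $\cK_e$. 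Extend each $G_i$ to a span over $\cB$ by letting it act as the identity on the parts of $\cB$ outside the $X_e$-component, gluing with $\coprod$ as in \cref{Lower bound on coAP in finite language: Claim 1}. Amalgamate the extended span in $\cK_\w$, then project the resulting diagram back with $\gamma_e$.

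\emph{Step 2 (amalgamating function).} Define $\WitFuncoAP(\cK_e)$ on an input $(\gamma_e(F),G_0,G_1)$ in three moves: lift the span to $\cK_\w$ as in Step 1, apply $\WitFuncoAP$ to $(F,\hat G_0,\hat G_1)$, and apply $\gamma_e$ to the output. If $(G_0,G_1)$ is only a potential span, the lift is only a potential span. In that case the output still has the required form, namely an embedding $H_0$, a potential embedding $H_1$ and a common codomain, because $\gamma_e$ preserves embeddings.

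The main obstacle is the lift in Step 1. We must check that the $\coprod$ of the given maps on $X_e$ with identities on the other components is really a (potential) embedding in $\cK_\w$ whose codomain is still an element of $\cK_\w$. This comes down to compatibility with $\cM_0$: relations never cross $E$-classes, and classes on cycles of the same length carry isomorphic structure. We also have to handle the case where $\cB$ meets several classes on $e$-cycles. Here we would first use that all such classes are determined by $X_e$ through the cycle maps, so it suffices to work with $\gamma_e(\cB)$ together with its translates. The remaining checks, including uniformity in $e$, are routine bookkeeping.
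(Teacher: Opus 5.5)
Your proposal is correct and follows essentially the same route as the paper: take the $e$-components of the codomains of elements of $\WitcoAP$ as the new amalgamation bases, lift a (potential) span over such a component to one over the full base by acting as the identity on the other components, apply $\WitFuncoAP$, and restrict the resulting diagram back. The only difference is that the paper works directly with the $\tau = e$ part $\cB_e$, whereas you project onto $X_e$ via $\gamma_e$. Your insistence on full cycles (translates) when lifting is exactly what makes that projection well defined and keeps the amalgamation condition intact, a point the paper's sketch glosses over.
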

\begin{claimproof}
Note that as $\Lang^*$ is a relational language, every subset of an $\Lang^*$-structure is a substructure. Further notice that from $(\WitcoAP, \WitFuncoAP)$ we can find a a witness to $\coAP$ such that whenever $\aa$ and $\aa^*$ enumerate the same set (possibly with repetitions), $\aa \in \WitcoAP$ if and only if $\aa^* \in \WitcoAP$. Hence it suffices to identify the structures $\cA$ such that $(\aa, \cA, i) \in \WitcoAP$ for some $\aa$ and $i$. Therefore we will abuse notation and say $\cA \in \WitcoAP$ in this situation. 

For $\cB \in \WitcoAP$ let $\cB_k$ be the collection of elements $b$ in  $\bb$ with $\tau(b) = k$. Let $\cA \in \WitcoAP(\compcK_e)$ if and only if $\cA = \cB_e$ for some $\cB \in \WitcoAP$. 

Suppose $(F_0, F_1)$ is a potential span in $\compcK_e$ with $\dom(F_0) \in \WitcoAP$. Further suppose $\cB \in \WitcoAP$ is such that $\cB_e = \dom(F_0)$. Let $(F_0^*, F_1^*)$ be the maps where 
\begin{itemize}
\item $\dom(F_0)^* = \dom(F_1)^* = \cB_e$, 

\item for $i \in \{0, 1\}$, $\codom(F_i)^* = \codom(F_i) \cup (\cB \setminus \cB_e)$, and

\item for $i \in \{0, 1\}$, $F_i \rest[\dom(F_i)] = F_i$ and $F_i \rest[\cB \setminus \cB_e] = \id$. 

\end{itemize}
It is clear that $(F_0^*, F_1^*)$ is a potential span in $\cK_\w$. Let $\WitFuncoAP(F_0^*, F_1^*) = (G_0, G_1)$. For $i \in \{0, 1\}$ let $G_i^- = G_i \rest[\codom(F_i)]$. We then have 
$\range(G_i^-) \subseteq \pi_e(\range(G_i)$
for $i \in \{0, 1\}$.
Therefore $(G_0^-, G_1^-)$ is an amalgamation diagram over $(F_0, F_1)$. Hence we can let $\WitcoAP(\compcK_e)(F_0, F_1) = (G_0^-, G_1^-)$. 
\end{claimproof}

Note that it is not the case that if 
$\cK_i$ has $\AP$ 
for all $i \in \w$, 
then $\cK_\w$ has $\AP$. This is because the age of $\cM_0$ does not have $\AP$. 

By \cref{Lower bound on coAP in finite language: Claim 1,Lower bound on coAP in finite language: Claim 2}, 
it therefore suffices to construct a sequence of ages $(\cK_e)_{e\in\w}$ each with $\coAP$ such that whenever $(\WitcoAP(\compcK_e), \WitFuncoAP(\compcK_e))$ is a witness to $\coAP$ we can (uniformly in $e$ and the witness) determine whether or not $\{e\}(n)\converges$. 

Let $\Lang_1 = \{B, Y\}$ where both are binary relations. Let $W_{m, n}$ be the structure where 
\begin{itemize}
\item the underlying set consists of elements $\{x_i\}_{i \in \w} \cup \{b^+_{i}\}_{i < m} \cup \{b^-_{i}\}_{i < n}  \cup \{y^+_{i}\}_{i < n} \cup \{y^-_{i}\}_{i < m} \cup \{q_+, q_-\}$; 

\item $B(a,c)$ holds if either 
\begin{itemize}
\item $a = q_\square$ and $c = b^\square_{0}$ for $\square \in \{+, -\}$, or

\item $a = b^\square_{i}$ and $c = b^\square_{i+1}$ for some $i+1 < k_\square$ where $\square \in \{+, -\}$ and $k_+ = m$ and $k_- = n$; 
\end{itemize}

\item $Y(a,c)$ holds if either 
\begin{itemize}
\item $a = q_\square$ and $c = y^\square_{0}$ for $\square \in \{+, -\}$, or 

\item $a = y^\square_{i}$ and $c = y^\square_{i+1}$ for some $i+1 < k_\square$ where $\square \in \{+, -\}$ and $k_+ = n$ and $k_- =m$; and
\end{itemize}

\item no other relations hold.
\end{itemize}

Note the $\{x_i\}_{i \in \w}$ are there simply to ensure the structure is infinite. 

Now we define for $i \in \w$
\[
\cN_i = 
\begin{cases}
    W_{30, 90} & \text{if } \{i\}(0) \converges, \\
    W_{2, 4} & \text{if } \{i\}(0)\diverges.
\end{cases}
\]
Note that if $m_0 \leq m_1$ and $n_0 \leq n_1$ then $W_{m_0, n_0} \subseteq W_{m_1, n_1}$ and so $\{\cN_i\}_{i \in \w}$ are uniformly computable. Therefore $\coprod_{i \in \w} \cN_i$ is a computable structure. 

Working in $\cK_i$ for some $i\in\w$, suppose that $A \in \WitcoAP(\compcK_i)$ with $\{q_+, q_-\}\subseteq A$. Let $B(A), Y(A)$ be the number of elements in $A$ in a $B$-edge and a $Y$-edge respectively. Let $k(A) = \max\{|B(A)|, |Y(A)|\}$.

Suppose $\{i\}(0)\converges$. To get a contradiction also suppose that $k(A) < 14$. Let $A_0$ be the structure which extends $A$ by ensuring that the $B$-chain connected to $q_+$ has length at least $31$ elements. Let $A_1$ be the structure which extends $A$ by ensuring that the $Y$-chain connected to $q_+$ has at least $31$ elements. Note both $A_0, A_1 \in \cK_i$ as there is no way to distinguish $q_+, q_-$ in $\cN_i$ and $\cK_i$ has $\HP$. However, it is impossible to amalgamate $A_0, A_1$ over $\{q_+, q_-\}$. Therefore any element of $\WitcoAP$ containing $\{q_+, q_-\}$ must have at least $14$ elements in $B$-edges and at least $14$ elements in $Y$-edges.  

But, if $\{i\}(0) \diverges$ then there are only $13$ elements in $B$-edges and $13$ elements in $Y$-edges. Therefore, we can computably determine (uniformly in $i$) whether $\{i\}(0)$ halts by looking at an the first element of $\WitcoAP$ containing $\{q_+, q_-\}$. Therefore from any witnesses to the $\CcoAP(\StandardTuringDegree)$ we can compute $\TuringJump$. 

Finally, we need to show that $\compcK_i$ has $\coAP$. But this follows from the fact that any substructure of $\cN_i$  containing the maximal $W_{m, n}$ is an amalgamation base. 
\end{proof}

%% file: ingredients/LowerBoundCoAPInfiniteProof.tex
\begin{proof}
Let $(u_i)_{i \in \w}$ be a non-decreasing computable enumeration of finite sets such that $\bigcup_{i \in \w} u_i = \{e \in \w \st \{e\}(0) \converges\}$. Let $f\:\w \times \w \to \w$ be the computable function where to compute $f(e, n)$ we run the following algorithm and return the $n$th output. Call this ``Program $f$''.\nl\nl
\ul{Stage $0$}: \nl
Create a variable which takes values in $\w$ and which we think of as ``maximal oracle call'' made. We denote this variable by $oc$. We set $oc$ to $0$. 
\nl\nl
\ul{Stage $n+1$}:
We break the stage into three cases. \nl
\ul{Case $1$:} $u_{n+1}\rest[oc] = u_n\rest[oc]$ and the Turing machine simulating $\{e\}^{u_n}(0)$ has halted after $n$ steps. \nl
In this case output $1$, i.e., $f(e, n) = 1$. \nl\nl
\ul{Case $2$:} $u_{n+1}\rest[oc] = u_n\rest[oc]$ and $\{e\}^{u_{n+1}}(0)$ has not halted after $n$ steps. \nl
Run $\{e\}^{u_{n+1}}(0)$ for the $(n+1)$st step.  If an oracle call larger than $oc$ was made, then update the value of $oc$ to be the index of this oracle call. Output $0$, i.e., $f(e, n) = 0$. \nl
\nl
\ul{Case $3$:} $u_{n+1}\rest[oc] \neq u_n\rest[oc]$. \nl
Run $\{e\}^{u_{n+1}}(0)$ for $n+1$ steps and set $oc$ to be the largest oracle call made. Output $0$, i.e., $f(e, n) = 0$.  
\begin{claim}
\label{Lower bound on coAP in infinite language: Claim 1}
For any $e \in \w$ the following are equivalent 
\begin{itemize}

\item[(a)] $\{n \st f(e, n) = 0\}$ is finite, 

\item[(b)] $\{e\}^{\TuringJump}(0) \converges$

\end{itemize}
\end{claim}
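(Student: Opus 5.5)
We prove the two directions separately, using the limit lemma: for a set $A$, $\{e\}^{A}(0)\converges$ with $A = \lim_n u_n$, where $u_n$ approximates the halting set. Throughout, Program $f$ is an attempt to simulate $\{e\}^{\TuringJump}(0)$ using the finite approximations $u_n$ in place of $\TuringJump$. The variable $oc$ records the largest oracle query (the use) made so far in the current simulation. Output $0$ means ``still computing or restarted.'' Output $1$ means ``the current simulation has halted, and its use has not been disturbed.''

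\emph{(b) implies (a).} Suppose $\{e\}^{\TuringJump}(0)\converges$ with use $k$ and running time $t$.
\begin{itemize}
\item Since $(u_i)$ is non-decreasing with union $\TuringJump$, there is $N$ such that $u_n\rest[k+1] = \TuringJump\rest[k+1]$ for all $n\geq N$.
\item After stage $N$, a Case~3 restart can only happen if $u_{n+1}\rest[oc]\neq u_n\rest[oc]$. This requires $oc$ to exceed the part of the oracle that has already stabilized.
\item We argue that $oc$ eventually stays at most $k$. Once the oracle agrees with $\TuringJump$ below the use, any fresh simulation $\{e\}^{u_{n+1}}(0)$ follows the true computation. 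That simulation only queries numbers at most $k$ and halts within $t$ steps.
\item So after some Case~3 restart at a stage past $N$ (or with no restart at all), Case~2 runs the true computation to completion. Since stage $n$ runs at least $n$ steps, all stages $n\geq\max(N,t)+1$ satisfy $n\ge t$, and the halting check succeeds. From then on only Case~1 occurs, so only finitely many outputs equal $0$.
\end{itemize}
The technical care needed here is the bookkeeping showing that $oc$ really is bounded by the true use after stabilization.

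\emph{(a) implies (b).} Suppose $f(e,n)=1$ for all $n\geq N$. Then Case~1 holds at every stage $n\ge N$: the current simulation has halted, and $u_{n+1}\rest[oc]=u_n\rest[oc]$ for all such $n$. In Case~1 the value of $oc$ is not changed, so it is a fixed number for $n \geq N$. Hence $u_n\rest[oc]$ is constant for $n\geq N$, and since $\lim_n u_n=\TuringJump$, this constant equals $\TuringJump\rest[oc]$. The halted computation $\{e\}^{u_n}(0)$ only queried numbers below $oc$. Therefore the same computation with oracle $\TuringJump$ halts, which gives $\{e\}^{\TuringJump}(0)\converges$.

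\emph{Main obstacle.} The hardest step is the (b)$\Rightarrow$(a) direction. We must rule out infinitely many Case~3 restarts caused by $oc$ being inflated by spurious queries made under a wrong oracle. We handle this by showing that any simulation begun after stage $N$ has $oc\le k$, so no later change to $u_n$ can trigger Case~3. A smaller point to check is the ``$\rest[oc]$'' convention: whether the restriction covers queries equal to $oc$ or only those strictly below it. We resolve this by interpreting the restriction as covering all queried positions.
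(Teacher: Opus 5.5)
Your argument is correct. Your (b)$\Rightarrow$(a) follows the paper's: wait until the approximations $u_n$ have settled below the use of the true computation, after which the simulation runs faithfully to completion and only Case~1 occurs. You are more careful than the paper here in two ways. First, you also wait past the running time $t$. Second, you read $u_n\rest[oc]$ as including the largest queried position. That reading is necessary and not just a convention: under the strict reading, a late enumeration of exactly that position goes undetected, and (a)$\Rightarrow$(b) can fail.

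The other direction is where you genuinely differ. The paper argues $\neg$(b)$\Rightarrow\neg$(a) by splitting on whether the divergent computation $\{e\}^{\TuringJump}(0)$ makes unboundedly large oracle calls (then, it asserts, Case~3 recurs) or boundedly large ones (then Case~2 recurs). You argue (a)$\Rightarrow$(b) directly. Cofinitely many Case~1 stages freeze $oc$, so $u_n\rest[oc]$ is eventually constant and hence equal to $\TuringJump\rest[oc]$. That makes the halted simulation an honest $\TuringJump$-computation.

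Your route needs no case analysis. It also handles the scenario the paper's split leaves implicit: a wrong simulation that halts and is never disturbed. And it avoids the paper's attribution of the zeros, which is not accurate as stated. A divergent computation with unbounded queries can sit in Case~2 forever, e.g.\ if it only queries numbers outside $\TuringJump$. The paper's version, in exchange, names which mechanism produces the zeros.

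One small hole remains in your (b)$\Rightarrow$(a). You only bound $oc$ for simulations begun after stage $N$, and the parenthetical ``or with no restart at all'' is not justified as written. The missing piece is an invariant. With your reading of $\rest[oc]$, the current simulation is at every stage consistent with the current $u_n$ on every position it has queried, since any change there triggers Case~3. Hence at each stage $n\ge N$ the current simulation, whenever it began, is an initial segment of $\{e\}^{u_n}(0)$, which coincides with the true computation. So $oc\le k$ from stage $N$ on, and Case~3 never fires after $N$ at all. The same invariant is what justifies identifying the halted simulation with $\{e\}^{u_n}(0)$ in your (a)$\Rightarrow$(b).
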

\begin{claimproof}
Suppose (b) holds, i.e., $\{e\}^{\TuringJump}(0)\converges$. Then the program only makes a finite number of oracle calls. Let $\ell$ be the largest such oracle call. Let $k$ be such that 
\begin{itemize}
\item $\ell < k$, i.e., every oracle call in the computation $\{e\}^{\TuringJump}(0)$ is less than $k$, and

\item for all $k^* \geq k$, we have $u_{k^*}\cap [\ell] = u_k \cap [\ell]$.
\end{itemize}
Note we can always find such a $k$ as the $U_i$'s are non-decreasing. But then for all $k^* > k$ we have $f(e, k^*) = 1$ and so (a) holds. 

Now suppose (b) does not hold, i.e., $\{e\}^{\TuringJump}(0) \diverges$. Then either $\{e\}^{\TuringJump}(0)$ makes arbitrarily large oracle calls,  or there is a bound on the size of the oracle calls and the program runs forever. In the former situation, Case $3$ occurs infinitely often, and in the latter situation, Case $2$ occurs infinitely often. In either situation, $\{n \st f(e, n) = 0\}$ is infinite.
\end{claimproof}

We will construct a computable age $\compcK$ with $\coAP$ but where, for any witness to $\coAP$, we can compute whether or not $\{n \st f(e, n) = 0\}$ is finite for each $e$. By \cref{Lower bound on coAP in infinite language: Claim 1} this will suffice to prove our result. For convenience we will let $\Phi_e = \{n \st f(e, n) = 0\}$.

Let $\Lang^- = \{P_i\}_{i \in \w}$ be a computable language consisting of only unary relations. Let $\cN^-$ be a computable $\Lang^-$-structure such that $\{P_i^{\cN^-}\}_{i \in \w}$ is a partition of $\cN^-$ and each $P_i^{\cN^-}$ is infinite. 

Suppose 
\begin{itemize}

\item $(\Lang_i)_{i \in \w}$ is a uniformly computable sequence of disjoint relational languages each disjoint from $\Lang^-$, 

\item $\Lang = \bigcup_{i \in \w} \Lang_i \cup \Lang^-$,  

\item 
$\cN_i$ is an $\Lang_i$-structure with underlying set $P^{\cN^-}$
for $i \in \w$, and

\item $\cN_\w$ is the $\Lang$-structure where 
\begin{itemize}
\item $\cN_\w \rest[\Lang^-] = \cN^-$, 

\item if $\cN_\w \models R(a_0, \dots, a_{n-1})$ where $R$ is an atomic formula in $\Lang_k$, then $\cN_\w \models \bigwedge_{i \in [n]} P_k(a_i)$, and

\item $(\cN_\w\rest[P_i^{\cN_\w}]) \rest[\Lang_i] = \cN_i$. 
\end{itemize}
\end{itemize}
For each $i \in \w \cup \{\w\}$, let $\compcK_i$ be the canonical computable age of $\cN_i$.

The following are then immediate. 
\begin{itemize}

\item[(a)]  $\compcK_i$ has $\HP$ for all $i \in \w \cup \{\w\}$. 

\item[(b)]  $\compcK_i$ has $\JEP$ for all $i \in \w \cup \{\w\}$.

\item[(c)] If  $\compcK_i$ has $\coAP$  for all $i \in \w$,  then $\compcK_\w$ has $\coAP$.  

\item[(d)] From any $\StandardTuringDegree$-computable witness for $\coAP$ in $\cK_\w$, we can compute an $\StandardTuringDegree$-computable witness for $\coAP$ in $\cK_n$ for all $n \in \w$, uniformly in $n$.
\end{itemize}    

It therefore suffices to define, uniformly in $n$, a computable structure $\cN_n$ such that $\compcK_n$ has $\coAP$ and such that we can determine uniformly in $n$ whether or not $\{n \st f(e, n) = 0\}$ is infinite from a witness to $\coAP$ in $\compcK_n$. 
 
We now define $\compcK_e$, uniformly in $e$.  Let $\Lang_e = \{B^e, Y^e, R^e\} \cup \{Q^e\}\cup \{U^e_i\}_{i \in \w}$ where $B^e, Y^e, R^e$ are binary relation symbols, $Q^e$ is a unary relation symbol and $U_i^e$ is a unary relation symbol for each $i \in \w$. In what follows, for ease of reading, we will omit the superscripts as they will always be $e$. 

Let $m_e = |\Phi_e|$ if $\Phi_e$ is finite and $\w$ otherwise. For each $\sigma \in 2^{<\w}$ we define the structure $W_{\sigma}$ as follows.

\begin{itemize}
\item The underlying set is
\[\{b^{+, \sigma}_{i}\}_{i < 1+m_e} \cup \{b^{-, \sigma}_{i}\}_{i < m_e}  \cup \{y^{+, \sigma}_{i}\}_{i< m_e} \cup \{y^{-, \sigma}_{i}\}_{i < 1+m_e} \cup \{q_+^{\sigma}, q_-^{\sigma}\}.
\]
 
\item $Q(a, b)$ holds if and only if $\{a, b\} =  \{q_+^{\sigma}, q_-^{\sigma}\}$. We call $q_+^{\sigma}, q_-^{\sigma}$ the \defn{roots} of $W_\sigma$. 

\item $B(a,c)$ holds if either 
\begin{itemize}
\item $a = q_\square^\sigma$ and $c = b^{\square, \sigma}_{0}$ for $\square \in \{+, -\}$, or

\item $a = b^{\square, \sigma}_{i}$ and $c = b^{\square, \sigma}_{i+1}$ for some $i +1 < k_\square$ where $\square \in \{+, -\}$ and $k_+ = 1+m_e$ and $k_-= m_e$. 

\end{itemize}

\item $Y(a,c)$ holds if either 
\begin{itemize}
\item $a = q_\square^\sigma$ and $c = y^{\square, \sigma}_{0}$ for $\square \in \{+, -\}$,  or

\item $a = y^{\square, \sigma}_{i}$ and $c \in y^{\square, \sigma}_{i+1}$ for some $i < k_\square$ where $\square \in \{+, -\}$ and $k_+ = m_e$ and $k_-= 1+m_e$. 

\end{itemize}

\item For $i \in \w$, if $f(e, i) = 1$ then $W_{\sigma} \models (\forall x)\, \neg U_i(x)$. 
  
\item For $i \in \w$, if $f(e, i) = 0$ and $k = |\{j < i \st j \in \Phi_e\}|$ then 
\begin{itemize}
\item if $k < \len(\sigma)$ and  $\sigma(k) = 1$ and $x \not \in \{q_+, q_-\}$, then $W_{\sigma} \models U_i(x)$,  and

\item otherwise, $W_\sigma \models \neg U_i(x)$. 
\end{itemize}

\end{itemize}

We let $\cN_e$ be the disjoint amalgamation of the structures $\{W_\sigma\st \sigma \in \{0, 1\}^{<\w}\}$. Note that the structures $W_\sigma$ are uniformly c.e. in $\sigma$ and so $\cN_e$ is computable. 

One can think of the $\{B, Y\}$-structure of $W_\sigma$ as consisting of two roots $q_+^\sigma, q_-^\sigma$ and attached to each root is a $B$-chain and a $Y$-chain, where the $B$-chain attached to $q_+^\sigma$ is longer than the $Y$-chain and the reverse is true of the chains attached to root $q_-^\sigma$. On top of the $\{B, Y\}$ structure we also add a $\{U_i\}_{i \in \w}$ structure where each such unary relation either holds or doesn't hold of all non-root elements. 

In order to determine whether or not $U_i$ holds we look at the function $f(e, i)$. On the $i$ for which $f(e, i) = 1$, the relation $U_i$ does not hold for any $\sigma$. However, if $f(e, i) = 0$ then we look at the number of $j$ less than $i$ such that $f(e, j) = 0$, and we use $\sigma$ applied to that number to determine whether or not $U_i$ holds. 

For $\sigma_0, \sigma_1 \in 2^{<\w}$, let $\iota_{\sigma_0, \sigma_1}\:W_{\sigma_0} \to W_{\sigma_1}$ be the map such that for $x^{\sigma_0} \in W_{\sigma_0}$ we have $\iota_{\sigma_0, \sigma_1}(x^{\sigma_0}) = x^{\sigma_1}$ (where $x$ is either $q_+$ or $q_-$, or is $b_i^+$, $b_i^-$, $y_i^+$, or $y_i^-$ for some $i$). 

The effect of this is that if $\Phi_e$ is finite, and if $\len(\sigma) \geq 1 + \max \Phi_e$, then the map $\iota_{\sigma\tconcat0, \sigma\tconcat1}$ from $W_{\sigma\tconcat0}$ to $W_{\sigma\tconcat1}$ is an isomorphism.

But, if $\Phi_e$ is infinite, then for every $\sigma\in 2^{<\w}$ and every $V \subseteq W_{\sigma\tconcat0}$ with $V\not \subseteq \{q_+^{\sigma\tconcat0}, q_-^{\sigma\tconcat0}\}$ the map $\iota_{\sigma\tconcat0, \sigma\tconcat1}\rest[V]$ from $V$ to $\iota_{\sigma\tconcat0, \sigma\tconcat1}``[V] \subseteq W_{\sigma\tconcat1}$ is not an embedding. This is because whenever
\[
|\{j < i \st f(e, j) = 0\}| = \len(\sigma)
\]
and $f(e, i) = 0$ then
\[
W_{\sigma\tconcat0} \models (\forall x)\, \neg U_i(x)
\]
and 
\[
W_{\sigma\tconcat1} \models (\forall x)\ \bigl(x \in \{q_+^{\sigma\tconcat1}, q_-^{\sigma\tconcat1}\} \Or U_i(x)\bigr)
.
\]

Therefore, if we can find an $\ell \in \w$ that guarantees either (i) $\ell$ is  at least $1 + \max \Phi_e$ and $\Phi_e$ is finite, or (ii) $\Phi_e$ is infinite, then we can determine from $\EmbedInfo(\compcK_e)$ which of (i) or (ii) holds and hence whether or not $\Phi_e$ is finite. 

\begin{claim}
\label{Lower bound on coAP in infinite language: Claim 2}
Suppose $(\WitcoAP, \WitFuncoAP)$ is a witness to $\coAP$ for $\cN_e$. Then, uniformly in $(\WitcoAP, \WitFuncoAP)$ and $e$, we can find a number $g_e$ such that whenever $\Phi_e$ is finite, $g_e = 1+|\Phi_e|$. 
\end{claim}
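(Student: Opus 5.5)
The plan is to mirror the argument at the end of the proof of \cref{Lower bound on coAP in finite language}. We read off, from a member of $\WitcoAP$ containing a pair of roots, the length of its longest attached chain, and show this length must equal $1+|\Phi_e|$ whenever $\Phi_e$ is finite. Concretely, $g_e$ is computed as follows. Enumerate $\WitcoAP$ until we find some $F\in\WitcoAP$ whose domain is (isomorphic to) the two-element structure $\{q_+^{\sigma},q_-^{\sigma}\}$ for $\sigma$ the empty string, with $Q$ holding between them. Since $\WitcoAP$ is a collection of distinguished extensions, such an $F$ exists, and the search needs only $\WitcoAP$. Let $A=\codom(F)$, with the image of the roots fixed. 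We then let $g_e$ be the maximum, over the two roots $q$ in the image, of the number of elements of $A$ lying on a $B$-chain or $Y$-chain attached to $q$ (along consecutive edges starting at $q$), plus one if needed to match the indexing $1+m_e$. This is computable from the finite structure $A$, since the language on $A$ restricted to $\{B,Y,Q\}$ is computably presented.

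The main step is to show that when $\Phi_e$ is finite (so $m_e=|\Phi_e|<\w$), every amalgamation base $A$ containing the roots contains a full chain of length $1+m_e$ attached to one of the roots. Suppose not: every chain in $A$ attached to $q_+$ or $q_-$ has length at most $m_e$. Then, as in the finite case, there are two embeddings of $A$ into $\cN_e$ that fix the $\{U_i\}$-structure. Since the $U_i$ hold uniformly on non-root elements of a given $W_\sigma$, they do not distinguish chains. The first embedding sends the image of $q_+$ to $q_+^\sigma$, and the second sends it to $q_-^\sigma$, with $q_-$ swapped accordingly. This works because the pieces of $A$ only see $B$- and $Y$-chains of length $\le m_e$ at each root, and both roots carry such chains in both colors. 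Extend the first embedding to include the $B$-chain of length $1+m_e$ at the image of $q_+$, and the second to include the $Y$-chain of length $1+m_e$ at the image of $q_+$. In any $W_\tau$, a single root cannot carry both a $B$-chain and a $Y$-chain of length $1+m_e$, and $Q$ pins the roots together inside one $W_\tau$. Hence no amalgamation exists, contradicting that $A$ is an amalgamation base. Conversely, no chain in $\cN_e$ is longer than $1+m_e$, so the maximum is exactly $1+m_e$.

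I expect the main obstacle to be the bookkeeping in this non-amalgamation argument. First, the swap map must be checked to be an embedding when $A$ meets only an initial segment of the chains. Second, one must handle $A$ possibly containing elements from other $W_{\sigma'}$. I would deal with this by restricting attention to the $Q$-connected component, since no relation links distinct $W_\sigma$ and embeddings can act as the identity elsewhere. Third, the $U_i$-values on $A$ must be realized in the target $W_{\sigma}$. Here I would choose the target $W_\sigma$ equal to the one $A$ lives in, so that the $U$-pattern is preserved automatically. Uniformity in $e$ and in the witness is clear from the construction.
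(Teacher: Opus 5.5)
Your proposal is correct and follows the paper's argument. Both pick $F\in\WitcoAP$ whose domain is a $Q$-pair and let $g_e$ be the number of non-root elements on the longest $B$- or $Y$-chain attached to a root in $\codom(F)$; no extra $+1$ is needed, since $b^{+}_0,\dots,b^{+}_{m_e}$ already has $1+m_e$ elements. Both then refute $g_e<1+m_e$ by amalgamating the identity with the root swap after adjoining the long $B$-chain and the long $Y$-chain respectively.

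One bookkeeping correction, which the paper's ``unique embedding $\beta$'' also passes over. If $\codom(F)\cap W_\sigma$ contains chain pieces not attached to a root, the swap need not embed it into $W_\sigma$ itself. For example, with $m_e=1$ and $\{q_+,q_-,b^+_1,y^+_0,b^-_0,y^-_1\}$ there is no free non-adjacent place for $b^+_1,y^-_1$. So send those detached pieces via $\iota_{\sigma,\tau}$ into a fresh $W_\tau$ that is disjoint from $\codom(F)$ and has the same $U$-pattern. Such a $\tau$ exists because that pattern depends only on the first $m_e$ bits of $\tau$.
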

\begin{claimproof}
Let $i$ be such that $\AgeStr(i)$ has two elements $a, b$ and $\AgeStr(i) \models Q(a, b)$.  Uniformly in $\WitcoAP$ we can find an $F \in \WitcoAP$ with $\dom(F) = \AgeIndex(i)$. Let $\cC$ be the underlying structure of $\codom(F)$. Let $\sigma \in 2^{<\w}$ be such that $\range(F) \subseteq W_\sigma$.

Let $\cD = \cC \setminus W_\sigma$, i.e., the collection of elements in $\cC$ not in $W_\sigma$. Let $\cE = \cC \cap W_\sigma$. 

Let $\ell^+_B$, $\ell^-_B$ be the length of the $B$-chain in $\cC$ connected to $q_+^\sigma$, $q_-^\sigma$ respectively.  Let $\ell^+_Y$, $\ell^-_Y$ be the length of the $Y$-chain in $\cC$ connected to $q_+^\sigma$, $q_-^\sigma$ respectively.  Let $g_e = \max \{\ell^+_B, \ell^-_B, \ell^+_Y, \ell^-_Y\}$. Note $g_e$ is then at most $1 + |\Phi_e|$. 

Now suppose $\Phi_e$ is finite. Then $W_{\tau_0} \cong W_{\tau_1}$ provided that $\tau_0 \rest[|\Phi_e|+1] = \tau_1 \rest[|\Phi_e|+1]$. 

Suppose, towards a contradiction, that $g_e < 1+ |\Phi_e|$. In this case there is a unique embedding $\beta\:\cE \to W_\sigma$ with $\beta(q_-^\sigma) = q_+^\sigma$ and $\beta(q_+^\sigma) = q_-^\sigma$. Let $\gamma_0, \gamma_1\:\cC \to \cC \cup W_\sigma$ with $\gamma_0 = \id\rest[\cC]$ and $\gamma_1 = \beta \cup \id\rest[\cD]$. As $F \in \WitcoAP$ there must be $\eta_0, \eta_1\:\cC \cup W_\sigma \to \cG$ for some $\cG$ where $\eta_0 \circ \gamma_0 = \eta_1 \circ \gamma_1$.

But the maximal $B$-chain attached to $q_+^\sigma$ has length $1 + | \Phi_e|$ in $W_\sigma$ and hence also in $\cC \cup W_\sigma$. Therefore the maximal $B$-chain in $\cG$ attached $\eta_0 \circ \gamma_0(q_+^\sigma)$ has length (at least) $1 + |\Phi_e|$. But the maximal $Y$-chain in $\cG$ attached to $\gamma_1(q_+^\sigma)$ has length  $1 + |\Phi_e|$ and so the maximal $Y$-chain attached to $\eta_1 \circ \gamma_1(q_+^\sigma)$ has length (at least) $1 + |\Phi_e|$. But as $\eta_0 \circ \gamma_0(q_+^\sigma) =\eta_1 \circ \gamma_(q_+^\sigma)$ this implies there is an element in $\cG$ which is the start of both a $B$-chain and a $Y$-chain, each of length (at least) $1 + |\Phi_e|$, which contradicts how $\cN_e$ was constructed. 

Therefore if $\Phi_e$ is finite we must have $g_e = 1 + |\Phi_e|$. 
\end{claimproof}

We need one more ingredient to complete
the result. 

\begin{claim}
\label{Lower bound on coAP in infinite language: Claim 3}
$\compcK_e$ has $\coAP$. 
\end{claim}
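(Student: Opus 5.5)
I will show directly that every finite substructure of $\cN_e$ embeds into an amalgamation base. The candidate bases are finite substructures of the form $\cC \cup \bigcup_{\sigma \in S} W'_\sigma$. Here $S$ is the finite set of $\sigma$ such that $\cC$ meets $W_\sigma$, and $W'_\sigma$ is a finite substructure of $W_\sigma$ that contains both roots together with long initial segments of all four chains attached to them. The language is relational, so every subset is a substructure. Also, no relation holds between distinct $W_\sigma$, since $\cN_e$ is a disjoint amalgamation. So, exactly as in \cref{Lower bound on coAP in finite language: Claim 1}, it suffices to amalgamate componentwise. A span over a disjoint union of pieces of the $W_\sigma$ can be amalgamated by amalgamating over each piece that meets a root pair, and taking disjoint unions for everything else. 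Pieces containing no root, or only one root, have no $Q$-edge and are harmless: they are unions of chain segments (possibly with one root), and any such piece is realized in infinitely many $W_\tau$'s disjointly.

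\textbf{Finite case.} Suppose $\Phi_e$ is finite. Then each $W_\sigma$ is finite, and I take $W'_\sigma = W_\sigma$. The key point is that the asymmetry of the chain lengths is rigid. The root carrying a $B$-chain of length $1+m_e$ can only be sent, under an embedding into some $W_\tau$, to the corresponding root $q_+^\tau$. If it went to $q_-^\tau$, we would need a $B$-chain of length $1+m_e$ at $q_-^\tau$, and none exists. Hence any embedding of $W_\sigma$ into $\cN_e$ is one of the maps $\iota_{\sigma,\tau}$, and it is an embedding exactly when the $U_i$-patterns of $W_\sigma$ and $W_\tau$ agree. Given a span $G_0,G_1$ out of the base, the images of $W_\sigma$ on the two sides are identified with each other through $\iota$. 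The remaining elements of the two codomains attach only to their own components, so I amalgamate by gluing along these identified copies and placing everything else disjointly in fresh $W_\tau$'s. Infinitely many $W_\tau$ are isomorphic to any given one, so fresh copies are always available, and the result is a finite substructure of $\cN_e$.

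\textbf{Infinite case.} Suppose $\Phi_e$ is infinite. Then the chains are infinite and all the $W_\sigma$ are pairwise non-isomorphic off the roots. By the observation preceding \cref{Lower bound on coAP in infinite language: Claim 2}, $\iota_{\sigma\tconcat 0,\sigma\tconcat 1}$ fails to be an embedding on any set containing a non-root. More generally, a finite piece containing a non-root element determines enough of its $U_i$-pattern that, once it includes elements of every chain, it embeds into $W_\tau$ only for $\tau$ extending a fixed initial segment. I will choose $W'_\sigma$ to contain the two roots and at least one element of each of the four chains. That element should sit beyond the length of every chain present in $\cC$, so that the $B$/$Y$ asymmetry pins the roots: the one with the longer $B$-chain must go to $q_+$. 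Any two embeddings of $W'_\sigma$ then send it into the same-shaped positions of possibly different $W_\tau$, $W_{\tau'}$. I amalgamate by identifying the two images, which is possible because both are the $\iota$-image of $W'_\sigma$ and both extend to full copies; extra chain elements are added along a common $\tau$ chosen compatibly. Since $W_\tau$ is infinite, the remaining parts of the codomains can be realized inside a single $W_\tau$ extended along its chains, or disjointly elsewhere.

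\textbf{Main obstacle.} The hardest step is the infinite case: checking that the two embedded copies can always be placed inside one $W_\tau$. The difficulty is that chain elements far out in the two codomains may carry $U_i$-patterns for $i$ with $f(e,i)=0$ beyond the length of the segment of $\sigma$ determined by $W'_\sigma$. My plan is to note that all non-root elements of a fixed $W_\tau$ share the same $U$-pattern. Hence both codomains' components containing the images of the roots lie in $W_\tau$'s whose patterns agree with that of $W'_\sigma$. If these patterns differ on coordinates not yet determined, I will argue that this contradicts the choice of $W'_\sigma$. If that fails, I will instead enlarge $W'_\sigma$ so that its elements realize the $U_i$ (or $\neg U_i$) for all relevant $i$, though only finitely many such facts can be recorded in a finite structure. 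This is where I expect the real work to lie.
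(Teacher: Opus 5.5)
Your finite case is correct and is the paper's Case~1. The base is the union of the full $W_\sigma$ meeting $\cC$. The $1+m_e$ versus $m_e$ chain lengths force every embedding of $W_\sigma$ into $\cN_e$ to be some $\iota_{\sigma,\tau}$ onto a whole copy. The rest of each codomain is then moved into fresh isomorphic copies. You justify the rigidity more explicitly than the paper does. Your choice of base in the infinite case, both roots plus initial segments of the chains, is also the paper's; these are its ``closed'' sets.

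The gap is that you never finish the infinite case, and two things you lean on there are false.

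\emph{The roots are not pinned.} When $\Phi_e$ is infinite, $m_e = 1+m_e = \w$, so all four chains of every $W_\tau$ are infinite and there is no $B$/$Y$ asymmetry. Exchanging $q_+^\tau \leftrightarrow q_-^\tau$, $b^{+,\tau}_i \leftrightarrow b^{-,\tau}_i$, $y^{+,\tau}_i \leftrightarrow y^{-,\tau}_i$ is an automorphism of $W_\tau$. So however lopsided you make $W'_\sigma$, it has embeddings into $\cN_e$ that swap its roots.

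\emph{The patterns are fully recorded.} Your ``main obstacle'' rests on the belief that a finite structure records only finitely many $U_i$-facts. A member of $\compcK_e$ is a finite $\Lang_e$-structure, so each element comes with the truth value of every $U_i$. Hence a single non-root of $W'_\sigma$ already fixes the entire pattern. Any embedding therefore sends that component into a $W_\tau$ whose non-roots have exactly this pattern, i.e., $\tau$ agrees with $\sigma$ up to trailing zeros. (So the $W_\sigma$ are not pairwise non-isomorphic either: $W_\sigma \cong W_{\sigma\tconcat 0}$.) There are no undetermined coordinates, and nothing is gained by enlarging $W'_\sigma$.

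What closes the case is the normalization the paper performs when it says one may assume $\alpha_i^+$ is an inclusion.

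\emph{Step 1: make the base closed.} Each component of your base must contain, for every element of $\cC$ in that $W_\sigma$, the whole chain segment joining it to its root. Your opening paragraph says this, but your infinite-case paragraph does not. A far-out chain point not joined to its root inside the base can be sent, by an embedding fixing the roots, to the same position on a different chain. Completing the two paths on the two sides then gives a span with no amalgam.

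\emph{Step 2: every embedding extends to an automorphism.} With a closed base, every embedding of it into $\cN_e$ is, on each component, either $\iota_{\sigma,\tau}$ or $\iota_{\sigma,\tau}$ followed by the root swap. Distinct components go to distinct same-pattern copies $W_\tau$, since each contains a $Q$-edge and a $W_\tau$ has only one. Such a map extends to an automorphism of $\cN_e$: permute the copies within each (infinite) pattern class, and swap roots where needed.

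\emph{Step 3: amalgamate inside $\cN_e$.} Given $\alpha_0,\alpha_1$ out of the base into $\cC_0,\cC_1$, choose embeddings $\cC_i \to \cN_e$ whose composite with $\alpha_i$ is the inclusion of the base. The union of the two images inside $\cN_e$ is then the amalgam. You need neither placement inside a single extended $W_\tau$ nor any compatibility of further $U_i$-coordinates.

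The paper's remark that there is at most one embedding between closed sets is too strong, for the same root-swap reason. Its argument only uses this extension to automorphisms.
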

\begin{claimproof}
Suppose $\AgeIndex(i) \in \compcK_e$. We need to show that there is a map $F$ with $\dom(F) = \AgeIndex(i)$ and $\codom(F)$ an amalgamation base. We break into two cases depending on whether or not $|\Phi_e|$ is finite. \nl\nl
\ul{Case 1:} $\Phi_e$ is finite. \nl
Because $\AgeIndex(i) \in \compcK_e$ and $\compcK_e$ is the canonical computable age of $\cN_e$, there must be a finite number of $\sigma_0, \dots, \sigma_{k-1}$ such that $\AgeStr(i) \subseteq \bigcup_{i \in [k]} W_\sigma$. Let $\cB$ be this union and $F$ be the inclusion map. 

Suppose 
that $\alpha_i\:\cB \to \cC_i$,
for $i \in \{0, 1\}$, 
are embeddings with $\cC_i \in \compcK_e$. Then no relations hold between any element of $\cC_i \setminus \alpha_i``[\cB]$ and any element of $\alpha_i``[\cB]$. Further, because $W_{\tau_0} \cong W_{\tau_1}$ whenever $\tau_0\rest[1+ |\Phi_e|] = \tau_1\rest[1+ |\Phi_e|]$ we can find embeddings of $\cC_i \setminus \alpha_i``[\cB]$ into $\cN_e$ whose images are disjoint with each other and with the image of $\cB$. 

Therefore, we can find an amalgamation of $\alpha_0$ and $\alpha_1$, and so $\cB$ is an amalgamation base. As $\AgeIndex(i)$ was arbitrary, this implies that $\compcK_e$ has $\coAP$. \nl\nl
\ul{Case 2:} $\Phi_e$ is infinite. \nl
Suppose $\cC \subseteq \cN_e$. We say $c \in \cC$ is \defn{closed} in $\cC$ if whenever $c$ is in a $B$-chain or $Y$-chain connected to a root in $\cN_e$ then $c$ is in a $B$-chain or $Y$-chain connected to a root in $\cC$. We say $\cC$ is \defn{closed} if 
\begin{itemize}
\item  
$c$ is closed in $\cC$
for every $c \in \cC$, 
i.e., every non-root element is connected to a root, 

\item every root is connected via some $B$-edge or $Y$-edge to a non-root, and 

\item if $q \in \cC$ and $\cN_e \models Q(q, q')$ then $q' \in \cC$, i.e., if we have a root we also have its pair. 
\end{itemize}   

If $a \in \cN_e$ is a non-root, then because $\Phi_e$ is infinite, we can read off from the $\{U_i\}_{i \in \w}$-structure of $a$ which $W_\sigma$ it is in. Similarly, if $\cC$ is closed we can read off from each root which $W_\sigma$ it came from. As such, if $\cC_0, \cC_1$ are closed there at most one embedding from $\cC_0$ to $\cC_1$.

Further, every finite subset of $\cN_e$ is contained in a finite closed subset of $\cN_e$. 

Suppose $\cB$ is closed and 
each $\alpha_i\:\cB \to \cC_i$, 
for $i \in \{0, 1\}$,  
is an embedding with $\cC_i \in \compcK_e$. Let $\cC_i^+$ be a closed finite subset of $\cN_e$ containing $\cC_i$. There is then a unique map $\alpha_i^+\:\cB \to \cC_i^+$. Further, as there is a unique embedding from $\cB$ into $\cN_e$ and a unique embedding from $\cC_i^+$ into $\cN_e$, we can assume without loss of generality that $\alpha_i^+$ is an inclusion. But then we can let $\cD \subseteq \cN_e$ be any closed subset containing $\cC_0^+$ and $\cC_1^+$. Then for
each $i\in \{0, 1\}$,
there is a unique map from $\cC_i^+$ to $\cD$.
Therefore $\cD$ is an amalgamation of $\alpha_0$ and $\alpha_1$. 

In particular, this implies that $\cB$ is an amalgamation base. Hence we can let $\codom(F)$ be any element of $\compcK_e$ whose underlying structure is $\cB$. Therefore $\compcK_e$ has $\coAP$. 
\end{claimproof}

Let $\cN$ be the disjoint amalgamation of 
$\cN_\w$ and
the structure from \cref{Lower bound on coAP in finite language}.
Let $\compcK$ be the age of $\cN$.
Then by \cref{Lower bound on coAP in finite language} and \cref{Lower bound on coAP in infinite language: Claim 3}, the age $\compcK$ has $\coAP$. Now suppose $\compcK$ has $\CcoAP(\StandardTuringDegree)$ for some Turing degree $\StandardTuringDegree$. By \cref{Lower bound on coAP in finite language} we can compute $\TuringJump$ from $\StandardTuringDegree$, and so by \cref{Embdedding info is computable from 0'} we can compute $\EmbedInfo(\compcK)$ from $\StandardTuringDegree$ as well. But then by \cref{Lower bound on coAP in infinite language: Claim 2}, uniformly in $e$ we can determine from $\StandardTuringDegree$ whether or not $\Phi_e$ is finite. Hence $\TuringDoubleJump \Turingleq \StandardTuringDegree$. 
\end{proof}

%% file: bibliography.bib
@article{futureworkAFM,
  author    = {Nathanael Ackerman and Cameron Freer and Mostafa Mirabi},
  title     = {Computable Weak {Fra\"iss\'e} Limits},
  note = {In preparation},
  fyear      = {2025},
  fnote      = {To appear},
}

@article{MR1224221,
    AUTHOR = {Droste, Manfred and G\"{o}bel, R\"{u}diger},
     TITLE = {Universal domains and the amalgamation property},
   JOURNAL = {Math. Structures Comput. Sci.},
  FJOURNAL = {Mathematical Structures in Computer Science. A Journal in the
              Applications of Categorical, Algebraic and Geometric Methods
              in Computer Science},
    VOLUME = {3},
      YEAR = {1993},
    NUMBER = {2},
     PAGES = {137--159},
      ISSN = {0960-1295,1469-8072},
   MRCLASS = {68Q55 (03C99 18B99)},
  MRNUMBER = {1224221},
MRREVIEWER = {David\ B.\ Benson},
       DOI = {10.1017/S0960129500000177},
       URL = {https://doi.org/10.1017/S0960129500000177},
}

@inproceedings{DBLP:conf/lics/DrosteG90,
  author       = {Manfred Droste and
                  R{\"{u}}diger G{\"{o}}bel},
  title        = {Universal Domains in the Theory of Denotational Semantics of Programming
                  Languages},
  booktitle    = {Proceedings of the Fifth Annual Symposium on Logic in Computer Science {(LICS} '90), Philadelphia, PA, June 4-7, 1990},
  fbooktitle    = {Proceedings of the Fifth Annual Symposium on Logic in Computer Science {(LICS} '90), Philadelphia, Pennsylvania, USA, June 4-7, 1990},
  pages        = {19--34},
  publisher    = {{IEEE} Computer Society},
  year         = {1990},
  url          = {https://doi.org/10.1109/LICS.1990.113730},
  doi          = {10.1109/LICS.1990.113730},
  bibsource    = {dblp computer science bibliography, https://dblp.org}
}

@article{MR3689377,
    AUTHOR = {Bodirsky, Manuel and Jonsson, Peter and Van Pham, Trung},
     TITLE = {The complexity of phylogeny constraint satisfaction problems},
   JOURNAL = {ACM Trans. Comput. Log.},
  FJOURNAL = {ACM Transactions on Computational Logic},
    VOLUME = {18},
      YEAR = {2017},
    NUMBER = {3},
     PAGES = {Art. 23, 42pp.},
      ISSN = {1529-3785,1557-945X},
   MRCLASS = {68R05 (03C05 05C90 68Q17 68Q25 90C60 92D15)},
  MRNUMBER = {3689377},
       DOI = {10.1145/3105907},
       URL = {https://doi.org/10.1145/3105907},
}

@article{MR2075214,
    AUTHOR = {Cristani, M. and Hirsch, R.},
     TITLE = {The complexity of constraint satisfaction problems for small
              relation algebras},
   JOURNAL = {Artificial Intelligence},
  FJOURNAL = {Artificial Intelligence},
    VOLUME = {156},
      YEAR = {2004},
    NUMBER = {2},
     PAGES = {177--196},
      ISSN = {0004-3702,1872-7921},
   MRCLASS = {68Q25 (03G15 68Q17 68T20)},
  MRNUMBER = {2075214},
       DOI = {10.1016/j.artint.2004.02.003},
       URL = {https://doi.org/10.1016/j.artint.2004.02.003},
}

@article{MR3810321,
    AUTHOR = {Badia, Guillermo and Noguera, Carles},
     TITLE = {Fra\"{\i}ss\'{e} classes of graded relational structures},
   JOURNAL = {Theoret. Comput. Sci.},
  FJOURNAL = {Theoretical Computer Science},
    VOLUME = {737},
      YEAR = {2018},
     PAGES = {81--90},
      ISSN = {0304-3975,1879-2294},
   MRCLASS = {03E72 (03C99)},
  MRNUMBER = {3810321},
       DOI = {10.1016/j.tcs.2018.05.010},
       URL = {https://doi.org/10.1016/j.tcs.2018.05.010},
}

@inproceedings{DBLP:conf/pods/BojanczykST13,
author = {Boja\'{n}czyk, Miko\l{}aj and Segoufin, Luc and Toru\'{n}czyk, Szymon},
  editor       = {Richard Hull and
                  Wenfei Fan},
  title        = {Verification of database-driven systems via amalgamation},
  booktitle    = {Proceedings of the 32nd {ACM} {SIGMOD-SIGACT-SIGART} Symposium on
                  Principles of Database Systems, {PODS} 2013, New York, NY, {USA} -
                  June 22 - 27, 2013},
  pages        = {63--74},
  publisher    = {{ACM}},
  year         = {2013},
  url          = {https://doi.org/10.1145/2463664.2465228},
  doi          = {10.1145/2463664.2465228},
  bibsource    = {dblp computer science bibliography, https://dblp.org}
}

@inproceedings{DBLP:conf/lics/KhoussainovNRS04,
  author       = {Bakhadyr Khoussainov and
                  Andr{\'{e}} Nies and
                  Sasha Rubin and
                  Frank Stephan},
  title        = {Automatic Structures: Richness and Limitations},
  booktitle    = {Proceedings of the 19th {IEEE} Symposium on Logic in Computer Science {(LICS 2004)}, 14-17 July 2004, Turku, Finland},
  fbooktitle    = {19th {IEEE} Symposium on Logic in Computer Science {(LICS 2004)}, 14-17 July 2004, Turku, Finland, Proceedings},
  pages        = {44--53},
  publisher    = {{IEEE} Computer Society},
  year         = {2004},
  url          = {https://doi.org/10.1109/LICS.2004.1319599},
  doi          = {10.1109/LICS.2004.1319599},
  bibsource    = {dblp computer science bibliography, https://dblp.org}
}

@article{MR3016251,
    AUTHOR = {Fouch\'{e}, Willem L.},
     TITLE = {Martin-{L}\"{o}f randomness, invariant measures and countable
              homogeneous structures},
   JOURNAL = {Theory Comput. Syst.},
  FJOURNAL = {Theory of Computing Systems},
    VOLUME = {52},
      YEAR = {2013},
    NUMBER = {1},
     PAGES = {65--79},
      ISSN = {1432-4350,1433-0490},
   MRCLASS = {03D32 (03C13 03E15 05C55 37B05)},
  MRNUMBER = {3016251},
MRREVIEWER = {Gabriel\ Debs},
       DOI = {10.1007/s00224-012-9419-y},
       URL = {https://doi.org/10.1007/s00224-012-9419-y},
}

@incollection{CenzerAdamsNg,
author = { Douglas   Cenzer  and  Francis   Adams  and  Keng Meng   Ng },
title = {Computability and categoricity of weakly homogeneous {Boolean} algebras and $p$-groups},
booktitle = {Aspects of Computation and Automata Theory with Applications},
    SERIES = {Lecture Notes Series. Institute for Mathematical Sciences. National University of Singapore},
fchapter = {},
pages = {141-158},
doi = {10.1142/9789811278631_0006},
    EDITOR = {Greenberg, N. and Jain, S. and Ng, K. M. and Schewe, S. and Stephan, F. and Yang, Y.},
    fEDITOR = {Greenberg, Noam and Jain, Sanjay and Ng, Keng Meng and Schewe, Sven and Stephan, Frank and Yang, Yue},
 PUBLISHER = {World Scientific Publishing Co. Pte. Ltd., Hackensack, NJ},
YEAR = {2024},
URL = {https://www.worldscientific.com/doi/abs/10.1142/9789811278631_0006},
feprint = {https://www.worldscientific.com/doi/pdf/10.1142/9789811278631_0006},
}

@article{MR3722988,
    AUTHOR = {Adams, Francis and Cenzer, Douglas},
     TITLE = {Computability and categoricity of weakly ultrahomogeneous
              structures},
   JOURNAL = {Computability},
  FJOURNAL = {Computability. The Journal of the Association CiE},
    VOLUME = {6},
      YEAR = {2017},
    NUMBER = {4},
     PAGES = {365--389},
      ISSN = {2211-3568,2211-3576},
   MRCLASS = {03C57 (03C35 03C50)},
  MRNUMBER = {3722988},
MRREVIEWER = {Alexandre\ Ivanov},
       DOI = {10.3233/com-170070},
       URL = {https://doi.org/10.3233/com-170070},
}

@phdthesis{MR3034686,
    AUTHOR = {Steiner, Rebecca M.},
     TITLE = {Reducibility, {D}egree {S}pectra, and {L}owness in {A}lgebraic
              {S}tructures},
      fNOTE = {Thesis (Ph.D.)--City University of New York},
      SCHOOL = {City University of New York},
fPUBLISHER = {ProQuest LLC, Ann Arbor, MI},
      YEAR = {2012},
     PAGES = {118},
      ISBN = {978-1267-34693-3},
   MRCLASS = {99-05},
  MRNUMBER = {3034686},
url = {https://www.proquest.com/docview/1018745163},
       fURL =
              {http://gateway.proquest.com/openurl?url_ver=Z39.88-2004&rft_val_fmt=info:ofi/fmt:kev:mtx:dissertation&res_dat=xri:pqm&rft_dat=xri:pqdiss:3508862},
}

@article{MR3944680,
    AUTHOR = {Fokina, Ekaterina and Harizanov, Valentina and Turetsky,
              Daniel},
     TITLE = {Computability-theoretic categoricity and {S}cott families},
   JOURNAL = {Ann. Pure Appl. Logic},
  FJOURNAL = {Annals of Pure and Applied Logic},
    VOLUME = {170},
      YEAR = {2019},
    NUMBER = {6},
     PAGES = {699--717},
      ISSN = {0168-0072,1873-2461},
   MRCLASS = {03D45 (03C57 03D80)},
  MRNUMBER = {3944680},
MRREVIEWER = {Ivan\ V.\ Latkin},
       DOI = {10.1016/j.apal.2019.01.003},
       URL = {https://doi.org/10.1016/j.apal.2019.01.003},
}

@article{MR2140630,
    AUTHOR = {Kechris, A. S. and Pestov, V. G. and Todorcevic, S.},
     TITLE = {Fra\"{\i}ss\'{e} limits, {R}amsey theory, and topological
              dynamics of automorphism groups},
   JOURNAL = {Geom. Funct. Anal.},
  FJOURNAL = {Geometric and Functional Analysis},
    VOLUME = {15},
      YEAR = {2005},
    NUMBER = {1},
     PAGES = {106--189},
      ISSN = {1016-443X,1420-8970},
   MRCLASS = {37B05 (03C15 03E02 03E15 05D10 22F50 43A07 54H20)},
  MRNUMBER = {2140630},
MRREVIEWER = {Eli\ Glasner},
       DOI = {10.1007/s00039-005-0503-1},
       URL = {https://doi.org/10.1007/s00039-005-0503-1},
}

@article{MR3274785,
    AUTHOR = {Angel, Omer and Kechris, Alexander S. and Lyons, Russell},
     TITLE = {Random orderings and unique ergodicity of automorphism groups},
   JOURNAL = {J. Eur. Math. Soc. (JEMS)},
  FJOURNAL = {Journal of the European Mathematical Society (JEMS)},
    VOLUME = {16},
      YEAR = {2014},
    NUMBER = {10},
     PAGES = {2059--2095},
      ISSN = {1435-9855,1435-9863},
   MRCLASS = {03C15 (03C98 22F50 37A25)},
  MRNUMBER = {3274785},
MRREVIEWER = {Yonatan\ Gutman},
       DOI = {10.4171/JEMS/483},
       URL = {https://doi.org/10.4171/JEMS/483},
}

@article{MR0232725,
    AUTHOR = {Calais, {Jean-Pierre}},
     TITLE = {Relation et multirelation pseudo-homog\`enes},
   JOURNAL = {C. R. Acad. Sci. Paris S\'{e}r. A-B},
  FJOURNAL = {Comptes Rendus Hebdomadaires des S\'{e}ances de l'Acad\'{e}mie
              des Sciences. S\'{e}ries A et B},
    VOLUME = {265},
      YEAR = {1967},
     PAGES = {A2--A4},
      ISSN = {0151-0509},
   MRCLASS = {08.30},
  MRNUMBER = {232725},
MRREVIEWER = {F.\ M.\ Sioson},
}

@article{MR0233739,
    AUTHOR = {{Calais}, {Jean-Pierre}},
     TITLE = {Relations pseudo-homog\`enes, application \`a la th\'{e}orie
              des arbres},
   JOURNAL = {C. R. Acad. Sci. Paris S\'{e}r. A-B},
  FJOURNAL = {Comptes Rendus Hebdomadaires des S\'{e}ances de l'Acad\'{e}mie
              des Sciences. S\'{e}ries A et B},
    VOLUME = {266},
      YEAR = {1968},
     PAGES = {A324--A326},
      ISSN = {0151-0509},
   MRCLASS = {06.20 (02.00)},
  MRNUMBER = {233739},
MRREVIEWER = {J.\ R.\ B\"{u}chi},
}

@article{MR1162490,
    AUTHOR = {Truss, J. K.},
     TITLE = {Generic automorphisms of homogeneous structures},
   JOURNAL = {Proc. London Math. Soc. (3)},
  FJOURNAL = {Proceedings of the London Mathematical Society. Third Series},
    VOLUME = {65},
      YEAR = {1992},
    NUMBER = {1},
     PAGES = {121--141},
      ISSN = {0024-6115,1460-244X},
   MRCLASS = {20B27 (20B07)},
  MRNUMBER = {1162490},
MRREVIEWER = {Michael\ Klemm},
       DOI = {10.1112/plms/s3-65.1.121},
       URL = {https://doi.org/10.1112/plms/s3-65.1.121},
}

@phdthesis{MR3697592,
    AUTHOR = {Kruckman, Alex},
     TITLE = {Infinitary {L}imits of {F}inite {S}tructures},
      fNOTE = {Thesis (Ph.D.)--University of California, Berkeley},
      SCHOOL = {University of California, Berkeley},
 fPUBLISHER = {ProQuest LLC, Ann Arbor, MI},
      YEAR = {2016},
     PAGES = {157},
      ISBN = {978-1369-84130-5},
   MRCLASS = {99-05},
  MRNUMBER = {3697592},
       URL ={https://www.proquest.com/docview/1917740694},
       fURL = {http://gateway.proquest.com/openurl?url_ver=Z39.88-2004&rft_val_fmt=info:ofi/fmt:kev:mtx:dissertation&res_dat=xri:pqm&rft_dat=xri:pqdiss:10189549},
}

@misc{KruckmanNotes,
    author = {Alex Kruckman},
    title = {Notes on generalized {Fra\"{i}ss\'{e}} theory},
    fhowpublished = {},
    faddress = {},
    year = {2015},
url = {https://math.berkeley.edu/~kruckman/fraisse.pdf} 
}

@article{MR4433330,
    AUTHOR = {Krawczyk, Adam and Kruckman, Alex and Kubi\'{s}, Wies{\l}aw and
              Panagiotopoulos, Aristotelis},
     TITLE = {Examples of weak amalgamation classes},
   JOURNAL = {MLQ Math. Log. Q.},
  FJOURNAL = {MLQ. Mathematical Logic Quarterly},
    VOLUME = {68},
      YEAR = {2022},
    NUMBER = {2},
     PAGES = {178--188},
      ISSN = {0942-5616,1521-3870},
   MRCLASS = {03C07 (03C50)},
  MRNUMBER = {4433330},
MRREVIEWER = {Vera\ Koponen},
DOI = {10.1002/malq.202100037}
}

@article{MR4292067,
    AUTHOR = {Krawczyk, Adam and Kubi\'{s}, Wies{\l}aw},
     TITLE = {Games with finitely generated structures},
   JOURNAL = {Ann. Pure Appl. Logic},
  FJOURNAL = {Annals of Pure and Applied Logic},
    VOLUME = {172},
      YEAR = {2021},
    NUMBER = {10},
     PAGES = {103016, 13pp.},
     fPAGES = {Paper No. 103016, 13},
      ISSN = {0168-0072,1873-2461},
   MRCLASS = {03C07 (03C50)},
  MRNUMBER = {4292067},
MRREVIEWER = {Santi\ Spadaro},
       DOI = {10.1016/j.apal.2021.103016},
       URL = {https://doi.org/10.1016/j.apal.2021.103016},
}

@misc{deRancourtSlides,
    author = {de Rancourt, No\'{e}},
    title = {Weak {Fra\"{i}ss\'{e}} classes and $\aleph_0$-categoricity},
    howpublished = {Workshop on Generic Structures},
    address = {Prague},
    year = {2021},
    fnote = {Slides},
url = {https://www.karlin.mff.cuni.cz/~rancourt/Conf/Generic2021.pdf}
}

@article{MR4369354,
    AUTHOR = {Kubi\'{s}, Wies{\l}aw},
     TITLE = {Weak {F}ra\"{\i}ss\'{e} categories},
   JOURNAL = {Theory Appl. Categ.},
  FJOURNAL = {Theory and Applications of Categories},
    VOLUME = {38},
      YEAR = {2022},
     PAGES = {2, pp.\ 27--63},
     fPAGES = {Paper No. 2, 27--63},
      ISSN = {1201-561X},
   MRCLASS = {03C95 (18A30)},
  MRNUMBER = {4369354},
       DOI = {10.4208/cmr.2021-0061},
       URL = {https://doi.org/10.4208/cmr.2021-0061},
}

@article{MR1141931,
    AUTHOR = {Cameron, Peter J.},
     TITLE = {The age of a relational structure},
      fNOTE = {Directions in infinite graph theory and combinatorics
              (Cambridge, 1989)},
   JOURNAL = {Discrete Math.},
  FJOURNAL = {Discrete Mathematics},
    VOLUME = {95},
      YEAR = {1991},
    NUMBER = {1-3},
     PAGES = {49--67},
      ISSN = {0012-365X,1872-681X},
   MRCLASS = {03C15 (03C07 08A05 08A30 08A35)},
  MRNUMBER = {1141931},
MRREVIEWER = {G.\ Fuhrken},
       DOI = {10.1016/0012-365X(91)90329-Z},
       URL = {https://doi.org/10.1016/0012-365X(91)90329-Z},
}

@article{MR0069239,
    AUTHOR = {Fra\"{\i}ss\'{e}, Roland},
     TITLE = {Sur l'extension aux relations de quelques propri\'{e}t\'{e}s
              des ordres},
   JOURNAL = {Ann. Sci. \'{E}cole Norm. Sup. (3)},
  FJOURNAL = {Annales Scientifiques de l'\'{E}cole Normale Sup\'{e}rieure.
              Troisi\`eme S\'{e}rie},
    VOLUME = {71},
      YEAR = {1954},
     PAGES = {363--388},
      ISSN = {0012-9593},
   MRCLASS = {27.2X},
  MRNUMBER = {69239},
MRREVIEWER = {Djuro\ Kurepa},
DOI = {10.24033/asens.1027},
       fURL = {http://www.numdam.org/item?id=ASENS_1954_3_71_4_363_0},
}

@book{MR1221741,
    AUTHOR = {Hodges, Wilfrid},
     TITLE = {Model theory},
    SERIES = {Encyclopedia of Mathematics and its Applications},
    VOLUME = {42},
 PUBLISHER = {Cambridge University Press, Cambridge},
      YEAR = {1993},
     PAGES = {xiv+772},
      ISBN = {0-521-30442-3},
   MRCLASS = {03-01 (03-02 03Cxx)},
  MRNUMBER = {1221741},
MRREVIEWER = {J.\ M.\ Plotkin},
       DOI = {10.1017/CBO9780511551574},
       URL = {https://doi.org/10.1017/CBO9780511551574},
}

@misc{bartos2021weak,
      title={The weak {Ramsey} property and extreme amenability}, 
      author={Adam Bartoš and Tristan Bice and Keegan Dasilva Barbosa and Wiesław Kubiś},
      year={2021},
      eprint={2110.01694},
      archivePrefix={arXiv},
      fprimaryClass={math.LO}
}

@phdthesis{MR4478610,
    AUTHOR = {Dasilva Barbosa, Keegan},
    TITLE = {Ramsey {D}egree {T}heory of {O}rdered and {D}irected {S}ets},
    SCHOOL = {University of Toronto},
    YEAR = {2022},
    fTYPE = {Ph.D. Dissertation},
    fADDRESS = {Ann Arbor, MI},
    fPUBLISHER = {ProQuest LLC},
    fNOTE = {Available from ProQuest Dissertations & Theses Global. (28970102)},
    ISBN = {979-8834-07764-0},
    MRCLASS = {99-05},
    MRNUMBER = {4478610},
    URL = {https://www.proquest.com/docview/2700765015},
    fURL = {http://gateway.proquest.com/openurl?url_ver=Z39.88-2004&rft_val_fmt=info:ofi/fmt:kev:mtx:dissertation&res_dat=xri:pqm&rft_dat=xri:pqdiss:28970102},
    PAGES = {103}
}

@article{MR2308230,
    AUTHOR = {Kechris, Alexander S. and Rosendal, Christian},
     TITLE = {Turbulence, amalgamation, and generic automorphisms of
              homogeneous structures},
   JOURNAL = {Proc. Lond. Math. Soc. (3)},
  FJOURNAL = {Proceedings of the London Mathematical Society. Third Series},
    VOLUME = {94},
      YEAR = {2007},
    NUMBER = {2},
     PAGES = {302--350},
      ISSN = {0024-6115,1460-244X},
   MRCLASS = {03E15 (37B05)},
  MRNUMBER = {2308230},
MRREVIEWER = {Tam\'{a}s\ M\'{a}trai},
       DOI = {10.1112/plms/pdl007},
       URL = {https://doi.org/10.1112/plms/pdl007},
}

@article{MR4452123,
    AUTHOR = {Malicki, Maciej},
     TITLE = {Remarks on weak amalgamation and large conjugacy classes in
              non-archimedean groups},
   JOURNAL = {Arch. Math. Logic},
  FJOURNAL = {Archive for Mathematical Logic},
    VOLUME = {61},
      YEAR = {2022},
    NUMBER = {5-6},
     PAGES = {685--704},
      ISSN = {0933-5846,1432-0665},
   MRCLASS = {03E15 (54H11)},
  MRNUMBER = {4452123},
MRREVIEWER = {Cheng\ Chang},
       DOI = {10.1007/s00153-021-00807-1},
       URL = {https://doi.org/10.1007/s00153-021-00807-1},
}

@article{MR4458208,
    AUTHOR = {Panagiotopoulos, Aristotelis and Tent, Katrin},
     TITLE = {Universality vs genericity and {$C_4$}-free graphs},
   JOURNAL = {European J. Combin.},
  FJOURNAL = {European Journal of Combinatorics},
    VOLUME = {106},
      YEAR = {2022},
     PAGES = {103590, 12pp.},
     fPAGES = {Paper No. 103590, 12},
      ISSN = {0195-6698,1095-9971},
   MRCLASS = {03C15 (05C75)},
  MRNUMBER = {4458208},
MRREVIEWER = {Martin\ Weese},
       DOI = {10.1016/j.ejc.2022.103590},
       URL = {https://doi.org/10.1016/j.ejc.2022.103590},
}

@misc{drzewiecka2023generics,
      title={Generics in invariant subsets of some highly homogeneous permutation groups}, 
      author={Monika Drzewiecka and Aleksander Ivanov and Bartosz Mokry},
      year={2023},
      eprint={2303.07915},
      archivePrefix={arXiv},
      fprimaryClass={math.LO}
}

@article{Computable-Fraisse,
    AUTHOR = {Csima, Barbara F. and Harizanov, Valentina S. and Miller,
              Russell and Montalb\'{a}n, Antonio},
     TITLE = {Computability of {F}ra\"{i}ss\'{e} limits},
   JOURNAL = {J. Symbolic Logic},
  FJOURNAL = {The Journal of Symbolic Logic},
    VOLUME = {76},
      YEAR = {2011},
    NUMBER = {1},
     PAGES = {66--93},
      ISSN = {0022-4812,1943-5886},
   MRCLASS = {03D25 (03D30)},
  MRNUMBER = {2791338},
MRREVIEWER = {Roland\ Sh.\ Omanadze},
       DOI = {10.2178/jsl/1294170990},
       URL = {https://doi.org/10.2178/jsl/1294170990},
}
